\documentclass[11pt,reqno]{amsart}
\usepackage{cmap}
\usepackage[T1]{fontenc}
\usepackage[utf8]{inputenc}
\usepackage{lmodern}

\usepackage{newunicodechar}
\newunicodechar{，}{,}
\newunicodechar{。}{.}
\usepackage{
  amsmath,
  amssymb,
  amsthm,
  amsfonts,
  mathtools,
  mathrsfs
}

\usepackage[margin=1.05in]{geometry}

\usepackage[
  colorlinks=true,
  linkcolor=blue,
  citecolor=blue,
  urlcolor=blue
]{hyperref}

\usepackage{aliascnt}
\usepackage[nameinlink,capitalize]{cleveref}

\numberwithin{equation}{section}

\newtheorem{theorem}{Theorem}[section]

\newaliascnt{proposition}{theorem}
\newtheorem{proposition}[proposition]{Proposition}
\aliascntresetthe{proposition}
\newaliascnt{lemma}{theorem}
\newtheorem{lemma}[lemma]{Lemma}
\aliascntresetthe{lemma}

\newaliascnt{corollary}{theorem}
\newtheorem{corollary}[corollary]{Corollary}
\aliascntresetthe{corollary}

\theoremstyle{definition}
\newaliascnt{definition}{theorem}
\newtheorem{definition}[definition]{Definition}
\aliascntresetthe{definition}

\theoremstyle{remark}
\newaliascnt{remark}{theorem}
\newtheorem{remark}[remark]{Remark}
\aliascntresetthe{remark}

\usepackage{fancyhdr}
\newcommand{\R}{\mathbb{R}}
\newcommand{\Z}{\mathbb{Z}}
\newcommand{\tdL}{\widetilde{L}}
\newcommand{\Id}{\operatorname{Id}}
\newcommand{\Div}{\operatorname{div}}

\newcommand{\cA}{\mathcal{A}}

\newcommand{\dDelta}{\dot{\Delta}}
\newcommand{\dS}{\dot{S}}
\newcommand{\dB}{\dot{B}}

\newcommand{\norm}[1]{\left\lVert #1\right\rVert}

\newcommand{\dd}{\,\mathrm{d}}

\title{\parbox{0.9\columnwidth}{\centering
 On the stability of the solutions to Compressible Navier-Stokes Equations
}}

\author[Q. Dong]{Qiaojie Dong}

\address{Qiaojie Dong, School of Information Management and Mathematics, Jiangxi University of Finance and
Economics, Nanchang, 330032, China}
\email{2202420242@stu.jxufe.edu.cn}
\author[M. Li]{Min Li}
\address{Min Li, School of Information Management and Mathematics, Jiangxi University of Finance and
Economics, Nanchang, 330032, China}
\email{nculimin@gmail.com}
\author[T. Li]{Yatao Li}
\address{Yatao Li, School of Information Management and Mathematics, Jiangxi University of Finance and
Economics, Nanchang, 330032, China}
\email{liyatao@jxufe.edu.cn}

\author[M. Yang]{Minghua Yang$^*$}\thanks{*Corresponding author}
\address{Minghua Yang, School of Information Management and Mathematics, Jiangxi University of Finance and
Economics, Nanchang, 330032, China}
\email{minghuayang@jxufe.edu.cn}

\date{}

\hypersetup{
  pdftitle={Continuous Dependence for the Compressible Navier--Stokes
  Equations with Large Density Perturbations},
  pdfauthor={XX}
}

\begin{document}
\raggedbottom
\begin{abstract}
This paper investigates the continuous dependence (stability) of solutions to the barotropic compressible Navier--Stokes equations in critical Besov spaces. Using the Lagrangian approach developed in \cite{Danchin2014}, it was shown that, for \(1<p<2d\), the flow map
$(a_0,u_0)\mapsto (\bar a,\bar u)=(a\circ X,u\circ X)$ is Lipschitz continuous from $\dot B_{p,1}^{d/p}\times \dot B_{p,1}^{d/p-1}$ into
$\mathcal{C}([0,T];\dot B_{p,1}^{d/p})\times E_p(T)$. However, this result does not directly imply the corresponding continuous dependence in the original Eulerian coordinates because of the low regularity of the critical initial data. Previously, only for $p<d$, continuous dependence was known only in certain lower-regularity spaces with a loss of one derivative relative to the natural solution spaces arising essentially as a by-product of the uniqueness argument. We close this gap and prove that for $1<p<2d$ the flow map $(a_0,u_0)\mapsto(a,u)$ is continuous (not Lipschitz continuous) from $\dot B_{p,1}^{d/p}\times\dot B_{p,1}^{d/p-1}$ to $\mathcal{C}([0,T];\dot B_{p,1}^{d/p})\times E_p(T)$ in Eulerian coordinates without any loss of regularity, which together with the known existence and uniqueness theory \cite{Danchin2014} completes Hadamard well-posedness in the critical spaces. 
\end{abstract}

\maketitle
\tableofcontents

\section{Introduction}

The barotropic compressible Navier--Stokes system provides one of the
fundamental mathematical models for the motion of viscous compressible
fluids. In the whole space $\mathbb{R}^{d}$, $d\geq 2$, the system reads
\begin{equation}\label{eq:1.1}
\left\{
\begin{aligned}
&\partial_t\rho+\operatorname{div}(\rho u)=0,
\\
&\partial_t(\rho u)+\operatorname{div}(\rho u\otimes u)
-\mathcal{A}u+\nabla P(\rho)=0,
\\
&(\rho,u)|_{t=0}=(\rho_0,u_0),
\end{aligned}
\right.
\end{equation}
where $\rho=\rho(t,x)$ denotes the density, $u=u(t,x)$ is the velocity
field, and $P=P(\rho)$ is the pressure law. The Lam\'e operator
$\mathcal{A}$ is given by
$
\mathcal{A}u
=
\mu\Delta u+(\lambda+\mu)\nabla\operatorname{div}u,
$
with viscosity coefficients satisfying
$
\mu>0,
2\mu+\lambda>0$. We normalize the reference density to one and set
$
a:=\rho-1
$.
Assuming that the density stays away from vacuum, namely
$
1+a(t,x)>0$,
we introduce
\[
b(a):=\frac{1}{1+a},
\qquad
G'(a):=\frac{P'(1+a)}{1+a}.
\]
Then system \eqref{eq:1.1} may be rewritten as
\begin{equation}\label{eq:1.2}
\left\{
\begin{aligned}
&\partial_t a+u\cdot\nabla a
=-(1+a)\operatorname{div}u,
\\
&\partial_tu-b(a)\mathcal{A}u
=-u\cdot\nabla u-\nabla G(a),
\\
&(a,u)|_{t=0}=(a_0,u_0).
\end{aligned}
\right.
\end{equation}

The initial value problem \eqref{eq:1.2} is said to be locally well-posed in a metric space \(X\) if, for any prescribed initial datum \((a_{01},u_{01})\in X\), there exists a neighbourhood \(U\) of this point and a time \(T=T(U)>0\) such that, for every \((a_0,u_0)\in U\), the ensuing conditions are fulfilled:
\begin{enumerate}
\item[(a)] \textbf{Existence} -- the system possesses a distributional solution \((a,u)\) that belongs to some class \(E\subset C([0,T):X)\);
\item[(b)] \textbf{Uniqueness} -- the solution is the sole member of \(E\) satisfying the equation;
\item[(c)] \textbf{Continuous dependence (Stability )} -- the map \((a_0,u_0)\mapsto (a,u)=:S_T(a_0,u_0)\) is continuous as a mapping from \(U\) into \(C([0,T):X)\).
\end{enumerate}
When the same properties hold for every \(T>0\), we call the problem globally well-posed in \(X\).

The compressible Navier--Stokes equations possess the formal scaling
\[
a(t,x)\mapsto a(\ell^2t,\ell x),
\qquad
u(t,x)\mapsto \ell u(\ell^2t,\ell x),
\qquad
\ell>0,
\]
up to the pressure term, which is of lower order with respect to this
parabolic scaling. This naturally leads to the critical initial space
\begin{equation}\label{eq:Xp}
X_p
:=
\dot B^{\frac{d}{p}}_{p,1}
\times
\dot B^{-1+\frac{d}{p}}_{p,1}.
\end{equation}
For $T>0$, we use the solution space
\begin{equation}\label{eq:Ep}
E_p(T)
:=
\mathcal{C}\bigl([0,T];\dot B^{\frac{d}{p}}_{p,1}\bigr)
\times
\left(
\mathcal{C}\bigl([0,T];\dot B^{-1+\frac{d}{p}}_{p,1}\bigr)
\cap
L^1\bigl(0,T;\dot B^{1+\frac{d}{p}}_{p,1}\bigr)
\right).
\end{equation}
We equip \(E_p(T)\) with the norm
\[
\begin{aligned}
\norm{(a,u)}_{E_p(T)}
:=
\norm a_{\tdL^\infty_T(\dB^{\frac dp}_{p,1})}
+
\norm u_{\tdL^\infty_T(\dB^{-1+\frac dp}_{p,1})}+
\norm u_{L^1_T(\dB^{1+\frac dp}_{p,1})}.
\end{aligned}
\]

The well-posedness for compressible Navier--Stokes equations has
been extensively developed over the past several decades. Danchin \cite{D2001,D2005} and Chen-Miao-Zhang \cite{CMZ2010R}established local existence for $1\leq p<2d$ and uniqueness for $1\leq p\leq d$. The uniqueness for $d<p<2d$ was not clear until  Danchin \cite{Danchin2014} by a Lagrangian approach (see  \cref{theo 2.2}). On the other hand, the range $p<2d$ is optimal for the well-posedness in $X_p$. Chen-Miao-Zhang \cite{CMZ2015} proved the ill-posedness in the sense that the continuity of the solution map $S_T$ fails at the origin in $X_p$ when $p>2d$. Iwabuchi-Ogawa \cite{IO2022} proved the same result for $p=2d$.  Only some weak continuous dependence results (in the $X_p^{-1}$ space ) were known as a by-product of the uniqueness argument. For example, in \cite{D2001, D2005}, it was proved that for $1\leq p<d$. For the case $p \geq d$, it is not known whether the above difference estimate holds true since the regularity of $u$ is too negative.  

In light of the preceding results, whether the continuity of the solution map \( S_T \) holds in \( X_p \) for \( 1 \leq p < 2d \); an affirmative answer would complete the Hadamard well-posedness theory in this space. The first resolution was provided in the recent paper \cite{GuoYangZhang2026} by the fourth author together with Guo and Zhang by the frequency-envelope method (see \cite{Tao04}) with the transport-parabolic structure and the Lagrangian approach \cite{Danchin2014}  under a smallness assumption on the density perturbation. In a recent work \cite{GSY}, the fourth author together with Guo and Song proved global well-posedness in the optimal Besov space under some additional assumption on the low frequency of the density and momentum.

 It is natural to ask whether the continuity of the solution map \( S_T \) holds in \( X_p \) for \( p < 2d \) without smallness assumption on the density perturbation. To the best of our knowledge, this issue had not been addressed in earlier works.
\begin{theorem}\label{theo 2.2}
\textup{\cite[Theorem~1.1 and 1.2]{Danchin2014}.}
  Let \( d \geq 2 \), \( 1 < p < 2d \), and let
\[
(a_0, u_0) \in X_p, \quad \inf_x\rho_0(x):=\inf_x\bigl(1+a_0(x)\bigr)>0.
\]
Then there exists \( T > 0 \) such that the compressible Navier--Stokes system admits a unique solution
\[
(a, u) \in E_p(T).
\]
Moreover, if \( (\bar{a}, \bar{u}) \) denotes the corresponding solution in Lagrangian coordinates, then
\[
(\bar{a}, \bar{u}) \in \mathcal{C}([0, T]; \dot{B}_{p,1}^{\frac{d}{p}}) \times \left( \mathcal{C}([0, T]; \dot{B}_{p,1}^{\frac{d}{p}-1}) \cap L^1(0, T; \dot{B}_{p,1}^{\frac{d}{p}+1}) \right),
\]
and the Lagrangian solution map is locally Lipschitz from \( X_p \) into this space.

\noindent Equivalently, in Eulerian coordinates, the solution satisfies
\[
a \in \mathcal{C}([0, T]; \dot{B}_{p,1}^{\frac{d}{p}}), \quad u \in \mathcal{C}([0, T]; \dot{B}_{p,1}^{\frac{d}{p}-1}) \cap L^1(0, T; \dot{B}_{p,1}^{\frac{d}{p}+1}),
\]
with \( 1 + a \) bounded away from zero on \( [0, T] \).  
\end{theorem}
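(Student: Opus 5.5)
This statement is quoted from \cite{Danchin2014}, so the plan is to retrace the Lagrangian proof given there rather than to build a new one.

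\textbf{Reformulation.} Given $u$, let $X$ be its flow, $X(t,y)=y+\int_0^t \bar u(\tau,y)\,d\tau$, and set $\bar a=a\circ X$, $\bar u=u\circ X$. The unknowns are written as functionals of $\bar u$ alone:
\begin{itemize}
\item $\bar\rho=\rho_0\,J^{-1}$ with $J=\det DX$, which removes the transport of the density;
\item $A=(DX)^{-1}$, a quantity close to $\Id$;
\item the momentum equation becomes
\[
\rho_0\partial_t\bar u-\Div\big(\mathrm{adj}(DX)\,\mu(A^T\nabla\bar u+\nabla\bar u^TA)+\dots\big)+\mathrm{adj}(DX)^T\nabla P(\rho_0J^{-1})=0.
\]
\end{itemize}
The key observation is that $\|D\bar u\|_{L^1_T(\dB^{d/p}_{p,1})}$ small implies $DX-\Id$ small in $L^\infty_T(\dB^{d/p}_{p,1})$. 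Since $\dB^{d/p}_{p,1}$ is an algebra, $A$, $J^{\pm1}$ and $\mathrm{adj}(DX)$ are all controlled there. This holds for every $p<\infty$, so the restriction $p<2d$ enters only through product laws.

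\textbf{Linear theory.} Next I would prove maximal regularity for
\[
\rho_0\partial_t v-\mathcal A v=f, \qquad \rho_0\in 1+\dB^{d/p}_{p,1},\quad \inf\rho_0>0,
\]
in $\tdL^\infty_T(\dB^{d/p-1}_{p,1})\cap L^1_T(\dB^{d/p+1}_{p,1})$. The method is to localize in frequency and commute $\dDelta_j$ with $\rho_0$. Writing $\rho_0=S_m\rho_0+(\rho_0-S_m\rho_0)$, the second piece is small in the multiplier space. The smooth low-frequency part creates only lower-order terms, which are absorbed for $T$ small. The commutator estimate for $[\dDelta_j,\rho_0]\partial_t v$ needs regularity index $d/p-1>-d/p$, that is $p<2d$. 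This is exactly where the range restriction enters.

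\textbf{Fixed point and stability.} I would then run a contraction on a ball around the free solution $u_L$ of the linear problem with data $u_0$. The ball is chosen so that $\|\nabla\bar u\|_{L^1_T(\dB^{d/p}_{p,1})}$ is small, which holds for $T$ small because $\|\nabla u_L\|_{L^1_T}\to0$.

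\begin{itemize}
\item \emph{Nonlinear terms.} They are $(A-\Id)\nabla\bar u$, $(J-1)$-type terms, and the pressure term. They are estimated in $L^1_T(\dB^{d/p-1}_{p,1})$, using the product law $\dB^{d/p}_{p,1}\times\dB^{d/p-1}_{p,1}\to\dB^{d/p-1}_{p,1}$, which again requires $p<2d$.
\item \emph{Pressure term.} It is bounded by $T\,C(\rho_0)$.
\item \emph{Difference estimates.} Differences $A_1-A_2$ are linear in $\int_0^t\nabla\delta\bar u$, so the map is Lipschitz and contracting in the same space, with no loss of derivative. The same estimates applied to two data give the local Lipschitz dependence of $(\bar a,\bar u)$ on $(a_0,u_0)$.
\end{itemize}

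\textbf{Back to Eulerian coordinates.} Since $X$ is a $C^1$ diffeomorphism close to $\Id$, composition with $X^{-1}$ preserves $\dB^{s}_{p,1}$ for $-d/p'<s\le d/p+1$. This gives $(a,u)\in E_p(T)$ together with the lower bound on $1+a$.

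\textbf{Uniqueness.} Uniqueness in $E_p(T)$ follows because any Eulerian solution with $\nabla u\in L^1_T(\dB^{d/p}_{p,1})$ generates a Lagrangian solution. For short times this solution lies in the contraction ball, where the solution is unique; a continuity argument then extends uniqueness to all of $[0,T]$.

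\textbf{Main obstacle.} I expect the hardest step to be the variable-coefficient maximal regularity with a rough $\rho_0$ together with the commutator estimates at the endpoint index $d/p-1$, because this is where $p<2d$ is used sharply.
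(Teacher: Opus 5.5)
Your outline retraces the Lagrangian contraction-mapping argument of \cite{Danchin2014}, which is the reference the paper quotes this theorem from without giving a proof of its own, so your route coincides with the paper's. When writing it out, note two points. First, the passage back to Eulerian variables needs more than $C^1$-closeness of $X$ to $\Id$: one uses $D(X^{-1})-\Id\in\dB^{\frac dp}_{p,1}$ and the multiplier property of the Jacobian, cf.\ \cref{pro 2.2} and \cref{lem:eulerian-lagrangian-high-norm}. Second, the linear estimate is more conveniently run on $\partial_t v-\rho_0^{-1}\mathcal A v=\rho_0^{-1}f$, commuting $\dDelta_j$ with $\rho_0^{-1}$ in front of $\mathcal A v$ and decoupling the Lam\'e system as in \cref{prop:weighted-lame}, so that the low-frequency commutator is genuinely lower order rather than acting on $\partial_t v$.
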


We note that Theorem~\ref{theo 2.2} establishes the existence and uniqueness of solutions, but does not address the continuity of the solution map \(S_T\) in \(X_p\) for \(p<2d\) in Eulerian coordinates. The main difficulty is that, although one has mutual control between \((a,u)\) and \((\bar a,\bar u)\), the required difference estimates in \(X_p\) cannot be recovered directly from those for the transformed variables because of the low regularity of the initial data in the critical spaces. Motivated by the works \cite{GuoYangZhang2026,Danchin2014,CMZ2010R}, the purpose of this paper is to establish the continuous dependence of the solutions constructed in Theorem~\ref{theo 2.2} on the initial data, or equivalently, the continuity of the solution map \(S_T\) in \(X_p\) for \(p<2d\) with respect to the Eulerian topology, without imposing any smallness assumption on the density perturbation. Our main result is stated as follows.

\begin{theorem}\label{thm:main}
Let $d\geq2$ and $1<p<2d$. If
\begin{equation}\label{eq:positive-density}
(a_0,u_0)\in X_p,
\,
\kappa_0:=\inf_x\rho_0(x)=\inf_x\bigl(1+a_0(x)\bigr)>0,
\end{equation}
then there exist a neighborhood $\mathcal{U}$ of $(a_0,u_0)$ in
$X_p$ and a time $T=T(\mathcal{U})>0$ such that, for every
$(\widetilde a_0,\widetilde u_0)\in\mathcal{U}$, system
\eqref{eq:1.2} admits a unique solution
\[
(\widetilde a,\widetilde u)
=
S_T(\widetilde a_0,\widetilde u_0)
\in E_p(T).
\]
Moreover, the solution map
$
S_T:\mathcal{U}\rightarrow E_p(T)
$
is continuous.
\end{theorem}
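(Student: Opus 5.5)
The plan follows the frequency-envelope (Bona--Smith type) strategy of \cite{GuoYangZhang2026}, combined with the Lagrangian Lipschitz estimates of \cref{theo 2.2}. Existence and uniqueness on a common interval come directly from \cref{theo 2.2}. We choose $\mathcal U$ as a small ball around $(a_0,u_0)$ in $X_p$. Uniform bounds in $E_p(T)$, and a uniform lower bound $1+\widetilde a\ge\kappa_0/2$, hold for all data in $\mathcal U$ on a fixed $T$. The reason is that the local existence time is governed by $\|a_0\|_{\dB^{d/p}_{p,1}}$, by $\kappa_0$, and by the time $T$ for which $\|e^{t\cA}u_0\|_{L^1_T(\dB^{1+d/p}_{p,1})}$ is small. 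This last quantity is stable under $X_p$-small perturbations, because the $\ell^1$ Besov norm makes it continuous with respect to the data.

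To prove continuity at a point, take $(\widetilde a_0^n,\widetilde u_0^n)\to(\widetilde a_0,\widetilde u_0)$ in $X_p$, and fix a frequency cut-off $N$. Write $S_N:=\dS_N$ for the low-frequency truncation and split
\[
\|S_T(\widetilde a_0^n,\widetilde u_0^n)-S_T(\widetilde a_0,\widetilde u_0)\|_{E_p(T)}\le I_1+I_2+I_3 ,
\]
where
\begin{align*}
I_1&=\|S_T(\widetilde a^n_0,\widetilde u^n_0)-S_T(S_N\widetilde a^n_0,S_N\widetilde u^n_0)\|,\\
I_2&=\|S_T(S_N\widetilde a^n_0,S_N\widetilde u^n_0)-S_T(S_N\widetilde a_0,S_N\widetilde u_0)\|,\\
I_3&=\|S_T(S_N\widetilde a_0,S_N\widetilde u_0)-S_T(\widetilde a_0,\widetilde u_0)\|.
\end{align*}

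For $I_2$, both data are smooth: they lie in $X_p\cap(\dB^{d/p+1}_{p,1}\times\dB^{d/p}_{p,1})$ with norms $\lesssim 2^N$. We propagate this extra derivative for the solutions, with bound $\lesssim 2^N$ on $[0,T]$, by the usual transport/Lam\'e estimates, using the uniform $E_p(T)$ bounds for the linear-in-top-order part. The Lagrangian flows $X$ are then $C^1$ in the $\dB^{d/p+1}$ sense. This lets us transfer the Lagrangian Lipschitz estimate of \cref{theo 2.2} back to Eulerian coordinates: $a=\bar a\circ X^{-1}$, and the difference $\bar a^n\circ (X^n)^{-1}-\bar a\circ X^{-1}$ is controlled in $\dB^{d/p}_{p,1}$ by $\|\bar a^n-\bar a\|$ plus $\|\nabla\bar a\|_{\dB^{d/p}}\,\|X^n-X\|$. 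The latter factor is bounded by $C2^N$, times the Lipschitz Lagrangian difference. Hence $I_2\le C(N)\|(\widetilde a^n_0-\widetilde a_0,\widetilde u^n_0-\widetilde u_0)\|_{X_p}$, and $I_2\to0$ as $n\to\infty$ for fixed $N$.

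For $I_1$ and $I_3$, we need a bound uniform in $n$ that tends to $0$ as $N\to\infty$. This is the main obstacle, because differences in $E_p$ generally lose a derivative. We plan to use frequency envelopes. Let $c_j$ be a slowly varying envelope for the data in $X_p$. First show that solutions in the uniform class obey $2^{jd/p}\|\dDelta_j a\|_{L^\infty_T L^p}+\dots\lesssim c_j$; the slowly varying property handles commutators, as in \cite{Tao04}. Next, estimate the difference $\delta$ between the solution and its truncated version in the weaker norm $\tdL^\infty_T(\dB^{d/p-1}_{p,1})\times(\dots\dB^{d/p-2})$ for $p<d$, or in Lagrangian variables when $d\le p<2d$, where the derivative loss is absent. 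This gives a bound of order $2^{-N}\sum_{j\ge N}2^{j}c_j$-type tails, roughly $\lesssim c_{\ge N}$ at one lower level. Finally, interpolate the high frequencies with the envelope bound $\sum_{j\ge N}c_j\to0$ uniformly over the convergent family $\{(\widetilde a^n_0,\widetilde u^n_0)\}\cup\{(\widetilde a_0,\widetilde u_0)\}$. Uniformity holds because this family is compact in $X_p$, so the tails are uniformly small. The main difficulty is the transport term $u\cdot\nabla a$ in the density equation at the top regularity. To treat it, we would estimate $\delta a$ at level $d/p$ frequency-by-frequency. The dangerous term $\delta u\cdot\nabla a$ is controlled by $\|\delta u\|_{L^1_T\dB^{d/p}}\,2^{N}c_{\ge N}$-type bounds for the smooth solution, paired with the $2^{-N}$ gain from the weak difference. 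Letting first $n\to\infty$ and then $N\to\infty$ concludes the proof.
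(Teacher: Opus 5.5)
Your splitting $I_1+I_2+I_3$ is a Bona--Smith scheme, which is not the paper's route. $I_2$ is plausible for fixed $N$. The treatment of $I_1,I_3$, however, has a genuine gap exactly in the range $d\le p<2d$. Your last step needs a factor $2^{-N}$ in the difference $\delta$ between a solution and its regularized version. The term $\delta u\cdot\nabla a^N$ costs roughly $\norm{\delta u}_{L^1_T(L^\infty)}\norm{a^N}_{L^\infty_T(\dB^{1+\frac dp}_{p,1})}$, and the second factor is of size $\sum_{j<N}2^jc_j\gg1$. A $2^{-N}$ gain would have to come from a Lipschitz bound in $\dB^{-1+\frac dp}_{p,1}\times\dB^{-2+\frac dp}_{p,1}$. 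Such difference estimates are available only for $p<d$: for $p\ge d$ the index $-2+\frac dp$ is too negative for products like $(b-1)\cA\delta u$. (For $p<d$ your scheme can indeed be closed: low frequencies of $\delta$ cost $2^N$ times a weak difference of size $2^{-N}c_{\ge N}$, and high frequencies are envelope tails.) \cref{theo 2.2} does not supply the gain, because the Lagrangian map is Lipschitz only at the critical level. The best you get is $\norm{\delta u}_{L^1_T(L^\infty)}\lesssim\norm{(\Id-\dS_N)(a_0,u_0)}_{X_p}\approx c_{\ge N}$, with no $2^{-N}$; ``no derivative loss'' also means no derivative gain. The dangerous term is then of order $c_{\ge N}\sum_{j<N}2^jc_j$, which need not tend to zero. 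So $I_1,I_3\to0$ uniformly in $n$ is not established.

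The paper needs no rate in $N$ because it never compares with regularized solutions. For $(a^n,u^n)$ and $(a^\ast,u^\ast)$ it splits $\delta=\dS_N\delta+(\Id-\dS_N)\delta$.
\begin{itemize}
\item The high part is bounded by the uniform envelope tails $C\varepsilon_N$ of each solution (\cref{cor:uniform-tails}).
\item The low part is estimated directly from the truncated difference system (\cref{prop:low-frequency-stability}) by $C_N\bigl(\norm{(\delta a_0,\delta u_0)}_{X_p}+\norm{\delta u}_{L^1_T(L^\infty)}\bigr)+C\varepsilon_N$.
\item Here $C_N$ may blow up, since it comes only from Bernstein on $\dS_N\dot T_{\delta u}\nabla a_1$. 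The commutators $[u_2\cdot\nabla,\dS_N]\delta a$ and $[\dS_N,b_2]\cA\delta u$ are bounded uniformly in $N$ (\cref{lem:cutoff-commutator}).
\item $\norm{\delta u}_{L^1_T(L^\infty)}\lesssim\norm{(\delta a_0,\delta u_0)}_{X_p}$ follows from the critical Lagrangian bound via $\norm{u_2-u_1}_{L^\infty}\le\norm{\bar u_2-\bar u_1}_{L^\infty}+\norm{\nabla u_1}_{L^\infty}\norm{X_2-X_1}_{L^\infty}$; no gain is required.
\end{itemize}
Letting $n\to\infty$ before $N\to\infty$ kills $C_N$.

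Your envelope-persistence step is also unjustified for large density. Writing $b(a)\cA u=\cA u+(b-1)\cA u$ cannot be handled perturbatively when $\norm{b-1}_{\dB^{\frac dp}_{p,1}}$ is large, and slowly varying envelopes do not address this. The paper handles it as follows:
\begin{itemize}
\item it splits $b=b_m+b^h$ with a cutoff $m=m(T)$ common to the whole family;
\item it obtains smallness of $b^h$ from the time-weighted estimates of \cite{CMZ2010R};
\item it controls $\int_0^TK_m^2\,\dd t$ through $c_*2^{2m}T<4$;
\item it proves the envelope-weighted variable-coefficient Lam\'e estimate, \cref{prop:weighted-lame}.
\end{itemize}
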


\begin{remark}
In contrast to the recent work \cite{GuoYangZhang2026}, the main novelty of the present paper lies in establishing the continuity of the Eulerian solution map under the sole non-vacuum condition \eqref{eq:positive-density}, without any smallness assumption on the density perturbation. This demonstrates that large density variations do not preclude Hadamard continuous dependence. Theorem~\ref{theo 2.2} tell us that the Lagrangian approach shows that, for \( 1 < p < 2d \), the flow map $(a_0, u_0) \mapsto (\bar{a}, \bar{u}) = (a \circ X, u \circ X)$ is Lipschitz continuous. However, this result does not directly imply the corresponding continuous dependence in the original Eulerian coordinates. In the present work, we prove that the Eulerian flow map \((a_0, u_0) \mapsto (a, u)\) is continuous (though not Lipschitz) from the same space into $\mathcal{C}([0, T]; \dot{B}_{p,1}^{d/p}) \times E_p(T)$. This provides a link between the Eulerian and Lagrangian approaches.
\end{remark}

In the rest of the introduction, we would like to discuss the main difficult in the proof of \cref{thm:main}. For small density perturbations, relying on  frequency-envelope methods adapting to the transport-parabolic setting \cite{Tao04} and the Lagrangian techniques from \cite{Danchin2014}, the continuity of the Eulerian
solution map was first established in \cite{GuoYangZhang2026}. More precisely, in \cite{GuoYangZhang2026}, 
\begin{align}
b(a)\mathcal{A}u
=
\mathcal{A}u+\bigl(b(a)-1\bigr)\mathcal{A}u,
\label{bu}
\end{align}
and the second term may be absorbed only when $b(a)-1$ is sufficiently
small. But for large density perturbations, 
$
b(a)\geq c_0>0,
$
the quantity
$
\|b(a)-1\|_{\dot B^{\frac{d}{p}}_{p,1}}
$
may be arbitrarily large. To overcome the aforementioned difficulties,  we establish a new \emph{a priori} estimate (see \cref{prop:weighted-lame}) for the variable-coefficient parabolic equation
    $
    \partial_t v - b(a)\mathcal A v = f
  $
    directly in the frequency-envelope weighted Besov spaces. The key idea is to split the coefficient \(b\) into a low-frequency part \(b_m\) (which carries the main information) and a high-frequency part \(b^h\) (a small tail). By exploiting the smallness of \(b^h\) (controlled by the time weight \(\omega_T\)) together with the good properties of \(b_m\) (bounded away from zero), the estimate successfully handles the genuinely variable-coefficient nature of the operator without requiring smallness of \(b-1\). This serves as the cornerstone for treating large density variations.

    We used two distinct weighting mechanisms. This double-layer weighting strategy enables the unified treatment of low regularity and high-frequency oscillations.
    The \emph{time weight} \(\omega_T\) borrowed from \cite{CMZ2010R} is used to gain smallness of high-frequency parts of the coefficients (e.g., \(b^h\)) over a short time interval (see \cref{cor:common-m}), which allows us to temporarily sacrifice the time length to obtain uniform high-frequency control, which is essential for establishing uniform estimates.
  The \emph{frequency envelope} \(\Omega\) is constructed  to handle high-frequency tails by building a weight \(\Omega\) that grows with frequency, one can absorb the energy in the high-frequency region (see \cref{prop:envelope-persistence} and \cref{cor:uniform-tails}), which borrowed ideas from  \cite{GuoYangZhang2026}.

In the low-frequency analysis, the commutator estimates established in
\cref{lem:cutoff-transport-commutator} are \emph{optimal} in the
critical Besov setting (i.e., no derivative loss), and their implicit
constants are uniform with respect to the cutoff indices \(m\) and
\(N\). Here \(m=m(T)\) is the common cutoff in the coefficient
decomposition \(b=b_m+b^h\), chosen so that \(1\le c_*2^{2m}T<4\), whereas \(N\) is the cutoff index used to separate the low- and
high-frequency components of the solutions in the continuity argument. For each fixed \(N\), the low-frequency
part is ultimately controlled by the difference of the initial data,
while the high-frequency remainder is bounded by
\(C\varepsilon_N\), where \(C\) is independent of \(N\) and
\(\varepsilon_N\to0\). Although the low-frequency constant \(C_N\) may depend on \(N\), this dependence causes no difficulty. In the continuity proof, the sequence limit is first taken for fixed \(N\), and \(N\) is then allowed to tend to infinity.

\section{Auxiliary results}\label{sec:prelim}
Throughout this paper, $C$ and $c$ denote generic positive constants, which may vary from line to line. Their dependence on the parameters will be clear from the context. We write $A \lesssim B$ to mean that $A \leq CB$ for some positive constant $C$. When necessary, the dependence of $C$ will be indicated explicitly by subscripts.

\begin{lemma}
\cite[Proposition A.1]{Danchin2014}\label{pro 2.2}
   Let \( X \) be a globally bi-Lipschitz diffeomorphism of \( \mathbb{R}^d \) and \( (s, p, q) \) with \( 1 \leq p < \infty \) and \(-\frac{d}{p'}< s < \frac{d}{p}\) (or just \(-\frac{d}{p'} < s \leq \frac{d}{p}\) if \( q = 1 \) and just \(-\frac{d}{p'} \leq s < \frac{d}{p}\) if \( q = \infty \)). Then \( a \mapsto a \circ X \) is a self-map over \( \dot{B}_{p,q}^s \) in the following cases:

\begin{enumerate}
    \item \( s \in (0, 1) \),
    \item \( s \in (-1, 0] \) and \( J_{X^{-1}} := \det D(X^{-1}) \) is in the multiplier space \( \mathcal{M}(\dot{B}_{p',q'}^{-s}) \) defined in \eqref{mutiple};
    \item \( s \geq 1 \) and \( (DX - \mathrm{Id}) \in \dot{B}_{p,1}^{\frac{d}{p}} \).
    \end{enumerate}
\end{lemma}
\noindent For the inverse-flow bootstrap below, we shall use the following
quantitative refinement of \cref{pro 2.2}.
\begin{lemma}
\label{lem:quantitative-critical-composition}
Let \(d\ge2\), \(1<p<2d\), and  \(L\ge1\). Let
\(\Phi:\R^d\to\R^d\) be a smooth globally bi-Lipschitz
diffeomorphism satisfying
\[
\max\left\{
\|D\Phi\|_{L^\infty},
\|D(\Phi^{-1})\|_{L^\infty}
\right\}
\le
L,
\qquad
D\Phi-\Id\in\dot B^{\frac dp}_{p,1}.
\]
Then there exists an integer \(m_p\ge0\), depending only on
\(d,p\), such that
\begin{equation}\label{eq:quantitative-critical-composition}
\|h\circ\Phi\|_{\dot B^{\frac dp}_{p,1}}
\lesssim
\left(
1+\|D\Phi-\Id\|_{\dot B^{\frac dp}_{p,1}}
\right)^{m_p}
\|h\|_{\dot B^{\frac dp}_{p,1}}
\end{equation}
for every \(h\in\dot B^{\frac dp}_{p,1}\), where the implicit constant depends only on \(d,p,L\).
\end{lemma}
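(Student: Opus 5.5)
The plan is to induct on the integer part of \(s:=\frac dp\in(\frac12,d)\), lowering regularity one derivative at a time with the chain rule. Each step costs at most a fixed power of \(1+\|D\Phi-\Id\|_{\dB^{\frac dp}_{p,1}}\). The base range \(0<\sigma<1\) costs only a constant depending on \(L\).

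\emph{Base step.} For \(\sigma\in(0,1)\) I would use the finite-difference characterization of the (homogeneous) Besov norm,
\[
\|f\|_{\dB^\sigma_{p,q}}\simeq \Bigl\|\,|y|^{-\sigma}\|f(\cdot+y)-f\|_{L^p}\Bigr\|_{L^q(\R^d;\,dy/|y|^d)} .
\]
Since \(\Phi\) is \(L\)-bi-Lipschitz, the change of variables \(x\mapsto\Phi(x)\) shows that \(\|f\circ\Phi(\cdot+y)-f\circ\Phi\|_{L^p}\) is controlled by \(L^{d/p}\sup_{|z|\le L|y|}\|f(\cdot+z)-f\|_{L^p}\), up to the usual averaging over a ball of radius \(\sim L|y|\). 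This gives
\[
\|f\circ\Phi\|_{\dB^\sigma_{p,q}}\le C(d,p,\sigma,L)\,\|f\|_{\dB^\sigma_{p,q}}.
\]
This is the quantitative form of case (1) of \cref{pro 2.2}. In particular the lemma holds with \(m_p=0\) when \(p>d\).

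\emph{Inductive step.} For \(s\ge1\) I would use \(\|h\circ\Phi\|_{\dB^{s}_{p,1}}\simeq\|\nabla(h\circ\Phi)\|_{\dB^{s-1}_{p,1}}\) together with
\[
\nabla(h\circ\Phi)=(\nabla h)\circ\Phi+(D\Phi-\Id)^{T}\,(\nabla h)\circ\Phi .
\]
The second term is handled by the product law \(\dB^{\frac dp}_{p,1}\times\dB^{s-1}_{p,1}\to\dB^{s-1}_{p,1}\). This law is valid because \(-\min(\frac dp,\frac d{p'})<s-1\le\frac dp\). The lower bound \(s-1>-\frac dp\) is exactly \(p<2d\), and \(s-1>-\frac d{p'}\) uses \(d\ge2\). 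Hence
\[
\|h\circ\Phi\|_{\dB^s_{p,1}}\lesssim\bigl(1+\|D\Phi-\Id\|_{\dB^{\frac dp}_{p,1}}\bigr)\,\|(\nabla h)\circ\Phi\|_{\dB^{s-1}_{p,1}} .
\]
I then apply the same reasoning to \((\nabla h)\circ\Phi\) at regularity \(s-1\), and more generally to derivatives of \(h\) at regularity \(s-k\) (not \(\frac dp\)). This uses the product law \(\dB^{\frac dp}_{p,1}\times\dB^{s-k}_{p,1}\to\dB^{s-k}_{p,1}\), which is valid while \(s-k>-\frac dp\). After \(k=\lfloor s\rfloor\) steps the regularity index lands in \([0,1)\). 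If it lies in \((0,1)\), the base step closes the argument, and \(m_p=\lfloor s\rfloor\) powers accumulate.

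\emph{Endpoint case.} The remaining case, which I expect to be the main obstacle, is when the index lands on \(0\), i.e. \(s=\frac dp\in\Z\) (for instance \(p=d\)). Here I would argue by duality: \(\dB^0_{p,1}\) is paired with \(\dB^0_{p',\infty}\), and
\[
\langle f\circ\Phi,g\rangle=\langle f,(g\circ\Phi^{-1})\,J_{\Phi^{-1}}\rangle .
\]
Since \(g\circ\Phi^{-1}\) only sits at regularity \(0\), I would either interpolate between two nearby admissible indices \(\pm\epsilon\) (keeping \(q=1\) via real interpolation), or first pass to a positive index \(\epsilon\in(0,\frac12)\) on the dual side. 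In either case one must bound the multiplier norm of \(J_{\Phi^{-1}}\) on \(\dB^{\epsilon}_{p',\infty}\). Note that \(J_{\Phi^{-1}}=(\det D\Phi)^{-1}\circ\Phi^{-1}\) lies in \(L^\infty\) with bounds depending only on \(L\), and \(J_{\Phi^{-1}}-1\) is a rational function of the entries of \(D\Phi^{-1}-\Id\).

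To avoid circularity (the Besov norm of \(D\Phi^{-1}-\Id\) again requires a composition estimate), I would estimate this multiplier norm using only regularity \(\epsilon<1\) of \(J_{\Phi^{-1}}\). At that level the base step applies to composition with \(\Phi^{-1}\), with a constant depending only on \(L\). Then \(\det D\Phi-1\), a polynomial of degree \(d\) in \(D\Phi-\Id\), is estimated in \(\dB^{\frac dp}_{p,1}\cap L^\infty\) by \((1+\|D\Phi-\Id\|_{\dB^{\frac dp}_{p,1}})^{d}\), using the algebra property and composition with the smooth function \(z\mapsto1/z\) on \([L^{-d},L^d]\). This contributes at most \(d+1\) further powers, so \(m_p\le\lfloor\frac dp\rfloor+d+1\) in the worst case. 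All constants depend only on \(d,p,L\).
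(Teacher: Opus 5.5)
Your proposal is correct and is essentially the argument the paper relies on. The paper only asserts that the bound follows from the proof of \cref{pro 2.2} by tracking the dependence on $\|D\Phi-\Id\|_{\dB^{\frac dp}_{p,1}}$; your finite-difference base case on $(0,1)$, your chain-rule/product-law induction costing one factor of $1+\|D\Phi-\Id\|_{\dB^{\frac dp}_{p,1}}$ per derivative, and your duality argument at nonpositive indices are exactly that proof with the bookkeeping made explicit. Your separate treatment of the integer case $\frac dp\in\Z$ also makes explicit a non-circularity point that the paper's one-line proof leaves implicit: you bound the multiplier norm of $J_{\Phi^{-1}}$ from low-regularity information, so that only the $(0,1)$ composition bound for $\Phi^{-1}$ is needed. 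For that step you should pass through an embedding such as $\dB^{\frac dp}_{p,1}\hookrightarrow\dB^{\frac dr}_{r,1}$ with $r>\max\{d,p\}$, since $\dB^{\frac dp}_{p,1}$ does not embed into $\dB^{\varepsilon}_{p,q}$ for $\varepsilon<\frac dp$.
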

\begin{proof}
  The result follows readily from the proof of \cref{pro 2.2}
by keeping track of the dependence on
\(\|D\Phi-\Id\|_{\dot B^{d/p}_{p,1}}\).  
\end{proof}
\begin{proposition}
\label{lem:eulerian-lagrangian-high-norm}
Assume that \(d\ge2\), \(1<p<2d\), and
\[
\bar v\in C([0,T];\dB^{-1+\frac dp}_{p,1})
\cap L^1(0,T;\dB^{1+\frac dp}_{p,1}).
\]
Let
\[
X(t,y):=y+\int_0^t\bar v(\tau,y)\dd\tau,
\,\,
A(t,y):=(D_yX(t,y))^{-1},
\,\,
Y(t,\cdot):=X(t,\cdot)^{-1}.
\]
There exists a constant \(c>0\), depending only on
\(d,p\), such that, if
\[
\norm{\bar v}_{L^1_T(\dB^{1+\frac dp}_{p,1})}\le c,
\]
then \(X(t,\cdot)\) is a globally bi-Lipschitz diffeomorphism for all
\(0\le t\le T\). Moreover, if \(v:=\bar v\circ Y\), then, for every
\(0\le t\le T\),
\begin{equation}\label{eq:eulerian-lagrangian-comparison1}
\norm v_{L^1_t(\dB^{1+\frac dp}_{p,1})}
\lesssim
\norm{\bar v}_{L^1_t(\dB^{1+\frac dp}_{p,1})}
+
\norm{\bar v}_{L^1_t(\dB^{1+\frac dp}_{p,1})}^2,
\end{equation} where the implicit constant depends only on \(d,p\).
\end{proposition}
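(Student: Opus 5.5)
Our approach has three steps: show that \(X(t,\cdot)\) is bi-Lipschitz with \(DX-\Id\) small in the critical norm, transfer the bound on \(D\bar v\) to Eulerian coordinates via \cref{lem:quantitative-critical-composition}, and recover the full \(\dB^{1+\frac dp}_{p,1}\) norm of \(v\) from the bound on \(Dv\).

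\emph{Step 1: bi-Lipschitz property.} Since \(\dB^{\frac dp}_{p,1}\hookrightarrow L^\infty\) and is an algebra, we have
\[
\|D_yX(t)-\Id\|_{L^\infty}\lesssim\|D_yX(t)-\Id\|_{\dB^{\frac dp}_{p,1}}\le\int_0^t\|D\bar v\|_{\dB^{\frac dp}_{p,1}}\dd\tau\lesssim\norm{\bar v}_{L^1_t(\dB^{1+\frac dp}_{p,1})}\le Cc.
\]
Choose \(c\) small enough that \(\|DX-\Id\|_{L^\infty}\le\frac12\). Then \(X(t,\cdot)\) is a local diffeomorphism with \(|X(t,y)-X(t,y')-(y-y')|\le\frac12|y-y'|\), so it is injective and bi-Lipschitz. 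It is also proper, because \(X-\Id\) is bounded as a time integral of \(\bar v\in L^1_T(L^\infty)\) (by interpolation, or simply by the Lipschitz bound). Hence \(X(t,\cdot)\) is onto, and \(L\le2\).

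The inverse matrix \(A=(DX)^{-1}=\sum_k(\Id-DX)^k\) satisfies \(\|A-\Id\|_{\dB^{\frac dp}_{p,1}}\lesssim\|DX-\Id\|_{\dB^{\frac dp}_{p,1}}\), again by the algebra property and smallness. Since \(DY=A\circ Y\), we get \(DY-\Id=(A-\Id)\circ Y\). Here the critical-norm bound on \(DY-\Id\) is the delicate point. We handle it with \cref{lem:quantitative-critical-composition} applied with \(\Phi=Y\), whose hypothesis needs \(DY-\Id\in\dB^{\frac dp}_{p,1}\) only qualitatively, which holds for smooth data by approximation. This gives
\[
\|DY-\Id\|\lesssim(1+\|DY-\Id\|)^{m_p}\|A-\Id\|,
\]
a bootstrap inequality. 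A continuity argument in \(t\), starting from \(DY(0)=\Id\), then yields \(\|DY(t)-\Id\|_{\dB^{\frac dp}_{p,1}}\lesssim\norm{\bar v}_{L^1_t(\dB^{1+\frac dp}_{p,1})}\le C\) for \(c\) small.

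\emph{Step 2: transfer of derivatives.} Compute \(Dv=(D\bar v\cdot A)\circ Y\). Applying \cref{lem:quantitative-critical-composition} with \(\Phi=Y\) and the bound from Step 1, we get for each \(\tau\)
\[
\|Dv(\tau)\|_{\dB^{\frac dp}_{p,1}}\lesssim\|D\bar v\,A\|_{\dB^{\frac dp}_{p,1}}\lesssim\|D\bar v\|_{\dB^{\frac dp}_{p,1}}\bigl(1+\|A-\Id\|_{\dB^{\frac dp}_{p,1}}\bigr).
\]
Writing \(D\bar v\,A=D\bar v+D\bar v(A-\Id)\) and using \(\|A(\tau)-\Id\|\lesssim\norm{\bar v}_{L^1_\tau(\dB^{1+\frac dp}_{p,1})}\), we integrate in time to obtain
\[
\norm{Dv}_{L^1_t(\dB^{\frac dp}_{p,1})}\lesssim\norm{\bar v}_{L^1_t}+\norm{\bar v}_{L^1_t}^2 .
\]
This quadratic term is exactly the one in \eqref{eq:eulerian-lagrangian-comparison1}.

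\emph{Step 3: from \(Dv\) to \(v\).} The homogeneous norm \(\|v\|_{\dB^{1+\frac dp}_{p,1}}\) is equivalent to \(\|Dv\|_{\dB^{\frac dp}_{p,1}}\) for tempered \(v\) vanishing at low frequencies. Membership of \(v\) in the homogeneous class (low-frequency realization) follows from \(v\in L^\infty\) decay properties inherited from \(\bar v\in\dB^{-1+\frac dp}_{p,1}\) composed with a bi-Lipschitz map; \cref{pro 2.2}(1)--(2) handle indices in \((-1,1)\).

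\emph{Main obstacle.} The main obstacle is Step 1's control of \(DY-\Id\) in \(\dB^{\frac dp}_{p,1}\). The polynomial factor \((1+\cdot)^{m_p}\) is what makes the smallness bootstrap close. If the bootstrap in \(t\) proves awkward, the alternative is to obtain \(Y\) through the inverse-function series and estimate \(A\circ Y-\Id\) directly.
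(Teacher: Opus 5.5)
Your argument is correct in outline, but it takes a different route from the paper at the transfer step.

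\emph{Transfer step.} You estimate \(Dv=(D\bar v\,A)\circ Y\) directly at the critical level \(\dB^{\frac dp}_{p,1}\), applying \cref{lem:quantitative-critical-composition} with \(\Phi=Y\) a second time. The paper uses that lemma only to bound \(DY-\Id\). It then transfers \(\nabla_x^2v\) at the level \(\dB^{-1+\frac dp}_{p,1}\) through the second-order chain rule \((\partial_{x_l}\partial_{x_i}v)\circ X=A_{ml}A_{ki}\partial_{y_m}\partial_{y_k}\bar v+A_{ml}(\partial_{y_m}A_{ki})\partial_{y_k}\bar v\). That forces it to prove the extra bound \(D_yA\in\dB^{-1+\frac dp}_{p,1}\). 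It also needs composition estimates for \(f\circ X\) and \(f\circ Y\) on \(\dB^{-1+\frac dp}_{p,1}\) in three index regimes, including a check that \(J_X,J_Y\) are multipliers of \(\dB^{1-\frac dp}_{p',\infty}\) when \(d\le p<2d\). Your route removes all of that.

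\emph{What the paper's route buys.} It yields the by-product \(v(t)\in\dB^{-1+\frac dp}_{p,1}\), which settles the realization question (\(v\in\mathcal S'_h\)) needed to pass from \(Dv\) to \(v\). Your Step 3 handles this point loosely. Case (2) of \cref{pro 2.2} needs exactly the Jacobian multiplier condition you did not verify, and for \(p\le d/2\) you would need case (3). It is simpler to note that \(\bar v(\tau)\in\dB^{\frac dp}_{p,1}\hookrightarrow C_0\) for a.e. \(\tau\), so \(v(\tau)\in C_0\subset\mathcal S'_h\).

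\emph{The bootstrap and the limit.} For \(DY-\Id\) you run the continuity argument in \(t\). The paper runs it in \(\theta\) along \(\Id+\theta J_N(X(\tau)-\Id)\) at a fixed time. That choice makes the smoothness of \(\Phi\) required by \cref{lem:quantitative-critical-composition}, and the continuity of the bootstrapped quantity, easier to secure. In either version you still need a closing approximation and Fatou step (the paper's Step 4), since \(Y\) is not smooth in general. Your proposal only gestures at this step.

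\emph{Minor point.} Boundedness of \(X-\Id\) does not follow from the Lipschitz bound, but you do not need it. Surjectivity of \(X(t,\cdot)\) follows from the contraction principle, because \(X-\Id\) is \(\frac12\)-Lipschitz.
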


\begin{proof}
We shall prove \cref{lem:eulerian-lagrangian-high-norm} using the following four steps.

\textbf{Step 1. Estimates for the deformation and the inverse map.}
Since  \(\dot B^{\frac{d}{p}}_{p,1}\hookrightarrow L^\infty\) and \(
D_yX(t)-\Id=\int_0^tD_y\bar v(\tau)\dd\tau
\), we obtain
\begin{equation}\label{eq:flow-deformation-comparison1}
\sup_{0\le\tau\le t}
\left(
\norm{DX(\tau)-\Id}_{L^\infty}
+
\norm{DX(\tau)-\Id}_{\dB^{\frac dp}_{p,1}}
\right)
\lesssim \norm{\bar v}_{L^1_t(\dB^{1+\frac dp}_{p,1})}.
\end{equation}
Taking \(c\) sufficiently small, we get
\(\|DX-\Id\|_{L^\infty_{t,x}}\le 1/2\). Hence \(X(t,\cdot)\) is a
globally bi-Lipschitz diffeomorphism. Moreover, by the Neumann series and
\eqref{eq:product-critical}, we have
\begin{equation}\label{3}
 \sup_{0\le\tau\le t}
\left(
\|A(\tau)\|_{L^\infty}
+
\|A(\tau)-\Id\|_{\dot B^{\frac{d}{p}}_{p,1}}
\right)
\le C.   
\end{equation}
For \(1\le m\le d\),$
\partial_{y_m}A
=
-A(\partial_{y_m}D_yX)A.
$
Writing \(A=\Id+(A-\Id)\), \eqref{3},
\eqref{eq:product-critical-low}, and
\[
D_y^2X(t)
=
\int_0^tD_y^2\bar v(\tau)\dd\tau
\]
yield
\begin{equation}\label{eq:inverse-flow-derivative1}
\sup_{0\le\tau\le t}
\norm{D_yA(\tau)}_{\dB^{-1+\frac dp}_{p,1}}
\lesssim \norm{\bar v}_{L^1_t(\dB^{1+\frac dp}_{p,1})}.
\end{equation}
We next need to prove
\begin{equation}\label{eq:inverse-map-critical1}
\sup_{0\le\tau\le t}
\norm{DY(\tau)-\Id}_{\dB^{\frac dp}_{p,1}}
\lesssim \norm{\bar v}_{L^1_t(\dB^{1+\frac dp}_{p,1})}.
\end{equation} 
Fix \(\tau\in[0,t]\) and write
\[
F:=X(\tau)-\Id
=
\int_0^\tau\bar v(\rho)\dd\rho.
\]
Then
\[
F\in
\dB^{-1+\frac dp}_{p,1}
\cap
\dB^{1+\frac dp}_{p,1},
\]
and interpolation yields \(F\in\dB^{\frac dp}_{p,1}\).

\noindent For \(N\ge1\), set
\[
J_N:=\dot S_N-\dot S_{-N},
\qquad
F^{(N)}:=J_NF,
\qquad
X_\theta^{(N)}:=\Id+\theta F^{(N)},
\quad 0\le\theta\le1.
\]
Then
\[
F^{(N)}\longrightarrow F
\quad\text{in}\quad
\dot B^{\frac dp}_{p,1}
\cap
\dot B^{1+\frac dp}_{p,1}
\qquad\text{as }N\to\infty,
\]
and
\[
\sup_{N\ge1}
\|DF^{(N)}\|_{\dot B^{\frac dp}_{p,1}}
\lesssim
\|\bar v\|_{L^1_t(\dot B^{1+\frac dp}_{p,1})}.
\]
By the critical embedding   
$
\dot B_{p,1}^{\frac{d}{p}} \hookrightarrow L^\infty,
$
after reducing \( c \) if necessary, we may assume that  
\[
 \sup_{N \geq 1,\theta \in [0,1]} \|\theta D F^{(N)}\|_{L^\infty} \leq \frac{1}{2}.
\]  
Hence
\[
\frac{1}{2} |x - y| \leq |X_\theta^{(N)}(x) - X_\theta^{(N)}(y)| \leq \frac{3}{2} |x - y|,
\]  
and therefore \(X_\theta^{(N)}\) is a smooth globally
bi-Lipschitz diffeomorphism, uniformly in \(N\) and \(\theta\).
We may thus define
\[
Y_\theta^{(N)}:=(X_\theta^{(N)})^{-1}.
\]
For each fixed \(N\), the finite-frequency regularity of
\(F^{(N)}\), together with the uniform bi-Lipschitz bounds,
gives
\[
\theta\mapsto DY_\theta^{(N)}-\Id
\in
C\bigl([0,1];\dot B^{\frac dp}_{p,1}\bigr).
\]
Differentiating
\(X_\theta^{(N)}\circ Y_\theta^{(N)}=\Id\), we obtain
\[
DY_\theta^{(N)}-\Id
=
\left[
(\Id+\theta DF^{(N)})^{-1}-\Id
\right]\circ Y_\theta^{(N)}.
\]
Set
\[
Z_\theta^{(N)}
:=
\|DY_\theta^{(N)}-\Id\|_{\dot B^{\frac dp}_{p,1}}.
\]
The Neumann series and \eqref{eq:product-critical} imply
\[
\|(\Id+\theta DF^{(N)})^{-1}-\Id\|
_{\dot B^{\frac dp}_{p,1}}
\lesssim
\theta
\|DF^{(N)}\|_{\dot B^{\frac dp}_{p,1}}.
\]
Hence, by
\cref{lem:quantitative-critical-composition},
\begin{equation}\label{eq:inverse-bootstrap1}
Z_\theta^{(N)}
\le
C\theta
\|DF^{(N)}\|_{\dot B^{\frac dp}_{p,1}}
(1+Z_\theta^{(N)})^{m_p}.
\end{equation}
Decreasing \(c\) once more, we may assume that
\[
C2^{m_p}
\sup_N
\norm{DF^{(N)}}_{\dB^{\frac dp}_{p,1}}
<1.
\]
Since \(Z_0^{(N)}=0\) and
\(\theta\mapsto Z_\theta^{(N)}\) is continuous on \([0,1]\),
a standard continuity argument applied to \eqref{eq:inverse-bootstrap1} gives
\[
\sup_{0\le\theta\le1}Z_\theta^{(N)}<1,\quad \sup_{0\le\theta\le1}Z_\theta^{(N)}
\lesssim
\norm{DF^{(N)}}_{\dB^{\frac dp}_{p,1}},
\]
uniformly in \(N\).

\noindent Finally,
\[
\|X_1^{(N)}-X(\tau)\|_{L^\infty}
+
\|Y_1^{(N)}-Y(\tau)\|_{L^\infty}
\longrightarrow0
\qquad\text{as }N\to\infty.
\]
Consequently,
\[
DY_1^{(N)}-\Id
\longrightarrow
DY(\tau)-\Id
\quad\text{in }\mathcal S'
\qquad\text{as }N\to\infty.
\]
By the Fatou property of homogeneous Besov spaces,
\[
\begin{aligned}
\|DY(\tau)-\Id\|_{\dot B^{\frac dp}_{p,1}}
&\le
\liminf_{N\to\infty}
\|DY_1^{(N)}-\Id\|_{\dot B^{\frac dp}_{p,1}}
\\
&\lesssim
\|DF\|_{\dot B^{\frac dp}_{p,1}}
=
\|DX(\tau)-\Id\|_{\dot B^{\frac dp}_{p,1}}.
\end{aligned}
\]
Taking the supremum over \(0\le\tau\le t\) and using
\eqref{eq:flow-deformation-comparison1} proves
\eqref{eq:inverse-map-critical1}.

\textbf{Step 2. Uniform composition estimates.}

We next use \cref{pro 2.2} to prove that, for every
\(0\le t\le T\),
\begin{equation}\label{eq:base-composition-comparison1}
\norm{f\circ X(t)}_{\dB^{-1+\frac dp}_{p,1}}
+
\norm{f\circ Y(t)}_{\dB^{-1+\frac dp}_{p,1}}
\lesssim \norm f_{\dB^{-1+\frac dp}_{p,1}}.
\end{equation}

\noindent Since \(d\ge2\), one has $-\frac d{p'}<-1+\frac dp\le\frac dp$. Thus \(-1+\frac dp\) lies in the admissible range of \cref{pro 2.2}.
We distinguish the three cases
\(0<-1+\frac dp<1\), \(-1+\frac dp\ge1\), and \(-1<-1+\frac dp\le0\). 
 
 If \(0<-1+\frac dp<1\), then both \(X(t,\cdot)\)
and \(Y(t,\cdot)\) are globally bi-Lipschitz maps. Therefore case (1) of \cref{pro 2.2},
applied first to \(\Phi=X(t,\cdot)\) and then to \(\Phi=Y(t,\cdot)\)
gives
\[
\|f\circ X(t)\|_{\dot B^{-1+\frac{d}{p}}_{p,1}}
\lesssim \|f\|_{\dot B^{-1+\frac{d}{p}}_{p,1}},
\qquad
\|f\circ Y(t)\|_{\dot B^{-1+\frac{d}{p}}_{p,1}}
\lesssim \|f\|_{\dot B^{-1+\frac{d}{p}}_{p,1}}.
\]
Thus \eqref{eq:base-composition-comparison1} follows in this case.

If \(-1+\frac dp\ge1\), case~\textup{(3)} of
\cref{pro 2.2} additionally requires
\[
D\Phi-\Id\in\dot B^{\frac dp}_{p,1}.
\]
This condition for \(\Phi=X(t,\cdot)\) follows from
\eqref{eq:flow-deformation-comparison1}, while that for
\(\Phi=Y(t,\cdot)\) follows from
\eqref{eq:inverse-map-critical1}. Hence
\eqref{eq:base-composition-comparison1} follows from
case~\textup{(3)} of \cref{pro 2.2}.

It remains to consider \(-1<-1+\frac dp\le0\).  We verify the multiplier condition. Let
\(
q\in\dot B^{\frac{d}{p}}_{p,1}\) and
\(g\in\dot B^{1-\frac{d}{p}}_{p',\infty}.
\) By Bony's decomposition,
\[
qg=\dot T_q g+\dot T_g q+\dot R(q,g).
\]
 Applying  \(\dot B^{\frac{d}{p}}_{p,1}\hookrightarrow L^\infty\) and \eqref{1} gives
\[
\norm{\dot T_q g}_{\dot B^{1-\frac{d}{p}}_{p',\infty}}
\lesssim
\norm q_{\dot B^{\frac{d}{p}}_{p,1}}
\norm g_{\dot B^{1-\frac{d}{p}}_{p',\infty}}.
\]
Writing
$
a_k:=2^{k\frac dp}\norm{\dot\Delta_k q}_{L^p},$ Bernstein's inequality and the condition \(p<2d\) give
\begin{equation}\label{2.391}
2^{j(1-\frac dp)}
\norm{\dot\Delta_j\dot T_g q}_{L^{p'}}
\lesssim
\norm g_{\dot B^{1-\frac{d}{p}}_{p',\infty}}
\sum_{|k-j|\le4}a_k,
\end{equation}
and
\begin{equation}\label{2.401}
2^{j(1-\frac dp)}
\norm{\dot\Delta_j\dot R(q,g)}_{L^{p'}}
\lesssim
\norm g_{\dot B^{1-\frac{d}{p}}_{p',\infty}}
\sum_{k\ge j-3}2^{-(k-j)}a_k.
\end{equation}
Taking the supremum over \(j\) in \eqref{2.391}--\eqref{2.401}, we obtain
\begin{equation}\label{4}
  \norm{qg}_{\dot B^{1-\frac{d}{p}}_{p',\infty}}
\lesssim
\norm q_{\dot B^{\frac{d}{p}}_{p,1}}
\norm g_{\dot B^{1-\frac{d}{p}}_{p',\infty}}. 
\end{equation}
Consequently, for every  \(q\in\dot B^{\frac{d}{p}}_{p,1}\), multiplication by \(1+q\) defines a bounded operator on \(\dot B^{1-\frac{d}{p}}_{p',\infty}\). Hence \(1+q\in \mathcal{M}(\dot B^{1-\frac{d}{p}}_{p',\infty})\).

\noindent By \eqref{eq:flow-deformation-comparison1},
\eqref{eq:inverse-map-critical1}, \cref{p}, and the smoothness of the
determinant,
\[
J_X-1\in\dot B^{\frac{d}{p}}_{p,1},\quad J_Y-1\in\dot B^{\frac{d}{p}}_{p,1}.
\]
Hence, by \eqref{4}, we obtain
\[
J_X\in
\mathcal M(\dot B^{1-\frac{d}{p}}_{p',\infty}),\quad J_Y
\in
\mathcal M(\dot B^{1-\frac{d}{p}}_{p',\infty}).
\]
Case (2) of \cref{pro 2.2} applies to both \(X\) and \(Y\),
and \eqref{eq:base-composition-comparison1} thus follows.

\textbf{Step 3. The high-norm estimate in the smooth case.}
Assume first that \(\bar{v}\) is smooth. Since \(\bar{v} = v \circ X\) and \(A = (D_y X)^{-1}\), the chain rule yields
\[
(\partial_{x_i} h) \circ X = A_{ki} \partial_{y_k} (h \circ X)
\]
for every smooth function \(h\), with summation over repeated indices. Consequently,
\[
(\partial_{x_i} v^\alpha) \circ X = A_{ki} \partial_{y_k} \bar{v}^\alpha,
\]
and
\[
(\partial_{x_l} \partial_{x_i} v^\alpha) \circ X = A_{ml} \partial_{y_m} (A_{ki} \partial_{y_k} \bar{v}^\alpha)
= A_{ml} A_{ki} \partial_{y_m} \partial_{y_k} \bar{v}^\alpha + A_{ml} (\partial_{y_m} A_{ki}) \partial_{y_k} \bar{v}^\alpha.\]
By the derivative characterization of homogeneous Besov spaces,
\eqref{eq:base-composition-comparison1}, \eqref{3},
\eqref{eq:product-critical-low}, and
\eqref{eq:inverse-flow-derivative1}, and writing
\(A=\Id+(A-\Id)\), we obtain
\[
\begin{aligned}
\norm{v(t)}_{\dot B^{1+\frac{d}{p}}_{p,1}}
&\lesssim
\norm{(\nabla_x^2v(t))\circ X(t)}_{\dot B^{-1+\frac{d}{p}}_{p,1}}
\\
&\lesssim
\left(
\norm{\nabla_y^2\bar v(t)}_{\dot B^{-1+\frac{d}{p}}_{p,1}}
+
\norm{D_yA(t)}_{\dot B^{-1+\frac{d}{p}}_{p,1}}
\norm{\nabla_y\bar v(t)}_{\dot B^{\frac{d}{p}}_{p,1}}
\right)
\\
&\lesssim
\norm{\bar v(t)}_{\dot B^{1+\frac{d}{p}}_{p,1}}
\bigl(1+\norm{\bar v}_{L^1_t(\dB^{1+\frac dp}_{p,1})}\bigr).
\end{aligned}
\]
Integrating over \([0,t]\), we get
\begin{align*}
\norm v_{L^1_t(\dot B^{1+\frac{d}{p}}_{p,1})}
&\lesssim \bigl(1+\norm{\bar v}_{L^1_t(\dB^{1+\frac dp}_{p,1})}\bigr)\norm{\bar v}_{L^1_t(\dB^{1+\frac dp}_{p,1})}\\
&\lesssim \norm{\bar v}_{L^1_t(\dB^{1+\frac dp}_{p,1})}+\norm{\bar v}_{L^1_t(\dB^{1+\frac dp}_{p,1})}^2.
\end{align*}
This proves \eqref{eq:eulerian-lagrangian-comparison1} in the smooth case.

\textbf{Step 4. Approximation.}
We now remove the smoothness assumption. Let
\[
J_N:=\dot S_N-\dot S_{-N},
\qquad
\bar v^{(N)}:=J_N\bar v.
\]
Then
\begin{equation}\label{sl1}
\bar v^{(N)}\to\bar v
\quad\text{in}\quad
C_T(\dot B^{-1+\frac{d}{p}}_{p,1})
\cap
L^1_T(\dot B^{1+\frac{d}{p}}_{p,1})\qquad\text{as }N\to\infty.
\end{equation}
By interpolation,
\[
\norm{\bar v^{(N)}-\bar v}_{L^1_t(\dot B^{\frac{d}{p}}_{p,1})}
\lesssim
t^{1/2}
\norm{\bar v^{(N)}-\bar v}_{L^\infty_t(\dot B^{-1+\frac{d}{p}}_{p,1})}^{1/2}
\norm{\bar v^{(N)}-\bar v}_{L^1_t(\dot B^{1+\frac{d}{p}}_{p,1})}^{1/2}.
\]
Therefore
\begin{equation}\label{eq:bar-v-L1Linfty-approx1}
\bar v^{(N)}\to\bar v
\quad\text{in}\quad
L^1_t(\dot B^{\frac{d}{p}}_{p,1})
\hookrightarrow L^1_t(L^\infty) \quad\text{as }N\to\infty.
\end{equation}
By decreasing \(c>0\) if necessary, the uniform boundedness of
\(J_N\) ensures that the smallness assumptions in Steps~1--3 hold
uniformly in \(N\). Hence the smooth estimate applies uniformly in
\(N\).

\noindent Let \(X^{(N)}\) and \(Y^{(N)}\) be the maps associated with
\(\bar v^{(N)}\). From \eqref{eq:bar-v-L1Linfty-approx1},
\[
\norm{X^{(N)}-X}_{L^\infty_{t,x}}
\le
\norm{\bar v^{(N)}-\bar v}_{L^1_t(L^\infty)}
\to0\quad\text{as }N\to\infty.
\]
The uniform bi-Lipschitz bounds imply
\begin{equation}\label{yn1}
\norm{Y^{(N)}-Y}_{L^\infty_{t,x}}\to0\quad\text{as }N\to\infty.
\end{equation}
Moreover, by \eqref{sl1},
\[
\norm{\bar v^{(N)}}_{L^1_t(\dot B^{1+\frac{d}{p}}_{p,1})}
\to
\norm{\bar v}_{L^1_t(\dot B^{1+\frac{d}{p}}_{p,1})}\quad\text{as }N\to\infty.
\]
Setting
\[
v^{(N)}:=\bar v^{(N)}\circ Y^{(N)},
\]
we obtain
\[
\norm{v^{(N)}}_{L^1_t(\dot B^{1+\frac{d}{p}}_{p,1})}
\lesssim 
\norm{\bar v^{(N)}}_{L^1_t(\dot B^{1+\frac{d}{p}}_{p,1})}
+
\norm{\bar v^{(N)}}_{L^1_t(\dot B^{1+\frac{d}{p}}_{p,1})}^2
.
\]
Finally,
\[
v^{(N)}-v
=
(\bar v^{(N)}-\bar v)\circ Y^{(N)}
+
\bar v\circ Y^{(N)}-\bar v\circ Y.
\]
The first term tends to zero in \(L^1_tL^\infty\) as
\(N\to\infty\) by
\eqref{eq:bar-v-L1Linfty-approx1}. For the second term, since
\[
\nabla\bar v\in L^1_t(\dot B^{\frac{d}{p}}_{p,1})
\hookrightarrow L^1_t(L^\infty),
\]
and \eqref{yn1}, we obtain 
\[
\norm{\bar v\circ Y^{(N)}-\bar v\circ Y}_{L^1_tL^\infty}
\le
\norm{\nabla\bar v}_{L^1_tL^\infty}
\norm{Y^{(N)}-Y}_{L^\infty_{t,x}}
\to0\quad\text{as }N\to\infty.
\]
Hence,
\[
v^{(N)}\longrightarrow v
\quad\text{in}\quad
L^1(0,t;L^\infty(\mathbb R^d))\quad\text{as }N\to\infty.
\]
Choose a subsequence \(\{N_k\}_{k\ge1}\) such that
\[
\lim_{k\to\infty}
\|v^{(N_k)}\|_{L^1_t(\dot B^{1+\frac dp}_{p,1})}
=
\liminf_{N\to\infty}
\|v^{(N)}\|_{L^1_t(\dot B^{1+\frac dp}_{p,1})}.
\]
Passing to a further subsequence if necessary, the convergence in
\(L^1_t(L^\infty)\) implies that
\[
v^{(N_k)}(\tau)\longrightarrow v(\tau)
\quad\text{in }L^\infty(\mathbb R^d)\qquad\text{as }k\to\infty
\]
for almost every \(\tau\in(0,t)\), and hence in
\(\mathcal S'(\mathbb R^d)\). Therefore, for every \(j\in\mathbb Z\)
and almost every \(\tau\in(0,t)\),
\[
\|\dot\Delta_jv(\tau)\|_{L^p}
\le
\liminf_{k\to\infty}
\|\dot\Delta_jv^{(N_k)}(\tau)\|_{L^p}.
\]
By Fatou's lemma,
\[
\begin{aligned}
\|v\|_{L^1_t(\dot B^{1+\frac dp}_{p,1})}
&=
\int_0^t\sum_{j\in\mathbb Z}
2^{j(1+\frac dp)}
\|\dot\Delta_jv(\tau)\|_{L^p}\,\dd\tau\\
&\le
\liminf_{k\to\infty}
\int_0^t\sum_{j\in\mathbb Z}
2^{j(1+\frac dp)}
\|\dot\Delta_jv^{(N_k)}(\tau)\|_{L^p}\,\dd\tau=
\liminf_{N\to\infty}
\|v^{(N)}\|_{L^1_t(\dot B^{1+\frac dp}_{p,1})}.
\end{aligned}
\]
Using the uniform estimate for \(v^{(N)}\) and the convergence
\[
\bar v^{(N)}
\longrightarrow \bar v
\quad\text{in}\quad
L^1_t\bigl(\dot B^{1+\frac dp}_{p,1}\bigr)\quad\text{as }N\to\infty,
\]
we obtain
\[
\|v\|_{L^1_t(\dot B^{1+\frac dp}_{p,1})}
\lesssim 
\|\bar v\|_{L^1_t(\dot B^{1+\frac dp}_{p,1})}
+
\|\bar v\|_{L^1_t(\dot B^{1+\frac dp}_{p,1})}^{2}
.
\]
This proves \eqref{eq:eulerian-lagrangian-comparison1} and completes the
proof.
\end{proof}

\begin{proposition}\label{lem:uniform-local}
Let \((a_0,u_0)\) satisfy the assumptions of \cref{thm:main}. There exist
a neighborhood \(\mathcal U_0\subset X_p\) of \((a_0,u_0)\), a time
\(T_0>0\), and constants \(M,\underline b>0\) such that, for every $(\widetilde{a}_0,\widetilde{u}_0) \in \mathcal{U}_0$, the corresponding solution $(\widetilde{a},\widetilde{u})$ exists uniquely on \([0,T_0]\) and satisfies
\begin{equation}\label{eq:uniform-local-bounds}
\begin{aligned}
&\norm {\widetilde{a}}_{\tdL^\infty_{T_0}(\dB^{\frac{d}{p}}_{p,1})}
+\norm {\widetilde{u}}_{\tdL^\infty_{T_0}(\dB^{-1+\frac{d}{p}}_{p,1})}
+\norm {\widetilde u}_{{L^1_{T_0}}(\dB^{{1+\frac{d}{p}}}_{p,1})}
\le M,
\\
&1+\widetilde{a}(t,x)\ge\frac{\kappa_0}{4},
\qquad
\underline b\le \widetilde{b}(t,x)\le\frac4{\kappa_0},
\end{aligned}
\end{equation}
where $\widetilde{b}:=(1+\widetilde{a})^{-1}$.
Moreover, for every \(\eta>0\), there exist a neighborhood
\(\mathcal U_\eta\subset\mathcal U_0\) of \((a_0,u_0)\) and a time
\(0<T_\eta\le T_0\) such that
\begin{equation}\label{eq:small-W}
\sup_{(\widetilde a_0,\widetilde u_0)\in\mathcal U_\eta}
\norm{\widetilde u}_{L^1_{T_\eta}(\dB^{{1+\frac{d}{p}}}_{p,1})}
\le\eta.
\end{equation}
\end{proposition}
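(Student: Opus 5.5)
The plan is to combine the Lagrangian local theory of \cref{theo 2.2} with its local Lipschitz dependence, and then transfer bounds back to Eulerian coordinates via \cref{lem:eulerian-lagrangian-high-norm}. Concretely, we first apply \cref{theo 2.2} at $(a_0,u_0)$ to get a reference time $T_1$ and Lagrangian solution $(\bar a,\bar u)$. By the local Lipschitz property of the Lagrangian solution map (as established in \cite{Danchin2014}), there is a neighborhood $\mathcal U_1$ of $(a_0,u_0)$ in $X_p$ such that, for every $(\widetilde a_0,\widetilde u_0)\in\mathcal U_1$, the Lagrangian solution exists on a common interval $[0,T_1]$. It also satisfies
\[
\|\bar{\widetilde a}-\bar a\|_{L^\infty_{T_1}(\dB^{\frac dp}_{p,1})}+\|\bar{\widetilde u}-\bar u\|_{L^\infty_{T_1}(\dB^{-1+\frac dp}_{p,1})\cap L^1_{T_1}(\dB^{1+\frac dp}_{p,1})}\le C\|(\widetilde a_0-a_0,\widetilde u_0-u_0)\|_{X_p}.
\]
In particular, the Lagrangian norms are uniformly bounded on $\mathcal U_1$. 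The Lagrangian density is explicit, namely $1+\bar{\widetilde a}=(1+\widetilde a_0)J_{\widetilde X}^{-1}$, which gives positivity once $J_{\widetilde X}$ is close to $1$.

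Next comes the smallness of the high norm of the velocity. Since $\bar u\in L^1_{T_1}(\dB^{1+\frac dp}_{p,1})$ is a single fixed function, $\|\bar u\|_{L^1_{t}(\dB^{1+\frac dp}_{p,1})}\to0$ as $t\to0$ by dominated convergence. Combined with the Lipschitz bound, for any $\eta'>0$ we can choose $T_{\eta'}$ and shrink the neighborhood so that $\|\bar{\widetilde u}\|_{L^1_{T_{\eta'}}(\dB^{1+\frac dp}_{p,1})}\le\eta'$ uniformly. Taking $\eta'\le c$ (the constant of \cref{lem:eulerian-lagrangian-high-norm}) defines $T_0$ and $\mathcal U_0$. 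Then $\widetilde X(t)$ is bi-Lipschitz with $\|D\widetilde X-\Id\|_{L^\infty}\le 1/2$, and \eqref{eq:eulerian-lagrangian-comparison1} gives $\|\widetilde u\|_{L^1_{T}(\dB^{1+\frac dp}_{p,1})}\lesssim \eta'+\eta'^2$. Choosing $\eta'$ small relative to $\eta$ yields \eqref{eq:small-W}, and the same step with $\eta'=c$ gives the $L^1$ part of \eqref{eq:uniform-local-bounds}. Uniqueness in $E_p(T_0)$ is inherited from \cref{theo 2.2}.

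The remaining Eulerian bounds are obtained as follows. For $\widetilde a=\bar{\widetilde a}\circ\widetilde Y$ and $\widetilde u=\bar{\widetilde u}\circ\widetilde Y$, we use \cref{lem:quantitative-critical-composition} with $\Phi=\widetilde Y(t)$, whose hypothesis $D\widetilde Y-\Id\in\dB^{\frac dp}_{p,1}$ with a uniform bound is \eqref{eq:inverse-map-critical1}; this controls $\|\widetilde a\|_{L^\infty_{T_0}(\dB^{\frac dp}_{p,1})}$. For $\widetilde u$ we use \eqref{eq:base-composition-comparison1} at regularity $-1+\frac dp$. Passing from $L^\infty_T$ to $\tdL^\infty_T$ is the delicate point, since composition is not frequency-localized. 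My first attempt would be to use $\tdL^\infty_T(\dB^s_{p,1})\subset$ norm control obtained from the equations directly: having $\widetilde u\in L^1_T(\dB^{1+\frac dp}_{p,1})$ uniformly, I would rerun the standard transport estimate for $\widetilde a$ (which gives $\tdL^\infty$ bounds with factor $\exp(C\|\nabla \widetilde u\|_{L^1_T(\dB^{\frac dp}_{p,1})})$) and a parabolic estimate for $\widetilde u$ in $\tdL^\infty$. Alternatively, one can simply note that $\tdL^\infty$ is controlled by $L^\infty$ plus the already established bounds via the equations.

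The main obstacle is the lower bound $1+\widetilde a\ge\kappa_0/4$ uniformly. Here I use $1+\widetilde a(t,\widetilde X(t,y))=(1+\widetilde a_0(y))/J_{\widetilde X}(t,y)$, and $|J_{\widetilde X}-1|\lesssim \|D\widetilde X-\Id\|_{L^\infty}\lesssim\eta'$ is small. Also $\inf(1+\widetilde a_0)\ge\kappa_0/2$ after shrinking $\mathcal U_0$, since $\|\widetilde a_0-a_0\|_{L^\infty}\lesssim\|\widetilde a_0-a_0\|_{\dB^{\frac dp}_{p,1}}$. Hence $1+\widetilde a\ge\kappa_0/4$ and $\widetilde b\le4/\kappa_0$. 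Finally, $\widetilde b\ge \underline b:=(1+C M)^{-1}$ follows from $\|\widetilde a\|_{L^\infty}\lesssim M$.
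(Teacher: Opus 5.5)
Your proposal is correct and follows essentially the paper's route: uniform Lagrangian bounds and Lipschitz dependence from \cref{theo 2.2}, uniform short-time smallness of $\|\bar{\widetilde u}\|_{L^1_t(\dB^{1+\frac dp}_{p,1})}$ transferred to $\widetilde u$ by \cref{lem:eulerian-lagrangian-high-norm}, the flow representation of the density (your factor $J_{\widetilde X}^{-1}$ is the paper's $\exp\bigl(-\int_0^t\Div\widetilde u(\tau,\widetilde X(\tau,y))\dd\tau\bigr)$), and the transport estimate for the Chemin--Lerner bound on $\widetilde a$. The one step you left loose is the $\tdL^\infty_{T_0}(\dB^{-1+\frac dp}_{p,1})$ bound for $\widetilde u$: you do not need a variable-coefficient parabolic estimate, which would be delicate when $\widetilde b-1$ is large. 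The paper just uses $\|\widetilde u\|_{\tdL^\infty_{T_0}(\dB^{-1+\frac dp}_{p,1})}\le\|\widetilde u_0\|_{\dB^{-1+\frac dp}_{p,1}}+\|\partial_t\widetilde u\|_{L^1_{T_0}(\dB^{-1+\frac dp}_{p,1})}$ and bounds $\partial_t\widetilde u=\widetilde b\cA\widetilde u-\widetilde u\cdot\nabla\widetilde u-\nabla G(\widetilde a)$ by product laws, which is the precise form of your ``via the equations'' remark.
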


\begin{proof}
By \(\cref{theo 2.2}\), there exist \(T>0\), a neighborhood
\(\mathcal U^{(1)}\subset X_p\) of \((a_0,u_0)\), and a constant
\(M>0\) such that every
\((\widetilde a_0,\widetilde u_0)\in\mathcal U^{(1)}\) generates a
unique solution on \([0,T]\) satisfying
\begin{equation}\label{2.47}
\norm{\widetilde a}_{L^\infty_T(\dB^{\frac{d}{p}}_{p,1})}
+\norm{\widetilde u}_{L^\infty_T(\dB^{-1+\frac{d}{p}}_{p,1})}
+\norm{\widetilde u}_{L^1_T(\dB^{{1+\frac{d}{p}}}_{p,1})}
\le M.
\end{equation}
\noindent By \eqref{eq:positive-density} and
\(\dB^{\frac{d}{p}}_{p,1}\hookrightarrow L^\infty\), there exist a
neighborhood \(\mathcal U^{(2)}\) of \((a_0,u_0)\) and a constant $\underline{b}>0$ such that
\begin{equation}\label{2.48}
1+\widetilde a_0\ge\frac{\kappa_0}{2},
\qquad
(1+\widetilde a_0)^{-1}\ge2\underline b
\end{equation}
for all  
$(\tilde{a}_0, \tilde{u}_0) \in \mathcal{U}^{(2)}. $

\noindent Let \(\bar{u}\) and \(\tilde{\bar{u}}\) denote the Lagrangian  
velocities corresponding to \((a_0, u_0)\) and  
\((\tilde{a}_0, \tilde{u}_0)\), respectively. The local Lipschitz  
continuity of the Lagrangian solution map yields a neighborhood  
\(\mathcal U^{(3)}\) of \((a_0, u_0)\)  such that
\begin{equation}\label{2.49}
\norm{\widetilde{\bar u}-\bar u}_{L^1_T(\dB^{1+\frac{d}{p}}_{p,1})}
\lesssim 
\norm{(\widetilde a_0-a_0,\widetilde u_0-u_0)}_{X_p}
\end{equation}
for all  
$(\tilde{a}_0, \tilde{u}_0) \in \mathcal{U}^{(3)}. $ Set
\[
\mathcal U
:=
\mathcal U^{(1)}
\cap\mathcal U^{(2)}
\cap\mathcal U^{(3)}
\cap B_{X_p}\bigl((a_0,u_0),1\bigr),
\]
where
\[
B_{X_p}\bigl((a_0,u_0),1\bigr)
:=
\left\{
(\widetilde a_0,\widetilde u_0)\in X_p:
\|(\widetilde a_0-a_0,\widetilde u_0-u_0)\|_{X_p}<1
\right\}.
\]
Then \(\mathcal U\) is a bounded neighborhood of \((a_0,u_0)\), and
\eqref{2.47}--\eqref{2.49} hold simultaneously for every datum in
\(\mathcal U\).

\noindent We first establish a uniform smallness property for the Eulerian
velocities. Let \(\gamma>0\). Choose \(\varepsilon>0\) so that
\cref{lem:eulerian-lagrangian-high-norm} applies whenever
$
\norm{\widetilde{\bar u}}_{L^1_t
(\dB^{1+\frac dp}_{p,1})}
\le2\varepsilon
$
and yields
$
\norm{\widetilde u}_{L^1_t
(\dB^{1+\frac dp}_{p,1})}
\le\gamma.
$
Since
$
\bar u\in L^1(0,T;\dB^{1+\frac dp}_{p,1}),
$
there exists \(T_\gamma\in(0,T]\) such that
\[
\norm{\bar u}_{L^1_{T_\gamma}
(\dB^{1+\frac dp}_{p,1})}
\le\varepsilon.
\]
By the local Lipschitz continuity of the Lagrangian solution map,
after shrinking \(\mathcal U\) to a neighborhood
\(\mathcal U_\gamma\) of \((a_0,u_0)\), we may assume that
$
\norm{\widetilde{\bar u}-\bar u}_{L^1_{T_\gamma}
(\dB^{1+\frac dp}_{p,1})}
\le\varepsilon.
$
Hence
\[
\norm{\widetilde{\bar u}}_{L^1_{T_\gamma}
(\dB^{1+\frac dp}_{p,1})}
\le\|\bar u\|_{L^1_{T_\gamma}
(\dot B^{1+\frac dp}_{p,1})}
+
\|\widetilde{\bar u}-\bar u\|_{L^1_{T_\gamma}
(\dot B^{1+\frac dp}_{p,1})}\le2\varepsilon,
\]
and therefore
\begin{equation}\label{2.53}
\sup_{(\widetilde a_0,\widetilde u_0)\in\mathcal U_\gamma}
\norm{\widetilde u}_{L^1_{T_\gamma}
(\dB^{1+\frac dp}_{p,1})}
\le\gamma.
\end{equation}
Since \(\gamma>0\) is arbitrary in \eqref{2.53}, we now choose it
sufficiently small so that, by
\(\dot B^{1+\frac dp}_{p,1}\hookrightarrow W^{1,\infty}\),
\[
\sup_{(\widetilde a_0,\widetilde u_0)\in\mathcal U_\gamma}
\int_0^{T_\gamma}
\norm{\Div\widetilde u(t)}_{L^\infty}\,\dd t
\le\log2.
\]
Set
\[
\mathcal U_0:=\mathcal U_\gamma,
\qquad
T_0:=T_\gamma.
\]
Let \(\widetilde X\) be the flow associated with \(\widetilde u\).
The continuity equation yields
\begin{equation}\label{2.57}
1+\widetilde a(t,\widetilde X(t,y))
=
(1+\widetilde a_0(y))
\exp\left(
-\int_0^t
\Div\widetilde u(\tau,\widetilde X(\tau,y))\,\dd\tau
\right).
\end{equation}
Consequently,
\begin{equation}\label{eq:uniform-density-coefficient-bounds}
 1+\widetilde a(t,x)\ge\frac{\kappa_0}{4},
\qquad
\underline b
\le\widetilde b(t,x)
\le\frac4{\kappa_0},
\qquad
0\le t\le T_0.   
\end{equation}
Since \(\mathcal U_0\subset\mathcal U\) and \(T_0\le T\), restricting
\eqref{2.47} to \([0,T_0]\) gives
\begin{equation}\label{eq:uniform-local-bochner-bounds}
\begin{aligned}
&\norm {\widetilde{a}}_{L^\infty_{T_0}(\dB^{\frac{d}{p}}_{p,1})}
+\norm {\widetilde{u}}_{L^\infty_{T_0}(\dB^{-1+\frac{d}{p}}_{p,1})}
+\norm {\widetilde{u}}_{L^1_{T_0}(\dB^{1+\frac{d}{p}}_{p,1})}
\le M.
\end{aligned}
\end{equation}
It remains to upgrade the first two norms in \eqref{eq:uniform-local-bochner-bounds} to
Chemin--Lerner norms. 
By \eqref{eq:uniform-local-bochner-bounds},
\eqref{eq:uniform-density-coefficient-bounds}, and
\eqref{eq:composition}, we have
\begin{equation}\label{eq:uniform-bG-composition}
\norm{\widetilde{b}-1}_{L^\infty_{T_0}(\dB^{\frac{d}{p}}_{p,1})}
+
\norm{G(\widetilde{a})}_{L^\infty_{T_0}(\dB^{\frac{d}{p}}_{p,1})}
\le C.
\end{equation}
Using
\[
\partial_t\widetilde u
=
\widetilde b\cA\widetilde u
-\widetilde u\cdot\nabla\widetilde u
-\nabla G(\widetilde a),
\]
we first estimate the right-hand side. By
\cref{lem:basic-products},
\begin{equation}\label{eq:velocity-product-interface}
\begin{aligned}
\norm{\widetilde b\cA\widetilde u}
_{\dB^{-1+\frac dp}_{p,1}}
&\lesssim
\left(
1+\norm{\widetilde b-1}
_{\dB^{\frac dp}_{p,1}}
\right)
\norm{\widetilde u}
_{\dB^{1+\frac dp}_{p,1}},
\\
\norm{\widetilde u\cdot\nabla\widetilde u}
_{\dB^{-1+\frac dp}_{p,1}}
&\lesssim
\norm{\widetilde u}_{\dB^{-1+\frac dp}_{p,1}}
\norm{\widetilde u}_{\dB^{1+\frac dp}_{p,1}}.
\end{aligned}
\end{equation}
Consequently, by
\eqref{eq:uniform-local-bochner-bounds},
\eqref{eq:uniform-bG-composition}, and
\eqref{eq:velocity-product-interface},
\begin{equation}\label{eq:time-derivative-velocity}
\begin{aligned}
\norm{\partial_t\widetilde u}
_{L^1_{T_0}(\dB^{-1+\frac dp}_{p,1})}
\lesssim{}&
\left(
1+
\norm{\widetilde b-1}
_{L^\infty_{T_0}(\dB^{\frac dp}_{p,1})}
+
\norm{\widetilde u}
_{L^\infty_{T_0}(\dB^{-1+\frac dp}_{p,1})}
\right)
\norm{\widetilde u}
_{L^1_{T_0}(\dB^{1+\frac dp}_{p,1})}
\\
&+
T_0
\norm{G(\widetilde a)}
_{L^\infty_{T_0}(\dB^{\frac dp}_{p,1})}\le{}
C.
\end{aligned}
\end{equation}
It follows that
\begin{equation}\label{eq:velocity-CL-interface}
\begin{aligned}
\norm{\widetilde u}
_{\tdL^\infty_{T_0}
(\dB^{-1+\frac dp}_{p,1})}
&\le
\norm{\widetilde u_0}
_{\dB^{-1+\frac dp}_{p,1}}
+
\norm{\partial_t\widetilde u}
_{L^1_{T_0}
(\dB^{-1+\frac dp}_{p,1})}
\le
C.
\end{aligned}
\end{equation}
For the density equation
\[
\partial_t\widetilde a
+\widetilde u\cdot\nabla\widetilde a
=
-(1+\widetilde a)\Div\widetilde u,
\qquad
\widetilde a|_{t=0}=\widetilde a_0,
\]
set
\[
V(t)
:=
\int_0^t
\norm{\nabla\widetilde u(\tau)}_
{\dB^{\frac dp}_{p,1}}
\dd\tau.
\]
By \eqref{eq:uniform-local-bochner-bounds},
\begin{equation}\label{eq:density-transport-control}
V(T_0)
\lesssim
\norm{\widetilde u}
_{L^1_{T_0}(\dB^{1+\frac dp}_{p,1})}
\le M.
\end{equation}
Moreover, \cref{lem:basic-products} gives
\begin{equation}\label{eq:density-source-interface}
\begin{aligned}
\norm{(1+\widetilde a)\Div\widetilde u}
_{L^1_{T_0}(\dB^{\frac dp}_{p,1})}
&\lesssim
\left(
1+
\norm{\widetilde a}
_{L^\infty_{T_0}(\dB^{\frac dp}_{p,1})}
\right)
\norm{\widetilde u}
_{L^1_{T_0}(\dB^{1+\frac dp}_{p,1})}
\\
&\le
C(1+M)M.
\end{aligned}
\end{equation}
Applying \cref{lem:transport-estimate}, and using
\eqref{eq:density-transport-control} and
\eqref{eq:density-source-interface}, we obtain
\begin{equation}\label{eq:density-CL-interface}
\begin{aligned}
\norm{\widetilde a}
_{\tdL^\infty_{T_0}(\dB^{\frac dp}_{p,1})}
&\le
\exp\bigl(CV(T_0)\bigr)
\left(
\norm{\widetilde a_0}
_{\dB^{\frac dp}_{p,1}}
+
\norm{(1+\widetilde a)\Div\widetilde u}
_{L^1_{T_0}(\dB^{\frac dp}_{p,1})}
\right)
\\
&\le
C.
\end{aligned}
\end{equation}
Combining
\eqref{eq:uniform-density-coefficient-bounds},
\eqref{eq:uniform-local-bochner-bounds},
\eqref{eq:velocity-CL-interface}, and
\eqref{eq:density-CL-interface}, and enlarging \(M\) if necessary, we obtain \eqref{eq:uniform-local-bounds}.

\noindent Finally, let \(\eta>0\). Applying the argument leading to
\eqref{2.53} with \(\gamma=\eta\), there exist a neighborhood
\(\mathcal V_\eta \subset\mathcal U\)  and \(0<T_\eta'\le T\) such that
\[
\sup_{(\widetilde a_0,\widetilde u_0)\in\mathcal V_\eta}
\norm{\widetilde u}_{L^1_{T_\eta'}
(\dB^{1+\frac dp}_{p,1})}
\le \eta.
\]
Set
\[
\mathcal U_\eta:=\mathcal U_0\cap\mathcal V_\eta,
\qquad
T_\eta:=\min\{T_0,T_\eta'\}.
\]
Then \(\mathcal U_\eta\subset\mathcal U_0\) is a neighborhood of
\((a_0,u_0)\), \(0<T_\eta\le T_0\), and
\[
\sup_{(\widetilde a_0,\widetilde u_0)\in\mathcal U_\eta}
\norm{\widetilde u}_{L^1_{T_\eta}
(\dB^{1+\frac dp}_{p,1})}
\le \eta.
\]
This proves \eqref{eq:small-W} and completes the proof.
\end{proof}

\subsection{Time-weighted estimates}

Following \cite{CMZ2010R}, fix \(c_*>0\) and define
\[
e_j(T):=\bigl(1-e^{-c_*2^{2j}T}\bigr)^{1/2},
\qquad
\omega_j(T):=\sum_{\ell\ge j}2^{j-\ell}e_\ell(T).
\]
Then
\begin{equation}\label{eq:time-weight-properties}
 0\le e_j(T)\le\omega_j(T)\le2,
\qquad
\omega_j(T)\rightarrow0\quad 
\text{as }T\downarrow0  
\end{equation}
for each fixed \(j\).
Let \( s \in \mathbb{R} \), \( 1 \leq p, r \leq +\infty \), and \( 0 < T < +\infty \). The weighted Besov space \( \dot{B}_{p,r}^s(\omega_T) \) is defined by
\[
\|f\|_{\dot{B}_{p,r}^s(\omega_T)} \stackrel{\mathrm{def}}{=} \left\| 2^{ks} \omega_k(T) \| \Delta_k f \|_{p} \right\|_{l^r}.
\]

\begin{proposition}
\label{prop:time-density-small}
For every \(\varepsilon>0\), there exist \(r_\varepsilon>0\) and
\(0<T_\varepsilon\le T_0\) such that
\[
\mathcal U_\varepsilon
:=
\left\{
(\widetilde a_0,\widetilde u_0)\in\mathcal U_0:
\norm{
(\widetilde a_0-a_0,\widetilde u_0-u_0)
}_{X_p}
<r_\varepsilon
\right\}
\]
is a neighborhood of \((a_0,u_0)\) in \(X_p\), and
\[
\sup_{
(\widetilde a_0,\widetilde u_0)\in\mathcal U_\varepsilon
}
\norm{\widetilde a}_{\tdL^\infty_{T_\varepsilon}
(\dB^{\frac{d}{p}}_{p,1}(\omega_{T_\varepsilon}))}
\le\varepsilon.
\]
\end{proposition}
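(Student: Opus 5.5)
We split the weighted norm by a frequency cutoff and treat low and high frequencies of $\widetilde a$ differently. Recall that
\[
\norm{\widetilde a}_{\tdL^\infty_T(\dB^{\frac dp}_{p,1}(\omega_T))}=\sum_{k}2^{k\frac dp}\omega_k(T)\sup_{t\le T}\norm{\dDelta_k\widetilde a(t)}_{L^p},
\]
and that $\omega_k(T)\le 2$ for every $k$.

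Fix $\varepsilon>0$. By \cref{lem:uniform-local}, the quantity $\norm{\widetilde a}_{\tdL^\infty_{T_0}(\dB^{\frac dp}_{p,1})}\le M$ is uniform over $\mathcal U_0$. This alone does not make the high-frequency tail small uniformly. We therefore compare $\widetilde a$ with the reference solution $a$. Since $a\in\tdL^\infty_{T_0}(\dB^{\frac dp}_{p,1})$ is a single function, we can pick $N$ such that
\[
\sum_{|k|\ge N}2^{k\frac dp}\sup_{t\le T_0}\norm{\dDelta_k a}_{L^p}\le\varepsilon/8 .
\]
For the low-frequency block $|k|<N$, note that $\omega_k(T)\to0$ as $T\downarrow0$ for each fixed $k$, and $\omega_k$ is increasing in $k$ for $k<N$ up to bounded factors. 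Hence one can take $T_\varepsilon$ so small that $\max_{|k|<N}\omega_k(T_\varepsilon)\le\varepsilon/(8M)$. This gives a low-frequency contribution of at most $\varepsilon/8$, uniformly in $\mathcal U_0$. The very low frequencies $k\le -N$ are harmless in the same way: there $\omega_k(T)\lesssim 2^{k}T^{1/2}$ plus a tail, and in any case they are covered by the choice of $N$ for $a$.

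The real task is the uniform high-frequency tail
\[
\sum_{k\ge N}2^{k\frac dp}\sup_{t\le T_\varepsilon}\norm{\dDelta_k\widetilde a}_{L^p}.
\]
We bound it by the same tail for $a$ plus
\[
2\norm{\widetilde a-a}_{\tdL^\infty_{T_\varepsilon}(\dB^{\frac dp}_{p,1})}.
\]
Such a difference bound in the full norm is exactly what is unavailable. So we instead use the transport structure. Localizing the density equation with \cref{lem:transport-estimate}, in the high-frequency form with the commutator estimates referenced there, gives
\[
\sum_{k\ge N}2^{k\frac dp}\sup_t\norm{\dDelta_k\widetilde a}_{L^p}\le e^{CV(T)}\Bigl(\sum_{k\ge N}2^{k\frac dp}\norm{\dDelta_k\widetilde a_0}_{L^p}+C(1+M)\norm{\widetilde u}_{L^1_T(\dB^{1+\frac dp}_{p,1})}\Bigr).
\]
The commutator terms are bounded by $V(T)\norm{\widetilde a}_{\tdL^\infty(\dB^{\frac dp}_{p,1})}$ up to a summable convolution that does not pass beyond the cutoff. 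The initial tail satisfies
\[
\sum_{k\ge N}2^{k\frac dp}\norm{\dDelta_k\widetilde a_0}_{L^p}\le\sum_{k\ge N}2^{k\frac dp}\norm{\dDelta_ka_0}_{L^p}+\norm{\widetilde a_0-a_0}_{\dB^{\frac dp}_{p,1}}.
\]
We choose $N$ so that the first term is at most $\varepsilon/16$, and $r_\varepsilon=\varepsilon/16$. For the velocity term, \eqref{eq:small-W} of \cref{lem:uniform-local} with $\eta$ small yields $\mathcal U_\eta$ and $T_\eta$ with $C(1+M)(M+1)\eta\le\varepsilon/16$ and $e^{CV}\le2$. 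We then take $T_\varepsilon=\min\{T_\eta,\cdot\}$ and shrink $r_\varepsilon$ so that the ball lies inside $\mathcal U_\eta$.

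The main obstacle is making the transport estimate tail-localized: the commutator $[\dDelta_k,\widetilde u\cdot\nabla]\widetilde a$ mixes frequencies. The commutator bound we aim for is
\[
2^{k\frac dp}\norm{[\dDelta_k,\widetilde u\cdot\nabla]\widetilde a}_{L^p}\lesssim c_k\norm{\nabla\widetilde u}_{\dB^{\frac dp}_{p,1}}\norm{\widetilde a}_{\dB^{\frac dp}_{p,1}},\qquad \sum_k c_k\le1 .
\]
This suffices, because the whole right-hand side is multiplied by $V(T_\eta)\lesssim\eta$, which is small. So we never need to localize the commutator itself; smallness comes from $\eta$, not from the tail. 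Summing the three contributions gives a total of at most $\varepsilon$.
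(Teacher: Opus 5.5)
Your argument is correct, but it takes a different route from the paper.

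\textbf{The paper's route.} The paper stays inside the time-weighted framework throughout:
\begin{enumerate}
\item It applies the weighted transport estimate \cref{lem:time-weighted-transport-CMZ} directly to the density equation.
\item It bounds $(1+\widetilde a)\Div\widetilde u$ with the weighted product estimate of \cref{lem:time-weighted-critical-product}.
\item It absorbs the weighted norm of $\widetilde a$ once $W(T)\le\delta_*$. This gives the linear bound $\norm{\widetilde a}_{\tdL^\infty_T(\dB^{\frac dp}_{p,1}(\omega_T))}\le C_0\bigl(\norm{\widetilde a_0}_{\dB^{\frac dp}_{p,1}(\omega_T)}+W(T)\bigr)$.
\item It writes $\widetilde a_0=a_0+(\widetilde a_0-a_0)$. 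It then invokes the compactness remark of \cite{CMZ2010R} for the single datum $a_0$, namely $\norm{a_0}_{\dB^{\frac dp}_{p,1}(\omega_T)}\to0$ as $T\downarrow0$, together with \eqref{eq:small-W}.
\end{enumerate}

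\textbf{Your route.} You never use a weighted estimate. You split in frequency at $N$:
\begin{enumerate}
\item The low block is killed by $\omega_k(T)\to0$ together with the uniform bound $M$.
\item The high block is controlled by $\omega_k\le2$ plus a uniform tail bound $\text{tail}_N(\widetilde a)\le\text{tail}_N(a_0)+r_\varepsilon+C(1+M)W(T)$.
\item That tail bound comes from the unweighted localized transport inequality. The source, commutator and $\Div\widetilde u$ contributions are summed over all $k$ and bounded crudely by $C(1+M)W(T)$, using $\sup_t\norm{\widetilde a(t)}_{\dB^{\frac dp}_{p,1}}\le M$.
\end{enumerate}
As you note, the commutator never needs to be localized. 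In fact even the factor $e^{CV(T)}$ is superfluous.

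\textbf{What each buys.} In effect you reprove the compactness remark uniformly over the whole family of solutions, using only standard unweighted commutator and product estimates. Your argument is therefore more elementary and self-contained. The paper's argument is shorter given the cited weighted lemmas of \cite{CMZ2010R}. It also yields a clean linear estimate in the weighted norm, consistent with the weighted machinery used afterwards.

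\textbf{Two points to tidy up.}
\begin{itemize}
\item Your justification for the very low frequencies $k\le-N$ (``covered by the choice of $N$ for $a$'') does not apply: that choice controls $a$, not $\widetilde a$. The clean fact is that $\omega_k(T)$ is nondecreasing in $k$. Indeed, since $e_\ell$ is nondecreasing in $\ell$, one has $\omega_k\ge2e_k$, and $\omega_{k+1}=2(\omega_k-e_k)\ge\omega_k$. Hence the entire block $k<N$ contributes at most $\omega_{N-1}(T)M$, which tends to $0$ as $T\downarrow0$ by \eqref{eq:time-weight-properties}.
\item Your first choice of $N$, via the tail of the solution $a$ on $[0,T_0]$, is never used. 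Only the tail of $a_0$ enters the argument.
\end{itemize}
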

\begin{proof}Fix $\varepsilon>0$, and let $0<T\le T
_0$. For \(0\le t\le T\), set
\[
W(t):=
\norm{\widetilde u}_{L^1_t
(\dB^{1+\frac{d}{p}}_{p,1})}.
\]
Applying \cref{lem:time-weighted-transport-CMZ} to
\[
\partial_t\widetilde a
+\widetilde u\cdot\nabla\widetilde a
=-(1+\widetilde a)\Div \widetilde u,
\]
and using $\norm {\nabla \widetilde u}_{L^1_T
(\dB^{\frac{d}{p}}_{p,1})}\lesssim W(T)$, we obtain
\[
\begin{aligned}
\norm{\widetilde a}_{\tdL^\infty_T
(\dB^{\frac{d}{p}}_{p,1}(\omega_T))}
&\lesssim{}
e^{CW(T)}
\Bigl(
\norm{\widetilde a_0}_{\dB^{\frac{d}{p}}_{p,1}(\omega_T)}+
\norm{(1+\widetilde a)\Div\widetilde u}_{L^1_T
(\dB^{\frac{d}{p}}_{p,1}(\omega_T))}
\Bigr).
\end{aligned}
\]
Writing
\[
(1+\widetilde a)\Div\widetilde u=\Div\widetilde u+\widetilde a\Div\widetilde u
\]
and applying \cref{lem:time-weighted-critical-product}, together with 
\(\omega_j(T)\le2\), we obtain
\[
\begin{aligned}
\norm{(1+\widetilde a)\Div\widetilde u}_{L^1_T
(\dB^{\frac{d}{p}}_{p,1}(\omega_T))}
\lesssim{}&
\left(
1+
\norm{\widetilde a}_{\tdL^\infty_T
(\dB^{\frac{d}{p}}_{p,1})}+\norm{\widetilde a}_{\tdL^\infty_T
(\dB^{\frac{d}{p}}_{p,1}(\omega_T))}
\right)
W(T).
\end{aligned}
\]
By \cref{lem:uniform-local}, the unweighted density norm is uniformly
bounded on \(\mathcal U_0\). Hence there exists $\delta_*>0$ such that, whenever $W(T)
\le\delta_*$,
\[
\norm{\widetilde a}_{\tdL^\infty_T
(\dB^{\frac{d}{p}}_{p,1}(\omega_T))}
\le
C_0\left(
\norm{\widetilde a_0}_{\dB^{\frac{d}{p}}_{p,1}(\omega_T)}
+
W(T)
\right),
\]
where \(C_0\) is independent of \(T\), the initial datum, and the corresponding solution.
Choose \(0<\delta\le\delta_*\) such that
\[
C_0\delta\le\frac{\varepsilon}{4}.
\]
By \eqref{eq:small-W}, there exist a neighborhood
\(\mathcal U_\delta\subset\mathcal U_0\) of \((a_0,u_0)\) and a time
\(0<T_\delta\le T_0\) such that
\[
\sup_{
(\widetilde a_0,\widetilde u_0)\in\mathcal U_\delta
}
\norm{\widetilde u}_{L^1_{T_\delta}
(\dB^{1+\frac{d}{p}}_{p,1})}
\le\delta.
\]
Since \(\mathcal U_\delta\) is a neighborhood of \((a_0,u_0)\) in
\(X_p\), we may choose \(r_\varepsilon>0\) such that
\[
\mathcal U_\varepsilon
:=
\left\{
(\widetilde a_0,\widetilde u_0)\in\mathcal U_0:
\norm{
(\widetilde a_0-a_0,\widetilde u_0-u_0)
}_{X_p}
<r_\varepsilon
\right\}
\subset\mathcal U_\delta
\]
and
\[
2C_0r_\varepsilon\le\frac{\varepsilon}{4}.
\]
By \cite[Remark 3.3]{CMZ2010R}, applied to the compact set
\(\{a_0\}\), there exists \(0<T_\varepsilon\le T_\delta\) such that
\[
C_0
\norm{a_0}_{\dB^{\frac{d}{p}}_{p,1}(\omega_{T_\varepsilon})}
\le\frac{\varepsilon}{2}.
\]
For every
\((\widetilde a_0,\widetilde u_0)\in\mathcal U_\varepsilon\), we have
$
\norm{\widetilde u}_{L^1_{T_\varepsilon}
(\dB^{1+\frac{d}{p}}_{p,1})}
\le\delta
$
and, since \(\omega_j(T_\varepsilon)\le2\),
\[
\begin{aligned}
\norm{\widetilde a_0}
_{\dB^{\frac{d}{p}}_{p,1}(\omega_{T_\varepsilon})}
&\le
\norm{a_0}
_{\dB^{\frac{d}{p}}_{p,1}(\omega_{T_\varepsilon})}
+
2\norm{\widetilde a_0-a_0}_{\dB^{\frac{d}{p}}_{p,1}}
\\
&\le
\norm{a_0}
_{\dB^{\frac{d}{p}}_{p,1}(\omega_{T_\varepsilon})}
+
2r_\varepsilon.
\end{aligned}
\]
Consequently,
\[
\begin{aligned}
\norm{\widetilde a}_{\tdL^\infty_{T_\varepsilon}
(\dB^{\frac{d}{p}}_{p,1}(\omega_{T_\varepsilon}))}
&\le
C_0\left(
\norm{a_0}_{\dB^{\frac{d}{p}}_{p,1}
(\omega_{T_\varepsilon})}
+
2r_\varepsilon
+
\delta
\right)\le\varepsilon.
\end{aligned}
\]
Taking the supremum over
\((\widetilde a_0,\widetilde u_0)\in\mathcal U_\varepsilon\)
completes the proof.
\end{proof}
\begin{corollary}\label{cor:common-m}
For every prescribed \(\eta_b>0\), there exist \(r>0\), a time
\(0<T\le T_0\), and an integer \(m=m(T)\) with the following
properties. Set
\[
\mathcal U_r
:=
\left\{
(a_0',u_0')\in\mathcal U_0:
\norm{a_0'-a_0}_{\dB^{\frac dp}_{p,1}}
+
\norm{u_0'-u_0}_{\dB^{-1+\frac dp}_{p,1}}
<r
\right\},
\]
and denote by \(\mathscr S_{r,T}\) the family of solutions \((a,u)\)
on \([0,T]\) issued from initial data in \(\mathcal U_r\). Thus, by \cref{lem:uniform-local},
\[
(a,u)\in
\tdL^\infty_T(\dB^{\frac dp}_{p,1})
\times
\left(
\tdL^\infty_T(\dB^{-1+\frac dp}_{p,1})
\cap
L^1_T(\dB^{1+\frac dp}_{p,1})
\right).
\]
For each \((a,u)\in\mathscr S_{r,T}\), set \(b=(1+a)^{-1}\) and
\begin{equation}\label{eq:coefficient-splitting}
b_m:=1+\dS_m(b-1),
\qquad
b^h:=(\Id-\dS_m)(b-1).
\end{equation}
Then
\[
b_m(t,x)\ge\frac{\underline b}{2}
\qquad
\text{for all }
(a,u)\in\mathscr S_{r,T}
\text{ and }(t,x)\in[0,T]\times\R^d,
\]
and
\begin{equation}\label{eq:coefficient-tail-small}
\sup_{(a,u)\in\mathscr S_{r,T}}
\norm{b^h}_{\tdL^\infty_T
(\dB^{\frac dp}_{p,1})}
\le\eta_b.
\end{equation}
\end{corollary}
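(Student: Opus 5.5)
The plan is to derive both conclusions from \cref{prop:time-density-small}, transferred from \(a\) to \(b-1\) and then exploited through the choice \(m=m(T)\) with \(1\le c_*2^{2m}T<4\). The mechanism is that for \(\ell\ge m\) one has \(c_*2^{2\ell}T\ge1\). Hence \(e_\ell(T)\ge(1-e^{-1})^{1/2}\), and therefore \(\omega_\ell(T)\ge e_\ell(T)\ge c_1>0\), with \(c_1\) absolute. This gives
\[
\norm{(\Id-\dS_m)f}_{\dB^{\frac dp}_{p,1}}\lesssim \sum_{\ell\ge m-1}2^{\ell\frac dp}\norm{\dDelta_\ell f}_{L^p}\lesssim c_1^{-1}\norm{f}_{\dB^{\frac dp}_{p,1}(\omega_T)}.
\]
The same bound holds in the \(\tdL^\infty_T\) version, and the shift \(m-1\) is harmless since \(\omega_{\ell-1}\ge\frac12\omega_\ell\). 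The tail bound \eqref{eq:coefficient-tail-small} thus reduces to smallness of \(\norm{b-1}_{\tdL^\infty_T(\dB^{\frac dp}_{p,1}(\omega_T))}\).

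\textbf{Step 1: weighted composition.} Write \(b-1=H(a)\) with \(H(z)=(1+z)^{-1}-1\), which is smooth on \([\kappa_0/4-1,\infty)\) and satisfies \(H(0)=0\). I want
\[
\norm{H(a)}_{\tdL^\infty_T(\dB^{\frac dp}_{p,1}(\omega_T))}\le C\norm{a}_{\tdL^\infty_T(\dB^{\frac dp}_{p,1}(\omega_T))},
\]
with \(C\) depending only on \(\kappa_0\), \(M\) and \(d,p\), through the bounds \eqref{eq:uniform-local-bounds}. I would prove this by running the usual Meyer paralinearization, \(H(a)=\sum_j (\dS_{j+1}a-\dS_j a)\,m_j\) with \(m_j=\int_0^1H'(\dS_ja+\tau\dDelta_ja)\dd\tau\), and inserting the weights \(\omega_k\). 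The point is that \(\omega_k\) is slowly varying: \(\omega_k\le 2\omega_{k+1}\) and \(\omega_{k+1}\le 2\omega_k\) up to constants, and \(\omega_k\ge 2^{k-j}\omega_j\) for \(j\ge k\). So the same convolution inequalities go through, exactly as in the weighted product estimate \cref{lem:time-weighted-critical-product}. Alternatively, one can write \(H(a)=a\,\widetilde H(a)\) with \(\widetilde H(a)-\widetilde H(0)\in\tdL^\infty_T(\dB^{\frac dp}_{p,1})\) bounded by the unweighted composition \eqref{eq:composition}. Then the weighted product lemma gives \(\norm{H(a)}_{\omega}\lesssim(1+\norm{\widetilde H(a)-\widetilde H(0)}_{\tdL^\infty_T(\dB^{\frac dp}_{p,1})})\norm a_{\omega}\), which is quicker and is the route I would take. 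This step, checking that the weighted product estimate applies with one factor weighted, is the main technical point, though it is essentially contained in \cref{lem:time-weighted-critical-product}.

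\textbf{Step 2: choice of parameters.} Given \(\eta_b\), choose \(\varepsilon>0\) with \(C c_1^{-1}C'\varepsilon\le\eta_b\). Also make \(\varepsilon\) small for Step 3. Apply \cref{prop:time-density-small} to get \(r=r_\varepsilon\) and \(T=T_\varepsilon\), noting that \(\mathcal U_r\) coincides with \(\mathcal U_\varepsilon\) since the norms agree. Let \(m\) be the unique integer with \(1\le c_*2^{2m}T<4\). Then for every solution in \(\mathscr S_{r,T}\), Steps 1–2 give \(\norm{b^h}_{\tdL^\infty_T(\dB^{\frac dp}_{p,1})}\le\eta_b\), uniformly.

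\textbf{Step 3: lower bound on \(b_m\).} Write \(b_m=b-b^h\). By \eqref{eq:uniform-local-bounds} we have \(b\ge\underline b\). By the embedding \(\dB^{\frac dp}_{p,1}\hookrightarrow L^\infty\), with constant \(C_e\), we get \(\norm{b^h}_{L^\infty_{t,x}}\le C_e\eta_b\). So it suffices to make the tail small enough, independently of the prescribed \(\eta_b\): run Step 2 with \(\min\{\eta_b,\underline b/(2C_e)\}\) in place of \(\eta_b\). This yields \(b_m\ge\underline b/2\) on \([0,T]\times\R^d\) for all \((a,u)\in\mathscr S_{r,T}\), which completes the plan.
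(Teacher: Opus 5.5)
Your architecture matches the paper's proof. It gets uniform smallness of \(\norm{a}_{\tdL^\infty_T(\dB^{\frac dp}_{p,1}(\omega_T))}\) from \cref{prop:time-density-small} and transfers it to \(b-1\). It uses the lower bound \(\omega_\ell(T)\ge c>0\) for \(\ell\ge m-O(1)\), forced by \(1\le c_*2^{2m}T<4\). Finally it applies the embedding \(\dB^{\frac dp}_{p,1}\hookrightarrow L^\infty\) with the threshold \(\min\{\eta_b,\underline b/(2C_e)\}\) to get \(b_m=b-b^h\ge\underline b/2\). Steps 2 and 3 are fine.

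The gap is in Step 1, in the ``quicker'' route you say you would actually take. The one-sided bound \(\norm{a\,g}_{\dB^{\frac dp}_{p,1}(\omega_T)}\lesssim\norm{a}_{\dB^{\frac dp}_{p,1}(\omega_T)}\norm{g}_{\dB^{\frac dp}_{p,1}}\) is false. The weight \(\omega_j(T)\) is nondecreasing in \(j\), tends to \(0\) as \(j\to-\infty\) and to \(2\) as \(j\to\infty\). So in the paraproduct \(\dot T_a g\) the weight at the output frequency is that of \(g\), not of \(a\). For a counterexample, take \(a\) spectrally localized at \(2^k\) with \(k\to-\infty\) and \(g\) concentrated at a fixed high frequency where \(a\neq0\). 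This is why \cref{lem:CMZ-time-weighted-products}(b) requires \(s_1\le\frac dp-1\), and why \cref{lem:time-weighted-critical-product} carries two terms.

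With the correct two-term estimate your argument becomes circular. For \(H(z)=(1+z)^{-1}-1\) one has \(\widetilde H(z)=-(1+z)^{-1}\), hence \(\widetilde H(a)-\widetilde H(0)=-H(a)\) and \(H(a)=-a-aH(a)\). Writing \(\norm{\cdot}_\omega\) for the \(\tdL^\infty_T(\dB^{\frac dp}_{p,1}(\omega_T))\) norm, you only get
\[
\norm{H(a)}_{\omega}\le \norm{a}_{\omega}+C\norm{a}_{\omega}\norm{H(a)}_{\tdL^\infty_T(\dB^{\frac dp}_{p,1})}+C\norm{a}_{\tdL^\infty_T(\dB^{\frac dp}_{p,1})}\norm{H(a)}_{\omega}.
\]
The last term contains the unknown with a coefficient of size \(M\), which is not small. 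Absorbing it would require exactly the smallness of the density perturbation that this corollary is designed to avoid.

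The fix is what the paper does. All densities in \(\mathscr S_{r,T}\) take values in a common compact subset of \((-1,\infty)\), because \(1+a\ge\kappa_0/4\) and \(a\) is uniformly bounded in \(L^\infty\). Extend \(H\) smoothly outside this set and apply the time-weighted composition estimate \cref{lem:time-weighted-composition-CMZ} directly. This gives \(\norm{b-1}_\omega\le C\norm{a}_\omega\) with \(C\) uniform over the family. Your first option, the weighted Meyer paralinearization, is also a correct proof of that lemma: it uses that \(\omega\) is nondecreasing and that \(\omega_q\le 2^{q-j}\omega_j\) for \(q\ge j\). Either cite the lemma or take that route, but drop the product shortcut.
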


\begin{proof}
Let \(N_0\ge1\), depending only on the supports of the
Littlewood--Paley cutoffs, be such that
\[
\dDelta_j(\Id-\dS_m)f=0
\qquad\text{whenever }j\le m-N_0.
\]
Let \(\varepsilon>0\), to be fixed below. By
\cref{prop:time-density-small}, after decreasing \(r\) and \(T\), if
necessary, we may assume that
\begin{equation}\label{eq:uniform-weighted-density-small}
\sup_{(a,u)\in\mathscr S_{r,T}}
\norm a_{\tdL^\infty_T
(\dB^{\frac dp}_{p,1}(\omega_T))}
\le\varepsilon.
\end{equation}
On the other hand, \cref{lem:uniform-local} gives
\begin{equation}\label{eq:uniform-density-range}
\sup_{(a,u)\in\mathscr S_{r,T}}
\norm a_{\tdL^\infty_T(\dB^{\frac dp}_{p,1})}
\le M,
\qquad
1+a(t,x)\ge\frac{\kappa_0}{4}.
\end{equation}
The critical embedding
\(\dB^{\frac dp}_{p,1}\hookrightarrow L^\infty\) and
\eqref{eq:uniform-density-range} imply that the ranges of all densities
in \(\mathscr S_{r,T}\) are contained in a common compact subset of
\((-1,\infty)\).
After extending
\[
F(z):=\frac1{1+z}-1
\]
smoothly outside this compact set, we may apply
\cref{lem:time-weighted-composition-CMZ}. Since
\(b-1=F(a)\), estimates
\eqref{eq:uniform-weighted-density-small} and
\eqref{eq:uniform-density-range} give
\begin{equation}\label{eq:uniform-weighted-b-small}
\begin{aligned}
\sup_{(a,u)\in\mathscr S_{r,T}}
\norm{b-1}_{\tdL^\infty_T
(\dB^{\frac dp}_{p,1}(\omega_T))}
&\le
C
\sup_{(a,u)\in\mathscr S_{r,T}}
\norm a_{\tdL^\infty_T
(\dB^{\frac dp}_{p,1}(\omega_T))}
\le C\varepsilon.
\end{aligned}
\end{equation}
Let \(m=m(T)\) be the smallest integer satisfying
$
c_*2^{2m}T\ge1.
$
Then
\begin{equation}\label{eq:common-m-time-bound}
1\le c_*2^{2m}T<4.
\end{equation}
In particular, \(m\) depends only on \(T\) and is independent of the
particular solution in \(\mathscr S_{r,T}\).

\noindent If \(\dDelta_jb^h\neq0\), then \(j\ge m-N_0\). Hence
\eqref{eq:common-m-time-bound} implies
\[
c_*2^{2j}T\ge2^{-2N_0}.
\]
Consequently,
\[
\omega_j(T)
\ge e_j(T)
\ge
\left(1-e^{-2^{-2N_0}}\right)^{1/2}
=:c_\omega>0.
\]
It follows from \eqref{eq:uniform-weighted-b-small} that
\begin{equation}\label{eq:bh-small-from-weight}
\begin{aligned}
\sup_{(a,u)\in\mathscr S_{r,T}}
\norm{b^h}_{\tdL^\infty_T
(\dB^{\frac dp}_{p,1})}
&\lesssim
c_\omega^{-1}
\sup_{(a,u)\in\mathscr S_{r,T}}
\norm{b-1}_{\tdL^\infty_T
(\dB^{\frac dp}_{p,1}(\omega_T))}\le
C\varepsilon.
\end{aligned}
\end{equation}
Choose \(\varepsilon>0\) sufficiently small so that
\[
C\varepsilon
\le
\min\left\{
\eta_b,\frac{\underline b}{2C_{\rm emb}}
\right\}.
\]
where \(C_{\mathrm{emb}}\) is the constant in the embedding
\(\dB^{\frac dp}_{p,1}\hookrightarrow L^\infty\).
Then \eqref{eq:bh-small-from-weight} proves
\eqref{eq:coefficient-tail-small} and gives
\[
\sup_{(a,u)\in\mathscr S_{r,T}}
\norm{b^h}_{L^\infty_{T,x}}
\le\frac{\underline b}{2}.
\]
Since \(b\ge\underline b\) uniformly by
\cref{lem:uniform-local}, we conclude that
\[
b_m=b-b^h\ge\frac{\underline b}{2}
\]
on \([0,T]\times\R^d\), uniformly for all
\((a,u)\in\mathscr S_{r,T}\).
\end{proof}

\begin{remark}\label{rem:common-m}
Let $\alpha:=\underline b\min\{\mu,\lambda+2\mu\}.$
Choose \(\eta_b>0\) sufficiently small so that $C\eta_b\le\eta_0\alpha,$
where \(\eta_0\) is the absorption constant in
\cref{prop:weighted-lame}. By \cref{cor:common-m}, after decreasing
\(r\) and \(T\) if necessary, there exists a common cutoff \(m=m(T)\)
such that
\[
b_m\ge\frac{\underline b}{2},
\qquad
\sup_{(a,u)\in\mathscr S_{r,T}}
\norm{\nabla b^h}_{L^\infty_T
(\dB^{-1+\frac dp}_{p,1})}
\le\eta_0\alpha,
\]
and
\[
1\le c_*2^{2m}T<4.
\]
These provide the coefficient smallness and time--frequency conditions required in the subsequent variable-coefficient estimate.
In particular, no smallness assumption is imposed on
$
\sup_{(a,u)\in\mathscr S_{r,T}}
\norm a_{\tdL^\infty_T
(\dB^{\frac dp}_{p,1})}.
$
\end{remark}

\subsection{Frequency-envelope-weighted estimates}

Throughout this subsection, fix $0 < \delta_0 < \min \left\{ 1, \frac{d}{p} - 1 + \min \left( \frac{d}{p}, \frac{d}{p'}\right) \right\}.$ We use the following frequency-envelop weight function.
\begin{definition}\cite[Definition~2.1]{GuoYangZhang2026}\label{def:AF}
A sequence
\(\Omega=(\Omega_j)_{j\in\Z}\) belongs to \(AF(\delta_0)\) if
\begin{equation}\label{eq:AF}
\Omega_j=1\ (j\le0),
\qquad
1\le\Omega_j\le\Omega_{j+1}\le2^{\delta_0}\Omega_j\ (j>0).
\end{equation}
\noindent For \(\sigma\in\R\) and \(1\le p,r,q\le\infty\), set
\[
\norm f_{\dB^\sigma_{p,r}(\Omega)}
:=\|(\Omega_j2^{j\sigma}\norm{\dDelta_jf}_{L^p})_{j\in\mathbb{Z}}\|_{l^r}.
\]
The corresponding Chemin--Lerner norm is
\[
\norm f_{\widetilde{L}^q_T(\dB^\sigma_{p,r}(\Omega))}
:=\|(\Omega_j2^{j\sigma}\norm{\dDelta_jf}_{L^q_T(L^p)})_{j\in\mathbb{Z}}\|_{l^r}.
\]
\end{definition}
\noindent In what follows, we use admissible weights satisfying
\(\Omega_1\le 2^{\delta_0}\).
\begin{remark}\label{rem:envelope-discrete-convolution}
Let \(\Omega\in AF(\delta_0)\). By \eqref{eq:AF} and the above normalization,, for all
\(j,k\in\mathbb Z\),
\begin{equation}\label{eq:weight-ratio-abstract}
\frac{\Omega_j}{\Omega_k}
\le 2^{\delta_0|j-k|}.
\end{equation}
Hence, for every \(\gamma>\delta_0\) and every nonnegative sequence
\((x_k)_{k\in\mathbb Z}\),
\begin{equation}\label{eq:envelope-linear-convolution}
\sum_{j,k\in\mathbb Z}
2^{-\gamma|j-k|}
\frac{\Omega_j}{\Omega_k}x_k
\le
C\sum_{k\in\mathbb Z}x_k.
\end{equation}
Moreover, for all nonnegative \(\ell^1\)-sequences \(x\) and \(y\),
\begin{equation}\label{eq:envelope-bilinear-convolution}
\sum_{j,k\in\mathbb Z}
2^{-\gamma|j-k|}
\frac{\Omega_j}{\Omega_k}x_jy_k
\le
C\norm{x}_{\ell^1}\norm{y}_{\ell^1},
\end{equation}
where \(C\) depends only on \(\gamma\) and \(\delta_0\).
\end{remark}

\begin{proposition}\label{lem:common-envelope}
Assume \((a_0^n,u_0^n)\to(a_0,u_0)\) in \(X_p\) as $n\to\infty$. There exists
\(\Omega\in AF(\delta_0)\) such that
\begin{equation}\label{eq:Omega-diverges}
\Omega_j\rightarrow\infty\quad(j\to\infty)
\end{equation}
Moreover,
\begin{equation}\label{eq:common-envelope-bound}
\begin{aligned}
\sup_{n\ge1}
\left(
\norm{a_0^n}_{\dB^{\frac dp}_{p,1}(\Omega)}
+
\norm{u_0^n}_{\dB^{-1+\frac dp}_{p,1}(\Omega)}
\right)
+
\norm{a_0}_{\dB^{\frac dp}_{p,1}(\Omega)}
+
\norm{u_0}_{\dB^{-1+\frac dp}_{p,1}(\Omega)}
<\infty.
\end{aligned}
\end{equation}
\end{proposition}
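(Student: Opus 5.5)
The plan is to build \(\Omega\) from the uniform decay of the high-frequency tails of the convergent family of initial data. The set \(K:=\{(a_0^n,u_0^n)\}_{n\ge1}\cup\{(a_0,u_0)\}\) is compact in \(X_p\). For \(k\ge0\) define the common tail
\[
\tau_k:=\sup_{(f,g)\in K}\sum_{j\ge k}\Bigl(2^{j\frac dp}\norm{\dDelta_jf}_{L^p}+2^{j(-1+\frac dp)}\norm{\dDelta_jg}_{L^p}\Bigr).
\]
First I show \(\tau_k\to0\) as \(k\to\infty\). Fix \(\epsilon>0\) and choose \(n_0\) with \(\norm{(a_0^n-a_0,u_0^n-u_0)}_{X_p}<\epsilon\) for \(n>n_0\). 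The tail functional is subadditive and dominated by the \(X_p\) norm. Hence the tails of \((a_0^n,u_0^n)\) for \(n>n_0\) are at most \(\epsilon\) plus the tail of \((a_0,u_0)\). The finitely many remaining elements each have vanishing tail because they lie in \(X_p\). Therefore \(\limsup_k\tau_k\le\epsilon\). Note also that \(\tau_k\) is nonincreasing and finite, since \(K\) is bounded in \(X_p\).

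Next comes the discrete construction, which is the main point. Choose an increasing sequence \(0=k_0<k_1<k_2<\cdots\) with \(\tau_{k_i}\le 4^{-i}\). Let \(\theta_j:=\#\{i\ge1:k_i\le j\}\), so \(\theta_j\to\infty\). A step-function weight \(2^{\theta_j}\) would violate the growth condition \(\Omega_{j+1}\le2^{\delta_0}\Omega_j\), so I smooth it by taking the largest minorant with the admissible slope:
\[
\Omega_j:=\min_{k\le j}\Bigl(2^{\theta_k}\,2^{\delta_0(j-k)}\Bigr)\ \text{for } j>0,\qquad \Omega_j:=1\ \text{for } j\le0,
\]
where \(\theta_k=0\) for \(k\le0\). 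Then \(\Omega_j\ge1\). It is nondecreasing because both \(\theta\) and the slope factor are nondecreasing. It satisfies \(\Omega_{j+1}\le2^{\delta_0}\Omega_j\) because each candidate in the minimum grows by at most \(2^{\delta_0}\), and the new candidate \(k=j+1\) only lowers the minimum. This gives \(\Omega\in AF(\delta_0)\). It also satisfies \(\Omega_j\le2^{\theta_j}\) and \(\Omega_1\le2^{\delta_0}\), the latter from the candidate \(k=0\).

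To get divergence, fix \(L\) and let \(j\ge k_L\). The candidates with \(k\ge k_L\) are at least \(2^{L}\). The candidates with \(k<k_L\) are at least \(2^{\delta_0(j-k_L)}\), which exceeds \(2^L\) once \(j\) is large. Hence \(\Omega_j\ge2^L\) for all large \(j\), which proves \eqref{eq:Omega-diverges}.

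For the uniform bound \eqref{eq:common-envelope-bound}, take \((f,g)\in K\) and split the frequency sum into the ranges \(j<k_1\) and \(k_i\le j<k_{i+1}\). On the second range \(\Omega_j\le2^{\theta_j}=2^i\). Therefore
\[
\norm f_{\dB^{\frac dp}_{p,1}(\Omega)}+\norm g_{\dB^{-1+\frac dp}_{p,1}(\Omega)}\le\norm{(f,g)}_{X_p}+\sum_{i\ge1}2^i\tau_{k_i}\le\sup_K\norm{\cdot}_{X_p}+\sum_{i\ge1}2^{-i},
\]
and the right-hand side is uniform over \(K\). Here the low-frequency part \(j\le0\) is harmless because \(\Omega_j=1\) there.

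The only delicate steps are the uniform tail decay from compactness and reconciling the slow-growth condition with divergence. The minimum-over-slopes construction handles the latter, since the constraint only limits the growth rate of \(\Omega\), never the fact that it eventually becomes large.
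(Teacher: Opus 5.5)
Your proof is correct and follows essentially the same route as the paper. Both arguments use uniform decay of the high-frequency tails from the $\ell^1$ convergence of the dyadic sequences, then a sparse sequence of cut-offs $k_i$ with summable weighted tails, then a weight whose logarithm is the largest minorant of the step function $\theta_j$ with increments at most $\delta_0$. Your minimum over slopes is the closed form of the paper's recursion $\theta_{j+1}=\min\{\theta_j+\delta_0,K(j+1)\}$. The only cosmetic difference is that you include the limit datum in the supremum defining $\tau_k$, so you do not need the Fatou step the paper uses for $(a_0,u_0)$.
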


\begin{proof}
For \(n\ge1\), set
\[
c_j^n
:=
2^{j\frac dp}\norm{\dDelta_j a_0^n}_{L^p}
+
2^{j(-1+\frac dp)}\norm{\dDelta_j u_0^n}_{L^p}
\]
and 
\[
c_j
:=
2^{j\frac dp}\norm{\dDelta_j a_0}_{L^p}
+
2^{j(-1+\frac dp)}\norm{\dDelta_j u_0}_{L^p}.
\]
Then  \(c^n \to c \) in $l^1(\mathbb{Z})$.  Consequently, $
\{c^n : n \geq 1\} \cup \{c\}$ is compact in \(\ell^1(\mathbb{Z})\), and thus 
\[
    r_N := \sup_{n \geq 1} \sum_{j \geq N} c_j^n \to 0 \quad (N \to \infty).
\]
Choose $1 \leq N_1 < N_2 < \cdots$ such that
\[
r_{N_k} \leq 2^{-3k}.
\]
For \( j \in \mathbb{Z} \), define
\[
K(j) := \max\{k : N_k \leq j\},
\]
with \( K(j) = 0 \) if \( j < N_1 \). Let $\theta_j = 0 \quad (j \leq N_1),$
and for \( j \geq N_1 \)
\[
\theta_{j+1} := \min\{\theta_j + \delta_0, K(j+1)\}.
\]
Then
\[
0 \leq \theta_{j+1} - \theta_j \leq \delta_0,\quad\theta_j \leq K(j), \quad \theta_j \to \infty.
\]
Hence
\(\Omega_j:=2^{\theta_j}\) belongs to \(AF(\delta_0)\), satisfies
\(\Omega_1=1\), and \(\Omega_j\to\infty\) as \(j\to\infty\).

\noindent If \(N_k \leq j < N_{k+1}\), then $
\Omega_j \leq 2^k$. Therefore,
\begin{align*}
    \sup_{n \geq 1} \sum_{j \geq N_1} \Omega_j c_j^n &\leq \sum_{k \geq 1} 2^k \sup_{n \geq 1} \sum_{N_k \leq j < N_{k+1}}c_j^n\\
    &\leq \sum_{k \geq 1} 2^k r_{N_k} \leq \sum_{k \geq 1} 2^{-2k} < \infty.
\end{align*}
Since \(\Omega_j = 1\) for \(j < N_1\),
\[
\sup_{n \geq 1} \sum_{j < N_1} \Omega_j c_j^n \leq \sup_{n \geq 1} \| (a_0^n, u_0^n) \|_{X_p} < \infty.
\]
Thus,
\begin{equation}\label{weight c}
\sup_{n\ge1}
\sum_{j\in\Z}\Omega_jc_j^n
<\infty.
\end{equation}
Moreover, \(c_j^n\to c_j\) for every \(j\in\Z\). Fatou's lemma gives
\begin{equation}\label{weight c1}
\sum_{j\in\Z}\Omega_jc_j
\le
\liminf_{n\to\infty}
\sum_{j\in\Z}\Omega_jc_j^n
<\infty.
\end{equation}
Therefore, summing up \eqref{weight c} and  \eqref{weight c1} gives \eqref{eq:common-envelope-bound}.
\end{proof}

\subsubsection{Frequency-envelope weighted estimates for the variable-coefficient Lam\'e equation}\label{sec:lame}
\begin{proposition}
\label{prop:weighted-lame}
Assume that $d\ge 2, 1<p<2d$ and $\Omega=(\Omega_j)_{j\in\Z} \in AF(\delta_0)$. Let $\underline b$  and $\overline b$ be two positive constants, 
\(b(t,x)\) be real-valued function satisfying
\begin{equation}\label{eq:beta-assumptions}
0<\underline b\le b(t,x)\le\overline b,
\qquad
b-1\in \tdL^\infty_T(\dB^{\frac{d}{p}}_{p,1})
\cap \tdL^\infty_T(\dB^{\frac{d}{p}}_{p,1}(\Omega)).
\end{equation}
For \(m\in \mathbb{Z}\), define
\begin{align*}
&b_m:=1+\dS_m(b-1),
\qquad
b^h:=(\Id-\dS_m)(b-1),\quad\alpha=\underline b\min\{\mu,\lambda+2\mu\},\\ &H_m:=\norm{\nabla b^h}_{L^\infty_T(\dB^{-1+\frac{d}{p}}_{p,1})},
K_m(t):=\norm{\dS_m\nabla b(t)}_{\dB^{\frac{d}{p}}_{p,1}},\quad H_m^\Omega(t):=\norm{\nabla b^h(t)}_{\dB^{-1+\frac{d}{p}}_{p,1}(\Omega)}.  
\end{align*}
Moreover, for $(t,x)\in[0,T]\times\R^d$ assume that
\begin{equation}\label{eq:bm-uniform-ellipticity}
b_m(t,x)\ge\frac{\underline b}{2}
\end{equation}
and  $v\in\tdL^\infty_T(\dB^{-1+\frac{d}{p}}_{p,1})\cap  L^1_T(\dB^{1+\frac{d}{p}}_{p,1})$
is a  solution of
\begin{equation}\label{eq:linear-lame}
\partial_tv-b\cA v=f,
\qquad
v|_{t=0}=v_0
\end{equation}
with $v_0\in\dB^{-1+\frac{d}{p}}_{p,1}(\Omega)$ and $ f\in L^1_T(\dB^{-1+\frac{d}{p}}_{p,1}(\Omega))$. There exists $\eta_0>0$ depending  on $d,p,\delta_0,\mu$ and $\lambda$ such that if $H_m\le \eta_0\alpha$, then
\[
v\in
\tdL^\infty_T(\dB^{-1+\frac{d}{p}}_{p,1}(\Omega))
\cap
L^1_T(\dB^{1+\frac{d}{p}}_{p,1}(\Omega)),
\]
and
\begin{equation}\label{eq:weighted-lame-estimate}
\begin{aligned}
&\norm v_{\tdL^\infty_T(\dB^{-1+\frac{d}{p}}_{p,1}(\Omega))}
+\alpha
\norm v_{L^1_T(\dB^{1+\frac{d}{p}}_{p,1}(\Omega))}
\\
&\quad\le C_0 \exp\left(
\frac {C_1}{\alpha}\int_0^T K_m(t)^2\dd t
\right)
\Biggl(
\norm{v_0}_{\dB^{-1+\frac{d}{p}}_{p,1}(\Omega)}
+\norm f_{L^1_T(\dB^{-1+\frac{d}{p}}_{p,1}(\Omega))}
\\
&\hspace{43mm}
+\int_0^T
H_m^\Omega(t)
\norm{v(t)}_{\dB^{1+\frac{d}{p}}_{p,1}}
\dd t
\Biggr),
\end{aligned}
\end{equation}
where $C_0, C_1>0$ depends only on \(d,p,\delta_0,\mu,\lambda\) and the fixed Littlewood–Paley decomposition.
\end{proposition}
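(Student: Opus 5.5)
We localize in frequency and derive energy-type estimates for each dyadic block, then sum with the weights $\Omega_j$. Write the equation as
\[
\partial_t v-b_m\cA v=f+b^h\cA v .
\]
Applying $\dDelta_j$ gives
\[
\partial_t\dDelta_jv-b_m\cA\dDelta_jv=\dDelta_jf+\dDelta_j(b^h\cA v)+[\dDelta_j,b_m]\cA v .
\]
Here $b_m$ is smooth (frequencies $\lesssim 2^m$) and bounded below by $\underline b/2$ thanks to \eqref{eq:bm-uniform-ellipticity}.

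\textbf{Step 1: the localized $L^p$ estimate.} We follow Danchin's approach for variable-coefficient Lamé systems. Decompose $\dDelta_jv$ into its compressible part $\mathcal Q\dDelta_jv$ and incompressible part $\mathcal P\dDelta_jv$. On these parts, $b_m\cA$ acts like $b_m(\lambda+2\mu)\Delta$ and $b_m\mu\Delta$ respectively, modulo commutators $[\mathcal P,b_m]$ and $[\mathcal Q,b_m]$.

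Multiply by $|w_j|^{p-2}w_j$ and use the lower bound on $b_m$ together with the $L^p$ Bernstein-type positivity lemma for $-\Div(b_m\nabla\cdot)$. Integration by parts produces the terms $\nabla b_m\cdot\nabla w_j$, and we should obtain
\[
\frac{d}{dt}\norm{\dDelta_jv}_{L^p}+c\alpha2^{2j}\norm{\dDelta_jv}_{L^p}\lesssim \norm{\dDelta_jf}_{L^p}+\norm{\dDelta_j(b^h\cA v)}_{L^p}+R_j,
\]
where $R_j$ collects the commutator terms and the $\nabla b_m$ terms.

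\textbf{Step 2: the remainders $R_j$.} These involve only $\dS_m\nabla b$. By standard commutator estimates (Bony decomposition, using $\delta_0<\frac dp-1+\min(\frac dp,\frac d{p'})$ to sum remainders), we expect
\[
2^{j(\frac dp-1)}R_j\lesssim c_j(t)\,K_m(t)\,\norm{v}_{\dB^{\frac dp}_{p,1}},
\]
with $\sum_j\Omega_jc_j\lesssim$ the $\Omega$-weighted norm. The exchange of weights is handled through \eqref{eq:envelope-linear-convolution}. Interpolating
\[
\norm v_{\dB^{d/p}}\le\norm v_{\dB^{-1+d/p}}^{1/2}\norm v_{\dB^{1+d/p}}^{1/2}
\]
(in weighted form) and applying Young's inequality, we get
\[
K_m\norm v_{\dB^{d/p}(\Omega)}\le\frac{\alpha}{4}\norm v_{\dB^{1+d/p}(\Omega)}+\frac{C}{\alpha}K_m^2\norm v_{\dB^{-1+d/p}(\Omega)}.
\]
This is the origin of the factor $\exp\bigl(\frac{C_1}{\alpha}\int K_m^2\bigr)$ through Gronwall's lemma.

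\textbf{Step 3: the $b^h$ term.} Apply the weighted product estimate, splitting $b^h\cA v$ by paraproducts:
\[
\norm{b^h\cA v}_{\dB^{-1+d/p}(\Omega)}\lesssim \norm{\nabla b^h}_{\dB^{-1+d/p}}\norm v_{\dB^{1+d/p}(\Omega)}+\norm{\nabla b^h}_{\dB^{-1+d/p}(\Omega)}\norm v_{\dB^{1+d/p}}.
\]
We use $b^h\in\dB^{d/p}$ and $\nabla b^h$ only at degree $-1+d/p$, via homogeneity, since $b^h$ has no low frequencies. The placement of $\Omega$ on the higher-frequency factor uses \eqref{eq:weight-ratio-abstract}. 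The first piece is at most $H_m\norm v_{L^1(\dB^{1+d/p}(\Omega))}$ after time integration, and is absorbed by the left side when $H_m\le\eta_0\alpha$. The second piece is exactly the term $\int H_m^\Omega\norm v_{\dB^{1+d/p}}$ in \eqref{eq:weighted-lame-estimate}.

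\textbf{Step 4: summation and finiteness.} Integrate in time, multiply by $\Omega_j2^{j(-1+d/p)}$, sum over $j$, and take the supremum over time blockwise to obtain the Chemin–Lerner norm. Absorption requires the weighted norms of $v$ to be finite a priori. To ensure this, we run the argument with a truncated weight $\Omega^{(K)}_j=\Omega_{\min(j,K)}$, which is still in $AF(\delta_0)$ and equivalent to the unweighted norm, so everything is finite. The constants are uniform in $K$, and we conclude by monotone convergence as $K\to\infty$.

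\textbf{Main obstacle.} The main obstacle is Step 1–2: obtaining the $L^p$ parabolic gain $\alpha2^{2j}$ for the Lamé system with variable $b_m$ for all $1<p<2d$. The $\mathcal P/\mathcal Q$ splitting does not commute with multiplication by $b_m$. The natural first attempt is to commute $b_m$ with $\mathcal P$ and treat $[\mathcal P,b_m]\cA\dDelta_jv$ as a $K_m$-remainder. This works because the commutator gains one derivative on $\nabla\dS_mb$, but it must be bounded without any loss, uniformly in $m$.
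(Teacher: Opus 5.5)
Your proposal is correct and follows essentially the paper's route. It splits $b=b_m+b^h$ and reduces the Lam\'e operator to scalar divergence-form parabolic equations via order-zero compressible/incompressible multipliers, then applies the $L^p$ positivity lemma on dyadic blocks; it absorbs the $b^h$ contribution through $H_m\le\eta_0\alpha$ while retaining $\int_0^T H_m^\Omega(t)\|v(t)\|_{\dB^{1+\frac dp}_{p,1}}\,dt$, controls the $\nabla b_m$ terms by weighted interpolation, Young's inequality and Gronwall, and passes from bounded truncated weights to $\Omega$ by monotone convergence. The differences are only organizational. You localize with $\dDelta_j$ before applying $\mathcal P,\mathcal Q$ (rather than applying $R(D),C(D)$ first), so the projection commutators act on single blocks and need only an unweighted Calder\'on-type bound; and you treat $b^h\cA v$ as a forcing term via the mixed weighted product estimate instead of through the $[R(D),b^h]$ commutators, which slightly lightens the commutator machinery without changing the argument.
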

\begin{proof}
For \(L\ge1\), define the bounded truncated weight
\begin{equation}\label{eq:truncated-envelope-lame}
\Theta_j^{(L)}
:=
\min\{\Omega_j,L\},\,\, \qquad\Theta^{(L)}:=
(\Theta_j^{(L)})_{j\in\Z}.
\end{equation}
For simplicity, we write $\Theta=\Theta^{(L)}$ and $\Theta_j=\Theta^{(L)}_j$. By the definition, since $\Omega\in AF(\delta_0)$, the sequence $\Theta \in AF(\delta_0)$, with admissibility constants independent of $L$ and $1\le\Theta_j\le L$. For every \(\sigma\in\R\) and \(1\le q\le\infty\), one easily infer 
\(
\norm{z}_{\tdL^q_T
(\dB^\sigma_{p,1}(\Theta))}
\le
L\norm{z}_{\tdL^q_T
(\dB^\sigma_{p,1})}
\), which shall be used in the following.

\noindent Let \(d_v:=R(D)v\) and \(w_v:=C(D)v\), where
\[
R(D)v := \Lambda^{-1}\Div v,
\qquad
\bigl(C(D)v\bigr)_{ij}
:=
\Lambda^{-1}(\partial_i v_j-\partial_jv_i),
\quad 1\le i,j\le d,
\]
which follows that
\[
d_v:[0,T]\times\R^d\to\R,
\qquad
(w_v)_{ii}=0,
\qquad
(w_v)_{ji}=-(w_v)_{ij},
\quad 1\le i,j\le d.
\]
Applying \(R(D)\) and \(C(D)\) to
\eqref{eq:linear-lame} gives
\begin{equation}\label{eq:divergence-form}
\partial_td_v-(\lambda+2\mu)\Div(b_m\nabla d_v)
=
R(D)f+F_d,
\end{equation}
and
\begin{equation}\label{eq:curl-form}
\partial_tw_v-\mu\Div(b_m\nabla w_v)
=
C(D)f+F_w,
\end{equation}
where
\begin{equation}\label{eq:divergence-remainder}
\begin{aligned}
F_d={}&
(\lambda+2\mu)b^h\Delta d_v
-(\lambda+2\mu)\nabla b_m\cdot\nabla d_v
+\mu[R(D),b_m]\Delta v
+\mu[R(D),b^h]\Delta v
\\
&+(\lambda+\mu)[R(D),b_m]\nabla\Div v
+(\lambda+\mu)[R(D),b^h]\nabla\Div v
\end{aligned}
\end{equation}
and
\begin{equation}\label{eq:curl-remainder}
\begin{aligned}
F_w={}&
\mu b^h\Delta w_v
-\mu \nabla b_m\cdot\nabla w_v
+\mu[C(D),b_m]\Delta v
+\mu[C(D),b^h]\Delta v
\\
&-(\lambda+\mu)C(D)
\bigl((\nabla b_m)\Div v\bigr)
-(\lambda+\mu)C(D)
\bigl((\nabla b^h)\Div v\bigr).
\end{aligned}
\end{equation}
Define
\[
h_m(t):=
\norm{\nabla b^h(t)}_{\dB^{-1+\frac{d}{p}}_{p,1}},
\qquad
h_m^\Theta(t):=
\norm{\nabla b^h(t)}_{\dB^{-1+\frac{d}{p}}_{p,1}(\Theta)}.
\]
By \eqref{eq:high-principal},
\begin{equation}\label{eq:high-principal-bound}
\begin{aligned}
&\norm{b^h\Delta d_v}_{\dB^{-1+\frac{d}{p}}_{p,1}(\Theta)}
+
\norm{b^h\Delta w_v}_{\dB^{-1+\frac{d}{p}}_{p,1}(\Theta)}\lesssim h_m(t)\norm v_{\dB^{1+\frac{d}{p}}_{p,1}(\Theta)}.
\end{aligned}
\end{equation}
Since
$
K_m(t)
:=\norm{\dS_m\nabla b(t)}_{\dB^{\frac{d}{p}}_{p,1}}=\norm{\nabla b_m(t)}_{\dB^{\frac{d}{p}}_{p,1}}
$,
hence \eqref{eq:weighted-one-sided} gives
\begin{equation}\label{eq:low-gradient-bound}
\begin{aligned}
&\norm{\nabla b_m\cdot\nabla d_v}_{\dB^{-1+\frac{d}{p}}_{p,1}(\Theta)}
+
\norm{\nabla b_m\cdot\nabla w_v}_{\dB^{-1+\frac{d}{p}}_{p,1}(\Theta)}
\lesssim K_m(t)\norm v_{\dB^{\frac{d}{p}}_{p,1}(\Theta)}.
\end{aligned}
\end{equation}
By \eqref{eq:mult-low},
\begin{equation}\label{eq:low-commutator-bound}
\begin{aligned}
&\norm{[R(D),b_m]\Delta v}_{\dB^{-1+\frac{d}{p}}_{p,1}(\Theta)}
+
\norm{[R(D),b_m]\nabla\Div v}_{\dB^{-1+\frac{d}{p}}_{p,1}(\Theta)}
\\
&\qquad+
\norm{[C(D),b_m]\Delta v}_{\dB^{-1+\frac{d}{p}}_{p,1}(\Theta)}
\lesssim K_m(t)\norm v_{\dB^{\frac{d}{p}}_{p,1}(\Theta)}.
\end{aligned}
\end{equation}
By \eqref{eq:mult-high},
\begin{equation}\label{eq:high-commutator-bound}
\begin{aligned}
&\norm{[R(D),b^h]\Delta v}_{\dB^{-1+\frac{d}{p}}_{p,1}(\Theta)}
+
\norm{[R(D),b^h]\nabla\Div v}_{\dB^{-1+\frac{d}{p}}_{p,1}(\Theta)}
\\
&\qquad+
\norm{[C(D),b^h]\Delta v}_{\dB^{-1+\frac{d}{p}}_{p,1}(\Theta)}
\lesssim h_m(t)\norm v_{\dB^{1+\frac{d}{p}}_{p,1}(\Theta)}
+
h_m^\Theta(t)\norm v_{\dB^{1+\frac{d}{p}}_{p,1}}.
\end{aligned}
\end{equation}
Since \(C(D)\) is a homogeneous Fourier multiplier of
order zero, the low-frequency curl term satisfies
\begin{equation}\label{eq:low-curl-bound}
\begin{aligned}
&\norm{
C(D)
\bigl((\nabla b_m)\Div v\bigr)}
_{\dB^{-1+\frac{d}{p}}_{p,1}(\Theta)}
\lesssim K_m(t)\norm v_{\dB^{\frac{d}{p}}_{p,1}(\Theta)}.
\end{aligned}
\end{equation}
For the high-frequency curl term, \eqref{eq:high-gradient} gives
\begin{equation}\label{eq:high-curl-bound}
\begin{aligned}
&\norm{
C(D)
\bigl((\nabla b^h)\Div v\bigr)}
_{\dB^{-1+\frac{d}{p}}_{p,1}(\Theta)}
\lesssim
h_m(t)\norm v_{\dB^{1+\frac{d}{p}}_{p,1}(\Theta)}
+
h_m^\Theta(t)\norm v_{\dB^{1+\frac{d}{p}}_{p,1}}.
\end{aligned}
\end{equation}
Combining
\eqref{eq:high-principal-bound}--\eqref{eq:high-curl-bound}, we obtain
\begin{equation}\label{eq:combined-remainder-bound}
\begin{aligned}
&\norm{F_d(t)}_{\dB^{-1+\frac{d}{p}}_{p,1}(\Theta)}
+
\norm{F_w(t)}_{\dB^{-1+\frac{d}{p}}_{p,1}(\Theta)}
\\
&\lesssim
K_m(t)\norm{v(t)}_{\dB^{\frac{d}{p}}_{p,1}(\Theta)}
+
h_m(t)\norm{v(t)}_{\dB^{1+\frac{d}{p}}_{p,1}(\Theta)}
+
h_m^\Theta(t)\norm{v(t)}_{\dB^{1+\frac{d}{p}}_{p,1}}.
\end{aligned}
\end{equation}
Let \(z\) be any scalar component of \(d_v\) or \(w_v\). Then \(z\)
satisfies
\begin{equation}\label{eq:scalar-model}
\partial_tz-\nu\Div(b_m\nabla z)=g+F,
\qquad
\nu\in\{\lambda+2\mu ,\mu \},
\end{equation}
applying \(\dDelta_j\) to which yields
\begin{equation}\label{eq:localized-scalar-model}
\partial_tz_j-\nu\Div(b_m\nabla z_j)
=
g_j+F_j
+\nu\Div\bigl([\dDelta_j,b_m]\nabla z\bigr),
\end{equation}
where \(z_j:=\dDelta_jz\).
For \(z_j=\dDelta_jz\), one has
\[
\operatorname{supp}\widehat z_j
\subset
\left\{
\xi\in\R^d:
r_*2^j\le|\xi|\le R_*2^j
\right\},
\qquad
0<r_*<R_*,
\]
where \(r_*\) and \(R_*\) depend only on the fixed
Littlewood--Paley decomposition. Moreover,
\[
b_m(t,\cdot)\in C_b(\R^d),
\qquad
\inf_{x\in\R^d}b_m(t,x)
\ge\frac{\underline b}{2}.
\]
Hence \cite[Lemma~2.4]{{CMZ2010R}} gives
\begin{equation}\label{eq:localized-coercivity}
\begin{aligned}
-\nu\int_{\R^d}
\Div(b_m\nabla z_j)
|z_j|^{p-2}z_j\,\dd x
\ge
c_p\nu\underline b\,2^{2j}
\|z_j\|_{L^p}^p
\ge
c_p\alpha\,2^{2j}\|z_j\|_{L^p}^p,
\end{aligned}
\end{equation}
where \(c_p>0\) depends only on $d$, \, $p$ and the fixed
Littlewood--Paley decomposition.
\noindent After applying the standard regularization to \(\norm{z_j}_{L^p}\), we obtain
\begin{equation}\label{eq:dyadic-parabolic}
\begin{aligned}
\frac{d}{dt}\norm{z_j}_{L^p}
+c_p\alpha2^{2j}\norm{z_j}_{L^p}
\lesssim &
\norm{g_j}_{L^p}
+\norm{F_j}_{L^p}
+
\norm{
\Div\bigl([\dDelta_j,b_m]\nabla z\bigr)}_{L^p}.
\end{aligned}
\end{equation}
Multiplying \eqref{eq:dyadic-parabolic} by \(\Theta_j2^{j{(-1+\frac{d}{p})}}\), integrating over \([0,t]\) and
summing over \(j\in\Z\) give
\begin{equation}\label{eq:integrated-parabolic}
\begin{aligned}
&\norm z_{\tdL^\infty_t(\dB^{-1+\frac{d}{p}}_{p,1}(\Theta))}
+c_p\alpha\norm z_{L^1_t(\dB^{1+\frac{d}{p}}_{p,1}(\Theta))}
\\
&\lesssim
\norm{z_0}_{\dB^{-1+\frac{d}{p}}_{p,1}(\Theta)}
+\norm g_{L^1_t(\dB^{-1+\frac{d}{p}}_{p,1}(\Theta))}
+\norm F_{L^1_t(\dB^{-1+\frac{d}{p}}_{p,1}(\Theta))}
\\
&\quad+
\sum_j\Theta_j2^{j(-1+\frac{d}{p})}
\int_0^t
\norm{
\Div\bigl([\dDelta_j,b_m]\nabla z\bigr)}_{L^p}
\dd\tau.
\end{aligned}
\end{equation}
By \eqref{eq:comm-low},
\begin{equation}\label{eq:localized-commutator-bound}
\sum_j\Theta_j2^{j(-1+\frac{d}{p})}
\norm{
\Div\bigl([\dDelta_j,b_m]\nabla z\bigr)}_{L^p}
\lesssim K_m(t)\norm z_{\dB^{\frac{d}{p}}_{p,1}(\Theta)}.
\end{equation}
Combining \eqref{eq:integrated-parabolic},
\eqref{eq:localized-commutator-bound}, and
\eqref{eq:combined-remainder-bound} for \(d_v\) and \(w_v\), there exist constants
$c_0$ and $C_0>0$ depend only on
\(d,p,\delta_0,\mu,\lambda\), but independent of \(L\) and \(m\) such that
\begin{equation}\label{eq:lame-before-absorption}
\begin{aligned}
&\norm v_{\tdL^\infty_t
(\dB^{-1+\frac dp}_{p,1}(\Theta))}
+c_0\alpha
\norm v_{L^1_t
(\dB^{1+\frac dp}_{p,1}(\Theta))}\\
&\le
C_0\Biggl(
\norm{v_0}_{\dB^{-1+\frac dp}_{p,1}(\Theta)}
+
\norm f_{L^1_t
(\dB^{-1+\frac dp}_{p,1}(\Theta))}
+
\int_0^t
K_m(\tau)
\norm{v(\tau)}_{\dB^{\frac dp}_{p,1}(\Theta)}
\dd\tau
\\
&\qquad+
\int_0^t
h_m(\tau)
\norm{v(\tau)}_{\dB^{1+\frac dp}_{p,1}(\Theta)}
\dd\tau+
\int_0^t
h_m^\Theta(\tau)
\norm{v(\tau)}_{\dB^{1+\frac dp}_{p,1}}
\dd\tau
\Biggr).
\end{aligned}
\end{equation}
Choose $\eta_0:=\frac{c_0}{4C_0}.$ Since $h_m(t)\le H_m\le\eta_0\alpha,$
we have
\begin{equation}\label{eq:high-tail-absorption}
C_0\int_0^t
h_m(\tau)
\norm{v(\tau)}_{\dB^{1+\frac dp}_{p,1}(\Theta)}
\dd\tau
\le
\frac{c_0\alpha}{4}
\norm v_{L^1_t
(\dB^{1+\frac dp}_{p,1}(\Theta))}.
\end{equation}
With aid of $\norm v_{\dB^{\frac dp}_{p,1}(\Theta)}\lesssim\norm v_{\dB^{-1+\frac dp}_{p,1}(\Theta)}^{1/2}\norm v_{\dB^{1+\frac dp}_{p,1}(\Theta)}^{1/2}
$ and Young's inequality, there exists a constant \(C_1>0\), independent
of \(L\) and \(m\), such that
\begin{equation}\label{eq:weighted-young}
\begin{aligned}
C_0K_m(t)
\norm{v(t)}_{\dB^{\frac dp}_{p,1}(\Theta)}
\le
\frac{c_0\alpha}{4}
\norm{v(t)}_{\dB^{1+\frac dp}_{p,1}(\Theta)}
+
\frac{C_1}{\alpha}K_m(t)^2
\norm{v(t)}_{\dB^{-1+\frac dp}_{p,1}(\Theta)}.
\end{aligned}
\end{equation}
Substituting \eqref{eq:high-tail-absorption} and
\eqref{eq:weighted-young} into
\eqref{eq:lame-before-absorption}, we obtain
\begin{equation}\label{eq:lame-gronwall-form}
\begin{aligned}
&\norm v_{\tdL^\infty_t
(\dB^{-1+\frac dp}_{p,1}(\Theta))}
+
\frac{c_0\alpha}{2}
\norm v_{L^1_t
(\dB^{1+\frac dp}_{p,1}(\Theta))}\\
&\le
C_0\Biggl(
\norm{v_0}_{\dB^{-1+\frac dp}_{p,1}(\Theta)}
+
\norm f_{L^1_t
(\dB^{-1+\frac dp}_{p,1}(\Theta))}
+
\int_0^t
h_m^\Theta(\tau)
\norm{v(\tau)}_{\dB^{1+\frac dp}_{p,1}}
\dd\tau
\Biggr)
\\
&\quad+
\frac{C_1}{\alpha}
\int_0^t
K_m(\tau)^2
\norm{v(\tau)}_{\dB^{-1+\frac dp}_{p,1}(\Theta)}
\dd\tau.
\end{aligned}
\end{equation}
Therefore,  after enlarging \(C_0\) if necessary, Gronwall's inequality yields
\begin{equation}\label{eq:lame-final-bound}
\begin{aligned}
&\norm v_{\tdL^\infty_t
(\dB^{-1+\frac dp}_{p,1}(\Theta))}
+
\alpha
\norm v_{L^1_t
(\dB^{1+\frac dp}_{p,1}(\Theta))}
\\
&\le C_0
\exp\left(
\frac{C_1}{\alpha}
\int_0^tK_m(\tau)^2\dd\tau
\right)
\Biggl(
\norm{v_0}_{\dB^{-1+\frac dp}_{p,1}(\Theta)}
+
\norm f_{L^1_t
(\dB^{-1+\frac dp}_{p,1}(\Theta))}
\\
&\quad+
\int_0^t
h_m^\Theta(\tau)
\norm{v(\tau)}_{\dB^{1+\frac dp}_{p,1}}
\dd\tau
\Biggr).
\end{aligned}
\end{equation}
We now remove the smoothness assumption by a Friedrichs
approximation. For \(L\ge1\), set
\[
J_n:=\dS_n-\dS_{-n},
\qquad
v_n:=J_nv.
\]
 Applying \(J_n\) to
\eqref{eq:linear-lame}, we obtain 
\begin{equation}\label{eq:friedrichs-projected-lame}
\partial_tv_n-b\cA v_n
=
J_nf+[J_n,b]\cA v,
\qquad
v_n|_{t=0}=J_nv_0.
\end{equation}
For \(M\ge1\), set
\[
b^{(M)}
:=
1+\sum_{|k|\le M}\dDelta_k(b-1),
\qquad
r^{(M)}:=b-b^{(M)}.
\]
Since \(\Theta_j\le\Omega_j\) and $b-1\in\tdL^\infty_T(\dB^{\frac dp}_{p,1})\cap\tdL^\infty_T(\dB^{\frac dp}_{p,1}(\Omega))$, we thus have
\begin{equation}\label{eq:coefficient-double-approximation}
\begin{aligned}
\norm{r^{(M)}}_{\tdL^\infty_T
(\dB^{\frac dp}_{p,1})}
+
\norm{r^{(M)}}_{\tdL^\infty_T
(\dB^{\frac dp}_{p,1}(\Theta))}
\le
\norm{r^{(M)}}_{\tdL^\infty_T
(\dB^{\frac dp}_{p,1})}
+
\norm{r^{(M)}}_{\tdL^\infty_T
(\dB^{\frac dp}_{p,1}(\Omega))}
\rightarrow0
\end{aligned}
\end{equation}
as \(M\to\infty\), uniformly in \(L\).

\noindent For fixed \(M\), \eqref{eq:weighted-one-sided} gives
\[
\begin{aligned}
\norm{b^{(M)}\cA v}_{L^1_T
(\dB^{-1+\frac dp}_{p,1}(\Theta))}
\lesssim
\left(
1+
\norm{b^{(M)}-1}_{\tdL^\infty_T
(\dB^{\frac dp}_{p,1})}
\right)
\norm{ \cA v}_{L^1_T
(\dB^{-1+\frac dp}_{p,1}(\Theta))}.
\end{aligned}
\]
In view of 
\begin{equation}\label{eq:fixed-M-commutator-identity}
[J_n,b^{(M)}]\cA v
=
(J_n-\Id)(b^{(M)}\cA v)
-
b^{(M)}(J_n-\Id)\cA v,
\end{equation}
it follows that
\[
\begin{aligned}
&\norm{[J_n,b^{(M)}]\cA v}_{L^1_T
(\dB^{-1+\frac dp}_{p,1}(\Theta))}\\
&\lesssim
\norm{(J_n-\Id)(b^{(M)}\cA v)}_{L^1_T
(\dB^{-1+\frac dp}_{p,1}(\Theta))}
+
\norm{b^{(M)}(J_n-\Id)\cA v}_{L^1_T
(\dB^{-1+\frac dp}_{p,1}(\Theta))}
\\
&\lesssim
\norm{(J_n-\Id)(b^{(M)}\cA v)}_{L^1_T
(\dB^{-1+\frac dp}_{p,1}(\Theta))}+
\left(
1+
\norm{b^{(M)}-1}_{\tdL^\infty_T
(\dB^{\frac dp}_{p,1})}
\right)\\
&\quad\cdot
\norm{(J_n-\Id)\cA v}_{L^1_T
(\dB^{-1+\frac dp}_{p,1}(\Theta))},
\end{aligned}
\]
from which, we obtain
\begin{equation}\label{eq:fixed-M-commutator}
\lim\limits_{n\to\infty}\norm{[J_n,b^{(M)}]\cA v}_{L^1_T
(\dB^{-1+\frac dp}_{p,1}(\Theta))}=0.
\end{equation}
On the other hand, the uniform boundedness of \(J_n\) in Besov spaces and \eqref{eq:weighted-mixed} give
\begin{equation}\label{eq:remainder-M-commutator}
\begin{aligned}
&\norm{[J_n,r^{(M)}]\cA v}_{L^1_T
(\dB^{-1+\frac dp}_{p,1}(\Theta))}\\
&\lesssim
\norm{r^{(M)}}_{\tdL^\infty_T
(\dB^{\frac dp}_{p,1})}
\norm v_{L^1_T
(\dB^{1+\frac dp}_{p,1}(\Theta))}
+
\norm{r^{(M)}}_{\tdL^\infty_T
(\dB^{\frac dp}_{p,1}(\Theta))}
\norm v_{L^1_T
(\dB^{1+\frac dp}_{p,1})}.
\end{aligned}
\end{equation}

\noindent For the fixed \(L\), let \(\varepsilon>0\). By
\eqref{eq:coefficient-double-approximation} and
\eqref{eq:remainder-M-commutator}, we may choose \(M\) sufficiently
large so that
\begin{equation}\label{small one}
\begin{aligned}
\norm{[J_n,r^{(M)}]\cA v}_{L^1_T
(\dB^{-1+\frac dp}_{p,1}(\Theta))}
\le\varepsilon    \qquad
\text{uniformly in  } n.
\end{aligned}
\end{equation}
For this fixed \(M\),
\eqref{eq:fixed-M-commutator} gives an integer \(n_0\) such that
\begin{equation}\label{small two}
\begin{aligned}
\norm{[J_n,b^{(M)}]\cA v}_{L^1_T
(\dB^{-1+\frac dp}_{p,1}(\Theta))}
\le\varepsilon
\qquad
\text{for all }n\ge n_0.
\end{aligned}
\end{equation}
Since $[J_n,b]\cA v=[J_n,b^{(M)}]\cA v+[J_n,r^{(M)}]\cA v$, from \eqref{small one} and
\eqref{small two}, we obtain
\[
\norm{[J_n,b]\cA v}_{L^1_T
(\dB^{-1+\frac dp}_{p,1}(\Theta))}
\le2\varepsilon
\qquad
\text{for all }n\ge n_0,
\]
from which and \(\varepsilon>0\) is arbitrary, we obtain
\[
\lim\limits_{n\to\infty}
\norm{[J_n,b]\cA v}_{L^1_T
(\dB^{-1+\frac dp}_{p,1}(\Theta))}
=0.
\]
Applying \eqref{eq:lame-final-bound} to \(v_n\) and \eqref{eq:friedrichs-projected-lame}, we obtain
\begin{equation}\label{eq:friedrichs-weighted-estimate}
\begin{aligned}
&\norm {v_n}_{\tdL^\infty_T
(\dB^{-1+\frac dp}_{p,1}(\Theta))}
+\alpha
\norm {v_n}_{L^1_T
(\dB^{1+\frac dp}_{p,1}(\Theta))}\\
&\le
C_0\exp\left(
\frac{C_1}{\alpha}
\int_0^T K_m(t)^2\dd t
\right)
\Biggl(
\norm{J_nv_0}_{\dB^{-1+\frac dp}_{p,1}(\Theta)}\\
& \ \  \ \ +\norm{J_nf+[J_n,b]\cA v}_{L^1_T
(\dB^{-1+\frac dp}_{p,1}(\Theta))}
+\int_0^T
h_m^\Theta(t)
\norm{v_n(t)}_{\dB^{1+\frac dp}_{p,1}}
\dd t
\Biggr).
\end{aligned}
\end{equation}
For fixed \(L\), since \(\Theta_j\le L\),
\[
\begin{aligned}
&\norm{J_nv-v}_{\tdL^\infty_T
(\dB^{-1+\frac dp}_{p,1}(\Theta))}
+
\norm{J_nv-v}_{L^1_T
(\dB^{1+\frac dp}_{p,1}(\Theta))}
\\
&\le
L\left(
\norm{J_nv-v}_{\tdL^\infty_T
(\dB^{-1+\frac dp}_{p,1})}
+
\norm{J_nv-v}_{L^1_T
(\dB^{1+\frac dp}_{p,1})}
\right)
\longrightarrow0
\qquad(n\to\infty),
\end{aligned}
\]
where the convergence follows from  \(J_n\to\Id\) as $n\to\infty$. Similarly, we have
\(
\lim\limits_{n\to\infty}\norm{J_nv_0-v_0}_{\dB^{-1+\frac dp}_{p,1}(\Theta)}
=0
\)
and
\(
\lim\limits_{n\to\infty}\norm{J_nf-f}_{L^1_T
(\dB^{-1+\frac dp}_{p,1}(\Theta))}
=0
\). Moreover, since
\(
\norm{h_m^\Theta}_{L^\infty_T}
\le
\norm{\nabla b^h}_{\tdL^\infty_T
(\dB^{-1+\frac dp}_{p,1}(\Theta))}
\), consequently, we have
\[
\begin{aligned}
\int_0^T
h_m^\Theta(t)
\norm{J_nv(t)-v(t)}_{\dB^{1+\frac dp}_{p,1}}
\dd t
\le
\norm{h_m^\Theta}_{L^\infty_T}
\norm{J_nv-v}_{L^1_T
(\dB^{1+\frac dp}_{p,1})}
\rightarrow0 \qquad(n\to\infty).
\end{aligned}
\]
Passing to the limit \(n\to\infty\) in
\eqref{eq:friedrichs-weighted-estimate}, using the convergences
established above for \(v_n\), \(J_nv_0\), \(J_nf\), and
\([J_n,b]\cA v\), we obtain
\begin{equation}\label{eq:lame-truncated-envelope-bound}
\begin{aligned}
&\norm v_{\tdL^\infty_T
(\dB^{-1+\frac dp}_{p,1}(\Theta^{(L)}))}
+\alpha
\norm v_{L^1_T
(\dB^{1+\frac dp}_{p,1}(\Theta^{(L)}))}\\
&\le
C_0
\exp\left(
\frac{C_1}{\alpha}
\int_0^T K_m(t)^2\dd t
\right)
\Biggl(
\norm{v_0}_{\dB^{-1+\frac dp}_{p,1}(\Theta^{(L)})}\\
&\quad+\norm f_{L^1_T
(\dB^{-1+\frac dp}_{p,1}(\Theta^{(L)}))}
+\int_0^T
h_m^{\Theta^{(L)}}(t)
\norm{v(t)}_{\dB^{1+\frac dp}_{p,1}}
\dd t
\Biggr),
\end{aligned}
\end{equation}
where the constants $C_0$ and $C_1$ are independent of \(L\).

\noindent By the definition of \(\Theta^{(L)}\), for every \(j\in\Z\),
\[
\Theta_j^{(L)}
=
\min\{\Omega_j,L\}
\rightarrow
\Omega_j
\qquad(L\to\infty),
\]
and the convergence is monotone increasing.
\noindent Hence the monotone convergence theorem applied to the nonnegative
dyadic sums gives
\[
\begin{aligned}
\lim\limits_{L\to\infty}\norm v_{\tdL^\infty_T
(\dB^{-1+\frac dp}_{p,1}(\Theta^{(L)}))}
=
\norm v_{\tdL^\infty_T
(\dB^{-1+\frac dp}_{p,1}(\Omega))},
\\
\lim\limits_{L\to\infty}\norm v_{L^1_T
(\dB^{1+\frac dp}_{p,1}(\Theta^{(L)}))}
=
\norm v_{L^1_T
(\dB^{1+\frac dp}_{p,1}(\Omega))},
\end{aligned}
\]
\[
\begin{aligned}
\lim\limits_{L\to\infty}\norm{v_0}_{\dB^{-1+\frac dp}_{p,1}(\Theta^{(L)})}
=
\norm{v_0}_{\dB^{-1+\frac dp}_{p,1}(\Omega)},
\\
\lim\limits_{L\to\infty}\norm f_{L^1_T
(\dB^{-1+\frac dp}_{p,1}(\Theta^{(L)}))}
=
\norm f_{L^1_T
(\dB^{-1+\frac dp}_{p,1}(\Omega))}.
\end{aligned}
\]
 Moreover, for almost every \(t\in[0,T]\),
\[
\lim\limits_{L\to\infty}h_m^{\Theta^{(L)}}(t)
=
H_m^\Omega(t).
\]
The monotone convergence theorem gives
\[
\begin{aligned}
\lim\limits_{L\to\infty}\int_0^T
h_m^{\Theta^{(L)}}(t)
\norm{v(t)}_{\dB^{1+\frac dp}_{p,1}}
\dd t
=
\int_0^T
H_m^\Omega(t)
\norm{v(t)}_{\dB^{1+\frac dp}_{p,1}}
\dd t.
\end{aligned}
\]
From above estimates and letting \(L\to\infty\) in
\eqref{eq:lame-truncated-envelope-bound}, we obtain \eqref{eq:weighted-lame-estimate}. This completes
the proof.
\end{proof}

\subsubsection{Frequency-envelope weighted estimates for uniform solutions}
\label{sec:envelope-persistence}
\begin{proposition}[Uniform boundedness]
\label{prop:envelope-persistence}
Let \(d\ge2\), \(1<p<2d\), and
\(\Omega\in AF(\delta_0)\). Let \((a,u)\) be a solution of
\eqref{eq:1.2} on \([0,T]\), where \(0<T\le T_0\le1\), satisfying
\eqref{eq:uniform-local-bounds}. Assume that
\eqref{eq:coefficient-splitting}--\eqref{eq:coefficient-tail-small}
hold with the common cutoff \(m=m(T)\) furnished by
\cref{cor:common-m}. If
\[
(a_0,u_0)
\in
\dot B^{\frac dp}_{p,1}(\Omega)
\times
\dot B^{-1+\frac dp}_{p,1}(\Omega),
\]
then
\begin{equation}\label{eq:envelope-persistence}
\begin{aligned}
\norm{a}_{\tdL^\infty_T
(\dB^{\frac dp}_{p,1}(\Omega))}
+
\norm{u}_{\tdL^\infty_T
(\dB^{-1+\frac dp}_{p,1}(\Omega))}
+
\norm{u}_{L^1_T
(\dB^{1+\frac dp}_{p,1}(\Omega))}
\lesssim
\norm{a_0}_{\dB^{\frac dp}_{p,1}(\Omega)}
+
\norm{u_0}_{\dB^{-1+\frac dp}_{p,1}(\Omega)}
,
\end{aligned}
\end{equation}
 where the implicit constant is independent of  \(T\), \(m\).
\end{proposition}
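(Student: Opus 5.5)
We couple a weighted transport estimate for $a$ with \cref{prop:weighted-lame} for $u$, and close by a continuity argument on a short time interval. The main difficulty is that the weighted norms themselves are not small. Only the unweighted quantities, namely $W(t)=\norm{u}_{L^1_t(\dB^{1+\frac dp}_{p,1})}$ and the tail $b^h$, are controlled by \cref{lem:uniform-local} and \cref{cor:common-m}. The estimate must therefore be \emph{linear} in the weighted norms, with coefficients built from unweighted quantities. To keep all norms finite during the argument, we first work with the truncated weights $\Theta^{(L)}=\min\{\Omega,L\}$, exactly as in the proof of \cref{prop:weighted-lame}. Since $\Theta^{(L)}$ is bounded by $L$, each weighted norm is finite by \eqref{eq:uniform-local-bounds}. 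We then let $L\to\infty$ by monotone convergence.

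\textbf{Density.} We apply the transport estimate of \cref{lem:transport-estimate} in the weighted space $\dB^{\frac dp}_{p,1}(\Theta)$ to $\partial_t a+u\cdot\nabla a=-(1+a)\Div u$. The weight satisfies $\Theta_j/\Theta_k\le 2^{\delta_0|j-k|}$ and $\delta_0<1$, so the weighted commutator estimates \eqref{eq:envelope-linear-convolution}--\eqref{eq:envelope-bilinear-convolution} apply. This should give
\[
\norm{a}_{\tdL^\infty_t(\dB^{\frac dp}_{p,1}(\Theta))}\le e^{CW(t)}\Big(\norm{a_0}_{\dB^{\frac dp}_{p,1}(\Theta)}+\int_0^t\big(\norm{u}_{\dB^{1+\frac dp}_{p,1}(\Theta)}(1+\norm{a}_{\dB^{\frac dp}_{p,1}})+\norm{u}_{\dB^{1+\frac dp}_{p,1}}\norm{a}_{\dB^{\frac dp}_{p,1}(\Theta)}\big)d\tau\Big).
\]
Here the commutator $[\dDelta_j,u\cdot\nabla]a$ produces a term $\norm{\nabla u}_{\dB^{\frac dp}_{p,1}(\Theta)}\norm{a}_{\dB^{\frac dp}_{p,1}}$. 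This is the place where $p<2d$ and the constraint on $\delta_0$ enter.

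\textbf{Velocity.} We apply \cref{prop:weighted-lame} with $b=(1+a)^{-1}$ and $f=-u\cdot\nabla u-\nabla G(a)$. Its hypotheses are supplied as follows. The bound $H_m\le\eta_0\alpha$ comes from \cref{rem:common-m}. Next, $K_m(t)\lesssim 2^{m}\norm{b-1}_{\dB^{\frac dp}_{p,1}}$, so $\int_0^T K_m^2\lesssim 2^{2m}T\,M^2\lesssim M^2/c_*$ by \eqref{eq:common-m-time-bound}; this is precisely why $m=m(T)$ is chosen as it is, and it makes the exponential factor a constant independent of $T$ and $m$. By a weighted composition estimate (the $\Omega$-analogue of \eqref{eq:composition}, proved by the same paralinearization with the ratio bound \eqref{eq:weight-ratio-abstract}), we get $H^\Theta_m(t)\lesssim\norm{b-1}_{\dB^{\frac dp}_{p,1}(\Theta)}\lesssim C(M)\norm{a}_{\dB^{\frac dp}_{p,1}(\Theta)}$. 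Hence
\[
\int_0^T H^\Theta_m\norm{u}_{\dB^{1+\frac dp}_{p,1}}\le C(M)\,W(T)\,\norm{a}_{\tdL^\infty_T(\dB^{\frac dp}_{p,1}(\Theta))}.
\]
For the forcing, weighted product laws give
\[
\norm{u\cdot\nabla u}_{\dB^{-1+\frac dp}_{p,1}(\Theta)}\lesssim \norm{u}_{\dB^{-1+\frac dp}_{p,1}(\Theta)}\norm{u}_{\dB^{1+\frac dp}_{p,1}}+\norm{u}_{\dB^{-1+\frac dp}_{p,1}}\norm{u}_{\dB^{1+\frac dp}_{p,1}(\Theta)}.
\]
The second term is not small in general because of the factor $M$. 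To absorb it, we interpolate
\[
\norm{u}_{\dB^{\frac dp}_{p,1}(\Theta)}\lesssim\norm{u}_{\dB^{-1+\frac dp}_{p,1}(\Theta)}^{1/2}\norm{u}_{\dB^{1+\frac dp}_{p,1}(\Theta)}^{1/2}
\]
and estimate $u\cdot\nabla u$ alternatively by $\norm{u}_{\dB^{\frac dp}_{p,1}}\norm{u}_{\dB^{\frac dp}_{p,1}(\Theta)}$ plus symmetric terms. Young's inequality then leaves Gronwall terms with coefficient $\norm{u}_{\dB^{\frac dp}_{p,1}}^2$, which is integrable in time with integral at most $CM^2$. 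Finally, since $T\le1$,
\[
\norm{\nabla G(a)}_{L^1_T(\dB^{-1+\frac dp}_{p,1}(\Theta))}\lesssim T\,C(M)\norm{a}_{\tdL^\infty_T(\dB^{\frac dp}_{p,1}(\Theta))}.
\]

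\textbf{Closing.} Set
\[
Z(t)=\norm{a}_{\tdL^\infty_t(\dB^{\frac dp}_{p,1}(\Theta))}+\norm{u}_{\tdL^\infty_t(\dB^{-1+\frac dp}_{p,1}(\Theta))}+\alpha\norm{u}_{L^1_t(\dB^{1+\frac dp}_{p,1}(\Theta))}.
\]
The density estimate contains a term $C\int_0^t\norm{u}_{\dB^{1+\frac dp}_{p,1}(\Theta)}$ with an $O(1)$ coefficient, so it cannot be absorbed directly. We handle it by multiplying the density inequality by a small constant $\epsilon$ before adding it to the velocity inequality. The velocity inequality does not contain $\norm{u}_{L^1(\Theta)}$ on its right-hand side except through terms already absorbed, while the remaining terms $W(t)Z$ and $\int(\cdots)Z$ are handled by Gronwall with exponents depending only on $M$. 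The result is $Z(T)\le C(M)Z(0)$, uniformly in $L$, $T$ and $m$, and letting $L\to\infty$ gives \eqref{eq:envelope-persistence}.

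The main obstacle is the pair of $O(1)$ cross terms: $\norm{u}_{\dB^{-1+\frac dp}_{p,1}}\norm{u}_{\dB^{1+\frac dp}_{p,1}(\Theta)}$ in the convection, and the density source $\norm{u}_{\dB^{1+\frac dp}_{p,1}(\Theta)}$. Neither comes with a small factor. We intend to handle the first by interpolation and Young, and the second by the $\epsilon$-weighted combination. If the convection cross term resists, the fallback is to shrink $T$ so that $\norm{u}_{\tdL^\infty_T(\dB^{-1+\frac dp}_{p,1})}$ is replaced by a time-weighted smallness, using $\omega_T$ as in \cref{prop:time-density-small}.
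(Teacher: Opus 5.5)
Your proposal is correct and follows the paper's proof in its essentials. Both use truncated weights $\Theta^{(L)}$, \cref{prop:weighted-lame} for $u$ with $\int_0^T K_m^2\dd t\lesssim 2^{2m}T\le C$ and $H_m\le\eta_0\alpha$, a weighted transport estimate for $a$, weighted composition for $b-1$ and $G(a)$, Gronwall, and finally $L\to\infty$; for the transport step the paper cites \cref{lem:static-weighted-transport}, where only the unweighted $\int_0^t\norm{\nabla u}_{\dB^{\frac dp}_{p,\infty}\cap L^\infty}\dd\tau$ enters, so your extra commutator term is harmless but unnecessary.

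The convection ``obstacle'' you flag does not arise, because the one-sided law \eqref{eq:weighted-one-sided} (admissible since $-1+\frac dp\le\frac dp-\delta_0$) gives $\norm{u\cdot\nabla u}_{\dB^{-1+\frac dp}_{p,1}(\Theta)}\lesssim\norm{u}_{\dB^{1+\frac dp}_{p,1}}\norm{u}_{\dB^{-1+\frac dp}_{p,1}(\Theta)}$ directly. The paper also handles the density's $\norm{u}_{L^1_t(\dB^{1+\frac dp}_{p,1}(\Theta))}$ term by substituting the velocity bound rather than by $\epsilon$-weighting. Your interpolation/Young and $\epsilon$ devices work too, provided the $H^\Theta_m$ and pressure contributions are kept in the integral form $\int_0^t\bigl(1+\norm{u(\tau)}_{\dB^{1+\frac dp}_{p,1}}\bigr)Z(\tau)\dd\tau$ rather than written as $W(t)Z(t)$.
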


\begin{proof}
Let \(C_{\mathrm{der}}>0\) be a constant such that \(
\|\nabla f\|_{\dot B^{-1+\frac dp}_{p,1}}
\le
C_{\mathrm{der}}
\|f\|_{\dot B^{\frac dp}_{p,1}}
\). We choose the parameter \(\eta_b\) in
\cref{cor:common-m} such that
\begin{equation}\label{eq:envelope-eta-b-choice}
C_{\mathrm{der}}\eta_b
\le
\eta_0\alpha,
\end{equation}
where $\alpha:=\underline b\min\{\mu,\lambda+2\mu\}$ and  \(\eta_0\) is the constant in
\cref{prop:weighted-lame}. 

 Fix \(L\ge1\) and write
\(\Theta=\Theta^{(L)}\) (see \eqref{eq:truncated-envelope-lame}). Since \(1\le\Theta_j\le L\), all the weighted norms below are finite
for fixed \(L\) by \eqref{eq:uniform-local-bounds}. By \(\dB^{\frac dp}_{p,1}\hookrightarrow L^\infty\), 
\cref{lem:CL-envelope-composition} and \eqref{eq:uniform-local-bounds}, we have
\begin{equation}\label{eq:composition-envelope-persistence}
\begin{aligned}
\norm{b-1}_{\tdL^\infty_t
(\dB^{\frac dp}_{p,1}(\Theta))}
+
\norm{G(a)-G(0)}_{\tdL^\infty_t
(\dB^{\frac dp}_{p,1}(\Theta))}
\lesssim
\norm{a}_{\tdL^\infty_t
(\dB^{\frac dp}_{p,1}(\Theta))}
\end{aligned}
\end{equation}
and
\begin{equation}\label{eq:Hm-envelope-persistence}
H_m^\Theta(t)
=
\norm{\nabla b^h(t)}
_{\dB^{-1+\frac dp}_{p,1}(\Theta)}
\lesssim
\norm{a}_{\tdL^\infty_t
(\dB^{\frac dp}_{p,1}(\Theta))}.
\end{equation}
By \eqref{eq:coefficient-tail-small} and
\eqref{eq:envelope-eta-b-choice},
\[
\begin{aligned}
H_m
=
\|\nabla b^h\|_{L^\infty_T
(\dot B^{-1+\frac dp}_{p,1})}
\le
C_{\mathrm{der}}
\|b^h\|_{\widetilde L^\infty_T
(\dot B^{\frac dp}_{p,1})}
\le
C_{\mathrm{der}}\eta_b
\le
\eta_0\alpha.
\end{aligned}
\]
Together with $b_m\ge\frac{\underline b}{2},$
which follows from \cref{cor:common-m}, this verifies the coefficient
assumptions of \cref{prop:weighted-lame}.

\noindent Bernstein's inequality and
\eqref{eq:uniform-local-bounds} imply
\[
K_m(t)
=
\norm{\dS_m\nabla b(t)}_{\dB^{\frac dp}_{p,1}}\lesssim 2^m\norm{b(t)-1}_{
\dB^{\frac dp}_{p,1}}
\lesssim2^m,
\]
from which, by \eqref{eq:common-m-time-bound}, we have
\begin{equation}\label{eq:lame-exponential-envelope-persistence}
\frac{C_1}{\alpha}
\int_0^T K_m(t)^2\,\dd t
\le
\frac{CT2^{2m}}{\alpha}
\le C,
\end{equation}
where \(C\) is independent of \(T\), \(m\).

\noindent We apply \cref{prop:weighted-lame} to the velocity equation
\[
\partial_tu-b\cA u
=
-u\cdot\nabla u-\nabla G(a),
\qquad
u|_{t=0}=u_0.
\]
By \eqref{eq:weighted-one-sided} and
\eqref{eq:composition-envelope-persistence},
\[
\begin{aligned}
\norm{u\cdot\nabla u}
_{\dB^{-1+\frac dp}_{p,1}(\Theta)}
&\lesssim
\norm{u}_{\dB^{1+\frac dp}_{p,1}}
\norm{u}_{\dB^{-1+\frac dp}_{p,1}(\Theta)},
\\
\norm{\nabla G(a)}
_{\dB^{-1+\frac dp}_{p,1}(\Theta)}
&\lesssim
\norm{a}_{\tdL^\infty_t
(\dB^{\frac dp}_{p,1}(\Theta))}.
\end{aligned}
\]
Inserting these estimates, together with
\eqref{eq:Hm-envelope-persistence} and
\eqref{eq:lame-exponential-envelope-persistence}, into
\eqref{eq:weighted-lame-estimate}, we obtain
\begin{equation}\label{eq:velocity-envelope-persistence}
\begin{aligned}
\norm{u}_{\tdL^\infty_t
(\dB^{-1+\frac dp}_{p,1}(\Theta))}
&+
\norm{u}_{L^1_t
(\dB^{1+\frac dp}_{p,1}(\Theta))}\lesssim
\norm{u_0}_{\dB^{-1+\frac dp}_{p,1}(\Theta)}
\\
&+
\int_0^t
\left(
1+\norm{u(\tau)}_{\dB^{1+\frac dp}_{p,1}}
\right)
\Bigg(
\norm{a}_{\tdL^\infty_\tau
(\dB^{\frac dp}_{p,1}(\Theta))}
\\
&
+
\norm{u}_{\tdL^\infty_\tau
(\dB^{-1+\frac dp}_{p,1}(\Theta))}
+
\norm{u}_{L^1_\tau
(\dB^{1+\frac dp}_{p,1}(\Theta))}
\Bigg)\,\dd\tau .
\end{aligned}
\end{equation}
We next apply \cref{lem:static-weighted-transport} to the density equation
\[
\partial_ta+u\cdot\nabla a
=
-(1+a)\operatorname{div}u,
\qquad
a|_{t=0}=a_0.
\]
The index condition
\eqref{eq:static-transport-index-range} follows from
\(0<\delta_0<1\). By \eqref{eq:weighted-endpoint},
\[
\begin{aligned}
\norm{(1+a)\operatorname{div}u}
_{\dB^{\frac dp}_{p,1}(\Theta)}\lesssim
\left(
1+\norm{a}_{\dB^{\frac dp}_{p,1}}
\right)
\norm{u}_{\dB^{1+\frac dp}_{p,1}(\Theta)}
+
\norm{a}_{\dB^{\frac dp}_{p,1}(\Theta)}
\norm{u}_{\dB^{1+\frac dp}_{p,1}}.
\end{aligned}
\]
Moreover, \eqref{eq:uniform-local-bounds} yields
\[
\int_0^T
\norm{\nabla u(t)}
_{\dB^{\frac dp}_{p,\infty}\cap L^\infty}
\,\dd t
\lesssim
\norm{u}_{L^1_T
(\dB^{1+\frac dp}_{p,1})}
\le M.
\]
It follows from \eqref{eq:static-weighted-transport} that
\begin{equation}\label{eq:density-envelope-persistence}
\begin{aligned}
\norm{a}_{\tdL^\infty_t
(\dB^{\frac dp}_{p,1}(\Theta))}
\lesssim
\norm{a_0}_{\dB^{\frac dp}_{p,1}(\Theta)}
+
\norm{u}_{L^1_t
(\dB^{1+\frac dp}_{p,1}(\Theta))}
+
\int_0^t
\norm{u(\tau)}_{\dB^{1+\frac dp}_{p,1}}
\norm{a}_{\tdL^\infty_\tau
(\dB^{\frac dp}_{p,1}(\Theta))}
\,\dd\tau .
\end{aligned}
\end{equation}
Combining
\eqref{eq:velocity-envelope-persistence} and
\eqref{eq:density-envelope-persistence}, we obtain
\begin{equation}\label{eq:combined-envelope-persistence}
\begin{aligned}
&\norm{a}_{\tdL^\infty_t
(\dB^{\frac dp}_{p,1}(\Theta))}
+
\norm{u}_{\tdL^\infty_t
(\dB^{-1+\frac dp}_{p,1}(\Theta))}
+
\norm{u}_{L^1_t
(\dB^{1+\frac dp}_{p,1}(\Theta))}\lesssim
\norm{a_0}_{\dB^{\frac dp}_{p,1}(\Theta)}
+
\norm{u_0}_{\dB^{-1+\frac dp}_{p,1}(\Theta)}
\\
&+
\int_0^t
\left(
1+
\norm{u(\tau)}_{\dB^{1+\frac dp}_{p,1}}
\right)
\Bigg(
\norm{a}_{\tdL^\infty_\tau
(\dB^{\frac dp}_{p,1}(\Theta))}
+
\norm{u}_{\tdL^\infty_\tau
(\dB^{-1+\frac dp}_{p,1}(\Theta))}
+
\norm{u}_{L^1_\tau
(\dB^{1+\frac dp}_{p,1}(\Theta))}
\Bigg)\,\dd\tau,
\end{aligned}
\end{equation}
from which, by Gronwall's inequality and \eqref{eq:uniform-local-bounds}, we have
\begin{equation}\label{eq:truncated-envelope-persistence}
\begin{aligned}
\norm{a}_{\tdL^\infty_T
(\dB^{\frac dp}_{p,1}(\Theta))}
+
\norm{u}_{\tdL^\infty_T
(\dB^{-1+\frac dp}_{p,1}(\Theta))}
+
\norm{u}_{L^1_T
(\dB^{1+\frac dp}_{p,1}(\Theta))}
\lesssim 
\norm{a_0}_{\dB^{\frac dp}_{p,1}(\Theta)}
+
\norm{u_0}_{\dB^{-1+\frac dp}_{p,1}(\Theta)},
\end{aligned}
\end{equation}
  where the implicit constant is independent of \(L\), \(T\), \(m\). Since
\[
\Theta_j^{(L)}
=
\min\{\Omega_j,L\}
\rightarrow
\Omega_j
\qquad(L\to\infty)
\]
monotonically, letting \(L\to\infty\) in
\eqref{eq:truncated-envelope-persistence} and using the monotone
convergence theorem yields \eqref{eq:envelope-persistence}.
\end{proof}

\section{Continuous dependence}\label{sec:continuity}

Let \((a_0^n, u_0^n) \to (a_0, u_0)\) in \(X_p\), and denote the corresponding solutions by \((a^n, u^n)\) and \((a, u)\). Upon possibly shrinking the common time \(T\), these solutions satisfy the conclusions of  \cref{lem:uniform-local}, \cref{cor:common-m} and \cref{lem:common-envelope}.
\subsection{Small-tail estimates at high frequencies}
\begin{proposition}\label{cor:uniform-tails}
Let \(r>0\), \(0<T\le T_0\), and
\(\mathscr S_{r,T}\) be as in \cref{cor:common-m} for this choice of
\(\eta_b\). Let \(\Omega\in AF(\delta_0)\) satisfy $
\Omega_j\longrightarrow\infty
\quad(\text{as }j\to\infty),
$ and let \(M_\Omega>0\). Set
\begin{equation}\label{eq:uniform-initial-envelope-bound}
\mathcal S
:=
\left\{
(a,u)\in\mathscr S_{r,T}:
\norm{a_0}_{\dB^{\frac dp}_{p,1}(\Omega)}
+
\norm{u_0}_{\dB^{-1+\frac dp}_{p,1}(\Omega)}
\le M_\Omega
\right\}.
\end{equation}
Then there exists a nonincreasing sequence
\((\varepsilon_N)_{N\in\Z}\), independent of the particular solution,
such that
\[
\varepsilon_N\rightarrow0
\qquad\text{as }N\to\infty,
\]
and
\begin{equation}\label{eq:uniform-tails}
\begin{aligned}
\sup_{(a,u)\in\mathcal S}\Bigl(
&
\norm{(\Id-\dS_N)a}
_{\tdL^\infty_T(\dB^{\frac dp}_{p,1})}
+
\norm{(\Id-\dS_N)u}
_{\tdL^\infty_T(\dB^{-1+\frac dp}_{p,1})}
\\
&+
\norm{(\Id-\dS_N)u}
_{L^1_T(\dB^{1+\frac dp}_{p,1})}
+
\norm{(\Id-\dS_N)(b(a)-1)}
_{\tdL^\infty_T(\dB^{\frac dp}_{p,1})}
\\
&+
\norm{(\Id-\dS_N)(G(a)-G(0))}
_{\tdL^\infty_T(\dB^{\frac dp}_{p,1})}
\Bigr)
\le\varepsilon_N .
\end{aligned}
\end{equation}
\end{proposition}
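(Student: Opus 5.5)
The plan is to derive the tail bounds directly from the uniform envelope estimate of \cref{prop:envelope-persistence}, using the fact that \(\Omega_j\to\infty\). Every \((a,u)\in\mathcal S\) belongs to \(\mathscr S_{r,T}\). Hence it satisfies \eqref{eq:uniform-local-bounds}, and the coefficient splitting \eqref{eq:coefficient-splitting}--\eqref{eq:coefficient-tail-small} holds with the common cutoff \(m=m(T)\). Its initial datum lies in \(\dB^{\frac dp}_{p,1}(\Omega)\times\dB^{-1+\frac dp}_{p,1}(\Omega)\) with norm at most \(M_\Omega\). Therefore \eqref{eq:envelope-persistence} applies, and since its implicit constant is independent of \(T\), \(m\) and the particular solution, we get
\[
\sup_{(a,u)\in\mathcal S}\Bigl(\norm{a}_{\tdL^\infty_T(\dB^{\frac dp}_{p,1}(\Omega))}+\norm{u}_{\tdL^\infty_T(\dB^{-1+\frac dp}_{p,1}(\Omega))}+\norm{u}_{L^1_T(\dB^{1+\frac dp}_{p,1}(\Omega))}\Bigr)\le CM_\Omega=:M_1 .
\]

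Next, I would convert the weight into a tail bound. Let \(N_0\) be the Littlewood--Paley overlap constant, so that \(\dDelta_j(\Id-\dS_N)f=0\) for \(j\le N-N_0\), and \(\norm{\dDelta_j(\Id-\dS_N)f}_{L^p}\lesssim\sum_{|k-j|\le1}\norm{\dDelta_kf}_{L^p}\) for all \(j\). Since \(\Omega\) is nondecreasing, for any Chemin--Lerner norm we obtain
\[
\norm{(\Id-\dS_N)f}_{\tdL^q_T(\dB^\sigma_{p,1})}\lesssim\sum_{j\ge N-N_0-1}2^{j\sigma}\norm{\dDelta_jf}_{L^q_T(L^p)}\le\frac{1}{\Omega_{N-N_0-1}}\norm{f}_{\tdL^q_T(\dB^\sigma_{p,1}(\Omega))}.
\]
Applied to \(a\) and \(u\), this bounds the first three terms of \eqref{eq:uniform-tails} by \(CM_1/\Omega_{N-N_0-1}\). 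For the \(L^1_T\) norm I use \(L^1_T(\dB^{\sigma}_{p,1})=\tdL^1_T(\dB^\sigma_{p,1})\) by Fubini.

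For the composition terms, I would argue as follows. By \eqref{eq:uniform-local-bounds} and the embedding \(\dB^{\frac dp}_{p,1}\hookrightarrow L^\infty\), all \(a\in\mathcal S\) take values in a common compact subset of \((-1,\infty)\). After extending \(z\mapsto(1+z)^{-1}-1\) and \(z\mapsto G(z)-G(0)\) smoothly off this set, \cref{lem:CL-envelope-composition} gives \(\norm{b(a)-1}_{\tdL^\infty_T(\dB^{\frac dp}_{p,1}(\Omega))}+\norm{G(a)-G(0)}_{\tdL^\infty_T(\dB^{\frac dp}_{p,1}(\Omega))}\lesssim\norm a_{\tdL^\infty_T(\dB^{\frac dp}_{p,1}(\Omega))}\le M_1\). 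This is exactly as in \eqref{eq:composition-envelope-persistence}, except that here it is applied with the full weight \(\Omega\), either directly or via the truncations \(\Theta^{(L)}\) and monotone convergence. The constant depends only on \(M\), \(\kappa_0\) and \(\delta_0\). The same weight-to-tail inequality then bounds the last two terms.

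Finally, I would set \(\varepsilon_N:=C'M_1\sup_{N'\ge N}\Omega_{N'-N_0-1}^{-1}=C'M_1/\Omega_{N-N_0-1}\), which is nonincreasing in \(N\), independent of the particular solution, and tends to \(0\) as \(N\to\infty\) because \(\Omega_j\to\infty\). For \(N\) small one may replace it by a finite uniform bound coming from \eqref{eq:uniform-local-bounds}; this is harmless. The main obstacle I anticipate is making sure the composition estimate in the weighted Chemin--Lerner space holds with the unbounded weight \(\Omega\) and with constants uniform over \(\mathcal S\). I would handle this through the truncated weights \(\Theta^{(L)}\), whose admissibility constants do not depend on \(L\), and then let \(L\to\infty\) by monotone convergence.
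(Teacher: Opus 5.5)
Your proposal is correct and follows the paper's proof essentially step for step. You obtain the uniform $\Omega$-weighted bounds from \cref{prop:envelope-persistence}, control $b(a)-1$ and $G(a)-G(0)$ via the weighted composition estimate of \cref{lem:CL-envelope-composition}, and convert weight to tail using monotonicity of $\Omega$, which gives $\varepsilon_N\sim C/\Omega_{N-N_0}$. Your index shift to $N-N_0-1$ and the optional truncation by $\Theta^{(L)}$ are harmless refinements.
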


\begin{proof}
Since \(\mathcal S\subset\mathscr S_{r,T}\), every
\((a,u)\in\mathcal S\) satisfies
\eqref{eq:uniform-local-bounds} and the common coefficient splitting
\eqref{eq:coefficient-splitting}--%
\eqref{eq:coefficient-tail-small}, with the same cutoff \(m=m(T)\).
Hence all the assumptions of
\cref{prop:envelope-persistence} are satisfied uniformly over
\(\mathcal S\).

\noindent It follows from
\cref{prop:envelope-persistence} and
\eqref{eq:uniform-initial-envelope-bound} that
\begin{equation}\label{eq:uniform-weighted-solution-bound}
\begin{aligned}
\sup_{(a,u)\in\mathcal S}\Bigl(
\norm{a}_{\tdL^\infty_T
(\dB^{\frac dp}_{p,1}(\Omega))}
+
\norm{u}_{\tdL^\infty_T
(\dB^{-1+\frac dp}_{p,1}(\Omega))}+
\norm{u}_{L^1_T
(\dB^{1+\frac dp}_{p,1}(\Omega))}
\Bigr)
\lesssim M_{\Omega}.
\end{aligned}
\end{equation}

\noindent By \cref{lem:CL-envelope-composition},
\eqref{eq:uniform-local-bounds}, and
\eqref{eq:uniform-weighted-solution-bound}, we have
\begin{equation}\label{eq:uniform-weighted-composition-bound}
\begin{aligned}
\sup_{(a,u)\in\mathcal S}\Bigl(
\norm{b(a)-1}_{\tdL^\infty_T
(\dB^{\frac dp}_{p,1}(\Omega))}
+
\norm{G(a)-G(0)}_{\tdL^\infty_T
(\dB^{\frac dp}_{p,1}(\Omega))}
\Bigr)
\lesssim M_{\Omega}.
\end{aligned}
\end{equation}
Define \(Q_N:=\Id-\dS_N\). Then there is an integer \(N_0\ge1\) with the property that for every \(j,k,N\in\mathbb Z\),
\[
\dDelta_jQ_N\dDelta_k=0
\]
unless both \(|j-k|\le N_0\) and \(k\ge N-N_0\). As a consequence, for each \(\sigma\in\mathbb R\) and \(1\le q\le\infty\), we have
\begin{equation}\label{tail-from-envelope-Linfty}
\begin{aligned}
\norm{Q_Nz}_{\tdL^q_T(\dB^\sigma_{p,1})}
&\lesssim
\sum_{j\in\Z}
\sum_{\substack{|k-j|\le N_0\\ k\ge N-N_0}}
2^{j\sigma}
\norm{\dDelta_kz}_{L^q_T(L^p)}
\\
&\lesssim
\frac{1}{\Omega_{N-N_0}}
\sum_{k\ge N-N_0}
\Omega_k2^{k\sigma}
\norm{\dDelta_kz}_{L^q_T(L^p)}\lesssim
\frac{1}{\Omega_{N-N_0}}
\norm{z}_{\tdL^q_T
(\dB^\sigma_{p,1}(\Omega))}.
\end{aligned}
\end{equation}
Taking \(q=\infty\) and \(q=1\), respectively, in
\eqref{tail-from-envelope-Linfty}, we obtain
\begin{equation}\label{eq:tail-from-envelope-Linfty}
\norm{Q_Nz}_{\tdL^\infty_T(\dB^\sigma_{p,1})}
\lesssim
\frac{1}{\Omega_{N-N_0}}
\norm{z}_{\tdL^\infty_T
(\dB^\sigma_{p,1}(\Omega))}
\end{equation}
and
\begin{equation}\label{eq:tail-from-envelope-Lone}
\norm{Q_Nz}_{L^1_T(\dB^\sigma_{p,1})}
\lesssim
\frac{1}{\Omega_{N-N_0}}
\norm{z}_{L^1_T
(\dB^\sigma_{p,1}(\Omega))}.
\end{equation}
Combining
\eqref{eq:uniform-weighted-solution-bound},
\eqref{eq:uniform-weighted-composition-bound},
\eqref{eq:tail-from-envelope-Linfty}, and
\eqref{eq:tail-from-envelope-Lone}, we obtain a constant \(C>0\), independent of \(N\) and of the
particular solution, such that
\[
\begin{aligned}
\sup_{(a,u)\in\mathcal S}\Bigl(
&
\norm{Q_Na}_{\tdL^\infty_T(\dB^{\frac dp}_{p,1})}
+
\norm{Q_Nu}_{\tdL^\infty_T
(\dB^{-1+\frac dp}_{p,1})}
+
\norm{Q_Nu}_{L^1_T
(\dB^{1+\frac dp}_{p,1})}
\\
&+
\norm{Q_N(b(a)-1)}
_{\tdL^\infty_T(\dB^{\frac dp}_{p,1})}
+
\norm{Q_N(G(a)-G(0))}
_{\tdL^\infty_T(\dB^{\frac dp}_{p,1})}
\Bigr)
\le
\frac{C}{\Omega_{N-N_0}}.
\end{aligned}
\]
Set \(\varepsilon_N := C/ \Omega_{N-N_0}\). As \(\Omega\) is nondecreasing, \((\varepsilon_N)_{N \in \mathbb{Z}}\) is nonincreasing. Combined with the fact that \(\lim_{N \to \infty} \Omega_{N-N_0} = \infty\), this yields \(\varepsilon_N \to 0\) as \(N \to \infty\), thereby establishing \eqref{eq:uniform-tails}.
\end{proof}

\subsection{The velocity difference estimate in \(L^1_T(L^\infty)\)}

Let \((a_i,u_i)\), \(i=1,2\), be two solutions issued from the common
initial-data neighborhood. Let \(X_i\) be the flow associated with
\(u_i\), and set
\[
\bar a_i:=a_i\circ X_i,
\qquad
\bar u_i:=u_i\circ X_i.
\]
By \cref{theo 2.2}, we have
\begin{equation}\label{eq:lagrangian-Lipschitz}
\begin{aligned}
\norm{\bar a_2-\bar a_1}
_{L^\infty_T(\dB^{\frac dp}_{p,1})}
+
\norm{\bar u_2-\bar u_1}
_{L^\infty_T(\dB^{-1+\frac dp}_{p,1})}+
\norm{\bar u_2-\bar u_1}
_{L^1_T(\dB^{1+\frac dp}_{p,1})}\le
C_T
\norm{
(a_{2,0}-a_{1,0},u_{2,0}-u_{1,0})
}_{X_p}.
\end{aligned}
\end{equation}

\begin{proposition}\label{lem:L1Linfty}
Under the above assumptions, there exists a constant \(C_T>0\),
independent of the two solutions, such that
\begin{equation}\label{eq:L1Linfty}
\norm{u_2-u_1}_{L^1_T(L^\infty)}
\le
C_T
\norm{(a_{2,0}-a_{1,0},u_{2,0}-u_{1,0})}_{X_p}.
\end{equation}
\end{proposition}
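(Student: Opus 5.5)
Since $u_i=\bar u_i\circ Y_i$ with $Y_i:=X_i(t,\cdot)^{-1}$, the plan is to split
\[
u_2-u_1=(\bar u_2-\bar u_1)\circ Y_2+\bigl(\bar u_1\circ Y_2-\bar u_1\circ Y_1\bigr).
\]
The first term is controlled directly in $L^\infty$. Composition with a homeomorphism preserves the sup norm, so
\[
\norm{(\bar u_2-\bar u_1)\circ Y_2}_{L^1_T(L^\infty)}=\norm{\bar u_2-\bar u_1}_{L^1_T(L^\infty)}.
\]
We then use $\dB^{\frac dp}_{p,1}\hookrightarrow L^\infty$ and the interpolation inequality already used in Step~4 of the proof of \cref{lem:eulerian-lagrangian-high-norm}:
\[
\norm{f}_{L^1_T(\dB^{\frac dp}_{p,1})}\lesssim T^{1/2}\norm f_{L^\infty_T(\dB^{-1+\frac dp}_{p,1})}^{1/2}\norm f_{L^1_T(\dB^{1+\frac dp}_{p,1})}^{1/2}.
\]
Applied to $f=\bar u_2-\bar u_1$, together with \eqref{eq:lagrangian-Lipschitz}, this bounds the first term by $C_T\norm{(a_{2,0}-a_{1,0},u_{2,0}-u_{1,0})}_{X_p}$.

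For the second term we use the mean value theorem:
\[
|\bar u_1(t,Y_2(t,x))-\bar u_1(t,Y_1(t,x))|\le \norm{\nabla\bar u_1(t)}_{L^\infty}\norm{Y_2(t)-Y_1(t)}_{L^\infty}.
\]
Integrating in time gives
\[
\norm{\bar u_1\circ Y_2-\bar u_1\circ Y_1}_{L^1_T(L^\infty)}\le \norm{\bar u_1}_{L^1_T(\dB^{1+\frac dp}_{p,1})}\sup_{t\le T}\norm{Y_2(t)-Y_1(t)}_{L^\infty},
\]
and the first factor is uniformly bounded on the neighborhood by \cref{theo 2.2}. It therefore remains to show
\[
\sup_{t\le T}\norm{Y_2(t)-Y_1(t)}_{L^\infty}\lesssim \norm{(a_{2,0}-a_{1,0},u_{2,0}-u_{1,0})}_{X_p}.
\]

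The comparison of the inverse flows is the main step. First,
\[
X_2(t,y)-X_1(t,y)=\int_0^t(\bar u_2-\bar u_1)(\tau,y)\,\dd\tau,
\]
so $\sup_t\norm{X_2-X_1}_{L^\infty}\le\norm{\bar u_2-\bar u_1}_{L^1_T(L^\infty)}$, which was bounded above. Next, fix $x$ and set $y_i=Y_i(t,x)$. Then $X_1(y_1)=x=X_2(y_2)$, and hence
\[
X_1(y_1)-X_1(y_2)=X_2(y_2)-X_1(y_2).
\]
By Step~1 of \cref{lem:eulerian-lagrangian-high-norm}, $X_1$ is bi-Lipschitz with $|X_1(y)-X_1(y')|\ge\frac12|y-y'|$. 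This requires $\norm{\bar u_i}_{L^1_T(\dB^{1+\frac dp}_{p,1})}\le c$, which we may assume after shrinking $T$ and the neighborhood, exactly as in \cref{lem:uniform-local}. It yields
\[
|Y_1(t,x)-Y_2(t,x)|\le 2\norm{X_2(t)-X_1(t)}_{L^\infty}.
\]

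Combining the three estimates gives \eqref{eq:L1Linfty}. The only delicate points are the following.
\begin{enumerate}
\item The flows must be bi-Lipschitz uniformly over the neighborhood. This is the shrinking step above, uniform thanks to \eqref{eq:small-W} and the Lagrangian Lipschitz bound.
\item $u_i=\bar u_i\circ Y_i$ must hold pointwise. This follows from the construction in \cref{theo 2.2}, since $\bar u_i\in L^1_T(W^{1,\infty})$ is continuous in $x$.
\end{enumerate}
No derivative loss occurs, because only $L^\infty$ norms of the differences are involved.
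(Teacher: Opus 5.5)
Your argument is correct, but it is organized differently from the paper's. The paper never introduces the inverse flows. It precomposes with the forward flow $X_2(t,\cdot)$, writing $(u_2-u_1)\circ X_2=(\bar u_2-\bar u_1)+(u_1\circ X_1-u_1\circ X_2)$, and applies the mean value theorem to the Eulerian velocity $u_1$. This gives at once
\[
\|u_2-u_1\|_{L^1_T(L^\infty)}\le\bigl(1+\|\nabla u_1\|_{L^1_T(L^\infty)}\bigr)\|\bar u_2-\bar u_1\|_{L^1_T(L^\infty)},
\]
with $\|\nabla u_1\|_{L^1_T(L^\infty)}\lesssim M$ by \eqref{eq:uniform-local-bounds}. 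What you call the main step, the inverse-flow comparison $\|Y_2-Y_1\|_{L^\infty}\le 2\|X_2-X_1\|_{L^\infty}$, is therefore bypassed; the paper only uses that $X_2(t,\cdot)$ is a homeomorphism of $\R^d$. In exchange, your route keeps the gradient factor on the Lagrangian side ($\nabla\bar u_1$ rather than $\nabla u_1$), so that factor is controlled directly by \cref{theo 2.2}. Finally, the bi-Lipschitz issue you flag needs no further shrinking of $T$, which would in any case be awkward for a statement with $T$ fixed. The smallness of the Lagrangian velocities in $L^1_T(\dB^{1+\frac dp}_{p,1})$ required in Step~1 of \cref{lem:eulerian-lagrangian-high-norm} is already built into the choice of $T_0$ in the proof of \cref{lem:uniform-local}. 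Independently of that, any Lipschitz flow satisfies $|X_1(t,y)-X_1(t,y')|\ge e^{-\|\nabla u_1\|_{L^1_t(L^\infty)}}|y-y'|$, which would merely replace your factor $2$ by $e^{CM}$.
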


\begin{proof}
Following the ideas of \cite[Page 15]{GuoYangZhang2026}, the velocity difference estimate \eqref{eq:L1Linfty} can be obtained, we present its proof here for completeness.
The standard interpolation inequality yields
\[
\begin{aligned}
\norm{\bar u_2-\bar u_1}
_{L^2_T(\dB^{\frac dp}_{p,1})}\lesssim
\norm{\bar u_2-\bar u_1}
_{L^\infty_T(\dB^{-1+\frac dp}_{p,1})}^{\frac12}
\norm{\bar u_2-\bar u_1}
_{L^1_T(\dB^{1+\frac dp}_{p,1})}^{\frac12}.
\end{aligned}
\]
Consequently, \eqref{eq:lagrangian-Lipschitz} and 
\(\dB^{\frac dp}_{p,1}\hookrightarrow L^\infty\) yield
\begin{equation}\label{eq:bar-L1Linfty}
\begin{aligned}
\norm{\bar u_2-\bar u_1}_{L^1_T(L^\infty)}
&\lesssim
T^{\frac12}
\norm{\bar u_2-\bar u_1}
_{L^2_T(\dB^{\frac dp}_{p,1})}
\\
&\le
C_T
\norm{(a_{2,0}-a_{1,0},u_{2,0}-u_{1,0})}_{X_p}.
\end{aligned}
\end{equation}
Because
\[
X_i(t,y)
=
y+\int_0^t\bar u_i(\tau,y)\dd\tau,
\qquad i=1,2,
\]
from which, we  have
\begin{equation}\label{eq:exact-lagrangian-flow-difference}
X_2(t,y)-X_1(t,y)
=
\int_0^t
(\bar u_2-\bar u_1)(\tau,y)\dd\tau
\end{equation}
and 
\begin{equation}\label{eq:flow-difference-from-lagrangian-difference}
\norm{X_2-X_1}_{L^\infty_T(L^\infty)}
\le
\norm{\bar u_2-\bar u_1}_{L^1_T(L^\infty)}.
\end{equation}
For every \(t\in[0,T]\), the map \(X_2(t,\cdot)\) is a
bi-Lipschitz diffeomorphism of \(\R^d\). Therefore,
\[
\begin{aligned}
\norm{u_2(t)-u_1(t)}_{L^\infty}
&=
\norm{
u_2(t)\circ X_2(t)
-
u_1(t)\circ X_2(t)
}_{L^\infty}
\\
&\le
\norm{\bar u_2(t)-\bar u_1(t)}_{L^\infty}
\\
&\quad+
\norm{
u_1(t)\circ X_1(t)
-
u_1(t)\circ X_2(t)
}_{L^\infty}
\\
&\le
\norm{\bar u_2(t)-\bar u_1(t)}_{L^\infty}
+
\norm{\nabla u_1(t)}_{L^\infty}
\norm{X_2(t)-X_1(t)}_{L^\infty},
\end{aligned}
\]
from which and using
\eqref{eq:flow-difference-from-lagrangian-difference}, we obtain
\[
\begin{aligned}
\norm{u_2-u_1}_{L^1_T(L^\infty)}
&\le
\left(
1+
\norm{\nabla u_1}_{L^1_T(L^\infty)}
\right)
\norm{\bar u_2-\bar u_1}_{L^1_T(L^\infty)}
\\
&\lesssim
\left(
1+
\norm{u_1}_{L^1_T(\dB^{1+\frac dp}_{p,1})}
\right)
\norm{\bar u_2-\bar u_1}_{L^1_T(L^\infty)}.
\end{aligned}
\]
Consequently, \eqref{eq:uniform-local-bounds} and
\eqref{eq:bar-L1Linfty} give
\[
\begin{aligned}
\norm{u_2-u_1}_{L^1_T(L^\infty)}
&\lesssim
(1+
\norm{u_1}_{L^1_T(\dB^{1+\frac dp}_{p,1})})
\norm{\bar u_2-\bar u_1}_{L^1_T(L^\infty)}
\\
&\le
C_T
\norm{
(a_{2,0}-a_{1,0},u_{2,0}-u_{1,0})
}_{X_p}.
\end{aligned}
\]
This proves \eqref{eq:L1Linfty}.
\end{proof}
\subsection{Low-frequency stability}
\begin{proposition}\label{prop:low-frequency-stability}
Let \(\eta_b>0\) be chosen sufficiently small in
\cref{cor:common-m}, and let
\(\mathcal S\) and \((\varepsilon_N)_{N\in\Z}\) be as in
\cref{cor:uniform-tails} for this choice of \(\eta_b\).
For \((a_i,u_i)\in\mathcal S\), \(i=1,2\), set
\[
\delta a:=a_2-a_1,
\qquad
\delta u:=u_2-u_1,
\]
and
\[
\delta a_0:=a_{2,0}-a_{1,0},
\qquad
\delta u_0:=u_{2,0}-u_{1,0}.
\]
Then, for every \(N\in\Z\),
\begin{equation}\label{eq:low-frequency-stability}
\begin{aligned}
\norm{\dS_N\delta a}
_{\tdL^\infty_T(\dB^{\frac dp}_{p,1})}
&+
\norm{\dS_N\delta u}
_{\tdL^\infty_T(\dB^{-1+\frac dp}_{p,1})}
+
\norm{\dS_N\delta u}
_{L^1_T(\dB^{1+\frac dp}_{p,1})}
\\
&\le
C_N\left(
\norm{\delta a_0}_{\dB^{\frac dp}_{p,1}}
+
\norm{\delta u_0}_{\dB^{-1+\frac dp}_{p,1}}
+
\norm{\delta u}_{L^1_T(L^\infty)}
\right)
+
C\varepsilon_N.
\end{aligned}
\end{equation}
Here \(C\) is independent of \(N\) and of the particular pair of
solutions, whereas \(C_N\) may additionally depend on \(N\).
\end{proposition}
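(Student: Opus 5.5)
The plan is to write the system satisfied by $(\delta a,\delta u)$, localize it with $\dS_N$, and close a Gronwall-type estimate for the low-frequency quantity
\[
Z_N(t):=\norm{\dS_N\delta a}_{\tdL^\infty_t(\dB^{\frac dp}_{p,1})}+\norm{\dS_N\delta u}_{\tdL^\infty_t(\dB^{-1+\frac dp}_{p,1})}+\norm{\dS_N\delta u}_{L^1_t(\dB^{1+\frac dp}_{p,1})}.
\]
Every contribution of high frequencies (of $\delta a$, $\delta u$, or of the coefficients $a_i,b_i,G(a_i)$) is paid for by the uniform tails of \cref{cor:uniform-tails}. Since $Q_N\delta a=Q_Na_2-Q_Na_1$, and similarly for $\delta u$, $b_2-b_1$ and $G(a_2)-G(a_1)$, all high-frequency parts of the differences are bounded by $2\varepsilon_N$ in the relevant norms, uniformly on $\mathcal S$. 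The differences satisfy
\[
\begin{aligned}
&\partial_t\delta a+u_2\cdot\nabla\delta a=-\delta u\cdot\nabla a_1-\delta a\,\Div u_2-(1+a_1)\Div\delta u,\\
&\partial_t\delta u-b_2\cA\delta u=(b_2-b_1)\cA u_1-u_2\cdot\nabla\delta u-\delta u\cdot\nabla u_1-\nabla\bigl(G(a_2)-G(a_1)\bigr).
\end{aligned}
\]
I would apply $\dS_N$ to both equations.

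\textbf{Velocity.} I would treat $\dS_N\delta u$ with \cref{prop:weighted-lame} taking $\Omega\equiv1$. The coefficient hypotheses ($b_{2,m}\ge\underline b/2$, $H_m\le\eta_0\alpha$, and $\frac{C_1}{\alpha}\int_0^TK_m^2\le C$) hold by \cref{cor:common-m} and \cref{rem:common-m}, as in \cref{prop:envelope-persistence}. The extra forcing is the commutator $[\dS_N,b_2]\cA\delta u$. I would split $b_2-1=\dS_{N}(b_2-1)+Q_N(b_2-1)$:
\begin{itemize}
\item the smooth part gives a commutator bounded by $C_N\norm{\delta u}_{\dB^{\frac dp}_{p,1}}$, using the gradient bound $2^N$ and the commutator estimate \eqref{eq:comm-low};
\item the rough part is bounded via \eqref{eq:mult-high} by $\varepsilon_N\norm{\delta u}_{\dB^{1+\frac dp}_{p,1}}$, which is integrable with a uniform bound by \eqref{eq:uniform-local-bounds}.
\end{itemize}
Each occurrence of a full $\delta u$ or $\delta a$ is then split as $\dS_N+Q_N$; the $Q_N$ pieces produce $C\varepsilon_N$ after multiplication by uniformly bounded norms of $(a_i,u_i)$. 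For the source terms, $(b_2-b_1)\cA u_1$ and $\nabla(G(a_2)-G(a_1))$ are bounded by products of $\norm{\dS_N\delta a}_{\dB^{\frac dp}_{p,1}}+\varepsilon_N$ with $1+\norm{u_1}_{\dB^{1+\frac dp}_{p,1}}$; here I use the composition lemma for differences, written as $\delta a\int_0^1F'(a_1+s\delta a)\dd s$. Interpolation plus Young then absorbs $\norm{\dS_N\delta u}_{\dB^{\frac dp}_{p,1}}$ into the parabolic gain. The transport terms $u_2\cdot\nabla\delta u$ and $\delta u\cdot\nabla u_1$ are handled the same way.

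\textbf{Density.} I would use \cref{lem:transport-estimate} on $\dS_N\delta a$, writing the commutator $[\dS_N,u_2\cdot\nabla]\delta a$ with the standard critical commutator bound $\norm{\nabla u_2}_{\dB^{\frac dp}_{p,1}}\norm{\delta a}_{\dB^{\frac dp}_{p,1}}$, and again splitting $\delta a$ into low part plus $\varepsilon_N$. The term $(1+a_1)\Div\delta u$ costs $\norm{\dS_N\delta u}_{L^1_t(\dB^{1+\frac dp}_{p,1})}$, which is already controlled by the velocity estimate, plus an $\varepsilon_N$ tail. The main obstacle is $\delta u\cdot\nabla a_1$, which loses a derivative on $a_1$. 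Here I would split
\[
\nabla a_1=\nabla\dS_{N}a_1+\nabla Q_Na_1 .
\]
The high part is bounded in $\dB^{\frac dp}_{p,1}$ after applying $\dS_N$, using the paraproduct/remainder structure and Bernstein at frequency $\le2^N$, by $C_N\,\varepsilon_N\norm{\delta u}_{\dB^{1+\frac dp}_{p,1}}$. Because this carries an $N$-dependent constant, I would instead use the $\tdL^\infty_T(\dB^{\frac dp}_{p,1}(\Omega))$ control of $a_1$ to keep the constant $N$-independent, as in \eqref{tail-from-envelope-Linfty}. The low part $\nabla\dS_Na_1$ is smooth, with every $\dB^{s}_{p,1}$ norm bounded by $C_N$. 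For the product with $\delta u$, Bernstein on $\dS_N$ lets me measure the output in a negative-regularity norm. I would try first to bound it by $C_N\norm{\delta u}_{L^\infty}\norm{\nabla\dS_Na_1}_{\dB^{0}_{p,1}\cap\dB^{\frac dp}_{p,1}}$, since multiplication by $L^\infty$ preserves $L^p$ and the output frequencies are capped at $2^N$. This is where the term $\norm{\delta u}_{L^1_T(L^\infty)}$ enters, and it is bounded by \cref{lem:L1Linfty}. The low frequencies $j\to-\infty$ of the output need care because of the homogeneous weight $2^{j\frac dp}$; I expect to use $\dB^0_{p,1}\supset$ the needed $L^p$-type control together with the positivity of $d/p$ to sum them.

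Finally, I would add the two estimates. Every remaining term has the form
\[
\bigl(1+\norm{u_1(t)}_{\dB^{1+\frac dp}_{p,1}}+\norm{u_2(t)}_{\dB^{1+\frac dp}_{p,1}}+C_N\bigr)Z_N(t),
\]
or $C_N\times$(initial data, or $\norm{\delta u}_{L^1_TL^\infty}$), or $C\varepsilon_N\times$(uniformly integrable weights). Gronwall on $[0,T]$, using \eqref{eq:uniform-local-bounds}, then yields \eqref{eq:low-frequency-stability}. The $N$-dependence enters only through $e^{C_NT}$ multiplying the data and the $L^1L^\infty$ term. I still need to ensure that the $\varepsilon_N$ contributions keep an $N$-independent constant. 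For this I would route them through weights independent of $N$, and not let $C_N$ multiply them; if needed, I would weaken the claim to $C_N\varepsilon_N$-free bookkeeping by applying the Gronwall factor only to the low-frequency part.
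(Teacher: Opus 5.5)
Your overall architecture matches the paper's: the difference system, localization by $\dS_N$, tails from \cref{cor:uniform-tails}, the $L^1_T(L^\infty)$ bound from \cref{lem:L1Linfty}, substituting the velocity bound into the density bound, and Gronwall. But there is a genuine gap. Your argument does not yield the constant $C$ in front of $\varepsilon_N$ independent of $N$. The cause is that you split $b_2-1$ at frequency $N$ in the commutator $[\dS_N,b_2]\cA\delta u$.

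\textbf{The commutator and the $N$-dependence.} With the split at $N$, the smooth piece costs $\norm{\nabla\dS_N b_2}_{\dB^{\frac dp}_{p,1}}\norm{\delta u}_{\dB^{\frac dp}_{p,1}}$, which is in general only $\lesssim 2^N\norm{\delta u}_{\dB^{\frac dp}_{p,1}}$. This has two consequences.
\begin{enumerate}
\item After interpolation and Young, it puts $C2^{2N}/\alpha$ into the Gronwall weight; this is your ``$C_N$'' in the weight.
\item Its tail part $2^N\int_0^T\norm{(\Id-\dS_N)\delta u}_{\dB^{\frac dp}_{p,1}}\dd t$ is only $\lesssim 2^NT^{1/2}\varepsilon_N$.
\end{enumerate}
Gronwall multiplies every source in the inequality for $Z_N$ by $e^{C2^{2N}T}$, including the $\varepsilon_N$ sources. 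So your claim that the $N$-dependence enters only through the data and $\norm{\delta u}_{L^1_T(L^\infty)}$ is false. You end with $C_N\varepsilon_N$, which is useless in the proof of \cref{thm:main}: there $N\to\infty$, and nothing is known about $\varepsilon_N$ beyond $\varepsilon_N\to0$.

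Your fallback of applying the Gronwall factor only to the low-frequency part does not work either. The tail forcing and the data forcing sit in the same linear inequality for $\dS_N(\delta a,\delta u)$, so any splitting by linearity still runs both through the same $N$-dependent estimate.

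\textbf{The missing idea.} Split the coefficient at the common cutoff $m=m(T)$ of \cref{cor:common-m}, which is independent of $N$ and satisfies $1\le c_*2^{2m}T<4$. Then \cref{lem:cutoff-commutator}, whose constant is uniform in $m$ and $N$, bounds the commutator by $K_m\norm{\delta u}_{\dB^{\frac dp}_{p,1}}+\norm{b_2^h}_{\dB^{\frac dp}_{p,1}}\norm{\delta u}_{\dB^{1+\frac dp}_{p,1}}$. Each piece is now harmless:
\begin{enumerate}
\item $\int_0^TK_m^2\dd t\le C$ uniformly in $N$.
\item The $(\Id-\dS_N)\delta u$ parts give $C\varepsilon_N$, via $\norm{K_m}_{L^2_T}\norm{(\Id-\dS_N)\delta u}_{L^2_T(\dB^{\frac dp}_{p,1})}$.
\item $\norm{b_2^h}\norm{\dS_N\delta u}_{\dB^{1+\frac dp}_{p,1}}$ is absorbed, because $\norm{b_2^h}_{\tdL^\infty_T(\dB^{\frac dp}_{p,1})}\le\eta\alpha_*$.
\end{enumerate}
The Gronwall weight $1+K_m^2/\alpha_*+\norm{u_1}_{\dB^{1+\frac dp}_{p,1}}+\norm{u_2}_{\dB^{1+\frac dp}_{p,1}}$ then has $N$-independent integral. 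As a result, $C_N$ multiplies only the data and $\norm{\delta u}_{L^1_T(L^\infty)}$.

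\textbf{The term $\delta u\cdot\nabla a_1$.} There is a second, more local problem here. Your bound $C_N\norm{\delta u}_{L^\infty}\norm{\nabla\dS_Na_1}_{\dB^0_{p,1}}$ fails for $p<d$. Indeed $\norm{\nabla\dS_Na_1}_{\dB^0_{p,1}}\approx\sum_{k<N}2^{k(1-\frac dp)}2^{k\frac dp}\norm{\dDelta_ka_1}_{L^p}$, which is not controlled by $\norm{a_1}_{\dB^{\frac dp}_{p,1}}$ when $1-\frac dp<0$. The low frequencies of $\nabla a_1$ are not in $L^p$, and the positivity of $d/p$ at the output side cannot repair missing integrability on the input side.

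For the high part $\nabla(\Id-\dS_N)a_1$, the envelope cannot offset the Bernstein factor $2^N$, since $\Omega_j\le 2^{\delta_0 j}$ with $\delta_0<1$.

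What works is Bony's decomposition of the whole product. Only $\dot T_{\delta u}\nabla a_1$ is charged to $L^\infty$, at cost $2^N\norm{\delta u}_{L^\infty}\norm{a_1}_{\dB^{\frac dp}_{p,1}}$. The rest, $\dot T_{\nabla a_1}\delta u+\dot R(\delta u,\nabla a_1)$, is bounded with an $N$-independent constant by $\norm{a_1}_{\dB^{\frac dp}_{p,1}}\norm{\delta u}_{\dB^{1+\frac dp}_{p,1}}$. That splits as $CU_N(t)+C\varepsilon_N$ and is handled by substituting the velocity estimate.
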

\begin{proof}
Set
\[
b_i:=(1+a_i)^{-1},
\qquad
b_{i,m}:=1+\dS_m(b_i-1),
\qquad
b_i^h:=(\Id-\dS_m)(b_i-1).
\]
From \eqref{eq:1.2} the difference satisfies
\begin{equation}\label{eq:difference-system}
\left\{
\begin{aligned}
\partial_t\delta a+u_2\cdot\nabla\delta a
={}&
-\delta u\cdot\nabla a_1
-(1+a_2)\Div\delta u
-\delta a\Div u_1,
\\
\partial_t\delta u-b_2\cA\delta u
={}&
-\delta u\cdot\nabla u_2
-u_1\cdot\nabla\delta u
+(b_2-b_1)\cA u_1
\\
&-
\nabla\bigl(G(a_2)-G(a_1)\bigr).
\end{aligned}
\right.
\end{equation}
For \(0\le t\le T\), denote
\[
A_N(t)
:=
\norm{\dS_N\delta a}
_{\tdL^\infty_t(\dB^{\frac dp}_{p,1})}
\]
and
\[
U_N(t)
:=
\norm{\dS_N\delta u}
_{\tdL^\infty_t(\dB^{-1+\frac dp}_{p,1})}
+
\norm{\dS_N\delta u}
_{L^1_t(\dB^{1+\frac dp}_{p,1})}. 
\]
By \eqref{eq:uniform-tails},
\begin{equation}\label{eq:difference-tail}
\begin{aligned}
\norm{(\Id-\dS_N)\delta a}
_{\tdL^\infty_T(\dB^{\frac dp}_{p,1})}
+
\norm{(\Id-\dS_N)\delta u}
_{\tdL^\infty_T(\dB^{-1+\frac dp}_{p,1})}
+
\norm{(\Id-\dS_N)\delta u}
_{L^1_T(\dB^{1+\frac dp}_{p,1})}
\le C\varepsilon_N.
\end{aligned}
\end{equation}
Applying \(\dS_N\) to the density equation in
\eqref{eq:difference-system}, we obtain
\[
\begin{aligned}
\partial_t\dS_N\delta a
+u_2\cdot\nabla\dS_N\delta a
={}&
-\dS_N(\delta u\cdot\nabla a_1)
-\dS_N((1+a_2)\Div\delta u)
\\
&-
\dS_N(\delta a\Div u_1)
+
[u_2\cdot\nabla,\dS_N]\delta a.
\end{aligned}
\]
Moreover, \eqref{eq:uniform-local-bounds} gives
\[
\int_0^t
\norm{\nabla u_2(\tau)}_{\dB^{\frac dp}_{p,1}}
\dd\tau
\lesssim
\norm{u_2}_{L^1_t(\dB^{1+\frac dp}_{p,1})}
\le M.
\]
Hence, by \cref{lem:transport-estimate}, we obtain
\begin{equation}\label{eq:low-density-pre}
\begin{aligned}
A_N(t)
\lesssim{}&
\norm{a_{2,0}-a_{1,0}}_{\dB^{\frac dp}_{p,1}}
+
\norm{\dS_N(\delta u\cdot\nabla a_1)}
_{L^1_t(\dB^{\frac dp}_{p,1})}
\\
&+
\norm{\dS_N((1+a_2)\Div\delta u)}
_{L^1_t(\dB^{\frac dp}_{p,1})}
+
\norm{\dS_N(\delta a\Div u_1)}
_{L^1_t(\dB^{\frac dp}_{p,1})}
\\
&+
\norm{[u_2\cdot\nabla,\dS_N]\delta a}
_{L^1_t(\dB^{\frac dp}_{p,1})}.
\end{aligned}
\end{equation}
Bony's decomposition gives
\[
\delta u\cdot\nabla a_1
=
\dot T_{\delta u}\nabla a_1
+
\dot T_{\nabla a_1}\delta u
+
\dot R(\delta u,\nabla a_1).
\]
By Bernstein's inequality and \cref{u}，
\begin{equation}\label{bony1}
\begin{aligned}
\norm{\dS_N\dot T_{\delta u}\nabla a_1}
_{L^1_t(\dB^{\frac dp}_{p,1})}
&\lesssim
2^N
\norm{\dot T_{\delta u}\nabla a_1}
_{L^1_t(\dB^{-1+\frac dp}_{p,1})}
\lesssim
C_N
\norm{\delta u}_{L^1_t(L^\infty)}
\norm{a_1}_{\tdL^\infty_t(\dB^{\frac dp}_{p,1})}.
\end{aligned}
\end{equation}
By \cref{u} and
\(\dB^{-1+\frac dp}_{p,1}\hookrightarrow
\dB^{-1}_{\infty,1}\),
\[
\norm{\dot T_{\nabla a_1}\delta u}
_{\dB^{\frac dp}_{p,1}}
\lesssim
\norm{a_1}_{\dB^{\frac dp}_{p,1}}
\norm{\delta u}_{\dB^{1+\frac dp}_{p,1}}.
\]
If \(2\le p<2d\), then \cref{v} gives
\[
\begin{aligned}
\norm{\dot R(\delta u,\nabla a_1)}
_{\dB^{\frac dp}_{p,1}}
\lesssim
\norm{\dot R(\delta u,\nabla a_1)}
_{\dB^{\frac{2d}{p}}_{\frac p2,1}}
\lesssim
\norm{\delta u}_{\dB^{1+\frac dp}_{p,2}}
\norm{\nabla a_1}_{\dB^{-1+\frac dp}_{p,2}},
\end{aligned}
\]
and if \(1<p<2\), then
\[
\begin{aligned}
\norm{\dot R(\delta u,\nabla a_1)}
_{\dB^{\frac dp}_{p,1}}
&\lesssim
\norm{\dot R(\delta u,\nabla a_1)}
_{\dB^d_{1,1}}
\lesssim
\norm{\delta u}_{\dB^{1+\frac d{p'}}_{p',2}}
\norm{\nabla a_1}_{\dB^{-1+\frac dp}_{p,2}}
\end{aligned}
\]
from which, 
\begin{equation}\label{bony2}
\begin{aligned}
\norm{
\dot T_{\nabla a_1}\delta u
+
\dot R(\delta u,\nabla a_1)}
_{L^1_t(\dB^{\frac dp}_{p,1})}
\lesssim
\norm{a_1}_{\tdL^\infty_t(\dB^{\frac dp}_{p,1})}
\norm{\delta u}_{L^1_t(\dB^{1+\frac dp}_{p,1})}.
\end{aligned}
\end{equation}
Hence, since 
\((a_i,u_i)\in\mathcal S\), \(i=1,2\), with aid of \eqref{bony1} and \eqref{bony2}, we have
\begin{equation}\label{eq:density-convection-bound}
\begin{aligned}
\norm{\dS_N(\delta u\cdot\nabla a_1)}
_{L^1_t(\dB^{\frac dp}_{p,1})}
\le
C_N\norm{\delta u}_{L^1_t(L^\infty)}
+
CU_N(t)
+
C\varepsilon_N.
\end{aligned}
\end{equation}
By \cref{lem:basic-products},
\begin{equation}\label{eq:density-divergence-bound}
\begin{aligned}
\norm{\dS_N((1+a_2)\Div\delta u)}
_{L^1_t(\dB^{\frac dp}_{p,1})}
&\lesssim
\left(
1+
\norm{a_2}_{\tdL^\infty_t(\dB^{\frac dp}_{p,1})}
\right)
\norm{\delta u}_{L^1_t(\dB^{1+\frac dp}_{p,1})}
\\
&\le
CU_N(t)+C\varepsilon_N,
\end{aligned}
\end{equation}
and
\begin{equation}\label{eq:density-product-bound}
\begin{aligned}
\norm{\dS_N(\delta a\Div u_1)}
_{L^1_t(\dB^{\frac dp}_{p,1})}
&\lesssim
\int_0^t
\norm{\delta a(\tau)}_{\dB^{\frac dp}_{p,1}}
\norm{u_1(\tau)}_{\dB^{1+\frac dp}_{p,1}}
\dd\tau
\\
&\le
C\int_0^t
\norm{u_1(\tau)}_{\dB^{1+\frac dp}_{p,1}}
A_N(\tau)\dd\tau
+
C\varepsilon_N.
\end{aligned}
\end{equation}
Moreover, \cref{lem:cutoff-transport-commutator} yields
\begin{equation}\label{eq:density-commutator-bound}
\begin{aligned}
\norm{[u_2\cdot\nabla,\dS_N]\delta a}
_{L^1_t(\dB^{\frac dp}_{p,1})}
&\lesssim
\int_0^t
\norm{u_2(\tau)}_{\dB^{1+\frac dp}_{p,1}}
\norm{\delta a(\tau)}_{\dB^{\frac dp}_{p,1}}
\dd\tau
\\
&\le
C\int_0^t
\norm{u_2(\tau)}_{\dB^{1+\frac dp}_{p,1}}
A_N(\tau)\dd\tau
+
C\varepsilon_N.
\end{aligned}
\end{equation}
Combining
\eqref{eq:low-density-pre}--%
\eqref{eq:density-commutator-bound}, we obtain
\begin{equation}\label{eq:low-density-estimate}
\begin{aligned}
A_N(t)\le{}&
C\norm{a_{2,0}-a_{1,0}}_{\dB^{\frac dp}_{p,1}}
+
C_N\norm{\delta u}_{L^1_t(L^\infty)}
+
CU_N(t)
+
C\varepsilon_N
\\
&+
C\int_0^t
\left(
\norm{u_1(\tau)}_{\dB^{1+\frac dp}_{p,1}}
+
\norm{u_2(\tau)}_{\dB^{1+\frac dp}_{p,1}}
\right)
A_N(\tau)\dd\tau.
\end{aligned}
\end{equation}
Applying \(\dS_N\) to the velocity equation in
\eqref{eq:difference-system}, we have
\begin{equation}\label{eq:truncated-velocity}
\left\{
\begin{aligned}
\partial_t\dS_N\delta u-b_2\cA \dS_N\delta u
={}&
-\dS_N(\delta u\cdot\nabla u_2)
-\dS_N(u_1\cdot\nabla\delta u)
\\
&+
\dS_N((b_2-b_1)\cA u_1)
-
\dS_N\nabla\bigl(G(a_2)-G(a_1)\bigr)
\\
&+
[\dS_N,b_2]\cA\delta u,
\\
\dS_N\delta u|_{t=0}
={}&
\dS_N(u_{2,0}-u_{1,0}).
\end{aligned}
\right.
\end{equation}
For the common cutoff \(m=m(T)\), set
\[
\alpha_*
:=
\frac{\underline b}{2}\min\{\mu,\lambda+2\mu\},
\qquad
K_m(t)
:=
\norm{\dS_m\nabla b_2(t)}_{\dB^{\frac dp}_{p,1}}.
\]
Fix \(\eta>0\) sufficiently small. By taking \(\eta_b>0\)
sufficiently small and using \cref{cor:common-m}, we have
\begin{equation}\label{eq:low-coefficient-smallness}
b_{2,m}\ge\frac{\underline b}{2},
\qquad
\norm{\nabla b_2^h}
_{L^\infty_T(\dB^{-1+\frac dp}_{p,1})}
\le\eta_0\alpha_*,
\qquad
\norm{b_2^h}
_{\tdL^\infty_T(\dB^{\frac dp}_{p,1})}
\le\eta\alpha_*.
\end{equation}
The composition estimate and
\eqref{eq:uniform-local-bounds} give
$
\norm{b_2-1}_{L^\infty_T(\dB^{\frac dp}_{p,1})}
\le C，
$
and \eqref{eq:common-m-time-bound} implies
$
T2^{2m}<\frac{4}{c_*}$. Consequently,
\begin{equation}\label{eq:Km-uniform-bound}
\begin{aligned}
\int_0^T K_m(t)^2\dd t\lesssim
T2^{2m}
\norm{b_2-1}_{L^\infty_T(\dB^{\frac dp}_{p,1})}^2
\le
C.
\end{aligned}
\end{equation}
Applying the unweighted case of
\cref{prop:weighted-lame} to
\eqref{eq:truncated-velocity}, using \eqref{eq:low-coefficient-smallness} and \eqref{eq:Km-uniform-bound}, we obtain
\begin{equation}\label{eq:velocity-pre}
\begin{aligned}
&\norm{\dS_N\delta u}
_{\tdL^\infty_t(\dB^{-1+\frac dp}_{p,1})}
+
\alpha_*
\norm{\dS_N\delta u}
_{L^1_t(\dB^{1+\frac dp}_{p,1})}
\\
&\lesssim 
\norm{\delta u_0}_{\dB^{-1+\frac dp}_{p,1}}
+
\norm{\dS_N(\delta u\cdot\nabla u_2)}
_{L^1_t(\dB^{-1+\frac dp}_{p,1})}
\\
&\quad+
\norm{\dS_N(u_1\cdot\nabla\delta u)}
_{L^1_t(\dB^{-1+\frac dp}_{p,1})}
+
\norm{\dS_N((b_2-b_1)\cA u_1)}
_{L^1_t(\dB^{-1+\frac dp}_{p,1})}
\\
&\quad+
\norm{\dS_N\nabla(G(a_2)-G(a_1))}
_{L^1_t(\dB^{-1+\frac dp}_{p,1})}
+
\norm{[\dS_N,b_2]\cA\delta u}
_{L^1_t(\dB^{-1+\frac dp}_{p,1})}.
\end{aligned}
\end{equation}
By \cref{lem:cutoff-commutator} and $\delta u=\dS_N\delta u+(\Id-\dS_N)\delta u$, we have
\begin{equation}\label{sb1}
\begin{aligned}
\norm{[\dS_N,b_2]\cA\delta u}
_{L^1_t(\dB^{-1+\frac dp}_{p,1})}
\lesssim{}&
\int_0^t
K_m(\tau)
\norm{\dS_N\delta u(\tau)}_{\dB^{\frac dp}_{p,1}}
\dd\tau
\\
&+
\int_0^t
K_m(\tau)
\norm{(\Id-\dS_N)\delta u(\tau)}
_{\dB^{\frac dp}_{p,1}}
\dd\tau
\\
&+
\norm{b_2^h}_{\tdL^\infty_t
(\dB^{\frac dp}_{p,1})}
\norm{\dS_N\delta u}_{L^1_t
(\dB^{1+\frac dp}_{p,1})}
\\
&+
\norm{b_2^h}_{\tdL^\infty_t
(\dB^{\frac dp}_{p,1})}
\norm{(\Id-\dS_N)\delta u}_{L^1_t
(\dB^{1+\frac dp}_{p,1})}.
\end{aligned}
\end{equation}
The interpolation inequality
\begin{equation}\label{eq:unweighted-critical-interpolation}
\norm{z}_{\dB^{\frac dp}_{p,1}}
\lesssim
\norm{z}_{\dB^{-1+\frac dp}_{p,1}}^{\frac12}
\norm{z}_{\dB^{1+\frac dp}_{p,1}}^{\frac12}.
\end{equation}
By
\eqref{eq:unweighted-critical-interpolation} and Young's inequality,
\begin{equation}\label{eq:low-commutator-young}
\begin{aligned}
K_m(\tau)
\norm{\dS_N\delta u(\tau)}_{\dB^{\frac dp}_{p,1}}
\le
\eta\alpha_*
\norm{\dS_N\delta u(\tau)}
_{\dB^{1+\frac dp}_{p,1}}
+
\frac{C}{\alpha_*}
K_m(\tau)^2
\norm{\dS_N\delta u(\tau)}
_{\dB^{-1+\frac dp}_{p,1}}.
\end{aligned}
\end{equation}
Moreover, \eqref{eq:unweighted-critical-interpolation} and
\eqref{eq:difference-tail} yield the following
\begin{equation}
\begin{aligned}\label{eq:high-tail-interpolation}
\norm{(\Id-\dS_N)\delta u}
_{L^2_t(\dB^{\frac dp}_{p,1})}^2
\lesssim
\norm{(\Id-\dS_N)\delta u}
_{\tdL^\infty_t(\dB^{-1+\frac dp}_{p,1})}
\norm{(\Id-\dS_N)\delta u}
_{L^1_t(\dB^{1+\frac dp}_{p,1})}
\lesssim
\varepsilon_N^2.
\end{aligned}
\end{equation}
Hence, by \eqref{eq:Km-uniform-bound}, we have
\begin{equation}\label{eqq2}
\begin{aligned}
\int_0^t
K_m(\tau)
\norm{(\Id-\dS_N)\delta u(\tau)}
_{\dB^{\frac dp}_{p,1}}
\dd\tau
\le
\norm{K_m}_{L^2_t}
\norm{(\Id-\dS_N)\delta u}
_{L^2_t(\dB^{\frac dp}_{p,1})}
\le
C\varepsilon_N.
\end{aligned}
\end{equation}
Hence, by \eqref{eq:low-coefficient-smallness} and
\eqref{eq:difference-tail},
\begin{equation}\label{eqq3}
\begin{aligned}
\norm{b_2^h}_{\tdL^\infty_t
(\dB^{\frac dp}_{p,1})}
\Bigl(
\norm{\dS_N\delta u}_{L^1_t
(\dB^{1+\frac dp}_{p,1})}
+
\norm{(\Id-\dS_N)\delta u}_{L^1_t
(\dB^{1+\frac dp}_{p,1})}
\Bigr)
\le
\eta\alpha_*
\norm{\dS_N\delta u}_{L^1_t
(\dB^{1+\frac dp}_{p,1})}
+
C\varepsilon_N.
\end{aligned}
\end{equation}
Integrating \eqref{eq:low-commutator-young} over \((0,t)\) and
using \eqref{eqq2} and \eqref{eqq3}, we deduce from
\eqref{sb1} that
\begin{equation}\label{eq:commutator-low-result}
\begin{aligned}
&\norm{[\dS_N,b_2]\cA\delta u}
_{L^1_t(\dB^{-1+\frac dp}_{p,1})}
\\
&\le
2\eta\alpha_*
\norm{\dS_N\delta u}_{L^1_t
(\dB^{1+\frac dp}_{p,1})}
+
\frac{C}{\alpha_*}
\int_0^t
K_m(\tau)^2
\norm{\dS_N\delta u(\tau)}
_{\dB^{-1+\frac dp}_{p,1}}
\dd\tau
+
C\varepsilon_N.
\end{aligned}
\end{equation}
By \cref{lem:basic-products} and \eqref{eq:difference-tail}, we have
\begin{equation}\label{eq:velocity-first-convection}
\begin{aligned}
\norm{\dS_N(\delta u\cdot\nabla u_2)}
_{L^1_t(\dB^{-1+\frac dp}_{p,1})}
&\lesssim
\int_0^t
\norm{u_2(\tau)}_{\dB^{1+\frac dp}_{p,1}}
\norm{\delta u(\tau)}_{\dB^{-1+\frac dp}_{p,1}}
\dd\tau
\\
&\le
C\int_0^t
\norm{u_2(\tau)}_{\dB^{1+\frac dp}_{p,1}}
\norm{\dS_N\delta u(\tau)}_{\dB^{-1+\frac dp}_{p,1}}
\dd\tau
+
C\varepsilon_N.
\end{aligned}
\end{equation}
By \eqref{eq:unweighted-critical-interpolation},  \eqref{eq:uniform-local-bounds} and Young's
inequality, we obtain
\begin{equation}\label{eqq4}
\begin{aligned}
&\norm{u_1\cdot\nabla \dS_N\delta u}_{\dB^{-1+\frac dp}_{p,1}}
\lesssim
\norm{u_1}_{\dB^{\frac dp}_{p,1}}
\norm{\dS_N\delta u}_{\dB^{\frac dp}_{p,1}}\\
&\lesssim
\norm{u_1}_{\dB^{-1+\frac dp}_{p,1}}^{\frac12}
\norm{u_1}_{\dB^{1+\frac dp}_{p,1}}^{\frac12}
\norm{\dS_N\delta u}_{\dB^{-1+\frac dp}_{p,1}}^{\frac12}
\norm{\dS_N\delta u}_{\dB^{1+\frac dp}_{p,1}}^{\frac12}
\\
&\le
\eta\alpha_*
\norm{\dS_N\delta u}_{\dB^{1+\frac dp}_{p,1}}
+
C
\norm{u_1}_{\dB^{1+\frac dp}_{p,1}}
\norm{\dS_N\delta u}_{\dB^{-1+\frac dp}_{p,1}}.
\end{aligned}
\end{equation}
With aid of  
$
\norm{u_1}_{L^2_t(\dB^{\frac dp}_{p,1})}
\lesssim1$ and $\norm{(\Id-\dS_N)\delta u}_{L^2_t(\dB^{\frac dp}_{p,1})}
\lesssim\varepsilon_N$
and we thus have
\begin{equation}\label{eqq5}
\begin{aligned}
\norm{u_1\cdot\nabla (\Id-\dS_N)\delta u}
_{L^1_t(\dB^{-1+\frac dp}_{p,1})}
\le C\varepsilon_N.
\end{aligned}
\end{equation}
By \eqref{eqq4} and \eqref{eqq5}, we have
\begin{equation}\label{eq:velocity-second-convection}
\begin{aligned}
\norm{\dS_N(u_1\cdot\nabla\delta u)}
_{L^1_t(\dB^{-1+\frac dp}_{p,1})}
&\quad\le
\eta\alpha_*
\norm{\dS_N\delta u}_{L^1_t(\dB^{1+\frac dp}_{p,1})}
\\
&\qquad+
C
\int_0^t
\norm{u_1(\tau)}_{\dB^{1+\frac dp}_{p,1}}
\norm{\dS_N\delta u(\tau)}_{\dB^{-1+\frac dp}_{p,1}}
\dd\tau
+
C\varepsilon_N.
\end{aligned}
\end{equation}
By \eqref{eq:uniform-bG-composition} and
\eqref{eq:uniform-local-bounds},
\[
\norm{b_i}_{L^\infty_T(L^\infty)}
+
\norm{b_i-1}_{L^\infty_T
(\dB^{\frac dp}_{p,1})}
\le C,
\qquad i=1,2.
\]
Since
\[
b_2-b_1=-b_1b_2\delta a,
\qquad
b_1b_2-1
=
(b_1-1)+(b_2-1)+(b_1-1)(b_2-1),
\]
from which and \cref{p} give
\[
\norm{b_1b_2}_{L^\infty_T(L^\infty)}
+
\norm{b_1b_2-1}_{L^\infty_T
(\dB^{\frac dp}_{p,1})}
\le C.
\]
Hence, by \eqref{eq:product-critical-low},
\eqref{eq:difference-tail}, and
\eqref{eq:uniform-local-bounds},
\begin{equation}\label{eq:velocity-coefficient-difference}
\begin{aligned}\norm{\dS_N((b_2-b_1)\cA u_1)}
_{L^1_t(\dB^{-1+\frac dp}_{p,1})}
&\lesssim
\int_0^t
\norm{b_1(\tau)b_2(\tau)\delta a(\tau)}
_{\dB^{\frac dp}_{p,1}}
\norm{\cA u_1(\tau)}
_{\dB^{-1+\frac dp}_{p,1}}
\,\dd\tau
\\
&\quad\lesssim
\int_0^t
\norm{\delta a(\tau)}_{\dB^{\frac dp}_{p,1}}
\norm{u_1(\tau)}_{\dB^{1+\frac dp}_{p,1}}
\,\dd\tau
\\
&\quad\le
C\int_0^t
\norm{u_1(\tau)}_{\dB^{1+\frac dp}_{p,1}}
A_N(\tau)\,\dd\tau
+
C\varepsilon_N.
\end{aligned}
\end{equation}
The identity
\[
G(a_2)-G(a_1)
=
\left(
\int_0^1
G'\bigl((1-\theta)a_1+\theta a_2\bigr)
\dd\theta
\right)\delta a
\]
and the composition estimate imply
\[
\begin{aligned}
\norm{
\int_0^1
G'\bigl((1-\theta)a_1+\theta a_2\bigr)
\dd\theta
}_{L^\infty}
+
\norm{
\int_0^1
\left[
G'\bigl((1-\theta)a_1+\theta a_2\bigr)-G'(0)
\right]\dd\theta
}_{\dB^{\frac dp}_{p,1}}
\le C.
\end{aligned}
\]
Consequently, for \(0\le\tau\le t\),
\begin{equation}\label{eq:velocity-pressure-difference}
\begin{aligned}
\norm{\dS_N\nabla(G(a_2)-G(a_1))}
_{L^1_t(\dB^{-1+\frac dp}_{p,1})}
&\lesssim
\int_0^t
\norm{\delta a(\tau)}_{\dB^{\frac dp}_{p,1}}
\dd\tau
\\
&\le
C\int_0^t(A_N(\tau)+
\varepsilon_N)\dd\tau.
\end{aligned}
\end{equation}
Substituting
\eqref{eq:commutator-low-result} and
\eqref{eq:velocity-first-convection}--%
\eqref{eq:velocity-pressure-difference} into
\eqref{eq:velocity-pre}, we obtain
\begin{equation}\label{eq:low-velocity-estimate-precise}
\begin{aligned}
U_N(t)\le{}&
C\norm{u_{2,0}-u_{1,0}}_{\dB^{-1+\frac dp}_{p,1}}
+
C\varepsilon_N
\\
&+
C\int_0^t
\left(
1+
\frac{K_m(\tau)^2}{\alpha_*}
+
\norm{u_1(\tau)}_{\dB^{1+\frac dp}_{p,1}}
+
\norm{u_2(\tau)}_{\dB^{1+\frac dp}_{p,1}}
\right)
\\
&\times
\bigl(A_N(\tau)+U_N(\tau)+\varepsilon_N\bigr)\dd\tau.
\end{aligned}
\end{equation}
Substituting \eqref{eq:low-velocity-estimate-precise} into the
term \(CU_N(t)\) on the right-hand side of
\eqref{eq:low-density-estimate}, and then adding
\eqref{eq:low-velocity-estimate-precise}, we obtain
\begin{equation}\label{eq:combined-low-frequency-estimate}
\begin{aligned}
A_N(t)+U_N(t)
\le{}&
C_N\left(
\norm{\delta a_0}_{\dB^{\frac dp}_{p,1}}
+
\norm{\delta u_0}_{\dB^{-1+\frac dp}_{p,1}}
+
\norm{\delta u}_{L^1_t(L^\infty)}
\right)
+
C\varepsilon_N
\\
&+
C\int_0^t
\left(
1+
\frac{K_m(\tau)^2}{\alpha_*}
+
\norm{u_1(\tau)}_{\dB^{1+\frac dp}_{p,1}}
+
\norm{u_2(\tau)}_{\dB^{1+\frac dp}_{p,1}}
\right)
\\
&\times
\bigl(A_N(\tau)+U_N(\tau)+\varepsilon_N\bigr)
\dd\tau.
\end{aligned}
\end{equation}
Set
\[
Y_N(t)
:=
A_N(t)+U_N(t)+\varepsilon_N, \,\quad\lambda(t)=1+
\frac{K_m(t)^2}{\alpha_*}
+
\norm{u_1(t)}_{\dB^{1+\frac dp}_{p,1}}
+
\norm{u_2(t)}_{\dB^{1+\frac dp}_{p,1}}.
\]
Then \eqref{eq:combined-low-frequency-estimate} implies
\begin{equation}\label{eq:low-frequency-gronwall-form}
\begin{aligned}
Y_N(t)
\le{}&
C_N
\left(
\norm{\delta a_0}_{\dB^{\frac dp}_{p,1}}
+
\norm{\delta u_0}_{\dB^{-1+\frac dp}_{p,1}}
+
\norm{\delta u}_{L^1_t(L^\infty)}
\right)
\\
&+
C\varepsilon_N
+
C\int_0^t 
\lambda(\tau)
Y_N(\tau)\,\dd\tau.
\end{aligned}
\end{equation}
Hence Gronwall's inequality for \eqref{eq:low-frequency-gronwall-form} gives
\[
Y_N(T)
\le
\exp \left( C \int_{0}^{T} \lambda(\tau) \, d\tau \right)\left(C_N
\left(
\norm{\delta a_0}_{\dB^{\frac dp}_{p,1}}
+
\norm{\delta u_0}_{\dB^{-1+\frac dp}_{p,1}}
+
\norm{\delta u}_{L^1_T(L^\infty)}
\right)
+
C\varepsilon_N\right)
\]
By \eqref{eq:uniform-local-bounds} and
\eqref{eq:Km-uniform-bound},
\[
\int_0^T
\lambda(t)\dd t
\le C,
\]
where \(C\) is independent of \(N\). Hence, we have
\[
\begin{aligned}
A_N(T)+U_N(T)
\le{}&
C_N\left(
\norm{\delta a_0}_{\dB^{\frac dp}_{p,1}}
+
\norm{\delta u_0}_{\dB^{-1+\frac dp}_{p,1}}
+
\norm{\delta u}_{L^1_T(L^\infty)}
\right)
+
C\varepsilon_N,
\end{aligned}
\]
which proves \eqref{eq:low-frequency-stability}.
\end{proof}
\begin{proof}[Proof of \cref{thm:main}]
Fix \(\eta_b>0\) as in
\cref{prop:low-frequency-stability}. For this choice of
\(\eta_b\), let \(r>0\), \(0<T\le T_0\), and \(m=m(T)\)
be furnished by \cref{cor:common-m}, and set
\(\mathcal U:=\mathcal U_r\).
Then every solution issued from \(\mathcal U\) belongs to
\(\mathscr S_{r,T}\) and satisfies
\eqref{eq:uniform-local-bounds} and
\eqref{eq:coefficient-splitting}--%
\eqref{eq:coefficient-tail-small}.

Since \(X_p\) and \(E_p(T)\) are metric spaces, it suffices to prove
sequential continuity. Fix $(a_0^\ast,u_0^\ast)\in\mathcal U$
and let 
$
(a_0^n,u_0^n)\in\mathcal U$ for $n\ge1$ satisfy
$
(a_0^n,u_0^n)
\rightarrow
(a_0^\ast,u_0^\ast)$ in $X_p$ as $ n\to\infty$. Denote the corresponding solutions by
\[
(a^n,u^n):=S_T(a_0^n,u_0^n),
\qquad
(a^\ast,u^\ast):=S_T(a_0^\ast,u_0^\ast),
\]
and set
\[
\delta a^n:=a^n-a^\ast,
\qquad
\delta u^n:=u^n-u^\ast.
\]
Applying \cref{lem:common-envelope}, there exists
\(\Omega\in AF(\delta_0)\) with
\(\Omega_j\to\infty\) as \(j\to\infty\) such that
\begin{equation}\label{eq:final-common-envelope}
\begin{aligned}
\sup_{n\ge1}
\left(
\norm{a_0^n}_{\dB^{\frac dp}_{p,1}(\Omega)}
+
\norm{u_0^n}_{\dB^{-1+\frac dp}_{p,1}(\Omega)}
\right)
+
\norm{a_0^\ast}_{\dB^{\frac dp}_{p,1}(\Omega)}
+
\norm{u_0^\ast}_{\dB^{-1+\frac dp}_{p,1}(\Omega)}
<\infty.
\end{aligned}
\end{equation}
Set
\[
\begin{aligned}
M_\Omega
:=
1+
\sup_{n\ge1}
\left(
\norm{a_0^n}_{\dB^{\frac dp}_{p,1}(\Omega)}
+
\norm{u_0^n}_{\dB^{-1+\frac dp}_{p,1}(\Omega)}
\right)+
\norm{a_0^\ast}_{\dB^{\frac dp}_{p,1}(\Omega)}
+
\norm{u_0^\ast}_{\dB^{-1+\frac dp}_{p,1}(\Omega)}
\end{aligned}
\]
and
\[
\mathcal S_\Omega
:=
\left\{
(a,u)\in\mathscr S_{r,T}:
\norm{a_0}_{\dB^{\frac dp}_{p,1}(\Omega)}
+
\norm{u_0}_{\dB^{-1+\frac dp}_{p,1}(\Omega)}
\le M_\Omega
\right\}.
\]
By \cref{cor:uniform-tails}, there exists a nonincreasing sequence
\((\varepsilon_N)_{N\in\Z}\), independent of \(n\), such that
\[
\varepsilon_N\rightarrow0
\qquad\text{as }N\to\infty.
\]
By \eqref{eq:uniform-tails}, we have
\begin{equation}\label{eq:final-uniform-difference-tail}
\begin{aligned}
\sup_{n\ge1}\Bigl(
&
\norm{(\Id-\dS_N)\delta a^n}
_{\tdL^\infty_T(\dB^{\frac dp}_{p,1})}
+
\norm{(\Id-\dS_N)\delta u^n}
_{\tdL^\infty_T(\dB^{-1+\frac dp}_{p,1})}
\\
&+
\norm{(\Id-\dS_N)\delta u^n}
_{L^1_T(\dB^{1+\frac dp}_{p,1})}
\Bigr)
\le C\varepsilon_N.
\end{aligned}
\end{equation}
By \cref{lem:L1Linfty},
\begin{equation}\label{eq:gamma-to-zero}
\begin{aligned}
\norm{\delta u^n}_{L^1_T(L^\infty)}\le
C_T
\norm{
(a_0^n-a_0^\ast,u_0^n-u_0^\ast)
}_{X_p}\rightarrow0 \qquad\text{as }n\to\infty.
\end{aligned}
\end{equation}
For every fixed \(N\), applying
\cref{prop:low-frequency-stability} to
\((a^n,u^n),(a^\ast,u^\ast)\in\mathcal S_\Omega\) gives
\[
\begin{aligned}
\norm{\dS_N\delta a^n}
_{\tdL^\infty_T(\dB^{\frac dp}_{p,1})}
&+
\norm{\dS_N\delta u^n}
_{\tdL^\infty_T(\dB^{-1+\frac dp}_{p,1})}
+
\norm{\dS_N\delta u^n}
_{L^1_T(\dB^{1+\frac dp}_{p,1})}
\\
&\le
C_N\left(
\norm{a_0^n-a_0^\ast}_{\dB^{\frac dp}_{p,1}}
+
\norm{u_0^n-u_0^\ast}_{\dB^{-1+\frac dp}_{p,1}}
+
\norm{\delta u^n}_{L^1_T(L^\infty)}
\right)
+
C\varepsilon_N,
\end{aligned}
\]
from which it follows that there exists a constant \(C\), independent of \(N\) such that 
\begin{equation}\label{eq:final-low-frequency-limsup}
\begin{aligned}
\limsup_{n\to\infty}\Bigl(
\norm{\dS_N\delta a^n}
_{\tdL^\infty_T(\dB^{\frac dp}_{p,1})}
+
\norm{\dS_N\delta u^n}
_{\tdL^\infty_T(\dB^{-1+\frac dp}_{p,1})}
+
\norm{\dS_N\delta u^n}
_{L^1_T(\dB^{1+\frac dp}_{p,1})}
\Bigr)
\le C\varepsilon_N.
\end{aligned}
\end{equation}

\noindent By the triangle inequality,
\eqref{eq:final-uniform-difference-tail} and
\eqref{eq:final-low-frequency-limsup}, we obtain
\[
\begin{aligned}
\limsup_{n\to\infty}\Bigl(
\norm{\delta a^n}
_{\tdL^\infty_T(\dB^{\frac dp}_{p,1})}
+
\norm{\delta u^n}
_{\tdL^\infty_T(\dB^{-1+\frac dp}_{p,1})}+
\norm{\delta u^n}
_{L^1_T(\dB^{1+\frac dp}_{p,1})}
\Bigr)
\le C\varepsilon_N
\end{aligned}
\]
for every \(N\in\Z\). Letting \(N\to\infty\), we conclude that
\[
\begin{aligned}
\norm{\delta a^n}
_{\tdL^\infty_T(\dB^{\frac dp}_{p,1})}
+  
\norm{\delta u^n}
_{\tdL^\infty_T(\dB^{-1+\frac dp}_{p,1})}
+
\norm{\delta u^n}
_{L^1_T(\dB^{1+\frac dp}_{p,1})}
\rightarrow0
\quad\text{as }n\to\infty.
\end{aligned}
\]
Therefore,
\[
\norm{
(a^n,u^n)-(a^\ast,u^\ast)
}_{E_p(T)}
\rightarrow0
\qquad\text{as }n\to\infty.
\]
Thus \(S_T\) is continuous at
\((a_0^\ast,u_0^\ast)\). Since this point is arbitrary,
\(S_T\) is continuous on \(\mathcal U\).
\end{proof}
\appendix
\section{Some Harmonic analysis tools}
We use $S$ and $S'$ to denote the spaces of Schwartz functions and tempered distributions on $\mathbb{R}^d$, respectively. The Fourier transform of a function $f$ is defined by $\mathcal{F} f(\xi) = \hat{f}(\xi) = (2\pi)^{-d/2} \int_{\mathbb{R}^d} e^{-ix \cdot \xi} f(x) \, dx$,
and its inverse Fourier transform is denoted by $\mathcal{F} ^{-1}f = \check{f}$. All function spaces considered below are defined over $\mathbb{R}^d$. For simplicity, the underlying domain will often be omitted; for instance, we write $L^p$ instead of $L^p(\mathbb{R}^d)$, unless otherwise specified.

Let $\varphi \in C_c^\infty(\mathbb{R}^d)$ satisfy $0 \leq \varphi \leq 1$, with $\varphi = 1$ on $B(0, 1/2)$ and $\varphi = 0$ outside $B(0, 1)$. Set $\psi(\xi) := \varphi(\xi/2) - \varphi(\xi),$
and define $\psi_j(\xi):= \psi(2^{-j}\xi), \,\varphi_j(\xi) := \varphi(2^{-j}\xi).$
The homogeneous Littlewood--Paley operators are defined by
\[
\dot\Delta_j := (\mathcal{F}^{-1} \psi_j) * , \quad 
\dot{S}_j =P_{\le j} := (\mathcal{F}^{-1} \varphi_j) * , \quad 
P_{>j} := \Id - \dot{S}_j, 
\]
where $*$ is the convolution operator in $\mathbb{R}^d$.
It is straightforward to verify that $\dot{S}_j f = \sum_{k \leq j-1} \dot\Delta_k f$ for $j \in \mathbb{Z}$.
Formally, the product of two distributions $f$ and $g$ may be decomposed according to Bony's decomposition as
\[
fg = \dot{T}_f g + \dot{T}_g f + \dot{R}(f, g),  
\]
where
\[
\dot{T}_f g := \sum_j \dot{S}_{j-1} f \cdot \dot\Delta_j g, 
  \quad\dot{R}(f, g) := \sum_{|j-k| \leq 1} \dot\Delta_j f \cdot \dot\Delta_k g = \sum_j \dot\Delta_j f \cdot \widetilde{\Delta}_j g. 
\]
Here,
\[
\widetilde{\dot \Delta}_j := \dot \Delta_{j-1} + \dot\Delta_j + \dot \Delta_{j+1}.
\]
With the above choice of $\varphi$, we have
\[
\dot\Delta_j \dot\Delta_k f = 0, \quad |j - k| \geq 2,  
\]
and
\[
\dot\Delta_j (\dot{S}_{k-1} f \cdot \dot\Delta_k g) = 0, \quad |j - k| \geq 5. 
\]

The following Bernstein's inequalities describe how derivatives act on spectrally localized functions. Let $ \mathcal{B}$ be a Ball and $\mathcal{C}$ be an annulus. There exists a constant $C>0$ such that for all $k\in \mathbb{N}\cup \{0\}$, any positive real number $\lambda$ and any function $f\in L^p$ with $1\leq p \leq q \leq \infty$, we have
\begin{align*}
&{\rm{supp}}\hat{f}\subset \lambda \mathcal{B}\;\Rightarrow\; \|D^{k}f\|_{L^q}\leq C^{k+1}\lambda^{k+d(\frac{1}{p}-\frac{1}{q})}\|f\|_{L^p},  \\
&{\rm{supp}}\hat{f}\subset \lambda \mathcal{C}\;\Rightarrow\; C^{-k-1}\lambda^{k}\|f\|_{L^p} \leq \|D^{k}f\|_{L^p} \leq C^{k+1}\lambda^{k}\|f\|_{L^p}.
\end{align*}

Let $S' / \mathcal{P}\cong
S_h' := \left\{ f \in S' : \lim_{j \to -\infty} \dot{S}_j f = 0 \text{ in } S' \right\}
$ denote the space of tempered distributions modulo polynomials. For $1 \leq p, q \leq \infty$ and $s \in \mathbb{R}$, define
\[
\dot{B}_{p,q}^s := \{ f \in S' / \mathcal{P} : \| f \|_{\dot{B}_{p,q}^s} < \infty \},
\]
where
\[
\| f \|_{\dot{B}_{p,q}^s} := \left( \sum_{j \in \mathbb{Z}} 2^{jsq} \| \dot\Delta_j f \|_{L^p}^{q} \right)^{1/q},
\]
with the usual modification when $q = \infty$. Let
\[
\|f(t, \cdot)\|_{L^q_T(\dot{B}_{p,r}^s)} := \left\| \|f(t, \cdot)\|_{\dot{B}_{p,r}^s} \right\|_{L^q_T}
\]
and
\[
\|f(t, \cdot)\|_{\widetilde{L}^q_T(\dot{B}_{p,r}^s)} := \left\| 2^{js} \| \dot{\Delta}_j f(t, \cdot) \|_{L^q_T(L^p)} \right\|_{\ell^r(\mathbb{Z})}.
\]
It is worth emphasizing that, due to the Minkowski inequality, if \( r \leq q \), then
\[
 \|f(t, \cdot)\|_{L^q_T(\dot{B}_{p,r}^s)} \leq \|f(t, \cdot)\|_{\widetilde{L}^q_T(\dot{B}_{p,r}^s)}.
\]
Conversely, the reverse inequality holds if \( r \geq q \).
For \( s \in \mathbb{R} \) and \( 1 \leq p, q \leq \infty \), the multiplier space associated with \( \dot{B}_{p,q}^s \) is defined by
\[
\mathcal{M}(\dot{B}_{p,q}^s) := \left\{ f : \psi f \in \dot{B}_{p,q}^s \text{ for every } \psi \in \dot{B}_{p,q}^s \right\},
\]
endowed with the norm
\begin{equation}
 \|f\|_{\mathcal{M}(\dot{B}_{p,q}^s)} := \sup_{\|\psi\|_{\dot{B}_{p,q}^s} \leq 1} \|\psi f\|_{\dot{B}_{p,q}^s}. \label{mutiple}   
\end{equation}
\begin{lemma}\label{lem:convolution-commutator}
\textup{\cite[Lemma~2.97]{BaChDa11}.}
Let \(\theta\) be a \(C^1\) function on \(\R^d\) such that $(1+|\cdot|)\widehat{\theta}\in L^1.$ There exists a constant \(C\) such that for any Lipschitz function
\(a\) with gradient in \(L^p\) and any function \(b\) in \(L^q\), we
have, for any positive \(\lambda\),
\[
\norm{[\theta(\lambda^{-1}D),a]b}_{L^r}
\le
C\lambda^{-1}
\norm{\nabla a}_{L^p}
\norm b_{L^q},
\qquad
\text{with}
\qquad
\frac1p+\frac1q=\frac1r.
\]
\end{lemma}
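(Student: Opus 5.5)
The statement is the classical commutator estimate of \cite[Lemma~2.97]{BaChDa11}: for \(\theta\in C^1\) with \((1+|\cdot|)\widehat\theta\in L^1\), \(a\) Lipschitz with \(\nabla a\in L^p\), and \(b\in L^q\), one has \(\|[\theta(\lambda^{-1}D),a]b\|_{L^r}\le C\lambda^{-1}\|\nabla a\|_{L^p}\|b\|_{L^q}\). The plan is to write the Fourier multiplier as a convolution and linearize the difference \(a(x)-a(y)\) by the fundamental theorem of calculus along segments; Minkowski's integral inequality and Hölder's inequality then give the bound, with constant \(\|\,|z|h(z)\|_{L^1}\), where \(h:=\mathcal F^{-1}\theta\) (up to the normalizing factor in the Fourier transform).

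\textbf{Kernel representation.} Set \(h:=\mathcal F^{-1}\theta\). Then \(\theta(\lambda^{-1}D)f=h_\lambda*f\) with \(h_\lambda(z):=\lambda^d h(\lambda z)\). The hypothesis \((1+|\cdot|)\widehat\theta\in L^1\) is to be read, as in \cite{BaChDa11}, as the statement that the convolution kernel has finite first moment: \(\int(1+|z|)|h(z)|\,dz<\infty\). Under this reading,
\[
[\theta(\lambda^{-1}D),a]b(x)=\int h_\lambda(x-y)\bigl(a(y)-a(x)\bigr)b(y)\,dy .
\]
This integral converges absolutely, because \(a\) is Lipschitz, so \(|a(y)-a(x)|\le\|\nabla a\|_{L^\infty}|x-y|\), and \(|z|h_\lambda(z)\in L^1\).

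\textbf{Taylor formula and change of variables.} Write
\[
a(x-z)-a(x)=-\int_0^1 z\cdot\nabla a(x-\tau z)\,d\tau .
\]
Substituting \(y=x-z\) gives
\[
[\theta(\lambda^{-1}D),a]b(x)=-\int_0^1\!\!\int h_\lambda(z)\,z\cdot\nabla a(x-\tau z)\,b(x-z)\,dz\,d\tau .
\]
Apply Minkowski's inequality in \((z,\tau)\) to move the \(L^r_x\) norm inside. Since translations preserve Lebesgue norms, Hölder's inequality with \(\frac1p+\frac1q=\frac1r\) bounds the inner norm by
\[
\|\nabla a(\cdot-\tau z)\,b(\cdot-z)\|_{L^r}\le\|\nabla a\|_{L^p}\|b\|_{L^q}.
\]
Therefore the commutator is bounded in \(L^r\) by \(\int|z|\,|h_\lambda(z)|\,dz\,\|\nabla a\|_{L^p}\|b\|_{L^q}\). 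Scaling gives \(\int|z||h_\lambda(z)|\,dz=\lambda^{-1}\int|w||h(w)|\,dw\), which produces the claimed factor \(\lambda^{-1}\).

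\textbf{Main obstacle.} The only delicate point is justifying the pointwise formula and Fubini's theorem for general \(b\in L^q\) and Lipschitz \(a\). I would first take \(b\in\mathcal S\) and \(a\) smooth, where everything is absolutely convergent. I would then pass to the limit using density of \(\mathcal S\) in \(L^q\) (for \(q<\infty\)), together with mollification of \(a\). Mollification preserves \(\|\nabla a\|_{L^p}\), and the mollified functions converge locally uniformly with a uniform Lipschitz bound, so dominated convergence applies. The cases with \(q=\infty\) are covered by weak-\(*\) approximation, or by running the direct integral argument above, which never used density.
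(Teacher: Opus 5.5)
Your argument is correct and follows the standard proof of Lemma~2.97 in Bahouri--Chemin--Danchin, which the paper cites without reproving: write the commutator as $\int h_\lambda(x-y)\bigl(a(y)-a(x)\bigr)b(y)\,dy$, apply the first-order Taylor formula along segments, then Minkowski and H\"older, and finally use the scaling $\int|z|\,|h_\lambda(z)|\,dz=\lambda^{-1}\int|w|\,|h(w)|\,dw$. Two small simplifications are available: $\mathcal{F}^{-1}\theta(x)$ is a constant multiple of $\widehat\theta(-x)$, so $(1+|\cdot|)\widehat\theta\in L^1$ is literally the finite-first-moment condition on the kernel rather than a reading of it; and applying Tonelli and Minkowski directly to the pointwise bound $|a(x-z)-a(x)|\le\int_0^1|z|\,|\nabla a(x-\tau z)|\,d\tau$, which holds for Lipschitz $a$, already makes the density and mollification step unnecessary.
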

\begin{lemma}\cite[Theorem~2.47]{BaChDa11}
\label{u}
There exists a constant \(C\) such that for any real number \(s\)
and any \((p,r)\) in \([1,\infty]^2\), we have, for any
\((u,v)\) in \(L^\infty\times\dot B^s_{p,r}\),
\begin{equation}\label{1}
\|\dot T_u v\|_{\dot B^s_{p,r}}
\le
C^{1+|s|}
\|u\|_{L^\infty}
\|v\|_{\dot B^s_{p,r}}.
\end{equation}
Moreover, for any \((s,t)\) in
\(\mathbb R\times(-\infty,0)\) and any
\((p,r_1,r_2)\) in \([1,\infty]^3\), we have, for any $(u,v)\in
\dot B^t_{\infty,r_1}
\times
\dot B^s_{p,r_2},$
\begin{equation}\label{2}
\|\dot T_u v\|_{\dot B^{s+t}_{p,r}}
\le
\frac{C^{1+|s+t|}}{-t}
\|u\|_{\dot B^t_{\infty,r_1}}
\|v\|_{\dot B^s_{p,r_2}},
\qquad
\text{with}
\qquad
\frac1r
\overset{\mathrm{def}}{=}
\min\left\{
1,\frac1{r_1}+\frac1{r_2}
\right\}.
\end{equation}
\end{lemma}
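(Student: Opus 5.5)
The plan is the standard Littlewood--Paley argument: exploit the spectral localization of each paraproduct block, bound the blocks in \(L^p\) by Hölder, and resum. Write
\[
\dot T_uv=\sum_{j}\dot S_{j-1}u\,\dot\Delta_jv .
\]
Since \(\operatorname{supp}\widehat{\dot S_{j-1}u}\subset B(0,2^{j-1})\) and \(\operatorname{supp}\widehat{\dot\Delta_jv}\) lies in the annulus \(\{2^{j-1}\le|\xi|\le2^{j+1}\}\), each product \(\dot S_{j-1}u\,\dot\Delta_jv\) has Fourier support in a fixed dilated annulus \(2^j\widetilde{\mathcal C}\). Hence, as recorded in the appendix, \(\dot\Delta_k(\dot S_{j-1}u\,\dot\Delta_jv)=0\) unless \(|j-k|\le4\). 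Therefore
\[
\|\dot\Delta_k\dot T_uv\|_{L^p}\lesssim\sum_{|j-k|\le4}\|\dot S_{j-1}u\|_{L^\infty}\|\dot\Delta_jv\|_{L^p},
\]
using Hölder's inequality and the uniform \(L^1\) bound on the kernel of \(\dot\Delta_k\).

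For \eqref{1}, the kernel of \(\dot S_{j-1}\) has \(L^1\) norm independent of \(j\), so \(\|\dot S_{j-1}u\|_{L^\infty}\le C\|u\|_{L^\infty}\). Multiplying by \(2^{ks}\) and writing \(2^{ks}=2^{(k-j)s}2^{js}\) with \(|k-j|\le4\) gives
\[
2^{ks}\|\dot\Delta_k\dot T_uv\|_{L^p}\le C\,2^{4|s|}\|u\|_{L^\infty}\sum_{|j-k|\le4}2^{js}\|\dot\Delta_jv\|_{L^p}.
\]
Taking the \(\ell^r\) norm in \(k\), a finite sum of shifts, yields \eqref{1} with constant \(C^{1+|s|}\).

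For \eqref{2}, set \(c_\ell:=2^{\ell t}\|\dot\Delta_\ell u\|_{L^\infty}\in\ell^{r_1}\). Since \(\dot S_{j-1}=\sum_{\ell\le j-2}\dot\Delta_\ell\),
\[
2^{jt}\|\dot S_{j-1}u\|_{L^\infty}\le\sum_{\ell\le j-2}2^{(j-\ell)t}c_\ell .
\]
Because \(t<0\), this is a convolution of \(c\) with the sequence \((2^{mt})_{m\ge2}\), whose \(\ell^1\) norm is
\[
\frac{2^{2t}}{1-2^{t}}\le\frac{C}{-t}.
\]
By Young's inequality the resulting sequence \(\tilde c_j\) has \(\ell^{r_1}\) norm at most \(\frac{C}{-t}\|u\|_{\dot B^t_{\infty,r_1}}\). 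Then, from the localization above,
\[
2^{k(s+t)}\|\dot\Delta_k\dot T_uv\|_{L^p}\le C\,2^{4|s+t|}\sum_{|j-k|\le4}\tilde c_j\,2^{js}\|\dot\Delta_jv\|_{L^p}.
\]
Hölder's inequality for sequences (\(\ell^{r_1}\cdot\ell^{r_2}\subset\ell^{\rho}\) with \(1/\rho=1/r_1+1/r_2\)), together with the embedding \(\ell^\rho\subset\ell^r\) when \(\rho<1\), gives the \(\ell^r\) bound with \(1/r=\min\{1,1/r_1+1/r_2\}\). This yields \eqref{2}.

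The only delicate point is bookkeeping the constants. The factor \(C^{1+|s|}\) (resp.\ \(C^{1+|s+t|}\)) comes solely from the shift \(|k-j|\le4\), and the factor \(1/(-t)\) comes solely from the geometric series, which diverges as \(t\to0^-\). Everything else is uniform in \(s\), \(t\), \(p\) and \(r\).
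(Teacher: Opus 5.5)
Your scheme (block localization, H\"older, Young's inequality for the geometric series producing $1/(-t)$, H\"older for sequences and $\ell^\rho\subset\ell^1$) is the one behind \cite[Theorem~2.47]{BaChDa11}, and your bookkeeping of the constants is fine. The gap is in the very first step. From $\operatorname{supp}\widehat{\dS_{j-1}u}\subset\{|\xi|\le2^{j-1}\}$ and $\operatorname{supp}\widehat{\dDelta_jv}\subset\{2^{j-1}\le|\eta|\le2^{j+1}\}$ it does not follow that $\dS_{j-1}u\,\dDelta_jv$ is spectrally supported in an annulus. The two sets touch at the sphere $|\xi|=2^{j-1}$, so $\xi+\eta$ can be arbitrarily close to $0$ (take $\eta\approx-\xi$), and their Minkowski sum is the whole ball $B(0,5\cdot2^{j-1})$. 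The appendix cutoffs have $\varphi\equiv1$ on $B(0,\frac12)$ and $\varphi=0$ outside $B(0,1)$, a radius ratio of exactly $2$. With them you only get $\dDelta_k(\dS_{j-1}u\,\dDelta_jv)=0$ for $k\ge j+3$. The two-sided claim ``$|j-k|\le4$'' that you quote from the appendix is not justified by these cutoffs. With one-sided localization you are left with
\[
2^{ks}\norm{\dDelta_k\dot T_uv}_{L^p}\lesssim\sum_{j\ge k-2}2^{(k-j)s}\norm{\dS_{j-1}u}_{L^\infty}\,2^{js}\norm{\dDelta_jv}_{L^p}.
\]
This resums only for $s>0$, and then with a constant of size $1/s$ rather than $C^{1+|s|}$. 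So \eqref{1} for $s\le0$ and \eqref{2} for $s+t\le0$ cannot be obtained this way; these are exactly the cases where the annulus property is indispensable.

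The fix is to build a strict gap between the radius of the low-frequency cutoff and the inner radius of the dyadic annulus, which is what \cite{BaChDa11} does. There the low-frequency cutoff is supported in $B(0,\frac43)$ and the annular one in $\mathcal C=\{\frac34\le|\xi|\le\frac83\}$. Then $\widehat{\dS_{j-1}u}$ lives in $2^jB(0,\frac23)$, and $2^j\bigl(B(0,\frac23)+\mathcal C\bigr)$ is a genuine annulus because $\frac23<\frac34$. Equivalently, in the appendix notation, take $\varphi$ supported in $B(0,r_0)$ with $r_0<1$, or define the paraproduct with $\dS_{j-2}$. With such a convention the rest of your argument goes through verbatim.

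Merely enlarging the window $|j-k|\le4$ does not help with the borderline cutoffs. The diagonal piece $\dDelta_{j-2}u\,\dDelta_jv$ contained in $\dS_{j-1}u\,\dDelta_jv$ is only ball-localized.

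Finally, in the homogeneous framework two of your steps tacitly use that $\sum_j\dS_{j-1}u\,\dDelta_jv$ converges in $\mathcal S'$ to an element of $\mathcal S'_h$. These are the identity $\dDelta_k\dot T_uv=\sum_{|j-k|\le4}\dDelta_k(\dS_{j-1}u\,\dDelta_jv)$ and the conclusion $\dot T_uv\in\dB^s_{p,r}$. This convergence is the hypothesis of Lemma~2.23 in \cite{BaChDa11}. It is automatic when $s<\frac dp$, or $s=\frac dp$ and $r=1$ (respectively for $s+t$), but otherwise must be checked or assumed.
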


\begin{lemma}\cite[Theorem~2.52]{BaChDa11}
\label{v}
A constant \(C\) exists which satisfies the following inequalities.
Let \((s_1,s_2)\) be in \(\mathbb R^2\) and
\((p_1,p_2,r_1,r_2)\) be in \([1,\infty]^4\). Assume that
\[
\frac1p
\overset{\mathrm{def}}{=}
\frac1{p_1}+\frac1{p_2}
\le1
\qquad\text{and}\qquad
\frac1r
\overset{\mathrm{def}}{=}
\frac1{r_1}+\frac1{r_2}
\le1.
\]
If \(s_1+s_2\) is positive, then we have, for any
$(u,v)\in
\dot B^{s_1}_{p_1,r_1}
\times
\dot B^{s_2}_{p_2,r_2},$
\begin{equation}\label{r}
\|\dot R(u,v)\|_{\dot B^{s_1+s_2}_{p,r}}
\le
\frac{C^{|s_1+s_2|+1}}{s_1+s_2}
\|u\|_{\dot B^{s_1}_{p_1,r_1}}
\|v\|_{\dot B^{s_2}_{p_2,r_2}}.
\end{equation}
\end{lemma}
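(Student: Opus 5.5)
The plan is the standard dyadic argument for the paraproduct remainder. I would localize each block of $\dot R(u,v)$ in frequency, estimate it by H\"older's inequality, and then sum over blocks with a discrete Young inequality. The positivity of $s_1+s_2$ is exactly what makes the resulting geometric kernel summable.

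\textbf{Step 1 (frequency localization).} Write $\dot R(u,v)=\sum_k \dot\Delta_k u\,\widetilde{\dot\Delta}_k v$. The Fourier transform of $\dot\Delta_k u\,\widetilde{\dot\Delta}_k v$ is supported in a ball $2^k\mathcal B$ of radius comparable to $2^k$, not in an annulus. Hence there is $N_0$, depending only on the Littlewood--Paley decomposition, such that
\[
\dot\Delta_j\dot R(u,v)=\sum_{k\ge j-N_0}\dot\Delta_j\bigl(\dot\Delta_k u\,\widetilde{\dot\Delta}_k v\bigr).
\]
Since $\dot\Delta_j$ is bounded on $L^p$ uniformly in $j$, and $\frac1p=\frac1{p_1}+\frac1{p_2}$, H\"older's inequality gives
\[
\|\dot\Delta_j\dot R(u,v)\|_{L^p}\lesssim \sum_{k\ge j-N_0}\|\dot\Delta_k u\|_{L^{p_1}}\|\widetilde{\dot\Delta}_k v\|_{L^{p_2}}.
\]

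\textbf{Step 2 (weighted summation).} Multiply by $2^{j(s_1+s_2)}$ and set
\[
c_k:=2^{ks_1}\|\dot\Delta_k u\|_{L^{p_1}}\;2^{ks_2}\|\widetilde{\dot\Delta}_k v\|_{L^{p_2}}.
\]
This yields
\[
2^{j(s_1+s_2)}\|\dot\Delta_j\dot R(u,v)\|_{L^p}\lesssim\sum_{k\ge j-N_0}2^{-(k-j)(s_1+s_2)}c_k .
\]
The right-hand side is the convolution of $(c_k)$ with the kernel $\kappa_m:=2^{-m(s_1+s_2)}\mathbf 1_{m\ge -N_0}$.

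\textbf{Step 3 (sequence estimates).} By H\"older's inequality in $\ell^r$ with $\frac1r=\frac1{r_1}+\frac1{r_2}$, and since $\widetilde{\dot\Delta}_k$ involves only three blocks,
\[
\|c\|_{\ell^r}\lesssim \|u\|_{\dot B^{s_1}_{p_1,r_1}}\|v\|_{\dot B^{s_2}_{p_2,r_2}}.
\]
The factor coming from the three neighbouring blocks is at most $C^{|s_2|+1}$. Young's inequality $\ell^1*\ell^r\subset\ell^r$ then reduces the bound to $\|\kappa\|_{\ell^1}$. Because $s_1+s_2>0$,
\[
\|\kappa\|_{\ell^1}=\frac{2^{N_0(s_1+s_2)}}{1-2^{-(s_1+s_2)}}\le \frac{C^{1+|s_1+s_2|}}{s_1+s_2}.
\]
Here the exponential factor $2^{N_0(s_1+s_2)}$ is absorbed into $C^{1+|s_1+s_2|}$, and $1-2^{-x}\ge c\min(x,1)$ produces the $1/(s_1+s_2)$. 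The polynomial-in-$C$ constants arising from the mismatch between $s_2$ and $s_1+s_2$ are handled in the same way.

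\textbf{Main obstacle: well-definedness of the series.} The delicate point is not the estimate but showing that $\sum_k\dot\Delta_k u\,\widetilde{\dot\Delta}_k v$ converges in $\mathcal S'$ to an element of $\mathcal S'_h$. I would handle it with the same bounds applied to partial sums. The $L^p$ norm of the $k$-th term is $\lesssim 2^{-k(s_1+s_2)}c_k$, and each term is supported in the ball $2^k\mathcal B$. A sum of such terms with decaying coefficients converges in $\mathcal S'$ when $s_1+s_2>0$; this is the usual lemma on series with ball-supported Fourier transforms. Low frequencies are controlled by $\dot S_j$ tending to $0$, which places the limit in $\mathcal S'_h$. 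The estimate of Steps 1--3 then passes to the limit by Fatou's lemma on each dyadic block.
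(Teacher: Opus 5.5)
Your argument is the standard dyadic proof. It is the argument behind the Bahouri--Chemin--Danchin theorem that the paper quotes without proof. Steps~1--3 are correct as far as they go: they give \eqref{r} with a constant of the form $2^{|s_2|}C^{1+|s_1+s_2|}/(s_1+s_2)$.

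\textbf{The constant.} The step that fails is the last sentence of Step~3. The factor $2^{|s_2|}$ from the three blocks in $\widetilde{\dDelta}_kv$ is not bounded by $C^{1+|s_1+s_2|}$ when $s_2\to-\infty$ and $s_1\to+\infty$ with $s_1+s_2$ fixed, and no argument can remove it. Take $u=e^{i2^kx_1}\Phi$ and $v=e^{-i2^{k+1}x_1}\Phi$, with $\widehat\Phi$ supported in a ball so small that the relative leakage into neighbouring dyadic blocks is below $2^{-|s_1|-|s_2|}$. Then
\[
\norm{u}_{\dB^{s_1}_{p_1,r_1}}\approx 2^{ks_1}\norm{\Phi}_{L^{p_1}},\qquad \norm{v}_{\dB^{s_2}_{p_2,r_2}}\approx 2^{(k+1)s_2}\norm{\Phi}_{L^{p_2}}.
\]
Meanwhile $\dot R(u,v)\approx uv$ has its spectrum near $-2^ke_1$, so the left side of \eqref{r} is $\gtrsim 2^{k(s_1+s_2)}\norm{\Phi^2}_{L^p}$. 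The ratio of the two sides is therefore $\gtrsim 2^{-s_2}$. Along $s_2=-S$, $s_1=S+1$, $S\to\infty$, this is unbounded, whereas $C^{1+|s_1+s_2|}/(s_1+s_2)=C^2$ stays fixed.

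So the constant as stated is not valid uniformly in $(s_1,s_2)$. What your proof establishes, and what is true, is \eqref{r} with an additional factor $2^{\min\{|s_1|,|s_2|\}}$; you get this by shifting the neighbouring blocks onto whichever factor is cheaper. For instance, a constant $C^{1+|s_1|+|s_2|}/(s_1+s_2)$ works, with a larger $C$. This is harmless for the paper, which uses the lemma only with fixed indices, but you should not claim the printed constant.

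\textbf{Well-definedness.} Your treatment of convergence also overreaches. The standard lemma on series with ball-supported spectra assumes, rather than proves, that the series converges in $\mathcal S'$ to an element of $\mathcal S'_h$. Positivity of $s_1+s_2$ only controls the high-frequency part $\sum_{k\ge0}$, which converges in $L^p$.

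For $k\to-\infty$ you need Bernstein's inequality: $\norm{\dDelta_ku\,\widetilde{\dDelta}_kv}_{L^\infty}\lesssim 2^{k(\frac dp-s_1-s_2)}c_k$. This gives convergence in $L^\infty$ and $\dS_j\dot R(u,v)\to0$ as $j\to-\infty$ in two cases: when $s_1+s_2<\frac dp$, or when $s_1+s_2=\frac dp$ and $r=1$. That covers both places where the paper applies the lemma, with targets $\dB^{2d/p}_{p/2,1}$ and $\dB^{d}_{1,1}$.

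Beyond that range, convergence in $\mathcal S'$ is not automatic. You must either assume it, or interpret the sum modulo polynomials by testing against Schwartz functions whose Fourier transforms vanish to infinite order at the origin. The latter is consistent with the paper's definition of $\dB^s_{p,q}$ inside $\mathcal S'/\mathcal P$.
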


\begin{lemma}\cite[Corollary~2.54]{BaChDa11}
\label{p}
If $(s,p,r)\in ]0,\infty[\times[1,\infty]^2$
satisfies $s<\frac dp,$
or $
s=\frac dp$
and $r=1,$
then \(L^\infty\cap\dot B^s_{p,r}\) is an algebra. Moreover, there
exists a constant \(C\), depending only on the dimension \(d\), such
that
\begin{equation}\label{eq:BCD-algebra}
\|uv\|_{\dot B^s_{p,r}}
\le
\frac{C^{s+1}}{s}
\left(
\|u\|_{L^\infty}\|v\|_{\dot B^s_{p,r}}
+
\|u\|_{\dot B^s_{p,r}}\|v\|_{L^\infty}
\right).
\end{equation}
\end{lemma}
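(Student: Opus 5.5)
This is the algebra property of $L^\infty\cap\dot B^s_{p,r}$. I would prove it by Bony's decomposition
\[
uv=\dot T_uv+\dot T_vu+\dot R(u,v),
\]
estimating each of the three pieces in $\dot B^s_{p,r}$ by the right-hand side of \eqref{eq:BCD-algebra}.

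\textbf{The two paraproducts.} For the first one, \eqref{1} in \cref{u} gives directly
\[
\|\dot T_uv\|_{\dot B^s_{p,r}}\le C^{1+s}\|u\|_{L^\infty}\|v\|_{\dot B^s_{p,r}}.
\]
The second is symmetric and gives $C^{1+s}\|v\|_{L^\infty}\|u\|_{\dot B^s_{p,r}}$. These bounds need no restriction on $s$.

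\textbf{The remainder.} Here positivity of $s$ is used, and this is the only step producing the factor $1/s$. I would not apply \cref{v} in the form with exponents $(p,\infty)$, since that would require $L^\infty$ to be replaced by $\dot B^0_{\infty,\infty}$ with the wrong summability. Instead I would estimate by hand. Since $\dot\Delta_j\dot R(u,v)=\dot\Delta_j\sum_{k\ge j-2}\dot\Delta_ku\,\widetilde{\Delta}_kv$, Hölder's inequality gives
\[
2^{js}\|\dot\Delta_j\dot R(u,v)\|_{L^p}\lesssim \|v\|_{L^\infty}\sum_{k\ge j-2}2^{-(k-j)s}\,2^{ks}\|\dot\Delta_ku\|_{L^p}.
\]
Young's inequality for series with the kernel $2^{-ms}\mathbf 1_{m\ge-2}$, whose $\ell^1$ norm is $\lesssim 2^{2s}/(1-2^{-s})\lesssim C^{s+1}/s$ for $s>0$, then yields
\[
\|\dot R(u,v)\|_{\dot B^s_{p,r}}\le \frac{C^{s+1}}{s}\|v\|_{L^\infty}\|u\|_{\dot B^s_{p,r}}.
\]

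\textbf{The endpoint and convergence.} The restriction $s<d/p$, or $s=d/p$ with $r=1$, is not needed for the inequality itself. It ensures that the series defining $uv$ converge in $\mathcal S'_h$, so that the product really lies in $\dot B^s_{p,r}$ as a homogeneous element (cf. the realization condition in \cite{BaChDa11}). I would verify this by noting that the low-frequency partial sums $\dot S_j(uv)\to0$, using the $L^\infty$ bound together with Bernstein's inequality under this index condition.

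I expect this convergence/realization point to be the only delicate step; the estimates themselves are routine. Summing the three bounds gives \eqref{eq:BCD-algebra}.
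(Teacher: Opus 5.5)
Your proposal is correct and follows the standard argument: the paper gives no proof of its own and cites Bahouri--Chemin--Danchin, where the result comes from Bony's decomposition, with \cref{u} bounding both paraproducts, \cref{v} bounding the remainder, and the index condition used only to place $uv$ in $\mathcal S'_h$. The one inaccuracy is your stated reason for not using \cref{v}: with $(s_1,p_1,r_1)=(0,\infty,\infty)$ and $(s_2,p_2,r_2)=(s,p,r)$ one has $\frac1{r_1}+\frac1{r_2}=\frac1r\le1$ and $\|u\|_{\dot B^0_{\infty,\infty}}\lesssim\|u\|_{L^\infty}$, so \cref{v} applies directly, and your hand estimate of $\dot R(u,v)$ is just that lemma's proof in this special case.
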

\begin{lemma}\label{lem:basic-products}\cite[Proposition 1.6]{D2001}
Let \(d\ge2\). For  \( 1 \leq p < \infty \) and \( f, g \in \dot{B}_{p,1}^{\frac{d}{p}}  \).  
Then the following estimate holds  
\begin{align}
\norm{fg}_{\dB^{\frac{d}{p}}_{p,1}}
&\lesssim \norm f_{\dB^{\frac{d}{p}}_{p,1}}\norm g_{\dB^{\frac{d}{p}}_{p,1}};
\label{eq:product-critical}
\end{align}
Moreover, if \(1\leq p<2d\). and $f\in\dot B^{\frac{d}{p}}_{p,1},\,
g\in\dot B^{\frac{d}{p}-1}_{p,1}$, then
\begin{equation}\label{eq:product-critical-low}
\|fg\|_{\dot B^{\frac{d}{p}-1}_{p,1}}
\lesssim
\|f\|_{\dot B^{\frac{d}{p}}_{p,1}}
\|g\|_{\dot B^{\frac{d}{p}-1}_{p,1}}.
\end{equation}
\end{lemma}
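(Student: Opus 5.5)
The statement to be proved is the product estimate of \cref{lem:basic-products}: \eqref{eq:product-critical} and \eqref{eq:product-critical-low}. The plan is to use Bony's decomposition $fg=\dot T_fg+\dot T_gf+\dot R(f,g)$ and bound each piece with \cref{u} and \cref{v}. The only structural inputs are the embeddings $\dB^{\frac dp}_{p,1}\hookrightarrow L^\infty$ and $\dB^{\frac dp-1}_{p,1}\hookrightarrow\dB^{-1}_{\infty,1}$, both of which follow from Bernstein's inequality.

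\textbf{Estimate \eqref{eq:product-critical}.} This is the algebra property. It follows at once from \cref{p} with $s=\frac dp$ and $r=1$, together with $\|u\|_{L^\infty}\lesssim\|u\|_{\dB^{\frac dp}_{p,1}}$.

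\textbf{Estimate \eqref{eq:product-critical-low}, paraproducts.} Take $f\in\dB^{\frac dp}_{p,1}$ and $g\in\dB^{\frac dp-1}_{p,1}$.
\begin{itemize}
\item By \eqref{1} and $f\in L^\infty$, we get $\|\dot T_fg\|_{\dB^{\frac dp-1}_{p,1}}\lesssim\|f\|_{L^\infty}\|g\|_{\dB^{\frac dp-1}_{p,1}}$.
\item For $\dot T_gf$, use \eqref{2} with $t=-1$ and $g\in\dB^{-1}_{\infty,1}$. This gives $\|\dot T_gf\|_{\dB^{\frac dp-1}_{p,1}}\lesssim\|g\|_{\dB^{-1}_{\infty,1}}\|f\|_{\dB^{\frac dp}_{p,1}}$.
\end{itemize}

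\textbf{Estimate \eqref{eq:product-critical-low}, remainder.} This is the main obstacle and the only place where $p<2d$ enters. The regularity sum $s_1+s_2=\frac{2d}p-1$ is positive exactly when $p<2d$. I split into two cases.
\begin{itemize}
\item If $p\ge2$, apply \eqref{r} with exponents $p_1=p_2=p$ and target Lebesgue index $p/2$. This gives $\dot R(f,g)\in\dB^{\frac{2d}p-1}_{p/2,1}$, which embeds into $\dB^{\frac dp-1}_{p,1}$ by Bernstein.
\item If $p<2$, $p/2$ is not admissible. Instead embed $f\in\dB^{\frac dp}_{p,1}\hookrightarrow\dB^{\frac d{p'}}_{p',1}$ (valid since $p'>p$). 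Then \eqref{r} with $\frac1p+\frac1{p'}=1$ places $\dot R$ in $\dB^{\frac d{p'}+\frac dp-1}_{1,1}=\dB^{d-1}_{1,1}$. This embeds into $\dB^{\frac dp-1}_{p,1}$, and the positivity condition $d-1>0$ holds because $d\ge2$.
\end{itemize}
Summing the three pieces proves \eqref{eq:product-critical-low}.
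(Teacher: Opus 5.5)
Your argument is correct and is the standard Bony-decomposition proof of the cited result. The paper gives no proof of this lemma, but it uses exactly your $p\ge2$ versus $p<2$ split for the remainder (Lebesgue index $\frac p2$, respectively $1$ after embedding one factor into an $L^{p'}$-based space) when it bounds $\dot R(\delta u,\nabla a_1)$ in the proof of \cref{prop:low-frequency-stability}. One detail you should state explicitly: \eqref{r} requires $\frac1{r_1}+\frac1{r_2}\le1$, so you cannot insert both factors with $r_1=r_2=1$; put one factor in $\dB^{s}_{p,\infty}$ (or both in $\ell^2$-based spaces, as the paper does) and you still land in an $\ell^1$-based Besov space.
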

\begin{lemma}\cite[Lemma 2.9]{CMZ2010R}
Let \( s > 0 \) and \( 1 \leq p, q \leq \infty \). Assume that \( F \in W_{\text{loc}}^{[s]+3,\infty}(\mathbb{R}) \) with \( F(0) = 0 \). Then for any \( f \in L_T^\infty(L^\infty) \cap \widetilde{L}_T^q(\dot{B}_{p,1}^s) \), we have
\begin{equation}
 \|F(f)\|_{\widetilde{L}_T^q(\dot{B}_{p,1}^s)} \leq C \left(1 + \|f\|_{L_T^\infty(L^\infty)}\right)^{[s]+2} \|f\|_{\widetilde{L}_T^q(\dot{B}_{p,1}^s)}. \label{eq:composition}  
\end{equation}                                      
\end{lemma}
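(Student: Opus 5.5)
This composition estimate will be proved by reducing it to the case $q=\infty$ and to a Meyer-type paralinearization carried out dyadic block by dyadic block. By the classical argument of \cite{BaChDa11} (Theorem~2.61 there), we may write
\[
F(f)=\sum_{k\in\Z}\bigl(F(\dS_{k+1}f)-F(\dS_kf)\bigr)=\sum_{k}m_k\,\dDelta_kf,
\qquad
m_k:=\int_0^1F'\bigl(\dS_kf+\theta\dDelta_kf\bigr)\dd\theta .
\]
The series converges because $\dS_kf\to0$ as $k\to-\infty$ and $F(0)=0$; this is the place where $F(0)=0$ is used. Since $\norm{\dS_kf}_{L^\infty}+\norm{\dDelta_kf}_{L^\infty}\lesssim\norm f_{L^\infty}$ uniformly in $k$, the multipliers satisfy the derivative bounds
\[
\norm{\nabla^\ell m_k}_{L^\infty_T(L^\infty)}\le C\bigl(1+\norm f_{L^\infty_T(L^\infty)}\bigr)^{\ell}\,2^{k\ell},\qquad 0\le\ell\le[s]+2 .
\]
These follow from Fa\`a di Bruno's formula, Bernstein's inequality applied to $\nabla^i\dS_kf$ and $\nabla^i\dDelta_kf$ (each $\lesssim 2^{ki}\norm f_{L^\infty}$), and the local boundedness of $F^{(\ell+1)}$ on the range of $f$. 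This last point is where the hypothesis $F\in W^{[s]+3,\infty}_{\rm loc}$ enters; the constant $C$ depends on that range.

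Next we localize: $\dDelta_j(m_k\dDelta_kf)$. For $k\ge j$ we use only $\norm{m_k}_{L^\infty}$, which gives
\[
\norm{\dDelta_j(m_k\dDelta_kf)}_{L^q_T(L^p)}\lesssim\norm{\dDelta_kf}_{L^q_T(L^p)} .
\]
For $k<j$ we put $N:=[s]+1$ derivatives on the product, using $\norm{\dDelta_j g}_{L^p}\lesssim 2^{-jN}\norm{\nabla^N g}_{L^p}$ and Leibniz's rule, and get
\[
\norm{\dDelta_j(m_k\dDelta_kf)}_{L^q_T(L^p)}\lesssim(1+\norm f_{L^\infty_T L^\infty})^{N}\,2^{(k-j)N}\norm{\dDelta_kf}_{L^q_T(L^p)} .
\]
The time norm is taken after the $L^\infty$ bounds on $m_k$; this is legitimate because those bounds are uniform in $t$. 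This is precisely what makes the Chemin--Lerner norm $\tdL^q_T$ behave like the time-independent case.

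Multiplying by $2^{js}$ and summing over $j$ yields two convolutions with the sequence $c_k:=2^{ks}\norm{\dDelta_kf}_{L^q_T(L^p)}\in\ell^1$. The first kernel is $2^{-(k-j)s}\mathbf 1_{k\ge j}$, which is summable since $s>0$. The second is $2^{-(j-k)(N-s)}\mathbf 1_{k<j}$, which is summable since $N=[s]+1>s$. Young's inequality for sequences then gives
\[
\norm{F(f)}_{\tdL^q_T(\dB^s_{p,1})}\le C\bigl(1+\norm f_{L^\infty_T(L^\infty)}\bigr)^{[s]+1}\norm f_{\tdL^q_T(\dB^s_{p,1})},
\]
which is stronger than the claimed power $[s]+2$. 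The extra power can also absorb the crude bound on $m_k$ itself.

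The main obstacle is the derivative bound on $m_k$ and, more precisely, the step of keeping the powers of $\norm f_{L^\infty}$ uniform in both $k$ and $t$. The Fa\`a di Bruno expansion produces products of up to $\ell$ factors, each of size $\lesssim2^{ki}\norm f_{L^\infty}$ with total order $\ell$. Multiplied by $F^{(r+1)}$ evaluated on a bounded set, this gives the factor $(1+\norm f_{L^\infty})^{\ell}$. The remaining points, namely convergence of the telescoping series in $\mathcal S'_h$ and identification of the limit with $F(f)$, are handled as in \cite{BaChDa11}: first for smooth, frequency-truncated $f$, and then by Fatou's property.
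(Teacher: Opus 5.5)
The paper gives no proof of this lemma; it is quoted from \cite{CMZ2010R}. Your argument is the standard proof behind that citation, and it is correct: Meyer's first linearization $F(f)=\sum_k m_k\dDelta_k f$, the bounds $\|\nabla^\ell m_k\|_{L^\infty}\lesssim 2^{k\ell}$, and the two-regime summation for non-spectrally-localized pieces as in \cite{BaChDa11}, with the multiplier bounds taken uniformly in $t$ before the $L^q_T$ norm so the Chemin--Lerner case costs nothing extra. It even yields the power $[s]+1$ under only $F\in W^{[s]+2,\infty}_{\rm loc}$, provided, as you note, that $C$ depends on the derivatives of $F$ on an interval of size $\lesssim\|f\|_{L^\infty_T(L^\infty)}$.
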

\begin{lemma}
\label{lem:transport-estimate}
\textup{\cite[Theorem~3.14 and Remarks~3.15--3.16]{BaChDa11}.}
Let \(1\le p<\infty\) and
\[
-\min\left\{\frac{d}{p},\frac{d}{p'}\right\}
<s\le\frac{d}{p}.
\]
Let \(f\) solve
\[
\partial_t f+v\cdot\nabla f=g,
\qquad
f|_{t=0}=f_0.
\]
Assume that
\[
f_0\in\dB^s_{p,1},
\qquad
g\in L^1_T(\dB^s_{p,1}),
\qquad
\nabla v\in L^1_T
\bigl(\dB^{\frac{d}{p}}_{p,1}\bigr).
\]
Set
\[
V(t):=
\int_0^t
\norm{\nabla v(\tau)}
_{\dB^{\frac{d}{p}}_{p,1}}
\dd\tau.
\]
Then there exists a constant \(C>0\), depending only on
\(d,p\), and \(s\), such that, for every \(t\in[0,T]\),
\begin{equation}\label{eq:transport-estimate}
\norm f_{\tdL^\infty_t(\dB^s_{p,1})}
\le
e^{CV(t)}
\left(
\norm{f_0}_{\dB^s_{p,1}}
+
\int_0^t
e^{-CV(\tau)}
\norm{g(\tau)}_{\dB^s_{p,1}}
\dd\tau
\right).
\end{equation}
\end{lemma}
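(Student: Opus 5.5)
This is the classical transport estimate in critical Besov spaces, so I would prove it by localizing the equation in frequency, using an $L^p$ energy estimate on each dyadic block, and controlling the commutator with the transport field.

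\textbf{Step 1: Localize.} Apply \(\dDelta_j\) to the equation. This gives
\[
\partial_t f_j+v\cdot\nabla f_j=\dDelta_j g+R_j,\qquad R_j:=[v\cdot\nabla,\dDelta_j]f,\qquad f_j:=\dDelta_j f.
\]

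\textbf{Step 2: Dyadic $L^p$ bound.} Multiply by \(|f_j|^{p-2}f_j\) and integrate by parts; the transport term produces only \(\frac1p\int\Div v\,|f_j|^p\). For \(p=1\) I would use a regularized absolute value, or argue along the flow of \(v\) via Liouville's formula. In all cases this yields
\[
\norm{f_j(t)}_{L^p}\le\norm{\dDelta_jf_0}_{L^p}+\int_0^t\Bigl(\norm{\dDelta_jg}_{L^p}+\norm{R_j}_{L^p}+\tfrac1p\norm{\Div v}_{L^\infty}\norm{f_j}_{L^p}\Bigr)\dd\tau .
\]

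\textbf{Step 3: Commutator bound (the main obstacle).} The key estimate is
\[
\sum_j2^{js}\norm{R_j}_{L^p}\lesssim\norm{\nabla v}_{\dB^{\frac dp}_{p,1}}\norm f_{\dB^s_{p,1}}
\]
for \(-\min\{\frac dp,\frac d{p'}\}<s\le\frac dp\). I would split \(R_j\) using Bony's decomposition of \(v^k\partial_kf\):
\begin{itemize}
\item The principal term \([\dot T_{v^k},\dDelta_j]\partial_kf\) is handled by \cref{lem:convolution-commutator}, which gives \(\lesssim\norm{\nabla v}_{L^\infty}\sum_{|j'-j|\le4}\norm{\dDelta_{j'}f}_{L^p}\). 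Since \(\dB^{\frac dp}_{p,1}\hookrightarrow L^\infty\), this is acceptable for any \(s\).
\item The paraproduct terms \(\dDelta_j\dot T_{\partial_kf}v^k\) and \(\dot T_{\partial_k\dDelta_jf}v^k\) need \(s\le\frac dp\), that is, a regularity \(s-1\le\frac dp-1\) for \(\nabla f\) paired with \(\nabla v\in\dB^{\frac dp}_{p,1}\). They are bounded via \cref{u}, using \eqref{2} with the negative index \(s-1-\frac dp\) in \(\dB^{s-1-\frac dp}_{\infty,1}\); the endpoint \(s=\frac dp\) is allowed precisely because the third Besov index is \(1\).
\item The remainder \(\dot R(v^k,\partial_kf)\), written as \(\partial_k\dot R(v^k,f)-\dot R(\Div v,f)\), needs positivity of the sum of regularity indices. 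This gives \(s+\frac dp>0\) when \(p\ge2\) by \cref{v}, and, when \(p<2\), \(s+\frac d{p'}>0\) after passing through \(L^{p/2}\) or \(L^1\) and using Bernstein as in the proof of \eqref{bony2}. This is exactly the lower bound \(s>-\min\{\frac dp,\frac d{p'}\}\).
\end{itemize}
Each piece is bounded by \(c_j(t)\,2^{-js}\norm{\nabla v}_{\dB^{\frac dp}_{p,1}}\norm f_{\dB^s_{p,1}}\) with \(\sum_jc_j\le1\).

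\textbf{Step 4: Sum and close with Gronwall.} Multiply the dyadic bound by \(2^{js}\), take the supremum in time on each block first (this is what yields the Chemin--Lerner norm \(\tdL^\infty_t\)), then sum over \(j\). Using \(\norm f_{\dB^s_{p,1}}\le\norm f_{\tdL^\infty_\tau(\dB^s_{p,1})}\) and \(\norm{\Div v}_{L^\infty}\lesssim\norm{\nabla v}_{\dB^{\frac dp}_{p,1}}\), we get
\[
\norm f_{\tdL^\infty_t(\dB^s_{p,1})}\le\norm{f_0}_{\dB^s_{p,1}}+\int_0^t\norm g_{\dB^s_{p,1}}\dd\tau+C\int_0^tV'(\tau)\norm f_{\tdL^\infty_\tau(\dB^s_{p,1})}\dd\tau .
\]
The Gronwall lemma then gives \eqref{eq:transport-estimate} in its integrating-factor form.

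\textbf{Step 5: Justify the computation.} Steps 2–4 assume enough regularity to differentiate \(\norm{f_j}_{L^p}\) in time. I would first work with smooth data and mollified \(v\), where the solution is smooth, and then pass to the limit using the uniform bound and the Fatou property, as in Step 4 of \cref{lem:eulerian-lagrangian-high-norm}.
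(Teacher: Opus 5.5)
Your proposal is correct and is essentially the standard proof of the result the paper quotes from Bahouri--Chemin--Danchin: dyadic localization, the $L^p$ estimate for the localized transport equation, the commutator bound for $[v\cdot\nabla,\dDelta_j]$, and Gronwall in integrating-factor form (the paper itself gives no proof beyond the citation). One harmless slip in your Step~3: the paraproduct term $\dDelta_j\dot T_{\partial_kf}v^k$ only needs $s<1+\frac dp$, which makes the index $s-1-\frac dp$ negative, and $\dot T_{\partial_k\dDelta_jf}v^k$ imposes no condition at all, so the bound $s\le\frac dp$ is never actually used, and the endpoint that relies on the summability index $1$ is $s=1+\frac dp$, not $s=\frac dp$.
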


\begin{lemma}\label{lem:cutoff-commutator}
Let \(d\ge2\), \(1<p<2d\), and $b=b_m+b^h$, where
\[b_m:=1+\dS_m(b-1),\qquad b^h:=(\Id-\dS_m)(b-1).\]
Assume that
\[
\nabla b_m\in\dB^{\frac dp}_{p,1},
\qquad
b^h\in\dB^{\frac dp}_{p,1},
\qquad
v\in
\dB^{\frac dp}_{p,1}
\cap
\dB^{1+\frac dp}_{p,1}.
\]
Then, for every \(N\in\Z\) and  \(L(D):=\mathcal{A}=\mu\Delta +(\lambda+\mu)\nabla\operatorname{div},\)
\begin{equation}\label{eq:cutoff-commutator-lemma}
\begin{aligned}
\norm{[\dS_N,b]L(D)v}_{\dB^{-1+\frac dp}_{p,1}}
\le C
\norm{\nabla b_m}_{\dB^{\frac dp}_{p,1}}
\norm v_{\dB^{\frac dp}_{p,1}}+
C\norm{b^h}_{\dB^{\frac dp}_{p,1}}
\norm v_{\dB^{1+\frac dp}_{p,1}}.
\end{aligned}
\end{equation}
The implicit constant $C$ is independent of \(m\) and \(N\), and may
depend on the coefficients of \(L(D)\).

\label{lem:cutoff-transport-commutator}
\noindent Moreover, if
\[
u\in\dB^{1+\frac dp}_{p,1},
\qquad
f\in\dB^{\frac dp}_{p,1},
\]
then, for every \(N\in\Z\),
\begin{equation}\label{eq:cutoff-transport-commutator}
\norm{[u\cdot\nabla,\dS_N]f}_{\dB^{\frac dp}_{p,1}}
\le C
\norm u_{\dB^{1+\frac dp}_{p,1}}
\norm f_{\dB^{\frac dp}_{p,1}},
\end{equation}
where the implicit constant $C$ is independent of \(N\).
\end{lemma}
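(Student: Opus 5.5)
Both estimates come from Bony's paraproduct decomposition, with the commutator written in terms of $[\dS_N,\cdot]$ acting on spectrally localized pieces. The only tool used is the convolution commutator bound \cref{lem:convolution-commutator}. That bound gives a factor $2^{-N}\|\nabla(\cdot)\|_{L^\infty}$, which is uniform in $N$ once it is balanced against Bernstein at frequency $\sim2^N$. Throughout we use that $\dS_N=\varphi(2^{-N}D)$ and that $\varphi$ satisfies the hypotheses of \cref{lem:convolution-commutator}.

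For \eqref{eq:cutoff-commutator-lemma} we split $[\dS_N,b]L(D)v=[\dS_N,b_m]L(D)v+[\dS_N,b^h]L(D)v$.

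The high part is the easy one. We do not exploit any commutator structure and simply write $[\dS_N,b^h]L(D)v=\dS_N(b^hL(D)v)-b^h\dS_NL(D)v$. The operator $\dS_N$ is uniformly bounded on $\dB^{-1+\frac dp}_{p,1}$, and the product law \eqref{eq:product-critical-low}, valid for $1<p<2d$, gives the bound $\|b^h\|_{\dB^{d/p}_{p,1}}\|v\|_{\dB^{1+d/p}_{p,1}}$.

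For the low part we expand $[\dS_N,b_m]g$ with $g=L(D)v$ into three pieces:
\[
[\dS_N,\dot T_{b_m-1}]g+\bigl(\dS_N\dot T_g(b_m-1)-\dot T_{\dS_Ng}(b_m-1)\bigr)+\bigl(\dS_N\dot R(b_m-1,g)-\dot R(b_m-1,\dS_Ng)\bigr).
\]

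The first piece is the paraproduct commutator, and it is the key one. Each block $[\dS_N,\dS_{j-1}b_m]\dDelta_jg$ is nonzero only for $|j-N|\lesssim1$, because elsewhere $\dS_N$ acts as $0$ or $\Id$ on both terms. Away from that range the difference collapses to $0$ up to finitely many boundary terms. On that range \cref{lem:convolution-commutator} gives the block bound
\[
2^{-N}\|\nabla b_m\|_{L^\infty}\|\dDelta_jg\|_{L^p}\lesssim \|\nabla b_m\|_{\dB^{d/p}_{p,1}}\,2^{-j}\|\dDelta_j g\|_{L^p}.
\]
Since $g=L(D)v$, we have $2^{-j}\|\dDelta_jg\|_{L^p}\sim2^{j}\|\dDelta_jv\|_{L^p}$. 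Multiplying by $2^{j(-1+d/p)}$ and summing over $j$ yields $\|\nabla b_m\|_{\dB^{d/p}_{p,1}}\|v\|_{\dB^{d/p}_{p,1}}$.

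The second and third pieces are handled individually rather than as commutators. In each of them a derivative can be moved onto $b_m$ via $g=L(D)v$, which is a second derivative of $v$. Concretely, we write $\dot T_g(b_m-1)$ blockwise as $\dS_{j-1}\partial^2v\cdot\dDelta_jb_m$. Using $\|\dS_{j-1}\partial^2 v\|_{L^\infty}\lesssim 2^{j}\sum_{k<j}2^{k-j}2^{k}\|\dDelta_kv\|_{L^\infty}$ together with $\|\dDelta_jb_m\|\sim2^{-j}\|\dDelta_j\nabla b_m\|$, we get a bound by $\|\nabla b_m\|_{\dB^{d/p}_{p,1}}\|v\|_{\dB^{d/p}_{p,1}}$. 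This works because the index $-1+d/p-1+1$ balances and the sum over $k<j$ converges.

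The remainder pieces $\dot R(b_m-1,g)$ and $\dot R(b_m-1,\dS_Ng)$ are bounded as in the proof of \eqref{bony2}. That requires the regularity sum $d/p+(-1+d/p)-1+1>0$ for $p<2d$, handled separately in the cases $p\ge2$ and $p<2$ via the $L^{p/2}$ or $L^1$ embedding. The factor $\nabla b_m$ again absorbs one derivative through $\dDelta_j b_m=2^{-j}\dDelta_j(\text{order-}0)\nabla b_m$. None of these bounds depends on $N$, since $\dS_N$ is uniformly bounded and $\dS_N g$ obeys the same estimates as $g$. They also do not depend on $m$, because $b_m$ enters only through $\|\nabla b_m\|_{\dB^{d/p}_{p,1}}$.

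For \eqref{eq:cutoff-transport-commutator} we run the same scheme with $u^k$ as the coefficient and $\partial_kf$ as the argument. The paraproduct commutator $[\dS_N,\dot T_{u^k}]\partial_kf$ is localized to $|j-N|\lesssim1$. By \cref{lem:convolution-commutator} it is bounded by $\|\nabla u\|_{L^\infty}\,2^{-j}\|\dDelta_j\partial_kf\|_{L^p}$, which sums to $\|u\|_{\dB^{1+d/p}_{p,1}}\|f\|_{\dB^{d/p}_{p,1}}$. The terms $\dot T_{\partial_k f}u^k$ and $\dot R(u^k,\partial_kf)$ are bounded directly as in \eqref{bony2}, with $\dS_N$ applied inside or outside.

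The main obstacle we expect is the low-frequency remainder part of \eqref{eq:cutoff-commutator-lemma}. The total regularity there is only $\frac{2d}{p}-1$, which requires $p<2d$. The delicate point is to arrange the estimate so that it uses only $\|\nabla b_m\|_{\dB^{d/p}_{p,1}}$, which tolerates $b_m-1$ having large low frequencies, and the $\dB^{d/p}_{p,1}$ norm of $v$ rather than its $\dB^{1+d/p}_{p,1}$ norm. We would first try placing $\dDelta_j b_m=2^{-j}\widetilde\dDelta_j\nabla b_m$ and $\widetilde\dDelta_j\partial^2 v$ in the $\dB^{d/p}\times\dB^{-2+d/p}$ pairing, exactly as \eqref{bony2} does for the pair $\delta u,\nabla a_1$.
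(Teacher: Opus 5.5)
Your proposal is correct and follows essentially the paper's argument. It uses the same split $b=b_m+b^h$, handles the high part by the product law \eqref{eq:product-critical-low} without exploiting commutator structure, and treats the low part by Bony's decomposition: the paraproduct commutator is localized to $|j-N|\lesssim1$ and bounded via \cref{lem:convolution-commutator}, while the other paraproduct and remainder pieces are estimated separately, with the $p\ge2$ / $1<p<2$ split requiring $\frac{2d}{p}-1>0$. The only difference is cosmetic: the paper packages both the Lam\'e and the transport cases into a single auxiliary bound for $[\dS_N,e]P_\rho(D)w$ with $\rho\in\{1,2\}$, whereas you run the two cases separately.
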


\begin{proof}It suffices to prove the following auxiliary estimate. Let
\(\rho\in\{1,2\}\) and \(P_\rho(D)\) be a constant-coefficient
homogeneous differential operator of order \(\rho\). Assume that
\[
\nabla e\in\dB^{\frac dp}_{p,1},
\qquad
w\in\dB^{\frac dp}_{p,1}.
\]
We claim that
\begin{equation}\label{eq:general-lowpass-commutator}
\norm{[\dS_N,e]P_\rho(D)w}
_{\dB^{1-\rho+\frac dp}_{p,1}}
\lesssim
\norm{\nabla e}_{\dB^{\frac dp}_{p,1}}
\norm w_{\dB^{\frac dp}_{p,1}}.
\end{equation}
Indeed, Bony's decomposition gives
\begin{equation}\label{eq:general-lowpass-bony-decomposition}
\begin{aligned}
[\dS_N,e]P_\rho(D)w
={}&
[\dS_N,\dot T_e]P_\rho(D)w+
\dS_N\dot T_{P_\rho(D)w}e
-
\dot T_{\dS_NP_\rho(D)w}e
\\
&+
\dS_N\dot R\bigl(P_\rho(D)w, e\bigr)
-
\dot R\bigl(\dS_NP_\rho(D)w, e\bigr).
\end{aligned}
\end{equation}
For the first term, there exists an integer \(N_0\ge1\), such that
\[
[\dS_N,\dot T_e]P_\rho(D)w
=
\sum_{|k-N|\le N_0}
[\dS_N,\dS_{k-1}e]
P_\rho(D)\dDelta_kw,
\]
from which, by \cref{lem:convolution-commutator} and Bernstein's inequality, we have
\begin{equation}\label{eq4}
\begin{aligned}
\norm{
[\dS_N,\dot T_e]P_\rho(D)w
}_{\dB^{1-\rho+\frac dp}_{p,1}}
&\lesssim
\sum_{|k-N|\le N_0}
2^{k(1-\rho+\frac dp)}
2^{-N}
\norm{\nabla\dS_{k-1}e}_{L^\infty}
\norm{P_\rho(D)\dDelta_kw}_{L^p}
\\
&\lesssim
\sum_{|k-N|\le N_0}
2^{k\frac dp}
\norm{\dDelta_kw}_{L^p}
\sum_{\ell\le k-2}
2^{\ell\frac dp}
\norm{\dDelta_\ell\nabla e}_{L^p}
\\
&\lesssim
\norm{\nabla e}_{\dB^{\frac dp}_{p,1}}
\norm w_{\dB^{\frac dp}_{p,1}}.
\end{aligned}
\end{equation}
For the second term in
\eqref{eq:general-lowpass-bony-decomposition},
\[
\begin{aligned}
\dS_N\dot T_{P_\rho(D)w}e
-
\dot T_{\dS_NP_\rho(D)w}e
={}&
\sum_{k\in\Z}
\sum_{\ell\le k-2}
\Bigl[
\dS_N\bigl(
\dDelta_\ell P_\rho(D)w\,
\dDelta_ke
\bigr)\\
&-
\bigl(
\dS_N\dDelta_\ell P_\rho(D)w
\bigr)
\dDelta_ke
\Bigr],
\end{aligned}
\]
from which, we thus have
\begin{equation}\label{eq5}
\begin{aligned}
\norm{
\dS_N\dot T_{P_\rho(D)w}e
-
\dot T_{\dS_NP_\rho(D)w}e
}_{\dB^{1-\rho+\frac dp}_{p,1}}
&\lesssim
\sum_{k\in\Z}
2^{k(1-\rho+\frac dp)}
\norm{\dDelta_ke}_{L^p}
\sum_{\ell\le k-2}
2^{\ell(\rho+\frac dp)}
\norm{\dDelta_\ell w}_{L^p}
\\
&\lesssim
\sum_{k\in\Z}
2^{k\frac dp}
\norm{\dDelta_k\nabla e}_{L^p}
\sum_{\ell\le k-2}
2^{-\rho(k-\ell)}
2^{\ell\frac dp}
\norm{\dDelta_\ell w}_{L^p}
\\
&\lesssim
\norm{\nabla e}_{\dB^{\frac dp}_{p,1}}
\norm w_{\dB^{\frac dp}_{p,1}}.
\end{aligned}
\end{equation}
For the remainder terms, we have
\[
\begin{aligned}
\dDelta_j
\left(
\dS_N\dot R\bigl(P_\rho(D)w,e\bigr)
-
\dot R\bigl(\dS_NP_\rho(D)w,e\bigr)
\right)
={}&
\sum_{k\ge j-3}
\dDelta_j
\Bigl[
\dS_N\bigl(
\dDelta_ke\,
\widetilde{\dDelta}_kP_\rho(D)w
\bigr)\\
&-
\dDelta_ke\,
\widetilde{\dDelta}_k
\dS_NP_\rho(D)w
\Bigr].
\end{aligned}
\]
Suppose first that \(2\le p<2d\). H\"older's inequality in
\(L^{\frac p2}\) and Bernstein's inequality from
\(L^{\frac p2}\) to \(L^p\) yield
\begin{equation}\label{eq1}
\begin{aligned}
&
2^{j(1-\rho+\frac dp)}
\norm{
\dDelta_j
\left(
\dS_N\dot R\bigl(P_\rho(D)w,e\bigr)
-
\dot R\bigl(\dS_NP_\rho(D)w,e\bigr)
\right)
}_{L^p}
\\
&\quad\lesssim
\sum_{k\ge j-3}
2^{j(1-\rho+\frac{2d}{p})}
2^{k(\rho-1)}
\norm{\dDelta_k\nabla e}_{L^p}
\norm{\widetilde{\dDelta}_kw}_{L^p}
\\
&\quad=
\sum_{k\ge j-3}
2^{-(k-j)(1-\rho+\frac{2d}{p})}
\left(
2^{k\frac dp}
\norm{\dDelta_k\nabla e}_{L^p}
\right)
\left(
2^{k\frac dp}
\norm{\widetilde{\dDelta}_kw}_{L^p}
\right).
\end{aligned}
\end{equation}
Suppose now that \(1<p<2\). By Bernstein's inequality and $\frac{1}{p'}+\frac{1}{p}=1$, we obtain
\begin{equation}\label{eq2}
\begin{aligned}
&
2^{j(1-\rho+\frac dp)}
\norm{
\dDelta_j
\left(
\dS_N\dot R\bigl(P_\rho(D)w,e\bigr)
-
\dot R\bigl(\dS_NP_\rho(D)w,e\bigr)
\right)
}_{L^p}
\\
&\quad\lesssim
\sum_{k\ge j-3}
2^{j(1-\rho+d)}
2^{k(\rho-1+d(\frac2p-1))}
\norm{\dDelta_k\nabla e}_{L^p}
\norm{\widetilde{\dDelta}_kw}_{L^p}
\\
&\quad=
\sum_{k\ge j-3}
2^{-(k-j)(1-\rho+d)}
\left(
2^{k\frac dp}
\norm{\dDelta_k\nabla e}_{L^p}
\right)
\left(
2^{k\frac dp}
\norm{\widetilde{\dDelta}_kw}_{L^p}
\right).
\end{aligned}
\end{equation}
Thus, from \eqref{eq1} and \eqref{eq2}, we have
\begin{equation}\label{eq3}
\begin{aligned}
&
2^{j(1-\rho+\frac dp)}
\norm{
\dDelta_j
\left(
\dS_N\dot R\bigl(P_\rho(D)w,e\bigr)
-
\dot R\bigl(\dS_NP_\rho(D)w,e\bigr)
\right)
}_{L^p}
\\
&\quad\lesssim
\sum_{k\ge j-3}
2^{-(k-j)\gamma_\rho}
\left(
2^{k\frac dp}
\norm{\dDelta_k\nabla e}_{L^p}
\right)
\left(
2^{k\frac dp}
\norm{\widetilde{\dDelta}_kw}_{L^p}
\right),
\end{aligned}
\end{equation}
where
\[
\gamma_\rho
:=
1-\rho+\frac dp+
\min\left\{\frac dp,\frac d{p'}\right\}
=
\begin{cases}
\displaystyle
1-\rho+\frac{2d}{p},
&2\le p<2d,
\\[2mm]
1-\rho+d,
&1<p<2.
\end{cases}
\]
For \(\rho\in\{1,2\}\) and \(1<p<2d\), one has
\(\gamma_\rho>0\). Consequently, from \eqref{eq3}, we have
\begin{equation}\label{eq6}
\begin{aligned}
&
\norm{
\dS_N\dot R\bigl(P_\rho(D)w,e\bigr)
-
\dot R\bigl(\dS_NP_\rho(D)w,e\bigr)
}_{\dB^{1-\rho+\frac dp}_{p,1}}
\\
&\quad\lesssim
\sum_{j\in\Z}
\sum_{k\ge j-3}
2^{-(k-j)\gamma_\rho}
\left(
2^{k\frac dp}
\norm{\dDelta_k\nabla e}_{L^p}
\right)
\left(
2^{k\frac dp}
\norm{\widetilde{\dDelta}_kw}_{L^p}
\right)
\\
&\quad\lesssim
\norm{\nabla e}_{\dB^{\frac dp}_{p,1}}
\norm w_{\dB^{\frac dp}_{p,1}}.
\end{aligned}
\end{equation}
Combining 
\eqref{eq:general-lowpass-bony-decomposition}, \eqref{eq4}, \eqref{eq5} and \eqref{eq6} yields
\eqref{eq:general-lowpass-commutator}.

 Taking $\rho=2,\, e=b_m,\, P_2(D)=L(D),\, w=v$ for \eqref{eq:general-lowpass-commutator}, we obtain
\begin{equation}\label{eq:cutoff-low-coefficient-bound}
\norm{[\dS_N,b_m]L(D)v}_{\dB^{-1+\frac dp}_{p,1}}
\lesssim
\norm{\nabla b_m}_{\dB^{\frac dp}_{p,1}}
\norm v_{\dB^{\frac dp}_{p,1}}.
\end{equation}
For the high-frequency coefficient, by \cref{lem:basic-products}, we obtain
\[
\begin{aligned}
\norm{[\dS_N,b^h]L(D)v}
_{\dB^{-1+\frac dp}_{p,1}}
&\le
\norm{\dS_N\bigl(b^hL(D)v\bigr)}
_{\dB^{-1+\frac dp}_{p,1}}
+
\norm{b^h\dS_NL(D)v}
_{\dB^{-1+\frac dp}_{p,1}}
\\
&\lesssim
\norm{b^h}_{\dB^{\frac dp}_{p,1}}
\norm v_{\dB^{1+\frac dp}_{p,1}},
\end{aligned}
\]
from which, \eqref{eq:cutoff-low-coefficient-bound} together with
\[
[\dS_N,b]L(D)v
=
[\dS_N,b_m]L(D)v
+
[\dS_N,b^h]L(D)v,
\]
we can prove \eqref{eq:cutoff-commutator-lemma}.

Finally, since $[u\cdot\nabla,\dS_N]f=-\sum_{\alpha=1}^d[\dS_N,u^\alpha]\partial_\alpha f$, applying \eqref{eq:general-lowpass-commutator} with
$
\rho=1,
\,
e=u^\alpha,
\,
P_1(D)=\partial_\alpha,
\,
w=f$
and summing over \(\alpha\) gives
\[
\norm{[u\cdot\nabla,\dS_N]f}_{\dB^{\frac dp}_{p,1}}
\lesssim
\norm u_{\dB^{1+\frac dp}_{p,1}}
\norm f_{\dB^{\frac dp}_{p,1}}.
\]
This proves \eqref{eq:cutoff-transport-commutator}.
\end{proof}

\begin{lemma}{\cite[Lemma 3.5]{CMZ2010R}}
\label{lem:CMZ-time-weighted-products}
Let \(1\le p,q,q_1,q_2\le\infty\) with
$
\frac1{q_1}+\frac1{q_2}=\frac1q.
$
Assume that
$
f\in
\tdL^{q_1}_T\bigl(\dB^{s_1}_{p,1}(\omega_T)\bigr)$, 
$g\in
\tdL^{q_2}_T\bigl(\dB^{s_2}_{p,1}\bigr)$. Then there hold
\begin{enumerate}
\item[(a)] If \(s_2\le \frac{d}{p}\), we have
\[
\norm{T_gf}_{\tdL^q_T
\left(\dB^{s_1+s_2-\frac{d}{p}}_{p,1}(\omega_T)\right)}
\le
C
\norm f_{\tdL^{q_1}_T
\left(\dB^{s_1}_{p,1}(\omega_T)\right)}
\norm g_{\tdL^{q_2}_T
\left(\dB^{s_2}_{p,1}\right)};
\]
\item[(b)] If \(s_1\le \frac{d}{p}-1\), we have
\[
\norm{T_fg}_{\tdL^q_T
\left(\dB^{s_1+s_2-\frac{d}{p}}_{p,1}(\omega_T)\right)}
\le
C
\norm f_{\tdL^{q_1}_T
\left(\dB^{s_1}_{p,1}(\omega_T)\right)}
\norm g_{\tdL^{q_2}_T
\left(\dB^{s_2}_{p,1}\right)};
\]
\item[(c)] If
\(
s_1+s_2>
d\max\left(0,\frac2p-1\right),
\)
we have
\[
\norm{R(f,g)}_{\tdL^q_T
\left(\dB^{s_1+s_2-\frac{d}{p}}_{p,1}(\omega_T)\right)}
\le
C
\norm f_{\tdL^{q_1}_T
\left(\dB^{s_1}_{p,1}(\omega_T)\right)}
\norm g_{\tdL^{q_2}_T
\left(\dB^{s_2}_{p,1}\right)}.
\]
\end{enumerate}
\end{lemma}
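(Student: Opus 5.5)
The statement just above is \cref{lem:CMZ-time-weighted-products}, the time-weighted paraproduct and remainder estimates of \cite[Lemma 3.5]{CMZ2010R}. I would reprove it dyadic block by dyadic block. The argument combines the usual paraproduct bounds with one structural property of the weight, namely a two-sided comparison between nearby dyadic indices. From the definition $\omega_j(T)=\sum_{\ell\ge j}2^{j-\ell}e_\ell(T)$ one has $\omega_j=e_j+\tfrac12\omega_{j+1}$, so $\omega_j\ge\tfrac12\omega_{j+1}$. Since $e_\ell$ is nondecreasing in $\ell$, one also has $\omega_{j}\le\omega_{j+1}\cdot 1$ up to the factor coming from reindexing; more precisely $\omega_j=\sum_{n\ge0}2^{-n}e_{j+n}\le\sum_{n\ge0}2^{-n}e_{j+1+n}=\omega_{j+1}$. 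Hence $\omega_j$ is nondecreasing in $j$ and $\omega_{j+k}\le 2^k\omega_j$ for $k\ge0$. Together these give $\omega_j\approx\omega_k$ whenever $|j-k|\le N_0$, and $\omega_j\le 2^{j-k}\omega_k$ for $j\ge k$.

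For (a), I write $\dot\Delta_j T_gf=\sum_{|k-j|\le4}\dot\Delta_j(\dot S_{k-1}g\,\dot\Delta_kf)$. Bernstein gives $\|\dot S_{k-1}g\|_{L^\infty_t L^\infty}\lesssim\sum_{\ell\le k-2}2^{\ell d/p}\|\dot\Delta_\ell g\|_{L^p}$ in the Chemin--Lerner sense. Then
\[
2^{j(s_1+s_2-\frac dp)}\|\dot\Delta_jT_gf\|_{L^q_TL^p}\lesssim\sum_{|k-j|\le4}2^{ks_1}\|\dot\Delta_kf\|_{L^{q_1}_TL^p}\sum_{\ell\le k-2}2^{(\ell-k)(\frac dp-s_2)}2^{\ell s_2}\|\dot\Delta_\ell g\|_{L^{q_2}_TL^p}.
\]
I multiply by $\omega_j\approx\omega_k$ and sum over $j$. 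The condition $s_2\le d/p$ makes the inner sum bounded by $\|g\|_{\tdL^{q_2}_T(\dB^{s_2}_{p,1})}$; at the endpoint $s_2=d/p$ the $\ell^1$ summability in $g$ is exactly what is needed. For (b), the roles are swapped: $\dot S_{k-1}f$ is estimated through $\sum_{\ell\le k-2}2^{\ell d/p}\|\dot\Delta_\ell f\|_{L^p}$, and its weight $\omega_\ell$ must be transferred to the output index $j\sim k\ge\ell$. Using $\omega_j\lesssim 2^{k-\ell}\omega_\ell$, the resulting factor is $2^{(\ell-k)(\frac dp-s_1-1)}$. This factor is summable because $s_1\le\frac dp-1$, with equality again handled by the $\ell^1$ norm. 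This transfer is the point where the loss of one derivative in the hypothesis enters.

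For (c), the remainder, $\dot\Delta_jR(f,g)=\sum_{k\ge j-3}\dot\Delta_j(\dot\Delta_kf\,\widetilde{\dot\Delta}_kg)$. If $p\ge2$, I apply H\"older in $L^{p/2}$ and then Bernstein $L^{p/2}\to L^p$, which gains $2^{jd/p}$. If $p<2$, I apply H\"older in $L^1$ and Bernstein $L^1\to L^p$, which gains $2^{jd(1-1/p)}$. In either case one arrives at a factor $2^{-(k-j)(s_1+s_2-d\max(0,\frac2p-1))}$ times the two block norms at index $k$. The weight is moved using monotonicity: $\omega_j\le\omega_k$ for $j\le k+3$. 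The decaying geometric factor, which is positive by the hypothesis on $s_1+s_2$, then sums in $j$. Throughout, H\"older in time with $1/q=1/q_1+1/q_2$ is applied blockwise, which is legitimate in the Chemin--Lerner norms.

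I expect the main obstacle to be the weight bookkeeping rather than the harmonic analysis. Specifically, I need to verify the monotonicity $\omega_j\le\omega_{j+1}$ and the bound $\omega_{j+1}\le2\omega_j$ from the definition. I also need to confirm that in (b) the weight has to be transported from a low index $\ell$ up to $j$, and that this is possible only at the cost $2^{k-\ell}$, which is precisely why (b) requires $s_1\le\frac dp-1$ rather than $s_1\le\frac dp$.
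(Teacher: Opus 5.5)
Your proof is correct. It is the standard blockwise Bony-decomposition argument behind \cite[Lemma 3.5]{CMZ2010R}, which the paper only cites without proving. The weight is moved using the monotonicity of $\omega_j$ together with the one-sided bound $\omega_k\le 2^{k-\ell}\omega_\ell$ for $\ell\le k$, and that second bound is exactly what forces $s_1\le\frac dp-1$ in (b).

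Two small corrections are needed:
\begin{itemize}
\item In (c), when $k<j\le k+3$, monotonicity does not give $\omega_j\le\omega_k$. Use $\omega_j\le 2^{j-k}\omega_k\le 8\omega_k$ instead, which you already derived.
\item For $p<2$, state the step that produces the loss $d(\frac2p-1)$. After H\"older in $L^1$, this is the Bernstein estimate $\|\widetilde{\dot\Delta}_kg\|_{L^{p'}}\lesssim 2^{kd(\frac2p-1)}\|\widetilde{\dot\Delta}_kg\|_{L^p}$.
\end{itemize}
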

\begin{remark}
\label{lem:time-weighted-critical-product}
From \cref{lem:CMZ-time-weighted-products}, we can easily obtain that if
\(
f\in
\tdL^\infty_T(\dB^{\frac{d}{p}}_{p,1})
\cap
\tdL^\infty_T(\dB^{\frac{d}{p}}_{p,1}(\omega_T))
\) and $g \in L^1_T(\dB^{\frac{d}{p}}_{p,1})
\cap
L^1_T(\dB^{\frac{d}{p}}_{p,1}(\omega_T))$, then 
\[
\begin{aligned}
\norm{fg}_{L^1_T(\dB^{\frac{d}{p}}_{p,1}(\omega_T))}
\le C\Bigl(
&\norm f_{\tdL^\infty_T(\dB^{\frac{d}{p}}_{p,1})}
 \norm g_{L^1_T(\dB^{\frac{d}{p}}_{p,1}(\omega_T))}
+
\norm f_{\tdL^\infty_T
(\dB^{\frac{d}{p}}_{p,1}(\omega_T))}
\norm g_{L^1_T(\dB^{\frac{d}{p}}_{p,1})}
\Bigr),
\end{aligned}
\]
where \(C\) is independent of \(T\).
\end{remark}
\begin{lemma}
\label{lem:time-weighted-composition-CMZ}
\textup{\cite[Proposition 3.8]{CMZ2010R}}
Let \(s>0\) and \(1\le p,q\le\infty\). Assume that
\[
F\in W^{[s]+3,\infty}_{\rm loc}(\R),
\qquad
F(0)=0.
\]
Then, for any
\[
f\in L^\infty_T(L^\infty)
\cap
\tdL^q_T(\dB^s_{p,1}(\omega_T)),
\]
we have
\[
\norm{F(f)}_{\tdL^q_T(\dB^s_{p,1}(\omega_T))}
\lesssim
\bigl(1+\norm f_{L^\infty_T(L^\infty)}\bigr)^{[s]+2}
\norm f_{\tdL^q_T(\dB^s_{p,1}(\omega_T))}.
\]
\end{lemma}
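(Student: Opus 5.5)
The statement is a composition estimate in the time-weighted Besov space of \cite{CMZ2010R}. I would prove it by the classical Meyer/Bony paralinearization of \(F(f)\), redone dyadic by dyadic so that the weight \(\omega_j(T)\) is carried along. The two structural facts I will use repeatedly are these. First, for \(j\le k\),
\[
\omega_j(T)=\sum_{\ell\ge j}2^{j-\ell}e_\ell(T)\le 2^{\,j-k}\sum_{\ell\ge k}2^{k-\ell}e_\ell+\sum_{j\le \ell<k}2^{j-\ell}e_\ell .
\]
Second, \(e_\ell\) is nondecreasing in \(\ell\), so \(e_\ell\le e_k\) for \(\ell\le k\), and \(e_k\le\omega_k\). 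Together these give the almost-monotonicity
\[
\omega_j(T)\le C\,\omega_k(T)\qquad (j\le k).
\]
The weight also has a controlled loss upward: \(\omega_j\ge 2^{j-k}\omega_k\) for \(j\le k\). So the weight behaves like an admissible slowly varying sequence, and weighted versions of the low-frequency sums in Bony's argument go through with constants independent of \(T\).

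The main step is a telescoping decomposition along \(S_kf\). Since \(F(0)=0\) and \(\dS_kf\to 0\) as \(k\to-\infty\) in \(L^\infty\) (modulo the usual density argument), I write
\[
F(f)=\sum_{k}\bigl(F(\dS_{k+1}f)-F(\dS_kf)\bigr)=\sum_k m_k\,\dDelta_kf,\qquad m_k:=\int_0^1F'\bigl(\dS_kf+\theta\dDelta_kf\bigr)\dd\theta .
\]
This is Meyer's first linearization. Standard Bernstein and chain-rule bounds give, for \(|\alpha|\le[s]+1\),
\[
\norm{\partial^\alpha m_k}_{L^\infty_T(L^\infty)}\lesssim 2^{k|\alpha|}\bigl(1+\norm f_{L^\infty_T(L^\infty)}\bigr)^{|\alpha|+1},
\]
with constants depending on \(\norm{F}_{W^{[s]+2,\infty}}\) on the range. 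Then I split \(\dDelta_j(m_k\dDelta_kf)\) into two regimes.

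When \(k\ge j-N_0\), I use the \(L^\infty\) bound on \(m_k\) and \(\norm{\dDelta_kf}_{L^q_T(L^p)}\). This gives \(2^{js}\omega_j\norm{\dDelta_j(\cdot)}\lesssim 2^{(j-k)s}(\omega_j/\omega_k)\,2^{ks}\omega_k\norm{\dDelta_kf}\). Since \(\omega_j\lesssim\omega_k\) and \(s>0\), the sum over \(k\ge j-N_0\) followed by \(\ell^1_j\) is a convolution with \(2^{-s|j-k|}\). When \(k<j-N_0\), I integrate by parts \(M=[s]+1\) times against \(\dDelta_j\), gaining \(2^{-jM}\norm{\partial^M(m_k\dDelta_kf)}\lesssim 2^{-jM}2^{kM}\norm{\dDelta_kf}\). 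This gives the kernel \(2^{(k-j)(M-s)}\,\omega_j/\omega_k\) with \(M>s\). The ratio \(\omega_j/\omega_k\) for \(k<j\) must be bounded by \(2^{j-k}\) times something, and this is where the gain must beat it. The bound \(\omega_j\le 2\) is useless, so I use \(\omega_j\le\omega_k\cdot C\) again: it also holds for \(j\ge k\), because \(\omega_j=\sum_{\ell\ge j}2^{j-\ell}e_\ell\le\sum_{\ell\ge j}e_\ell 2^{j-\ell}\) and \(e_\ell\le 1\). The cleaner route is to compare \(e\)'s directly, giving \(\omega_j\le \omega_k\) up to a constant whenever \(k\le j\) as well. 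If that fails, I take \(M=[s]+2\), which is available since \(F\in W^{[s]+3,\infty}\), to absorb a factor \(2^{j-k}\).

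I expect the main obstacle to be exactly this control of the weight ratio \(\omega_j/\omega_k\) in both regimes, uniformly in \(T\). I would first try to establish the two-sided comparison
\[
2^{-|j-k|}\omega_k\lesssim\omega_j\lesssim 2^{|j-k|}\omega_k
\]
from the definition. Then I would choose the number of integrations by parts, one more than Meyer's \([s]+1\) (permitted by \([s]+3\) derivatives on \(F\)), so that both tails are summable with constants independent of \(T\). The Chemin--Lerner structure causes no extra trouble: all bounds are taken pointwise in \(j\) after the time norm \(L^q_T\), with \(m_k\) only ever in \(L^\infty_T(L^\infty)\). Collecting the powers of \(1+\norm f_{L^\infty_T(L^\infty)}\) yields the stated exponent \([s]+2\).
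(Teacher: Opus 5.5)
The paper does not prove this lemma; it quotes it from \cite[Proposition~3.8]{CMZ2010R}, so there is no internal argument to compare with. On its own merits, the plan you finally commit to is correct, modulo the low-frequency point at the end. That plan is Meyer's first linearization $F(f)=\sum_k m_k\dDelta_kf$, estimated block by block with the weight carried along, with $M=[s]+2$ integrations by parts in the high--low regime. The hypotheses of the statement are tailored exactly to it. However, your write-up also contains a false intermediate claim. You should remove it, because it hides why the extra derivative is needed.

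The two weight inequalities you need are both exact and uniform in $T$:
\[
\omega_j(T)\le\omega_k(T)\quad(j\le k),
\qquad
\omega_j(T)\le2^{j-k}\omega_k(T)\quad(j\ge k).
\]
The first holds because $\omega_j=\sum_{m\ge0}2^{-m}e_{j+m}$ and $e_\ell$ is nondecreasing. The second is your ``controlled loss upward'' and follows from the identity $\omega_k=\sum_{k\le\ell<j}2^{k-\ell}e_\ell+2^{k-j}\omega_j$.

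They serve different regimes. For $k\ge j-N_0$ you need monotonicity: with only the symmetric bound $2^{|j-k|}$, the kernel for $k\ge j$ would be $2^{-(k-j)(s-1)}$, which forces $s>1$. For $k<j-N_0$ the factor $2^{j-k}$ cannot be avoided.

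Your ``cleaner route'', that $\omega_j\lesssim\omega_k$ also for $k\le j$, is false. The bound $e_\ell\le1$ only gives $\omega_j\le2$, whereas $\omega_k(T)\to0$ as $T\downarrow0$. In fact one can bound the first sum in the identity using $e_\ell\le(c_*T)^{1/2}2^\ell$. One can bound $\omega_j$ from below using $e_\ell\ge c\,(c_*T)^{1/2}2^\ell$ whenever $c_*4^\ell T\le1$. Together these give $\omega_j(T)/\omega_k(T)\to2^{j-k}$ as $T\downarrow0$ for fixed $k<j$.

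So the high--low kernel is genuinely $2^{-(j-k)(M-s-1)}$, and $M=[s]+2$ is forced rather than a fallback. It requires $F'\in W^{[s]+2,\infty}_{\rm loc}$, which is precisely why the statement assumes $F\in W^{[s]+3,\infty}_{\rm loc}$, one derivative more than the unweighted Meyer lemma.

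Two smaller points.

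\emph{The exponent.} Fa\`a di Bruno gives
\[
\norm{\partial^\alpha m_k}_{L^\infty_T(L^\infty)}\lesssim2^{k|\alpha|}\bigl(1+\norm f_{L^\infty_T(L^\infty)}\bigr)^{|\alpha|},
\]
with a constant depending on $F^{(r)}$, $1\le r\le|\alpha|+1$, on $[-C\norm f_{L^\infty_T(L^\infty)},C\norm f_{L^\infty_T(L^\infty)}]$. With $|\alpha|\le[s]+2$ this produces exactly the power $[s]+2$. Your bound with exponent $|\alpha|+1$ would instead give $[s]+3$.

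\emph{The telescoping step.} You justify it via $\norm{\dS_kf}_{L^\infty_T(L^\infty)}\to0$ as $k\to-\infty$, but this is not implied by the stated hypotheses. The weight $\omega_j(T)$ decays like $|j|2^j$ as $j\to-\infty$, so the weighted norm gives no $L^\infty$ control of low frequencies. It has to come from elsewhere. In the paper's application, $a\in\tdL^\infty_T(\dB^{\frac dp}_{p,1})$ supplies it.
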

\begin{lemma}
\label{lem:time-weighted-transport-CMZ}
\textup{\cite[Proposition 4.2]{CMZ2010R}}
Let \(p\in[1,\infty]\), \(s\in\left(-d\min\left\{\frac1p,\frac1{p'}\right\},\frac dp\right]\) and \(f\) solve
\[
\partial_t f+v\cdot\nabla f=g,
\qquad
f|_{t=0}=f_0.
\]
Assume that
\[
\nabla v\in L^1_T(\dB^{\frac{d}{p}}_{p,1}),
\qquad
f_0\in \dB^s_{p,1},
\qquad
g\in L^1_T(\dB^s_{p,1}).
\]
Then, for every \(t\in[0,T]\),
\[
\norm f_{\tdL^\infty_t(\dB^s_{p,1}(\omega_T))}
\le
e^{CV(t)}
\left(
\norm{f_0}_{\dB^s_{p,1}(\omega_T)}
+
\int_0^t e^{-CV(\tau)}
\norm{g(\tau)}_{\dB^s_{p,1}(\omega_T)}\dd\tau
\right),
\]
where
\[
V(t):=\int_0^t
\norm{\nabla v(\tau)}_{\dB^{\frac{d}{p}}_{p,1}}\dd\tau.
\]
\end{lemma}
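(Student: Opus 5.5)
The plan is to repeat the classical Besov transport estimate (\cref{lem:transport-estimate}), carrying the weight \(\omega_j(T)\) through the dyadic summation. Since \(T\) is fixed and the weight depends only on \(j\), multiplying by \(\omega_j(T)\) commutes with the time integration. The only new input is a set of comparability properties of \((\omega_j(T))_j\) that let the weight pass through the frequency interactions in the commutator.

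\textbf{Step 1: properties of the weight.} From \(\omega_j=\sum_{\ell\ge j}2^{j-\ell}e_\ell\) and the fact that \(e_\ell(T)\) is nondecreasing in \(\ell\), I will check three facts:
\[
\omega_{j+1}\le 2\omega_j,\qquad \omega_j\le 3\,\omega_k\ \ (k\ge j),\qquad \omega_j\le 2^{\,j-k}\omega_k\ \ (k<j).
\]
For the second, write \(\omega_j=\sum_{\ell=j}^{k-1}2^{j-\ell}e_\ell+2^{j-k}\omega_k\le 2e_k+\omega_k\), and use \(e_k\le\omega_k\). The first and third follow directly from the definition. In words: the weight is essentially constant on neighbouring dyadic blocks, essentially nondecreasing towards high frequencies, and loses at most a factor \(2^{j-k}\) when passing from a low block \(k\) to a higher block \(j\).

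\textbf{Step 2: localization.} I will apply \(\dDelta_j\) to the equation to get
\[
\partial_tf_j+v\cdot\nabla f_j=g_j+R_j,\qquad R_j:=[v\cdot\nabla,\dDelta_j]f.
\]
The standard \(L^p\) energy argument (multiply by \(|f_j|^{p-2}f_j\), regularize the norm, and for \(p=\infty\) use the characteristics) gives
\[
\|f_j(t)\|_{L^p}\le\|f_j(0)\|_{L^p}+\int_0^t\Bigl(\|g_j\|_{L^p}+\|R_j\|_{L^p}+\tfrac1p\|\Div v\|_{L^\infty}\|f_j\|_{L^p}\Bigr)\dd\tau .
\]
I will multiply by \(2^{js}\omega_j(T)\), take the supremum over \([0,t]\), and sum over \(j\). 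This produces the \(\tdL^\infty_t\) weighted norm on the left-hand side, provided I can prove the weighted commutator bound
\[
\sum_j2^{js}\omega_j(T)\|R_j\|_{L^p}\le C\,\|\nabla v\|_{\dB^{\frac dp}_{p,1}}\,\|f\|_{\dB^s_{p,1}(\omega_T)} .
\]

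\textbf{Step 3: the commutator, which is the main obstacle.} I will decompose \(R_j\) with Bony's decomposition of \(v\cdot\nabla f\), exactly as in the proof of \cite[Lemma 2.100]{BaChDa11}, and track the weight in each piece.
\begin{enumerate}
\item[(i)] The genuine paraproduct commutator \([\dot T_{v^\alpha},\dDelta_j]\partial_\alpha f\) involves only blocks \(|k-j|\le4\). It is handled by \cref{lem:convolution-commutator}, and \(\omega_j\simeq\omega_k\) there by the first property in Step 1.
\item[(ii)] The terms \(\dDelta_j\dot T_{\partial_\alpha f}v^\alpha\) and \(\dot T_{\dDelta_j\partial_\alpha f}v^\alpha\) place \(f\) at low blocks \(k<j\). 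Here I will use \(\omega_j\le2^{j-k}\omega_k\). The extra factor \(2^{j-k}\) is affordable because \(\|\dS_{j}\nabla f\|_{L^\infty}\) is summed against \(2^{j(s-1-d/p)}\)-type decay, with strict room when \(s<\frac dp\); the endpoint \(s=\frac dp\) needs \(\ell^1\) summability and the third index \(1\). I expect this bookkeeping, together with whether the factor \(2^{j-k}\) is exactly absorbed at the endpoint, to be the most delicate part. If it fails, I will fall back on splitting off the term with \(k\) close to \(j\).
\item[(iii)] The remainder \(\dDelta_j\dot R(v,\nabla f)\), plus the \(\Div v\) correction, only involves blocks \(k\ge j-3\). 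There \(\omega_j\le3\omega_k\), and the usual condition \(s>-d\min\{\frac1p,\frac1{p'}\}\) gives geometric decay in \(k-j\).
\end{enumerate}

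\textbf{Step 4: Gronwall.} Combining Steps 2 and 3 gives
\[
F(t)\le\|f_0\|_{\dB^s_{p,1}(\omega_T)}+\int_0^t\|g\|_{\dB^s_{p,1}(\omega_T)}\dd\tau+C\int_0^tV'(\tau)F(\tau)\dd\tau,
\]
where \(F(t)\) is the \(\tdL^\infty_t(\dB^s_{p,1}(\omega_T))\) norm of \(f\), and I bound \(\|\Div v\|_{L^\infty}\lesssim\|\nabla v\|_{\dB^{\frac dp}_{p,1}}\). Gronwall's lemma then yields the stated estimate, with the factor \(e^{-CV(\tau)}\) inside the integral.
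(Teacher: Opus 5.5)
Your argument is correct; the paper gives no proof of its own and simply cites \cite[Proposition 4.2]{CMZ2010R}, and your route is the standard one behind that result: localize, split the commutator by Bony's decomposition while moving the weight with the comparability properties of $\omega_j(T)$, then apply Gronwall. Your endpoint worry is unfounded, since at $s=\frac dp$ the low--high kernel $2^{(j-k)(s-\frac dp)}$ is bounded by $1$ and $\ell^1\times\ell^1$ summation closes it; also, the term $\dot T_{\dDelta_j\partial_\alpha f}v^\alpha$ has $f$ at block $j$, not at blocks $k<j$, so it needs no weight transfer at all.
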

\begin{lemma}
\label{lem:static-products}\cite[Lemma~2.3 and Lemma~2.5]{GuoYangZhang2026}
Let
$
d\ge2,1<p<2d,\, 0<\delta_0<1$ and 
 $ \Omega\in AF(\delta_0).
$
We have the following estimates.

\begin{enumerate}
\item If $-d\min\left\{\frac1p,\frac1{p'}\right\}
<s\le \frac dp-\delta_0,$
then
\begin{equation}\label{eq:weighted-one-sided}
\|fg\|_{\dot B^s_{p,1}(\Omega)}
\lesssim
\|f\|_{\dot B^{\frac dp}_{p,1}}
\|g\|_{\dot B^s_{p,1}(\Omega)}.
\end{equation}

\item One has
\begin{equation}\label{eq:weighted-endpoint}
\begin{aligned}
\|fg\|_{\dot B^{\frac dp}_{p,1}(\Omega)}
\lesssim{}&
\|f\|_{\dot B^{\frac dp}_{p,1}(\Omega)}
\|g\|_{\dot B^{\frac dp}_{p,1}}+
\|f\|_{\dot B^{\frac dp}_{p,1}}
\|g\|_{\dot B^{\frac dp}_{p,1}(\Omega)}.
\end{aligned}
\end{equation}

\item If $-d\min\left\{\frac1p,\frac1{p'}\right\}
<s\le\frac dp,$
then
\begin{equation}\label{eq:weighted-mixed}
\begin{aligned}
\|fg\|_{\dot B^s_{p,1}(\Omega)}
\lesssim{}&
\|f\|_{\dot B^s_{p,1}}
\|g\|_{\dot B^{\frac dp}_{p,1}(\Omega)}
+
\|f\|_{\dot B^s_{p,1}(\Omega)}
\|g\|_{\dot B^{\frac dp}_{p,1}}.
\end{aligned}
\end{equation}

\item  If \(F\in C^\infty(\mathbb R)\), \(F(0)=0\), and $f\in L^\infty\cap\dot B^{\frac dp}_{p,1}(\Omega),$
then
\begin{equation}\label{eq:weighted-composition}
\|F(f)\|_{\dot B^{\frac dp}_{p,1}(\Omega)}
\le
C
\|f\|_{\dot B^{\frac dp}_{p,1}(\Omega)}
\end{equation}
with $C$ depending only on $\|f\|_{L^{\infty}}$, $F$, $s, p, r, d$ and $\delta_{0}$.
\end{enumerate}
\end{lemma}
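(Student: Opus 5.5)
The plan is to prove all four estimates by Bony's decomposition $fg=\dot T_fg+\dot T_gf+\dot R(f,g)$ (and, for (4), a Meyer-type telescoping). Two properties of $\Omega\in AF(\delta_0)$ will be used throughout:
\begin{itemize}
\item the ratio bound \eqref{eq:weight-ratio-abstract}, $\Omega_j/\Omega_k\le 2^{\delta_0|j-k|}$;
\item monotonicity, so that $\Omega_j\le\Omega_k$ whenever $j\le k$.
\end{itemize}
The guiding principle is to put the weight on the factor that carries the output frequency (the high-frequency factor). A weight that has to be transferred to a lower frequency costs at most a factor $2^{\delta_0(k-\ell)}$, and this must be paid for by spare regularity.

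For (1), consider the paraproduct terms first.
\begin{itemize}
\item In $\dot T_fg$ the $j$-th block involves only $\dot S_{k-1}f\,\dot\Delta_kg$ with $|j-k|\le4$. We bound $\|\dot S_{k-1}f\|_{L^\infty}\lesssim\|f\|_{\dot B^{d/p}_{p,1}}$ and $\Omega_j\lesssim\Omega_k$, which gives $\|f\|_{\dot B^{d/p}_{p,1}}\|g\|_{\dot B^s_{p,1}(\Omega)}$.
\item In $\dot T_gf$ we write $\|\dot S_{k-1}g\|_{L^\infty}\lesssim\sum_{\ell\le k-2}2^{\ell d/p}\|\dot\Delta_\ell g\|_{L^p}$ and transfer the weight from $j\approx k$ to $\ell$ via $\Omega_k\le2^{\delta_0(k-\ell)}\Omega_\ell$. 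This produces
\[
2^{ks}\Omega_k\|\dot S_{k-1}g\|_{L^\infty}\lesssim\sum_{\ell\le k-2}2^{(k-\ell)(s-\frac dp+\delta_0)}\,\Omega_\ell2^{\ell s}\|\dot\Delta_\ell g\|_{L^p}.
\]
Since $s\le\frac dp-\delta_0$, the exponent is $\le0$, so this is bounded by $\|g\|_{\dot B^s_{p,1}(\Omega)}$. Multiplying by $2^{k\frac dp}\|\dot\Delta_kf\|_{L^p}$ and summing in $k$ gives the claim.
\item This is exactly where the upper restriction $s\le\frac dp-\delta_0$ enters, and I expect this endpoint bookkeeping to be the main (though mild) obstacle. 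The $\ell^1$ structure makes the borderline exponent $0$ harmless.
\end{itemize}
For the remainder, the $j$-th block only involves $k\ge j-3$, so $\Omega_j\le2^{3\delta_0}\Omega_k$ by monotonicity. We then use Bernstein from $L^{p/2}$ to $L^p$ when $p\ge2$, and from $L^1$ to $L^p$ when $p<2$, exactly as in the proof of \cref{lem:cutoff-commutator}. This gains a factor $2^{-(k-j)(s+\min\{d/p,d/p'\})}$, summable because $s>-d\min\{\frac1p,\frac1{p'}\}$. Summation then follows from \eqref{eq:envelope-bilinear-convolution}.

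Estimates (2) and (3) follow the same scheme, but no weight ever needs to be moved to a lower frequency, so there is no $\delta_0$ restriction.
\begin{itemize}
\item In $\dot T_fg$ and $\dot T_gf$ the weight is always placed on the high-frequency factor $\dot\Delta_k$, with $\Omega_j\simeq\Omega_k$ since $|j-k|\le4$. The low-frequency factor is estimated unweighted in $L^\infty$ when its index is $\frac dp$.
\item When the low-frequency factor has index $s<\frac dp$, it is estimated in $\dot B^{s-\frac dp}_{\infty,1}$ via \eqref{2}. At $s=\frac dp$ the plain $L^\infty$ bound is used instead.
\item In the remainder, $\Omega_j\lesssim\Omega_k$, and the weight may be put on either factor.
\item The positivity conditions $s+\frac dp>0$ (for (2)) and $s+\min\{\frac dp,\frac d{p'}\}>0$ (for (3)) are what make the remainder summable.
\end{itemize}

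For (4), we write
\[
F(f)=\sum_k\bigl(F(\dot S_{k+1}f)-F(\dot S_kf)\bigr)=\sum_k m_k\,\dot\Delta_kf,
\qquad
m_k=\int_0^1F'(\dot S_kf+\tau\dot\Delta_kf)\,\dd\tau .
\]
By Bernstein and the chain rule, $\|D^Mm_k\|_{L^\infty}\le C_M2^{kM}$, with $C_M$ depending on $\|f\|_{L^\infty}$ and $F$. This yields
\[
\|\dot\Delta_j(m_k\dot\Delta_kf)\|_{L^p}\lesssim\min\{1,2^{-(j-k)M}\}\|\dot\Delta_kf\|_{L^p}.
\]
We then split into two ranges of $j$.
\begin{itemize}
\item For $j<k$ we use $\Omega_j\le\Omega_k$ and $2^{(j-k)\frac dp}$, which is summable since $\frac dp>0$.
\item For $j\ge k$ we pay $2^{\delta_0(j-k)}$ for the weight and choose $M>\frac dp+\delta_0$.
\end{itemize}
Summation by \eqref{eq:envelope-linear-convolution} then gives \eqref{eq:weighted-composition}.
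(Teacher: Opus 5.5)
Your proof is correct. There is nothing in the paper to compare it against: the paper does not prove this lemma but quotes it from \cite{GuoYangZhang2026}. Your argument is the standard one:
\begin{itemize}
\item Bony's decomposition, with the weight kept on the factor carrying the output frequency;
\item in the low--high paraproduct of (1), a transfer cost $2^{\delta_0(k-\ell)}$ paid for by $s\le\frac dp-\delta_0$;
\item the two Bernstein regimes $p\ge2$ and $p<2$ in the remainder;
\item Meyer's first linearization for (4).
\end{itemize}
This is the same weight-transfer mechanism the paper itself uses in its appendix; compare the kernel $2^{-(k-\ell)(1-\delta_0)}$ in \eqref{eq:comm-III-T-kernel}.

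A few details should be tidied.
\begin{enumerate}
\item The display in your $\dot T_gf$ step is off by a factor $2^{k\frac dp}$. Its left-hand side must be $2^{k(s-\frac dp)}\Omega_k\|\dot S_{k-1}g\|_{L^\infty}$. That corrected form is what your next sentence (multiplying by $2^{k\frac dp}\|\dot\Delta_kf\|_{L^p}$) actually uses.
\item In the remainder term, the bound $\Omega_j\le2^{3\delta_0}\Omega_k$ for $k\in\{j-3,j-2,j-1\}$ comes from the ratio bound, not from monotonicity.
\item In (3), the estimate \eqref{2} is an unweighted statement. What you actually need is $\sup_k2^{k(s-\frac dp)}\|\dot S_{k-1}f\|_{L^\infty}\lesssim\|f\|_{\dot B^s_{p,1}}$ for $s<\frac dp$, with the weight left on $\dot\Delta_kg$.
\item Your closing appeals to \eqref{eq:envelope-bilinear-convolution} and \eqref{eq:envelope-linear-convolution} are unnecessary. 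In (4) the latter would even require $\frac dp>\delta_0$, which the hypotheses $0<\delta_0<1$, $p<2d$ do not guarantee. The one-sided sums you already set up close the estimate directly: monotonicity for $j<k$, and $M>\frac dp+\delta_0$ for $j\ge k$.
\end{enumerate}
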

\begin{remark}
\label{lem:CL-envelope-composition}
Let \(\Phi\in C^\infty(\R)\) satisfy \(\Phi(0)=0\). From \cref{lem:static-products}, if
\[
z\in L^\infty((0,T)\times\R^d)
\cap\widetilde L^\infty_T
\bigl(\dot B^{\frac dp}_{p,1}\bigr)
\cap\widetilde L^\infty_T
\bigl(\dot B^{\frac dp}_{p,1}(\Omega)\bigr),
\]
then
\begin{equation}\label{eq:CL-envelope-composition}
\|\Phi(z)\|_{\widetilde L^\infty_T
(\dot B^{\frac dp}_{p,1}(\Omega))}
\le
C
\|z\|_{\widetilde L^\infty_T
(\dot B^{\frac dp}_{p,1}(\Omega))},
\end{equation}
where \(C\) depends only on \(d,p,\delta_0\),
\(\|z\|_{L^\infty_{T,x}}\), and finitely many derivatives of
\(\Phi\) on a compact interval determined by
\(\|z\|_{L^\infty_{T,x}}\).
\end{remark}

\begin{lemma}
\label{lem:static-weighted-transport}
\textup{\cite[Lemma 2.4]{GuoYangZhang2026}.}
Let \(1\le p,r\le\infty\), let
\(\Omega\in AF(\delta_0)\), and assume that
\begin{equation}\label{eq:static-transport-index-range}
-d\min\left\{\frac1p,1-\frac1p\right\}
<\sigma<
1+\frac dp-\delta_0.
\end{equation}
Let \(\lambda\ge0\), and consider
\begin{equation}\label{eq:static-weighted-transport-equation}
\partial_t f+v\cdot\nabla f+\lambda f=g,
\qquad
f|_{t=0}=f_0.
\end{equation}
Assume that
\[
f_0\in\dB^\sigma_{p,r}(\Omega),
\qquad
g\in\tdL^1_T\bigl(\dB^\sigma_{p,r}(\Omega)\bigr),
\]
and that
\[
V(t):=
\int_0^t
\norm{\nabla v(\tau)}
_{\dB^{\frac{d}{p}}_{p,\infty}\cap L^\infty}
\dd\tau
<\infty .
\]
Then there exists a constant \(C>0\) such that every smooth solution
of \eqref{eq:static-weighted-transport-equation} satisfies, for all
\(t\in[0,T]\),
\begin{equation}\label{eq:static-weighted-transport}
\begin{aligned}
&\norm f_{\tdL^\infty_t
\bigl(\dB^\sigma_{p,r}(\Omega)\bigr)}
+\lambda
\norm f_{\tdL^1_t
\bigl(\dB^\sigma_{p,r}(\Omega)\bigr)}
\lesssim \exp\bigl(CV(t)\bigr)
\left(
\norm{f_0}_{\dB^\sigma_{p,r}(\Omega)}
+
\norm g_{\tdL^1_t
\bigl(\dB^\sigma_{p,r}(\Omega)\bigr)}
\right),
\end{aligned}
\end{equation}
where the constant \(C\) depends only on
\(d,p,r,\sigma,\delta_0\) and is independent of
\(t,\lambda,f_0,g\), and the terminal value of \(\Omega\).
\end{lemma}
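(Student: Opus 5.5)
The plan is the classical Littlewood--Paley proof of transport estimates in Besov spaces (as in \cite[Theorem~3.14]{BaChDa11}), with one extra step: keeping track of the weight $\Omega$ in the commutator through the ratio bound \eqref{eq:weight-ratio-abstract}. The damping term $\lambda f$ only helps, because it has a good sign in the $L^p$ energy estimate.

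\textbf{Step 1: localized energy inequality.} Apply $\dDelta_j$ to \eqref{eq:static-weighted-transport-equation} and set $f_j:=\dDelta_jf$. Then
\[
\partial_tf_j+v\cdot\nabla f_j+\lambda f_j=\dDelta_jg+R_j,\qquad R_j:=[v\cdot\nabla,\dDelta_j]f .
\]
Multiply by $|f_j|^{p-2}f_j$ (regularizing if $p<2$, or using the limiting argument if $p=\infty$) and integrate by parts in the transport term. This gives
\[
\frac{d}{dt}\norm{f_j}_{L^p}+\lambda\norm{f_j}_{L^p}\le\norm{\dDelta_jg}_{L^p}+\norm{R_j}_{L^p}+\tfrac1p\norm{\Div v}_{L^\infty}\norm{f_j}_{L^p}.
\]
Integrate in time, take the supremum over $[0,t]$, and note that $\int_0^t\lambda\norm{f_j}_{L^p}\,\dd\tau$ appears on the left. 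This yields, for each $j$,
\[
\norm{f_j}_{L^\infty_t(L^p)}+\lambda\norm{f_j}_{L^1_t(L^p)}\le\norm{\dDelta_jf_0}_{L^p}+\norm{\dDelta_jg}_{L^1_t(L^p)}+\int_0^t\bigl(\norm{R_j}_{L^p}+\norm{\nabla v}_{L^\infty}\norm{f_j}_{L^p}\bigr)\dd\tau.
\]
Next multiply by $\Omega_j2^{j\sigma}$ and take the $\ell^r$ norm in $j$. This produces the Chemin--Lerner quantities on the left, and the term $\int_0^t\norm{\nabla v}_{L^\infty}\norm{f}_{\dB^\sigma_{p,r}(\Omega)}\,\dd\tau$ on the right. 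More precisely, that term should be bounded by $\int_0^t\norm{\nabla v(\tau)}\norm f_{\tdL^\infty_\tau(\dB^\sigma_{p,r}(\Omega))}\,\dd\tau$, which is the form Gronwall needs.

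\textbf{Step 2: weighted commutator estimate (main obstacle).} The key claim is
\[
\bigl\|\bigl(\Omega_j2^{j\sigma}\norm{R_j}_{L^p}\bigr)_j\bigr\|_{\ell^r}\le C\,\norm{\nabla v}_{\dB^{\frac dp}_{p,\infty}\cap L^\infty}\,\norm f_{\dB^\sigma_{p,r}(\Omega)},
\]
with $C$ independent of the size of $\Omega$. To prove it, use Bony's decomposition to split $R_j$ into four pieces, exactly as in \cite[Lemma~2.100]{BaChDa11}.
\begin{itemize}
\item[(i)] The main piece $[\dS_{k-1}v\cdot\nabla,\dDelta_j]\dDelta_kf$ with $|k-j|\le4$. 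By \cref{lem:convolution-commutator} it is bounded by $\norm{\nabla v}_{L^\infty}\norm{\dDelta_kf}_{L^p}$. Since $|j-k|\le4$, the ratio $\Omega_j/\Omega_k$ is bounded by \eqref{eq:weight-ratio-abstract}.
\item[(ii)] The term $\dDelta_j(\dot T_{\nabla f}v)$, i.e.\ high frequencies of $v$ against low frequencies $k<j$ of $f$. This is where the weight matters. After using $\Omega_j\le2^{\delta_0(j-k)}\Omega_k$, the sum over $k$ carries the factor $2^{(j-k)(\sigma-1-\frac dp+\delta_0)}$. This factor is summable exactly when $\sigma<1+\frac dp-\delta_0$, which is the upper bound in \eqref{eq:static-transport-index-range}. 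At the endpoint one uses $\nabla v\in\dB^{\frac dp}_{p,\infty}$, which suffices because that sum has geometric decay.
\item[(iii)] The remainder $\dot R(v,\nabla f)$. Here frequencies $k\ge j-3$ contribute. For these $\Omega_j\le\Omega_k$, since $\Omega$ is nondecreasing up to the bounded shift $3$. Summability of $2^{(j-k)(\sigma+d\min\{\frac1p,\frac1{p'}\})}$ then requires the lower bound in \eqref{eq:static-transport-index-range}; for $p<2$ one first uses Bernstein from $L^{p/2}$ or $L^1$, as in the proof of \cref{lem:cutoff-commutator}.
\item[(iv)] The terms $\dot T_{v}$-type corrections of the form $\dS_{j-1}v\cdot\nabla f_j-\dDelta_j(\dots)$ are handled like (i).
\end{itemize}
In every case the sum over $j$ is controlled by Young's convolution inequality, in the form \eqref{eq:envelope-linear-convolution}. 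The step needing the most care is (ii): it is the only place where $\delta_0$ enters the index range, and the bound must not depend on the size of $\Omega$.

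\textbf{Step 3: closing.} Inserting the commutator bound of Step 2 into the summed inequality of Step 1 gives
\[
Z(t)\lesssim\norm{f_0}_{\dB^\sigma_{p,r}(\Omega)}+\norm g_{\tdL^1_t(\dB^\sigma_{p,r}(\Omega))}+\int_0^t V'(\tau)Z(\tau)\,\dd\tau,
\]
where $Z(t):=\norm f_{\tdL^\infty_t(\dB^\sigma_{p,r}(\Omega))}+\lambda\norm f_{\tdL^1_t(\dB^\sigma_{p,r}(\Omega))}$. Gronwall's inequality then yields \eqref{eq:static-weighted-transport}, with constants depending only on $d,p,r,\sigma,\delta_0$ and independent of $\lambda$. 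If one worries about finiteness of the weighted norms in the Gronwall step, first run the argument with the truncated weights $\Theta^{(L)}$ of \eqref{eq:truncated-envelope-lame}, whose constants are independent of $L$, and then let $L\to\infty$ by monotone convergence.
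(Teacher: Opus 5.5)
Your argument is correct and is the standard one. The paper gives no proof of its own here; it imports the estimate from \cite[Lemma~2.4]{GuoYangZhang2026}. Your route is the usual way to obtain it: the proof of \cite[Theorem~3.14]{BaChDa11}, tracking the weight ratio \eqref{eq:weight-ratio-abstract} in the commutator $[v\cdot\nabla,\dDelta_j]f$. As you say, the upper bound $\sigma<1+\frac dp-\delta_0$ comes only from the low--high term $\dDelta_j\dot T_{\nabla f}v$.

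Three small corrections, none of which affects the conclusion:
\begin{enumerate}
\item Your piece (iv) is really $\dot T'_{\nabla\dDelta_j f}v$. It is bounded directly by H\"older and Bernstein by $\norm{\nabla v}_{L^\infty}\norm{\dDelta_jf}_{L^p}$, not via \cref{lem:convolution-commutator}.
\item For $k\ge j-3$ you only have $\Omega_j\le 2^{3\delta_0}\Omega_k$, not $\Omega_j\le\Omega_k$.
\item For $p<2$ the Bernstein step in (iii) goes from $L^1$ to $L^p$; the $L^{p/2}$ version is the one used when $p\ge2$.
\end{enumerate}
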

\begin{remark}
By the standard Friedrichs approximation procedure, together with the
Fatou property of homogeneous Besov spaces, estimate
\eqref{eq:static-weighted-transport} remains valid for distributional
solutions of \eqref{eq:static-weighted-transport-equation} satisfying
the regularity assumptions stated above.
\end{remark}
\begin{lemma}
\label{lem:weighted-commutators}
Let $d\ge2,1<p<2d$ and  $\gamma_m:=-1+\frac{d}{p}+\min \left\{\frac{d}{p},\frac{d}{p'}\right\}$.
Assume that $
0 < \delta_0 < \min\{1, \gamma_m\},\, m \in \mathbb{Z}$ and $\Omega \in AF(\delta_0)$. Let $A(D)$ be a smooth homogeneous Fourier multiplier of order zero.
Define 
\[
 \quad q_m := \dot{S}_m q, \quad q^h := (\text{Id} - \dot{S}_m)q.
\]
If
$q \in \dot{B}_{p,1}^{\frac{d}{p}} \cap \dot{B}_{p,1}^{\frac{d}{p}}(\Omega)$ and
$
v \in \dot{B}_{p,1}^{\frac{d}{p}}(\Omega) \cap \dot{B}_{p,1}^{1+\frac{d}{p}} \cap \dot{B}_{p,1}^{1+\frac{d}{p}}(\Omega)
$, then
\begin{align}
&\sum_j\Omega_j2^{j({-1+\frac{d}{p}})}
\norm{\Div([\dDelta_j,q_m]\nabla v)}_{L^p}
\le C\norm{\nabla q_m}_{\dB^{\frac{d}{p}}_{p,1}}
\norm v_{\dB^{{\frac{d}{p}}}_{p,1}(\Omega)},
\label{eq:comm-low}\\
&\norm{q^h\nabla^2v}_{\dB^{{-1+\frac{d}{p}}}_{p,1}(\Omega)}
\le C\norm{q^h}_{\dB^{\frac{d}{p}}_{p,1}}
\norm v_{\dB^{{{1+\frac{d}{p}}}}_{p,1}(\Omega)},
\label{eq:high-principal}\\
&\norm{\nabla q^h\,\nabla v}_{\dB^{{-1+\frac{d}{p}}}_{p,1}(\Omega)}
\le C \Bigl(
\norm{\nabla q^h}_{\dB^{{-1+\frac{d}{p}}}_{p,1}}
\norm v_{\dB^{{{1+\frac{d}{p}}}}_{p,1}(\Omega)}
+
\norm{\nabla q^h}_{\dB^{{-1+\frac{d}{p}}}_{p,1}(\Omega)}
\norm v_{\dB^{{{1+\frac{d}{p}}}}_{p,1}}
\Bigr),
\label{eq:high-gradient}\\
&\norm{[A(D),q_m]\nabla^2v}_{\dB^{{-1+\frac{d}{p}}}_{p,1}(\Omega)}
\le C_A\norm{\nabla q_m}_{\dB^{\frac{d}{p}}_{p,1}}
\norm v_{\dB^{{{\frac{d}{p}}}}_{p,1}(\Omega)},
\label{eq:mult-low}\\
&\norm{[A(D),q^h]\nabla^2v}_{\dB^{{-1+\frac{d}{p}}}_{p,1}(\Omega)}
\le C_A \Bigl(
\norm{\nabla q^h}_{\dB^{{-1+\frac{d}{p}}}_{p,1}}
\norm v_{\dB^{{{1+\frac{d}{p}}}}_{p,1}(\Omega)}
+
\norm{\nabla q^h}_{\dB^{{-1+\frac{d}{p}}}_{p,1}(\Omega)}
\norm v_{\dB^{{{1+\frac{d}{p}}}}_{p,1}}
\Bigr),
\label{eq:mult-high}
\end{align}
where  $C_A$ depends only on finitely many seminorms of the symbol of \(A\) on the unit annulus and  $C$ is independent
of $m,q,v,$ and $\Omega$.
\end{lemma}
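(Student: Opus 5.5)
The five estimates are proved one at a time by Bony's paraproduct decomposition applied dyadic block by dyadic block. The frequency-envelope weight is then moved across blocks using the ratio bound \eqref{eq:weight-ratio-abstract}, namely $\Omega_j/\Omega_k\le 2^{\delta_0|j-k|}$, together with the convolution inequalities \eqref{eq:envelope-linear-convolution}--\eqref{eq:envelope-bilinear-convolution}. Every sum produced has one of two shapes:
\begin{itemize}
\item an almost-diagonal sum over $|j-k|\le N_0$, where the ratio $\Omega_j/\Omega_k$ is harmless;
\item a geometrically weighted sum carrying a factor $2^{-\gamma|j-k|}$ with $\gamma>\delta_0$, where the ratio is absorbed.
\end{itemize}
The hypothesis $\delta_0<\min\{1,\gamma_m\}$ is there precisely to make these decay rates exceed $\delta_0$. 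This also makes all constants independent of $m$ and $\Omega$: only the $AF(\delta_0)$ structure is used, and the splitting $q=q_m+q^h$ enters only through which norm of $q$ appears.

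For \eqref{eq:comm-low} and \eqref{eq:mult-low} I follow the scheme of \cref{lem:cutoff-commutator}. Write
\[
\Div([\dDelta_j,q_m]\nabla v)=[\dDelta_j,\dot T_{q_m}]\Delta v+\bigl(\text{paraproduct and remainder pieces}\bigr).
\]
\begin{itemize}
\item The principal piece $[\dDelta_j,\dS_{k-1}q_m]\dDelta_k\nabla^2 v$ with $|k-j|\le N_0$ is handled by \cref{lem:convolution-commutator}. This gives the bound $2^{-j}\norm{\nabla q_m}_{L^\infty}\norm{\dDelta_k\nabla^2v}_{L^p}$, which is of order $2^{k(1+d/p)}\cdot 2^{-j}$ times $\norm{\dDelta_k v}_{L^p}$, i.e. one derivative on $v$ at level $d/p$. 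The weight costs at most $2^{N_0\delta_0}$.
\item In the pieces $\dot T_{\nabla^2 v}q_m$ the low frequencies $\ell\le k-2$ of $v$ are paired against the high frequencies $k$ of $\nabla q_m$. Here $\Omega_j\approx\Omega_k\le 2^{\delta_0(k-\ell)}\Omega_\ell$, and the factor $2^{-(k-\ell)}$ (or $2^{-2(k-\ell)}$) beats $2^{\delta_0(k-\ell)}$ because $\delta_0<1$. So the weight lands on $v$.
\item The remainder terms sum over $k\ge j-3$ with decay rate $\gamma_\rho$, exactly as in \eqref{eq3}. For the case $\rho=1$ this rate is $\gamma_m$, so $\delta_0<\gamma_m$ gives summability with the weight placed on $v$ via $\Omega_j\le\Omega_k$ (monotonicity).
\end{itemize}
For $[A(D),q_m]$ the same argument works, with \cref{lem:convolution-commutator} applied to the symbol of $A$ localized to annuli; this is where $C_A$ enters.

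For \eqref{eq:high-principal}, \eqref{eq:high-gradient} and \eqref{eq:mult-high}, the coefficient $q^h$ is rough, so no commutator gain is available. I bound the products directly using \cref{lem:static-products}.
\begin{itemize}
\item For \eqref{eq:high-principal}, apply \eqref{eq:weighted-one-sided} with $s=-1+d/p\le d/p-\delta_0$ and $f=q^h$.
\item For \eqref{eq:high-gradient}, decompose $\nabla q^h\,\nabla v$ by Bony. The term $\dot T_{\nabla v}\nabla q^h$ puts the weight on $\nabla q^h$ and yields the second term. The term $\dot T_{\nabla q^h}\nabla v$ uses $\nabla q^h\in\dB^{-1}_{\infty,1}$ via \eqref{2} and yields the first. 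The remainder uses $\gamma_m>\delta_0$ again.
\item For \eqref{eq:mult-high}, write $[A(D),q^h]\nabla^2v=A(D)(q^h\nabla^2v)-q^hA(D)\nabla^2v$. The principal parts $\dot T_{q^h}$ commute with $A(D)$ up to a commutator of the first type, now costing $\norm{\nabla q^h}_{\dB^{-1+d/p}_{p,1}}$. The other pieces are exactly those of \eqref{eq:high-gradient}, after moving one derivative from $\nabla^2 v$ onto $q^h$ using frequency localization.
\end{itemize}

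The main obstacle is the high-high remainder when $p>d$. There the available decay $\gamma_m=-1+2d/p$ can be small, and the weight must be put on the correct factor without losing the gap $\gamma_m-\delta_0$. I expect to handle it exactly as in \eqref{eq3}, using \eqref{eq:envelope-bilinear-convolution}.
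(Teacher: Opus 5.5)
Your proposal is correct and follows essentially the same route as the paper. Both arguments use a block-by-block Bony decomposition, with \cref{lem:convolution-commutator} applied to the $[\dDelta_j,\dot T_{q_m}]$ piece (resp.\ the localized $[A(D)\dDelta_j,\dS_{k-1}q_m]$ piece), and move the weight via $\Omega_j/\Omega_k\le 2^{\delta_0|j-k|}$, using decay $1-\delta_0$ or $2-\delta_0$ in the low--high terms and a rate exceeding $\delta_0$ in the high--high remainders; the $q^h$ estimates then come from \eqref{eq:weighted-one-sided} and \eqref{eq:weighted-mixed}. Two minor points: the rate $\gamma_m$ corresponds to $\rho=2$, not $\rho=1$, since $\rho=1$ gives $\frac dp+\min\{\frac dp,\frac d{p'}\}$, which is the rate the paper gets for \eqref{eq:comm-low} by keeping the divergence outside; and for \eqref{eq:mult-high} no commutator structure is needed, because the paper simply bounds $A(D)(q^h\nabla^2v)$ and $q^hA(D)\nabla^2v$ separately by \eqref{eq:high-principal}, \eqref{eq:weighted-mixed} and $\norm{q^h}_{\dB^{\frac dp}_{p,1}}\simeq\norm{\nabla q^h}_{\dB^{-1+\frac dp}_{p,1}}$.
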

\begin{proof}
We use Bony's decomposition in the form
\[
fg = \dot T_fg + \dot T'_gf, \quad \dot T'_gf := \dot T_gf + \dot R(g, f).
\]
By \eqref{eq:weight-ratio-abstract},
\[
\frac{\Omega_j}{\Omega_k}
\le
2^{\delta_0|j-k|}
\qquad
\text{for all }j,k\in\Z.
\]
By assumption, we know that $\delta_0 < 1,\,\delta_0 < \gamma_m$ and 
\[
\gamma_m = 
\begin{cases} 
\frac{2d}{p} - 1, & p \geq 2, \\
d - 1, & 1 < p < 2.
\end{cases}
\]

 We first prove \eqref{eq:comm-low}. Bony's decomposition gives
\begin{equation}
[\dDelta_j,q_m]\nabla v
=I_j-II_j+III_j, \label{eq:comm-bony}
\end{equation}
where
\[
I_j:=[\dDelta_j,\dot T_{q_m}]\nabla v,
\qquad
II_j:=\dot T'_{\dDelta_j\nabla v}q_m,
\qquad
III_j:=\dDelta_j\dot T'_{\nabla v}q_m.
\]
There exists an integer \(N_0\ge1\), for the first term
\[
I_j
=
\sum_{|k-j|\le N_0}
[\dDelta_j,\dS_{k-1}q_m]
\nabla\dDelta_kv,
\]
by which, \cref{lem:convolution-commutator} and Bernstein's inequality, we can obtain
\begin{align}
\norm{\Div I_j}_{L^p}
&\lesssim
\sum_{|k-j|\le N_0}
\norm{\nabla\dS_{k-1}q_m}_{L^\infty}
2^k\norm{\dDelta_kv}_{L^p}
\nonumber
\\
&\lesssim
\sum_{|k-j|\le N_0}
\left(
\sum_{\ell\le k-2}
2^{\ell\frac dp}
\norm{\dDelta_\ell\nabla q_m}_{L^p}
\right)
2^k\norm{\dDelta_kv}_{L^p}.
\label{eq:comm-Ij}
\end{align}
Multiplying above term by $\Omega_j2^{j{(-1+\frac{d}{p})}}$, and summing in $j$,  we obtain
\begin{align}
\sum_j\Omega_j2^{j{(-1+\frac{d}{p})}}\|\Div I_j\|_{L^p}
&\lesssim
\|\nabla q_m\|_{\dB^{\frac{d}{p}}_{p,1}}
\|v\|_{\dB^{\frac{d}{p}}_{p,1}(\Omega)}.
\label{eq:comm-Ij-sum}
\end{align}
We next estimate \(II_j\) and \(III_j\) separately. Put $c=(c_{k})_{k\in \mathbb{Z}}$ and  $d=(d_{k})_{k\in \mathbb{Z}}$, where
\begin{equation}\label{eq:comm-sequences}
c_k:=2^{k\frac{d}{p}}\norm{\dDelta_k\nabla q_m}_{L^p},
\qquad
d_k:=\Omega_k2^{k(\frac{d}{p})}
\norm{{\dDelta}_kv}_{L^p},
\end{equation}
from which,  
\[
\| c \|_{\ell^1}=\| \nabla q_m \|_{\dot{B}_{p,1}^{\frac{d}{p}}}, \quad \| d \|_{\ell^1}=\| v \|_{\dot{B}_{p,1}^{\frac{d}{p}}(\Omega)}.
\]
\noindent For \(II_j\), the support properties of \(\dot T'\) give
\begin{equation}\label{eq:comm-II-kernel}
\Omega_j2^{j{(-1+\frac{d}{p})}}\norm{\Div II_j}_{L^p}
\lesssim  d_j\sum_{k\ge j-N_0}
2^{-(k-j)\frac{d}{p}}c_k.
\end{equation}
\noindent For \(III_j\), decompose
\[
III_j=III_j^{\,T}+III_j^{\,R}
:=\dDelta_j\dot T_{\nabla v}q_m
+\dDelta_j\dot R(\nabla v,q_m).
\]
The low-high interaction  $III_j^{\,T}$ satisfies
\begin{equation}\label{eq:comm-III-T-kernel}
\begin{aligned}
\Omega_j2^{j{(-1+\frac{d}{p})}}\norm{\Div III_j^{\,T}}_{L^p}
&\lesssim \sum_{|k-j|\le N_0}c_k
\sum_{\ell\le k-N_0}
2^{-(k-\ell)(1-\delta_0)}d_\ell,
\end{aligned}
\end{equation}
where the summability of the kernel follows from \(\delta_0<1\).

\noindent For the remainder term $III_j^{\,R}$. If \(p \geq 2\), H\"older's inequality in \(L^{\frac{p}{2}}\), followed
by Bernstein's inequality from \(L^{\frac{p}{2}}\) to \(L^p\), we have
\[
\begin{aligned}
\Omega_j2^{j(-1+\frac dp)}
\|\Div\dot\Delta_j(
\dot\Delta_k\nabla v\,\widetilde{\dot\Delta}_kq_m)\|_{L^p}
&\lesssim
\Omega_j2^{j(-1+\frac dp)}
2^j2^{j\frac dp}
\bigl(\Omega_k^{-1}2^{k(1-\frac dp)}d_k\bigr)
\bigl(2^{-k(1+\frac dp)}c_k\bigr)
\\
&=
2^{-(k-j)\frac{2d}{p}}
\frac{\Omega_j}{\Omega_k}c_kd_k.
\end{aligned}
\]
If \(1<p<2\), Bernstein from \(L^p\) to \(L^{p'}\) for the velocity
block and then from \(L^1\) to \(L^p\) for the output give
\[
\begin{aligned}
\Omega_j2^{j(-1+\frac dp)}
\|\Div\dot\Delta_j(
\dot\Delta_k\nabla v\,\widetilde{\dot\Delta}_kq_m)\|_{L^p}
&\lesssim
\Omega_j2^{j(-1+\frac dp)}
2^j2^{j\frac d{p'}}
2^{kd(\frac1p-\frac1{p'})}
\bigl(\Omega_k^{-1}2^{k(1-\frac dp)}d_k\bigr)
\bigl(2^{-k(1+\frac dp)}c_k\bigr)\\
&=
2^{-d(k-j)}
\frac{\Omega_j}{\Omega_k}c_kd_k.
\end{aligned}
\]
Thus both regimes give
\begin{equation}\label{eq:comm-III-R-kernel}
\Omega_j2^{j{(-1+\frac{d}{p})}}\norm{\Div III_j^{\,R}}_{L^p}
\lesssim \sum_{k\ge j-3}
2^{-(k-j)\gamma_c}
\frac{\Omega_j}{\Omega_k}c_kd_k,
\end{equation}
where
\[
\gamma_c := \frac{d}{p} + \min \left\{ \frac{d}{p}, \frac{d}{p'} \right\} = \begin{cases} 
\frac{2d}{p}, & p \geq 2, \\ 
d, & 1 < p < 2. 
\end{cases}
\]
Since \(\gamma_c>\gamma_m>\delta_0\),
\cref{rem:envelope-discrete-convolution} shows that the kernel in
\eqref{eq:comm-III-R-kernel} is summable. Summing 
\eqref{eq:comm-II-kernel},
\eqref{eq:comm-III-T-kernel}, and
\eqref{eq:comm-III-R-kernel}, and combining with
\eqref{eq:comm-Ij-sum}, it proves \eqref{eq:comm-low}.

We next prove\eqref{eq:high-principal}.
By the weighted product estimate \emph{\eqref{eq:weighted-one-sided}}, we have
\begin{align*}
\|q^h\nabla^2v\|_{\dB^{-1+\frac{d}{p}}_{p,1}(\Omega)}
&\lesssim \|q^h\|_{\dB^{\frac{d}{p}}_{p,1}}
\|\nabla^2v\|_{\dB^{-1+\frac{d}{p}}_{p,1}(\Omega)}
\lesssim \|q^h\|_{\dB^{\frac{d}{p}}_{p,1}}
\|v\|_{\dB^{1+\frac{d}{p}}_{p,1}(\Omega)}.
\label{eq:high-principal-proof}
\end{align*}
This proves \eqref{eq:high-principal}.

 For \eqref{eq:high-gradient}, applying \eqref{eq:weighted-mixed}, we obtain
\begin{align*}
\|\nabla q^h\,\nabla v\|_{\dB^{-1+\frac{d}{p}}_{p,1}(\Omega)}
&\lesssim 
\|\nabla q^h\|_{\dB^{-1+\frac{d}{p}}_{p,1}}
\|v\|_{\dB^{1+\frac{d}{p}}_{p,1}(\Omega)}
+
\|\nabla q^h\|_{\dB^{-1+\frac{d}{p}}_{p,1}(\Omega)}
\|v\|_{\dB^{1+\frac{d}{p}}_{p,1}}.
\end{align*}
Thus \eqref{eq:high-gradient} follows.

  We next shall prove \eqref{eq:mult-low}. Let
\[
A_j(D):=A(D)\dDelta_j.
\]
The convolution kernel of \(A_j(D)\) has the form $2^{jd}K_A(2^j\cdot)$ and 
$\int_{\R^d}|x|\,|K_A(x)|\dd x<\infty$. Hence, by \cref{lem:convolution-commutator}, we have
\begin{equation}\label{eq:multiplier-kernel}
\begin{aligned}
\norm{[A_j(D),a]h}_{L^p}
+
\norm{[\dDelta_j,a]A(D)h}_{L^p}
\le
C_A2^{-j}
\norm{\nabla a}_{L^\infty}
\norm h_{L^p},
\end{aligned}
\end{equation}
where \(C_A\) depends only on finitely many seminorms of the symbol
of \(A(D)\) on the unit annulus.
Bony's decomposition gives
\begin{equation}
[A(D),q_m]\nabla^2v
=
[A(D),\dot T_{q_m}]\nabla^2v
+
A(D)\dot T'_{\nabla^2v}q_m
-
\dot T'_{A(D)\nabla^2v}q_m.
\label{eq:multiplier-bony}
\end{equation}
For the first term of the right side \eqref{eq:multiplier-bony}, we have
\begin{equation}\label{eq:correct-multiplier-localization}
  \dot \Delta_j[A(D),\dot T_{q_m}]\nabla^2v
=
\sum_{|k-j|\le N_0}\left(
[A_j(D),\dS_{k-1}q_m]\nabla^2\dDelta_kv- [\dot{\Delta}_j, \dot{S}_{k-1} q_m] A(D) \nabla^2 \dot{\Delta}_k v .\right)  
\end{equation}
Using \eqref{eq:multiplier-kernel}, Bernstein's inequality, and the
same summation as in \eqref{eq:comm-Ij-sum}, we get
\begin{equation}
\|[A(D),\dot T_{q_m}]\nabla^2v\|_{\dB^{-1+\frac{d}{p}}_{p,1}(\Omega)}
\le
C_A
\|\nabla q_m\|_{\dB^{\frac{d}{p}}_{p,1}}
\|v\|_{\dB^{\frac{d}{p}}_{p,1}(\Omega)}.
\label{eq:multiplier-low-paraproduct}
\end{equation}
For the two remaining terms in the right side of \eqref{eq:multiplier-bony}, the
low-high contribution satisfies
\begin{equation}\label{eq:multiplier-T-kernel}
\begin{aligned}
&\sum_j\Omega_j2^{j{(-1+\frac{d}{p})}}
\norm{\dDelta_j\bigl(
A(D)\dot T_{\nabla^2v}q_m
-\dot T_{A(D)\nabla^2v}q_m
\bigr)}_{L^p}\\
&\qquad\le
C_A\sum_k c_k
\sum_{\ell\le k-2}
2^{-(k-\ell)(2-\delta_0)}d_\ell,
\end{aligned}
\end{equation}
and for the high-high remainder, the same two $p$-regimes as above yield
\begin{equation}\label{eq:multiplier-R-kernel}
\begin{aligned}
&\Omega_j2^{j{(-1+\frac{d}{p})}}
\norm{\dDelta_j\bigl(
A(D)\dot R(\nabla^2v,q_m)
-\dot R(A(D)\nabla^2v,q_m)
\bigr)}_{L^p}\\
&\qquad\le
C_A\sum_{k\ge j-3}
2^{-(k-j)\gamma_m}
\frac{\Omega_j}{\Omega_k}c_kd_k.
\end{aligned}
\end{equation}
The kernel in \eqref{eq:multiplier-T-kernel} is summable because $\delta_0<1$, while the one in
\eqref{eq:multiplier-R-kernel} is summable because $\delta_0<\gamma_m$. Therefore,
\begin{align}
&\|A(D)\dot T'_{\nabla^2v}q_m
-
\dot T'_{A(D)\nabla^2v}q_m\|_{\dB^{-1+\frac{d}{p}}_{p,1}(\Omega)}
\nonumber \le
C_A
\|\nabla q_m\|_{\dB^{\frac{d}{p}}_{p,1}}
\|v\|_{\dB^{\frac{d}{p}}_{p,1}(\Omega)},
\label{eq:multiplier-remainder}
\end{align}
which together with \eqref{eq:multiplier-low-paraproduct},we  obtain 
\eqref{eq:mult-low}.

 Finally, since $A(D)$ is bounded on $\dB^{-1+\frac{d}{p}}_{p,1}(\Omega)$,
\begin{equation}\label{eq:multiplier-high-first}
\|[A(D),q^h]\nabla^2v\|_{\dB^{-1+\frac{d}{p}}_{p,1}(\Omega)}
\le
C_A\left(\|q^h\nabla^2v\|_{\dB^{-1+\frac{d}{p}}_{p,1}(\Omega)}
+
\|q^hA(D)\nabla^2v\|_{\dB^{-1+\frac{d}{p}}_{p,1}(\Omega)}\right).
\end{equation}
The first term  in the right side of \eqref{eq:multiplier-high-first} is controlled by \eqref{eq:high-principal}. For the second term, applying \eqref{eq:weighted-mixed} , together with the order-zero
boundedness of $A(D)$, yields
\begin{align*}
  \|q^h A(D) \nabla^2 v\|_{\dot{B}_{p,1}^{-1+\frac{d}{p}}(\Omega)} &\le C_A \left( \|q^h\|_{\dot{B}_{p,1}^{\frac{d}{p}}} \|v\|_{\dot{B}_{p,1}^{1+\frac{d}{p}}(\Omega)} + \|q^h\|_{\dot{B}_{p,1}^{\frac{d}{p}}(\Omega)} \|v\|_{\dot{B}_{p,1}^{1+\frac{d}{p}}} \right)\\
  &\le C_A \left( \|\nabla q^h\|_{\dot{B}_{p,1}^{-1+\frac{d}{p}}} \|v\|_{\dot{B}_{p,1}^{1+\frac{d}{p}}(\Omega)} + \|\nabla q^h\|_{\dot{B}_{p,1}^{-1+\frac{d}{p}}(\Omega)} \|v\|_{\dot{B}_{p,1}^{1+\frac{d}{p}}} \right),  
\end{align*}
from which we can obtain \eqref{eq:mult-high}. The proof of \cref{lem:weighted-commutators} is complete.
\end{proof}
\section*{Acknowledgement}
This work was started while M. Yang was visiting Professor Zihua Guo at Monash University. M. Yang thanks Professor Zihua Guo  for helpful discussions and his kind hospitality. The article is supported by the National Natural Science Foundation of China (Grant Nos.~12661017, 1266104, 12561034) and the Jiangxi Province Natural Science Foundation (Grant Nos.~2052BAC250004,  2052BAC200146).

\end{document}